\documentclass[12pt]{amsart}
\usepackage{amssymb}
\usepackage[shortlabels]{enumitem}
\usepackage[all]{xy}
\usepackage{xcolor}
\usepackage[margin=1in]{geometry}
\usepackage{graphicx}
\usepackage{stmaryrd}
\usepackage{hyperref}
\usepackage{mathrsfs}
\usepackage{booktabs}
\usepackage{eucal}
\usepackage[normalem]{ulem}
\hypersetup{bookmarksdepth=2}
\hypersetup{colorlinks=true}
\hypersetup{linkcolor=blue}
\hypersetup{citecolor=blue}
\hypersetup{urlcolor=blue}
\usepackage{tikz}

\numberwithin{equation}{section}
\newtheorem{theorem}[equation]{Theorem}
\newtheorem{proposition}[equation]{Proposition}
\newtheorem{lemma}[equation]{Lemma}
\newtheorem{corollary}[equation]{Corollary}

\newtheorem{maintheorem}{Theorem}
\newtheorem{maincorollary}[maintheorem]{Corollary}

\theoremstyle{definition}
\newtheorem{remark}[equation]{Remark}
\newtheorem{example}[equation]{Example}
\newtheorem{definition}[equation]{Definition}

\newcommand{\bA}{\mathbf{A}}
\newcommand{\fA}{\mathfrak{A}}
\newcommand{\fB}{\mathfrak{B}}

\newcommand{\cC}{\mathcal{C}}
\newcommand{\fC}{\mathfrak{C}}
\newcommand{\bE}{\mathbf{E}}
\newcommand{\cE}{\mathcal{E}}
\newcommand{\sE}{\mathscr{E}}
\newcommand{\bF}{\mathbf{F}}
\newcommand{\cF}{\mathcal{F}}
\newcommand{\fF}{\mathfrak{F}}
\newcommand{\bG}{\mathbf{G}}

\newcommand{\bN}{\mathbf{N}}

\newcommand{\cP}{\mathcal{P}}
\newcommand{\fP}{\mathfrak{P}}

\newcommand{\bQ}{\mathbf{Q}}

\newcommand{\fQ}{\mathfrak{Q}}

\newcommand{\bS}{\mathbf{S}}

\newcommand{\fS}{\mathfrak{S}}

\newcommand{\bZ}{\mathbf{Z}}

\newcommand{\fa}{\mathfrak{a}}

\renewcommand{\AA}{\mathbb{A}}
\newcommand{\BB}{\mathbb{B}}
\newcommand{\CC}{\mathbb{C}}

\newcommand{\arxiv}[1]{\href{http://arxiv.org/abs/#1}{{\tiny\tt arXiv:#1}}}
\newcommand{\DOI}[1]{\href{http://doi.org/#1}{\color{purple}{\tiny\tt DOI:#1}}}
\newcommand{\defn}[1]{\emph{#1}}

\let\ul\underline
\let\ol\overline
\renewcommand{\phi}{\varphi}
\DeclareMathOperator{\Aut}{Aut}
\DeclareMathOperator{\Max}{Max}
\DeclareMathOperator{\Min}{Min}
\DeclareMathOperator{\im}{im}
\DeclareMathOperator{\Sym}{Sym}
\DeclareMathOperator{\uPerm}{\ul{Perm}}
\DeclareMathOperator{\Iso}{Iso}
\DeclareMathOperator{\Quot}{Quot}

\newcommand{\Aff}{\mathbf{Aff}}
\newcommand{\bone}{\mathbf{1}}

\newcommand{\bzero}{\mathbf{0}}
\renewcommand{\emptyset}{\varnothing}
\let\lbb\llbracket
\let\rbb\rrbracket

\title{Measures on partial orders}

\author{Andrew Snowden}
\address{Department of Mathematics, University of Michigan, Ann Arbor, MI, USA}
\email{\href{mailto:asnowden@umich.edu}{asnowden@umich.edu}}
\urladdr{\url{http://www-personal.umich.edu/~asnowden/}}
\thanks{The author was supported by NSF grant DMS-2301871.}

\date{September 1, 2026}

\begin{document}

\begin{abstract}
We determine the measures (in the sense of Harman--Snowden) on the Fra\"iss\'e class of partially ordered sets: the space of measures is a union of a plane, eight lines, and 15 isolated points. This is the first case where the space is not equidimensional, and the first primitive case in which it has dimension at least two.
\vskip.5\baselineskip\noindent
ChatGPT was used to obtain many arguments. The writing was done entirely by the author.
\end{abstract}

\maketitle
\tableofcontents

\section{Introduction}

In joint work with Harman \cite{repst}, we introduced the notion of measure on a Fra\"iss\'e class. We showed that each measure leads to an additive tensor category, and we used this idea to construct interesting new pre-Tannakian categories, such as the Delannoy category \cite{line} and the arboreal categories \cite{arboreal}. Constructing and classifying measures on Fra\"iss\'e classes is therefore an important problem. Currently, there are very few general results, and only a modest list of solutions for specific classes. We solve the problem for a new case: the class of partially ordered sets. The answer turns out to be more complicated than those in previously studied cases, and it exhibits some new phenomena.

\subsection{Background}

Let $\fF$ be a Fra\"iss\'e class, i.e., a class of finite relational structures satisfying the Fra\"iss\'e axioms (\S \ref{ss:fraisse}). A \defn{measure} on $\fF$ valued in a field $k$ is a rule $\nu$ that assigns to each embedding $\alpha \colon P \to Q$ in $\fF$ a value $\nu(\alpha)$ in $k$ such that three axioms hold:
\begin{enumerate}
\item $\nu(\alpha)=1$ if $\alpha$ is an isomorphism.
\item $\nu(\beta \circ \alpha) = \nu(\beta) \cdot \nu(\alpha)$, when defined.
\item Given embeddings $\alpha \colon P \to Q$ and $\beta \colon P \to P'$, let $Q'_1, \ldots, Q'_r$ be the various amalgamations of $Q$ and $P'$ over $P$, and let $\alpha'_i \colon P' \to Q'_i$ be the given embedding. Then $\nu(\alpha) = \nu(\alpha'_1) + \cdots + \nu(\alpha'_r)$.
\end{enumerate}
Amalgamations are reviewed in \S \ref{ss:fraisse}, and the definition is discussed in more detail in \S \ref{ss:meas}. The measures on $\fF$ are the $k$-points of a scheme (typically a variety), which we call the space of measures. Some typical examples of these spaces are given in Table~\ref{t:examples}.

Suppose $\nu$ is a measure on $\fF$. A \defn{marked structure} is a pair $(P,x)$ where $P$ is a structure in $\fF$ and $x$ is an element of $P$. We let $P^x=P \setminus \{x\}$, with the induced structure, and put
\begin{displaymath}
\nu(P,x) = \nu(P^x \to P).
\end{displaymath}
Since every embedding in $\fF$ factors into a sequence of one-point extensions, axiom (b) implies that $\nu$ is determined by the values $\nu(P,x)$. Moreover, it is known how to characterize when a rule defined on marked structures comes from a measure (\S \ref{ss:onepoint}). This is an important perspective that will be used throughout the paper.
\begin{table}
\caption{Some examples of spaces of measures.}
\label{t:examples}
\centering
\begin{tabular}{lll}
\toprule
$\fF$ & space of measures & reference \\
\midrule
sets & line & \cite[\S 14]{repst} \\
equivalence relations & plane & --- \\
trees & two lines meeting at a point & \cite{arboreal} \\
planar boron trees & empty & \cite[\S 2.7]{Nekrasov} \\
total orders & four points & \cite[\S 16]{repst} \\
permutations & 37 points & \cite{homoperm} \\
\bottomrule
\end{tabular}
\end{table}

\subsection{Main results}

The purpose of this paper is to determine the measures on the class $\fP$ of finite partially ordered sets. We now state our main result in this direction. Let $J_{p,q}$ be the poset with underlying set $\{a_1, \ldots, a_p, b_1, \ldots, b_q, x\}$ and order generated by the relations $a_i < x$ and $x < b_j$ for all $i$ and $j$; thus $\{a_1, \ldots, a_p\}$ and $\{b_1, \ldots, b_q\}$ are antichains. We mark $J_{p,q}$ using the point $x$, which we sometimes denote by $\ast$.

\begin{maintheorem} \label{mainthm}
Let $\nu$ be a measure for $\fP$. Define $\nu_{p,q}=\nu(J_{p,q}, \ast)$. Then $\nu$ is completely determined by the $3 \times 3$ matrix $N=(\nu_{p,q})_{0 \le p,q \le 2}$. The matrices that arise in this way are exactly the ones listed in Table~\ref{t:matrix}, for any $s$ and $t$, and their transposes.
\end{maintheorem}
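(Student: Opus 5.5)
The plan has two halves. First, show that $\nu$ is determined by $N$. Then find the constraints that $N$ must satisfy, and realize every solution.

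\emph{Reduction to $N$.} We use the one-point perspective of \S\ref{ss:onepoint}: $\nu$ is determined by the values $\nu(P,x)$ on marked posets. These values satisfy two kinds of relations.
\begin{enumerate}
\item Amalgamation relations. Take a one-point extension $P^x \to P$ and an extra point added to $P^x$. Summing over the ways the new point can be ordered relative to $x$ gives an identity.
\item Cocycle relations. For two distinct points $x,y$ of $P$, we have $\nu(P,x)\,\nu(P^x,y)=\nu(P,y)\,\nu(P^y,x)$.
\end{enumerate}
We first use the amalgamation relations to reduce an arbitrary marked poset $(P,x)$ to a combination of simpler ones. The natural invariants of $(P,x)$ are the down-set $D$ and the up-set $U$ of $x$, together with the rest of $P$. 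By adding to $P^x$ a point that is incomparable to everything, or comparable in a controlled way, we express $\nu(P,x)$ in terms of marked structures where $x$ is comparable to fewer elements. We aim to show that $\nu(P,x)$ depends only on $|\Max D|$ and $|\Min U|$, at least up to multiplicative factors forced by the cocycle relation. This yields the numbers $\nu_{p,q}$ for all $p,q$.

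\emph{Recursion for $\nu_{p,q}$.} We then derive a recursion for $\nu_{p,q}$ with $p\ge 3$ or $q\ge 3$ in terms of the $3\times 3$ block. Apply the amalgamation relation to $J_{p,q}$ with a new point placed above two of the $a_i$. The resulting identity is linear in $\nu_{p,q}$ and expresses it through entries with smaller indices. We expect this to be a second-order linear recurrence in each index separately, which is why a $3\times 3$ block suffices.

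\emph{Constraints on $N$.} Next we impose the remaining consistency conditions, namely the cocycle relation on small posets with two marked points and the amalgamation relations on small configurations. This yields a finite system of polynomial equations in the nine entries of $N$. The symmetry $P\mapsto P^{\mathrm{op}}$ transposes $N$, which explains the "and their transposes" clause; we use it to halve the case analysis. We solve the system by cases. We split on whether $\nu_{0,0}$ (the measure of a free point, i.e.\ the "sets" parameter) and the chain-type entries $\nu_{1,0},\nu_{0,1}$ are generic or take special values. In the generic case we expect a two-parameter family, the plane with parameters $s,t$. Degenerate branches should give the eight lines and the 15 isolated points. Some of the isolated points should come from the four measures on total orders together with measures on antichains.

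\emph{Existence, the main obstacle.} The hardest step is showing that each matrix in Table~\ref{t:matrix} really comes from a measure. For this we define $\nu(P,x)$ for all marked posets by the formula produced in the reduction step. We then verify the criterion of \S\ref{ss:onepoint}, which consists of the amalgamation identity for every one-point extension and every added point, together with the cocycle condition. For the plane family, I would first try to find a closed-form product formula, something like $\nu(P,x)$ equal to a function of $s$ and $t$ evaluated on $|\Max D|$ and $|\Min U|$. I would then check the amalgamation sum by a generating-function computation over the possible positions of the new point relative to $D$ and $U$. For the isolated points and lines, where such formulas may fail, I would instead realize the measures by pulling back known measures along natural constructions: linear extensions to total orders, and the forgetful map to sets. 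Only as a last resort would I do a direct verification.
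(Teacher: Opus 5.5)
\textbf{Where the proposal fails: the reduction to $N$.} The claimed linear recursion for $\nu_{p,q}$ with $p\ge 3$ or $q\ge 3$ cannot exist. The amalgamation identities are exactly the defining relations of $\Theta^1(\fP)$, and the paper shows (Theorem~\ref{thm:weak}) that $e$ together with \emph{all} classes $j_{p,q}$, $p,q\in\bN$, form a $\bZ$-basis of $\Theta^1(\fP)$. Consequently, for arbitrary values $\nu_{p,q}$, the rule $-\tau+\sum_{p,q}\nu_{p,q}\lambda_{p,q}$ satisfies every amalgamation identity. So no such identity can express $\nu_{p,q}$ through smaller entries.

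Your specific identity is in fact tautological. Take $y$ above only $a_1,a_2$, with $p\ge 3$ and $q\ge 1$. Then $x$ and $y$ are separated, $P_0$ is the only amalgamation, and the identity reads $\nu_{p,q}=\nu_{p,q}$.

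The stabilization $\nu_{p,q}=\nu_{\ol{p},\ol{q}}$ is genuinely quadratic. The paper builds a poset $E_{m,n}$ with $[E,y]=0$. The cocycle identity for $(E,x,y)$ then becomes $\delta_{m,n}\cdot\sum_{r=0}^{n}(-1)^r\binom{n}{r}\delta_{m,r}=0$, where $\delta_{m,n}=j_{m,n+3}-j_{m,n+2}$. Induction on $n$ shows each $\delta_{m,n}$ is nilpotent in $\Theta(\fP)$, and only then does the hypothesis that $k$ is a field give stabilization. Using the quadratic relations and the reducedness of $k$ in some such way is unavoidable. An argument from linear relations alone would apply verbatim to weak measures, where the $\nu_{p,q}$ are completely unconstrained.

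\textbf{The premise of that step is also false.} $\nu(P,x)$ is not a function of $|\Max D|$ and $|\Min U|$, even up to factors. A point of a two-element antichain and the one-point poset both have $D=U=\emptyset$. Yet $\nu$ takes the values $\nu_{0,0}-\nu_{1,0}-\nu_{0,1}-1$ and $\nu_{0,0}$ on them, which are $t-1$ and $t$ for $\AA(s,t)$. What is true is
\[
\nu(P,x)=-\tau(P,x)+\sum_{p,q}\lambda_{p,q}(P,x)\,\nu_{p,q}.
\]
Here $\tau$ counts twins of $x$. The term $\lambda_{p,q}$ is a signed count of cuts $(L,R)$ with $L,R\subset P_{\parallel x}$ and $L\cup P_{<x}<R\cup P_{>x}$, graded by $|\Max(L\cup P_{<x})|$ and $|\Min(R\cup P_{>x})|$. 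The reduction proving this moves toward \emph{fewer} elements incomparable to $x$, deletes separated points, and ends at the $J_{p,q}$.

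Without such an explicit formula you cannot write down the polynomial equations on $N$, since each requires evaluating $\nu$ on concrete twice-marked posets in terms of $N$. Nor can you define candidate measures. Your existence plan inherits the false premise: the product formula in $|\Max D|,|\Min U|$ and the pullbacks along linear extensions both rest on it.

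\textbf{On existence.} Once the formula above is in hand, the amalgamation identities hold automatically, and only the cocycle identity needs proof. The paper checks it directly only for $\BB_1(t)$, $\CC_8$, $\CC_3$, $\CC_4$. It uses M\"obius-function global formulas for the first two, and case checks reduced to cover pairs $x\lessdot y$ for the last two. It then obtains every other matrix from these via the action of marked posets by substitution (the maps $\bE,\bF,\bG$), Zariski density, and a specialization argument in positive characteristic.
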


We actually give a formula for $\nu$ in terms of the matrix $N$ (see \S \ref{ss:quadthm}). By the theorem, we can identify the collection of all measures with a subset of the space $\bA^{3 \times 3}$ of $3 \times 3$ matrices; it is in fact a closed subvariety. It is described geometrically as follows\footnote{Corollary~B, we only describe the reduced subscheme. We do not prove that the scheme is reduced.}.

\begin{maincorollary}
The space of measures for $\fP$ is the union of one plane, eight lines, and 15 isolated points. The plane and three of the lines intersect at a common point, and there are no other intersections.
\end{maincorollary}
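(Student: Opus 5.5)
The corollary should follow from Theorem~\ref{mainthm}, which identifies the space of measures with the finite list of matrix families in Table~\ref{t:matrix} together with their transposes. The plan is to read each row of the table as a parametrized subvariety of $\bA^{3\times 3}$, classify it by dimension, and then compute all pairwise intersections. Concretely, I would take each entry of Table~\ref{t:matrix} and determine whether it depends on two free parameters $s,t$, on one, or on none. The map $(s,t)\mapsto N$ (or $s\mapsto N$) has entries that are polynomial in the parameters. I would check that it is injective, for instance because some entry equals $s$ and another equals $t$ up to affine change, and that its image is closed. Given these two facts, each family is an affine plane, an affine line, or a point. Since the paper asserts only a description of the reduced subscheme, it is enough to work with these images as sets of $k$-points.

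Next I would account for transposition. Let $T$ denote the involution $N\mapsto N^{t}$. For each family I would decide whether it is $T$-stable or is exchanged with a different family; a family that is $T$-stable as a set contributes once, and otherwise it contributes twice. I expect the count to land at one plane that is $T$-stable, eight lines, and 15 points. I would arrive at these numbers by listing each table entry with its transpose and removing coincidences, including the case where a specialization of one family equals another listed entry. To see that each line and point is a genuine irreducible component, I would also check that no listed point lies on a listed line or on the plane, and that no line lies inside the plane. For instance, a line lies in the plane only if the defining linear conditions of the plane hold identically in $s$.

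The intersection statement is the step needing care. I would intersect every pair of families by equating parametrized matrices entrywise. Each such system consists of polynomial equations in at most four parameters, and the entry $\nu_{0,0}$ should make most of them immediately inconsistent. The value $\nu_{0,0}=\nu(\text{one point},\ast)$ is the basic "dimension-like" parameter, and I expect the constant entries (such as $\nu_{p,0}$ or $\nu_{0,q}$ forced to be $\pm1$ or $0$ in various families) to separate the components. The expected result is that the plane and exactly three of the lines pass through one common matrix, presumably a degenerate measure like the one pulled back from sets or total orders, while every other pair is disjoint. Transposition symmetry roughly halves the case analysis.

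The main obstacle is purely bookkeeping: making sure the transposition identifications and the point-on-line checks are complete, so that the counts $1,8,15$ are exact. I would organize this as a table of pairwise intersections and verify each entry.
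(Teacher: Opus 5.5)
Your plan is correct and is essentially the paper's argument: read the families off Table~\ref{t:matrix}, note that $\AA$, $\BB_1$, $\BB_5$, $\CC_8$ are symmetric while the other families also contribute their transposes (giving $1$ plane, $5+3$ lines, $8+7$ points), and then compare matrices entrywise. Two corrections to your heuristics: the common point is the zero matrix $\AA(0,0)=\BB_1(0)=\BB_2(0)=\BB_2^{\top}(0)$, and $\nu_{0,0}$ is the free parameter in $\AA,\BB_1,\BB_3,\BB_4,\BB_5$, so it separates almost nothing --- the separation actually comes from the lower-right $2\times 2$ block together with the remaining entries of the first row and column.
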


\begin{proof}
The matrix $\AA$ gives the plane, as $s$ and $t$ vary. The matrices $\BB_i$, for $1 \le i \le 5$, give five lines, and the transposes of $\BB_2$, $\BB_3$, and $\BB_4$ give three additional lines; $\BB_1$ and $\BB_5$ are symmetric. The intersection point lies on $\AA$, $\BB_1$, $\BB_2$, and $\BB_2^{\top}$ when $s=t=0$. The matrices $\CC_i$ for $1 \le i \le 8$ give eight isolated points, and the matrices $\CC_i^{\top}$ for $1 \le i \le 7$ give seven additional isolated points. The matrix $\CC_8$ is symmetric.
\end{proof}

We find the above results noteworthy because the space of measures is more complicated than the spaces arising in previously studied cases: indeed, this is the first known case where the space is not equidimensional and the first primitive case where it has dimension at least two. (It is easy to give examples of arbitrarily high dimension using Fra\"iss\'e classes that correspond to products or wreath products of oligomorphic groups; this is what we mean by imprimitive. For instance, the class of equivalence relations appearing in Table~\ref{t:examples} corresponds to the wreath product $\fS \wr \fS$ of the infinite symmetric group with itself. The class $\fP$ is primitive.)

\begin{table}
\caption{The matrices used to describe measures; $s$ and $t$ are parameters, and $+$ and $-$ stand for $+1$ and $-1$.}
\label{t:matrix}
\newcommand{\smatrix}[1]{\scalebox{0.7}{$\begin{pmatrix} #1 \end{pmatrix}$}}
\centering
\begin{tabular}{cccccccc}
\toprule \addlinespace[6pt]
& $\AA(s,t)$ & $\BB_1(t)$ & $\BB_2(t)$ & $\BB_3(t)$ & $\BB_4(t)$ & $\BB_5(t)$ \\[3pt]
& \smatrix{t & 0 & 0 \\ 0 & s & s \\ 0 & s & s} &
\smatrix{t & t & t \\ t & t & t \\ t & t & t} &
\smatrix{0 & t & t \\ 0 & t & t \\ 0 & t & t} &
\smatrix{t & 0 & 0 \\ 0 & - & 0 \\ 0 & - & 0} &
\smatrix{t & - & 0 \\ - & - & 0 \\ 0 & - & 0} &
\smatrix{t & - & 0 \\ - & - & 0 \\ 0 & 0 & +} \\[15pt]
$\CC_1$ & $\CC_2$ & $\CC_3$ & $\CC_4$ & $\CC_5$ & $\CC_6$ & $\CC_7$ & $\CC_8$ \\[3pt]
\smatrix{0 & - & 0 \\ 0 & - & 0 \\ 0 & - & 0} &
\smatrix{- & - & 0 \\ - & - & 0 \\ - & - & 0} &
\smatrix{0 & 0 & 0 \\ - & - & 0 \\ - & - & 0} &
\smatrix{0 & 0 & + \\ - & - & 0 \\ 0 & - & 0} &
\smatrix{0 & 0 & + \\ - & - & 0 \\ - & - & 0} &
\smatrix{+ & 0 & + \\ 0 & - & 0 \\ 0 & - & 0} &
\smatrix{0 & - & 0 \\ 0 & - & 0 \\ + & 0 & +} &
\smatrix{+ & 0 & + \\ 0 & - & 0 \\ + & 0 & +} \\[15pt]
\bottomrule
\end{tabular}
\end{table}

\subsection{Other results} \label{ss:further}

We discuss a few other results that we obtain.

\textit{(a) Weak measures.} The measure axioms impose both quadratic equations (in axiom (b)) and linear equations (in axiom (c)). It is often useful to study the linear equations first, as they are easier to solve and still offer strong constraints. We introduce the notion of \defn{weak measure} to carry out this strategy systematically (\S \ref{ss:weak}): this is a rule that assigns to each marked poset a value in $k$ such that an appropriate version of axiom (c) holds\footnote{In fact, a weak measure has one additional parameter.}. The collection of weak measures forms a $k$-vector space. A measure is a weak measure that satisfies additional non-linear equations.

We completely determine the weak measures for $\fP$ (Theorem~\ref{thm:weak}). There is one, denoted $\tau$, related to ``twins'' (\S \ref{ss:twins}) and a family, denoted $\lambda_{p,q}$, related to ``cuts'' (\S \ref{ss:cuts}). These weak measures form a (topological) basis for the space of weak measures. We also give explicit formulas for certain linear combinations of the $\lambda_{p,q}$ (\S \ref{ss:explicit} and \S \ref{ss:mobius}). This detailed description of weak measures is essential in our analysis of measures.

\textit{(b) The monoid of marked posets.} If $(P,x)$ is a marked poset and $Q$ is another poset, then we can ``substitute'' $Q$ into $x$. This is a poset with underlying set $P^x \sqcup Q$. The order restricts to the given ones on $P^x$ and $Q$, and for $a \in P^x$ and $b \in Q$ we have $a \le b$ if and only if $a \le x$, and similarly for $b \le a$. We call this operation \defn{composition}, and denote it by $(P,x) \circ Q$. If $Q$ has a marked point then so does the composition. This gives the set $\cP$ of isomorphism classes of marked posets the structure of a monoid (\S \ref{ss:poset-comp}).

It turns out that the composition operation respects the defining equations of measures, and so the monoid $\cP$ acts on the space of measures. We use this observation in the following way. To prove Theorem~\ref{mainthm}, we must show that each of the matrices in Table~\ref{t:matrix} comes from a measure. We do this directly for the matrices $\BB_1(t)$, $\CC_3$, $\CC_4$, and $\CC_8$ (\S \ref{s:some-meas}). We then use the monoid action to obtain the remaining matrices from these four (\S \ref{ss:remaining}). This is far easier than constructing the remaining measures by hand.

The space of measures is naturally a closed subvariety of the space $\bA^{3 \times 3}$ of matrices. The action of $\cP$ on the space of measures is induced from an affine linear action of $\cP$ on $\bA^{3 \times 3}$. We describe the Zariski closure of the image of $\cP$ in the space affine linear transformations (\S \ref{ss:monoid-closure}). It is five-dimensional and has 30 irreducible components (21 of which are isolated points).

\textit{(c) Support.} Given a measure $\nu$ on a Fra\"iss\'e class $\fF$, the collection $\fF_{\nu}$ of structures $P$ for which $\nu(P) = \nu(\emptyset \to P)$ is non-zero forms a Fra\"iss\'e subclass, called the \defn{support} of $\nu$ (\S \ref{ss:support}). We determine the supports of all the measures on $\fP$ (Theorem~\ref{thm:support}). An important point here is that Schmerl \cite{Schmerl} classified the Fra\"iss\'e subclasses of $\fP$, and there are not too many of them (see Theorem~\ref{thm:schmerl}). We note that there is no measure whose support is all of $\fP$, that is, there is no regular measure (\S \ref{ss:regular}). However, in characteristic~0, every proper Fra\"iss\'e class of $\fP$ does occur as the support of some measure (Corollary~\ref{cor:support}).

\textit{(d) Tensor categories.} Given a measure $\nu$ on $\fP$, the main construction of \cite{repst} (as reformulated in \cite{arboreal}) gives an additive tensor category $\uPerm(\fP, \nu)$. An important question is whether this category can be embedded into a pre-Tannakian category. There is a basic necessary condition: if $\uPerm(\fP, \nu)$ admits a tensor functor to a pre-Tannakian category, then nilpotent endomorphisms in $\uPerm(\fP, \nu)$ must have trace~0. We show that this condition does indeed hold (Corollary~\ref{cor:nilp}), by making use of the semi-simplification (which is intimately tied to support). Thus, as far as we know, every category $\uPerm(\fP, \nu)$ could embed into a pre-Tannakian category. This suggests a potentially rich source of new and interesting pre-Tannakian categories.

\textit{(e) The associated oligomorphic group.} Let $\Omega$ be the Fra\"iss\'e limit of the class $\fP$, i.e., the universal homogeneous partial order, and let $G$ be its automorphism group; the pair $(G, \Omega)$ is an oligomorphic permutation group. In general, measures on a Fra\"iss\'e class and measures on the associated oligomorphic group are closely related, but do not always perfectly align. We show that if $k$ is a field of characteristic~0 then measures for $G$ valued in $k$ correspond bijectively to measures for $\fP$ valued in $k$ (Proposition~\ref{prop:oligo-meas}). The key input is a result about the action of $G$ on $\Omega$ due to Jech \cite[Lemma~7.7]{Jech}. We thus have a complete classification of measures for $G$ in characteristic~0.

\textit{(f) Knop-like measures.} The category $\fP^+$ of finite posets with monotone maps is coregular (Proposition~\ref{prop:coreg}), and so there is a notion of degree function in the sense of Knop \cite{Knop}. Prior results \cite{regcat} show that each degree function has an associated measure on $\fP$; we call the measures arising in this way \defn{Knop-like}. We show that the Knop-like measures are $\BB_5(t)$, $\CC_7$, $\CC_7^{\top}$, and $\CC_8$ (\S \ref{ss:knoplike}). Geometrically, the space of Knop-like measures is a line and three isolated points. 

\subsection{AI usage}

I used ChatGPT Pro 5.6 extensively on this project. In fact, it produced most of the mathematical arguments. I did all of the writing myself, though I did have ChatGPT proofread a draft and provide comments.

Here is an account of how the proof of Theorem~\ref{mainthm} was developed.
\begin{enumerate}[(1)]
\item I first asked ChatGPT to determine the measures on $\fP$. It thought for about 100 minutes, and returned the correct classification. However, it did not have a rigorous argument. It had simply generated all the defining equations for posets of size at most seven and solved them. It noticed that going to larger posets did not change the result, so (reasonably) guessed that this finite approximation was correct.
\item In step (1), ChatGPT had observed that there is a universal formula $\nu(P,x) = b+\sum c_{p,q} \nu(J_{p,q},\ast)$ where the coefficients depend only on $(P,x)$ and not on $\nu$. I asked it to describe the coefficients explicitly, and it came up with formulas involving twins and cuts; in our notation, these formulas are $b=-\tau(P,x)$ and $c_{p,q}=\lambda_{p,q}(P,x)$, where $\tau$ is defined in \S \ref{ss:twins} and $\lambda_{p,q}$ is defined in \S \ref{ss:linear}. These formulas and the notions of twins and cuts turned out to be extremely important for the project.
\item Step (2) shows that the classes $e$ and $j_{p,q} = \lbb J_{p,q}, \ast \rbb$ span what we call $\Theta^1(\fP)$. I next wanted to know that they form a basis. Essentially, this means that the above formula for $\nu(P,x)$ is unique. This is equivalent to $\tau$ and $\lambda_{p,q}$ being weak measures. ChatGPT furnished proofs, which became Lemmas~\ref{lem:weak-2} and~\ref{lem:weak-3}.
\item At this point, we had a complete and rigorous understanding of the linear equations defining measures. I next asked ChatGPT to rigorously prove that the quadratic equations for posets of size at most seven generate the ideal of all quadratic equations. It was unable to do this. I think it was working with the equations symbolically and attempting to use Gr\"obner methods, but that approach is quite difficult.
\item Due to the failure in (4), I set a more modest goal. I asked ChatGPT to show that the matrix $\BB_1(t)$ comes from a measure. From steps (2) and (3), we know that this matrix corresponds to a weak measure $\nu$, and we have an explicit formula for $\nu(P,x)$. For $\nu$ to be a measure, we need its defect to vanish; this is essentially measure axiom (b). From the explicit formula for $\nu$, this amounts to a very concrete statement about posets. ChatGPT was able to prove this; the argument is given in \S \ref{ss:meas1}.
\item I next asked ChatGPT to do the same for the other matrices. It realized that it was enough to handle $\CC_3$, $\CC_4$, and $\CC_8$, and then the other cases could be deduced formally. I recognized that this reduction step could be better phrased in terms of the monoid action discussed in \S \ref{s:monoid}. It then gave proofs for these three matrices; these arguments are given in \S \ref{s:some-meas}.
\item At this point, we had shown that all the matrices in Table~\ref{t:matrix} come from measures. We also had an explicit list of equations (Table~\ref{t:eqs}) showing that if $\nu$ is a measure then the associated matrix $(\nu_{p,q})_{0 \le p,q \le 2}$ must be among these matrices (up to transpose). The one missing piece was the statement that this matrix completely determines $\nu$. From step (2), we know $\nu$ is determined by the $\nu_{p,q}$ with $p,q \in \bN$. ChatGPT had observed empirically that these values stabilize: $\nu_{p,q} = \nu_{\ol{p}, \ol{q}}$, where $\ol{n}=\min(n,2)$. I asked it to prove this rigorously, and it came up with the clever construction of the poset $E$ (\S \ref{ss:jstab}).
\end{enumerate}
After proving Theorem~\ref{mainthm}, I used ChatGPT to help prove the results discussed in \S \ref{ss:further}.

\subsection{Notation}

We list the most important notation:
\begin{description}[align=right,labelwidth=2.5cm,leftmargin=!]
\item[ $k$ ] The coefficient field
\item[ $\fP$ ] The class of finite posets
\item[ $\parallel$ ] The incomparability relation in a poset
\item[ $\lessdot$ ] The cover relation in a poset
\item[ $P_{<x}$ ] The set of elements below $x$ in a poset $P$
\item[ $P_{\parallel x}$ ] The set of elements incomparable to $x$ in a poset $P$ (excluding $x$)
\item[ $P^x$ ] The set $P \setminus \{x\}$
\item[ $\mu$ ] The M\"obius function of a poset
\item[ $\tau$ ] A weak measure related to twins (\S \ref{ss:twins})
\item[ $\lambda_{p,q}$ ] A weak measure related to cuts (\S \ref{ss:linear})
\item[ $J_{p,q}$ ] A poset with $p+q+1$ elements and marked point $\ast$
\item[ $j_{p,q}$ ] The class $\lbb J_{p,q}, \ast \rbb$ in $\Theta^1(\fP)$, or its image in $\Theta(\fP)$
\end{description}

\subsection*{Acknowledgments}

We thank Johannes Flake for helpful discussions.

\section{Measures on Fra\"iss\'e classes} \label{s:fraisse}

In this section, we review the definitions of Fra\"iss\'e classes and measures, and introduce the notion of weak measure\footnote{As far as I know, this idea does not occur in the literature, though it is familiar to experts.}.

\subsection{Fra\"iss\'e classes} \label{ss:fraisse}

A \defn{signature} $\sigma$ is a pair $(I, n)$, where $I$ is an index set and $n(i)$ is a positive integer for each $i \in I$. Fix a signature $\sigma$. A \defn{structure} (with signature $\sigma$) is a set $P$ together with a subset $R^P_i$ of $P^{n(i)}$ for each $i \in I$. If $Q$ is a structure and $P$ is a subset of $Q$, then $P$ carries the \defn{induced substructure} via $R^P_i=R^Q_i \cap P^{n(i)}$. If $P$ and $Q$ are structures, an \defn{embedding} is an injection $\alpha \colon P \to Q$ that maps $P$ isomorphically onto $\alpha(P)$, equipped with the induced substructure.

Let $\alpha \colon P \to Q$ and $\beta \colon P \to P'$ be embeddings of structures. An \defn{amalgamation} of $Q$ and $P'$ over $P$ is a triple $(Q', \alpha', \beta')$ where $Q'$ is a structure, $\alpha' \colon P' \to Q'$ is an embedding, and $\beta' \colon Q \to Q'$ is an embedding such that $\beta' \circ \alpha = \alpha' \circ \beta$ and $Q'=\im(\alpha') \cup \im(\beta')$. There is an evident notion of isomorphism for amalgamations.

\begin{definition} \label{defn:fraisse}
A \defn{Fra\"iss\'e class} is a class $\fF$ of finite structures, all with the same signature, satisfying the following conditions:
\begin{enumerate}
\item $\fF$ is non-empty.
\item $\fF$ is hereditary: if $\alpha \colon P \to Q$ is an embedding of structures and $Q$ belongs to $\fF$ then $P$ belongs to $\fF$.
\item $\fF$ satisfies the amalgamation property: if $\alpha \colon P \to Q$ and $\beta \colon P \to P'$ are embeddings in $\fF$, there is at least one amalgamation $(Q', \alpha', \beta')$ with $Q'$ in $\fF$.
\item $\fF$ has finitely many structures of a given cardinality, up to isomorphism.
\end{enumerate}
We note that (b) implies $\fF$ is closed under isomorphism, and (d) is often relaxed to simply asking that $\fF$ has countably many isomorphism classes.
\end{definition}

Suppose $\fF$ is a Fra\"iss\'e class. Then there exists a countable homogeneous structure $\Omega$ whose age is $\fF$. Here \defn{homogeneous} means that whenever $\alpha$ and $\beta$ are two embeddings of the same finite structure into $\Omega$, there is an automorphism $\gamma$ of $\Omega$ such that $\alpha = \gamma \circ \beta$. The \defn{age} of $\Omega$ is the class of finite structures that embed into $\Omega$. The structure $\Omega$ is unique up to isomorphism and is called the \defn{Fra\"iss\'e limit} of $\fF$. The automorphism group $G$ of $\Omega$ is oligomorphic; this means that $G$ has finitely many orbits on $\Omega^n$ for all $n \ge 0$. Fra\"iss\'e limits and oligomorphic groups will only play an ancillary role in this paper.

Fra\"iss\'e classes abound; examples include sets, total orders, graphs, trees (as in \cite{arboreal}), tournaments. Of course, in all cases we really mean the finite structures of the given type, and it is important to specify exactly what the signature is. Another example the class of partially ordered sets, which is our primary focus. See \cite{Cameron} or \cite{Macpherson} for additional background.

\subsection{Measures} \label{ss:meas}

Fix a Fra\"iss\'e class $\fF$ and a commutative ring $k$. For the remainder of \S \ref{s:fraisse}, all structures will belong to $\fF$. The following definition, which originally appeared in \cite[Definition~6.2]{repst}, is the central concept studied in this paper.

\begin{definition} \label{defn:meas}
A \defn{measure} for $\fF$ valued in $k$ is a rule $\nu$ that assigns to each embedding $\alpha \colon P \to Q$ in $\fF$ a value $\nu(\alpha)$ in $k$ such that the following conditions hold:
\begin{enumerate}
\item $\nu(\alpha)=1$ if $\alpha$ is an isomorphism.
\item $\nu(\beta \circ \alpha) = \nu(\beta) \cdot \nu(\alpha)$, when the composition is defined.
\item Let $\alpha \colon P \to Q$ and $\beta \colon P \to P'$ be embeddings in $\fF$, and let $(Q'_i, \alpha'_i, \beta'_i)$, for $1 \le i \le r$, be the various amalgamations in $\fF$ (up to isomorphism). Then $\nu(\alpha) = \sum_{i=1}^r \nu(\alpha'_i)$.
\end{enumerate}
We note that there are finitely many amalgamations by Definition~\ref{defn:fraisse}(d). For a measure $\nu$ and a structure $P$, we put $\nu(P) = \nu(\emptyset \to P)$.
\end{definition}

There is a universal measure valued in a ring $\Theta(\fF)$. To define $\Theta(\fF)$, we take the polynomial ring in symbols indexed by embeddings in $\fF$, and then quotient by the relations encoding the measure axioms; see \cite[\S 6.4]{repst}. We write $[\alpha]$ for the class of the embedding $\alpha$ in $\Theta(\fF)$. For a structure $P$, we write $[P]$ for the class $[\emptyset \to P]$. ``Universal'' means that giving a $k$-valued measure $\nu$ is equivalent to giving a ring homomorphism $\phi \colon \Theta(\fF) \to k$; precisely, $\nu$ and $\phi$ correspond if $\nu(\alpha)=\phi([\alpha])$ for all $\alpha$.

\subsection{Marked structures} \label{ss:onepoint}

A \defn{marked structure} is a pair $(P,x)$ where $P$ is a structure in $\fF$ and $x \in P$. Let $P^x = P \setminus \{x\}$ with the induced substructure. Given a measure $\nu$, define
\begin{displaymath}
\nu(P,x) = \nu(P^x \to P).
\end{displaymath}
Similarly, write $[P,x]$ for the class $[P^x \to P]$ in $\Theta(\fF)$. A \defn{one-point extension} is an embedding $P \to Q$ where $\vert Q \vert = \vert P \vert + 1$. Any one-point extension is isomorphic to $P^x \to P$ for some marked structure $(P,x)$. Since any embedding can be factored into a sequence of one-point extensions, it follows from Definition~\ref{defn:meas}(b) that $\nu$ is determined by its values $\nu(P,x)$ on marked structures. We now explain how to view measures entirely from this perspective.

A \defn{pre-amalgamation}\footnote{Pre-amalgamations are essentially equivalent to the L-data used in \cite{arboreal}, but slightly neater.} is a triple $(P,x,y)$ where $P$ is a finite set and $x$ and $y$ are distinct elements of $P$, such that $P^x$ and $P^y$ carry structures in $\fF$ that induce the same structure on $P^{x,y}=P \setminus \{x,y\}$. Let $(P,x,y)$ be a pre-amalgamation. We analyze the amalgamations of $P^x$ and $P^y$ over $P^{x,y}$ in $\fF$. In an amalgamation, the points $x$ and $y$ can remain distinct or become identified; we refer to these as \defn{proper} and \defn{improper} amalgamations, respectively. Every proper amalgamation is uniquely isomorphic to a structure with underlying set $P$ such that the induced structures on $P^x$ and $P^y$ are the given ones; we always use these representatives of the isomorphism classes. We now turn to improper amalgmations. There is a unique bijection $\gamma \colon P^x \to P^y$ that is the identity on $P^{x,y}$ and takes $y$ to $x$. If $\gamma$ is an isomorphism of structures then $P^y$ itself (equipped with the identity map $P^y \to P^y$ and $\gamma \colon P^x \to P^y$) is the unique improper amalgamation. If $\gamma$ is not an isomorphism then no improper amalgamation exists. We define $\epsilon(P,x,y)$ to be~1 when an improper amalgamation exists and~0 otherwise.

Now, suppose $\nu$ is a measure valued in a ring $k$. The rule $(P,x) \mapsto \nu(P,x)$ has the following properties:
\begin{enumerate}
\item If $(P,x)$ is isomorphic to $(P',x')$ then $\nu(P,x)=\nu(P',x')$.
\item Given a structure $P$ in $\fF$ and distinct points $x$ and $y$, we have
\begin{displaymath}
\nu(P,x) \nu(P^x, y) = \nu(P, y) \nu(P^y, x).
\end{displaymath}
\item Given a pre-amalgamation $(P,x,y)$ with proper amalgamations $Q_1, \ldots, Q_r$, we have
\begin{displaymath}
\nu(P^y, x) = \epsilon(P,x,y) + \sum_{i=1}^r \nu(Q_i, x).
\end{displaymath}
\end{enumerate}
Condition (b) comes from Definition~\ref{defn:meas}(b) by factoring the embedding $P^{x,y} \to P$ in two ways. Condition (c) comes from Definition~\ref{defn:meas}(c). If an improper amalgamation exists, then one of the embeddings $\alpha'_i$ in Definition~\ref{defn:meas}(c) will be an isomorphism, and then $\nu(\alpha'_i)=1=\epsilon(P,x,y)$.

The following result shows that these conditions are sufficient to define a measure.

\begin{proposition} \label{prop:meas-marked}
Let $\nu'$ be a rule that assigns to each marked structure $(P,x)$ a value $\nu'(P,x)$ in the ring $k$ such that (a), (b), and (c) above hold. Then there exists a unique measure $\nu$ valued in $k$ such that $\nu'(P,x) = \nu(P,x)$ for all $(P,x)$.
\end{proposition}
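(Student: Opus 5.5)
Uniqueness is immediate: every embedding $\alpha \colon P \to Q$ factors as a composite of one-point extensions, each isomorphic to some $R^z \to R$. Axioms (a) and (b) of Definition~\ref{defn:meas} then force $\nu(\alpha)$ to be the product of the corresponding values $\nu'(R,z)$. For existence, I would take this as the definition. Choose an enumeration $z_1, \ldots, z_n$ of $Q \setminus \alpha(P)$, and set
\begin{displaymath}
\nu(\alpha) = \prod_{i=1}^n \nu'\bigl(\alpha(P) \cup \{z_1, \ldots, z_i\}, z_i\bigr).
\end{displaymath}
Independence of the enumeration follows from condition (b): any two orderings are related by adjacent transpositions, and swapping $z_i, z_{i+1}$ changes only two consecutive factors, which is exactly the identity in (b) applied to the structure $R=\alpha(P)\cup\{z_1,\dots,z_{i+1}\}$ with $x=z_{i+1}$, $y=z_i$. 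Condition (a) shows $\nu$ depends only on the isomorphism class of $\alpha$ and gives $\nu(\text{iso})=1$ (empty product). Multiplicativity is clear by concatenating enumerations. Finally, $\nu(R^z\to R)=\nu'(R,z)$ by construction.

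The real work is axiom (c). Given $\alpha \colon P \to Q$ and $\beta \colon P \to P'$, I would reduce first to the case where $\beta$ is a one-point extension, and then to the case where $\alpha$ is also one. For the first reduction, factor $\beta = \beta_2\beta_1$. Amalgamations of $Q$ and $P'$ over $P$ correspond to pairs consisting of an amalgamation $Q_1'$ of $Q,P_1$ over $P$ together with an amalgamation of $Q_1'$ and $P'$ over $P_1$. This is the standard ``iterated amalgamation'' bijection, and it needs the condition $Q'=\im\alpha'\cup\im\beta'$ to check. Applying (c) twice then gives the result. Symmetrically, factor $\alpha=\alpha_2\alpha_1$. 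An amalgamation of $Q$ and $P'$ over $P$ amounts to an amalgamation $Q_1'$ of $Q_1$ and $P'$ over $P$, followed by an amalgamation of $Q$ and $Q_1'$ over $Q_1$. Using multiplicativity, this gives
\begin{displaymath}
\nu(\alpha)=\nu(\alpha_2)\nu(\alpha_1)=\nu(\alpha_2)\sum_i\nu(\alpha'_{1,i})=\sum_i\sum_j \nu(\alpha'_{2,ij})\nu(\alpha'_{1,i}).
\end{displaymath}
Here the second equality applies (c) to $\alpha_1$. The last one applies (c) over $Q_1 \to Q_1'$, with $\alpha_2$ amalgamated against the (not necessarily one-point) embedding $Q_1 \to Q'_{1,i}$, which is allowed once the first reduction is done in general for one-point $\alpha_2$. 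So the induction should be organized as follows. First prove (c) for $\alpha$ one-point and $\beta$ arbitrary, by induction on $|P'|-|P|$. Then prove the general case by induction on $|Q|-|P|$.

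In the base case both $\alpha$ and $\beta$ are one-point extensions. Write $Q=P\cup\{x\}$ and $P'=P\cup\{y\}$; this is precisely a pre-amalgamation $(P\cup\{x,y\},x,y)$. Proper amalgamations $Q_i$ contribute $\nu(P'\to Q_i)=\nu'(Q_i,x)$, and an improper amalgamation contributes $1=\epsilon$. So the identity is exactly condition (c).

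I expect the main obstacle to be the bookkeeping in the bijection between amalgamations and iterated amalgamations. In particular, one must make sure that isomorphism classes of amalgamations (not merely structures) correspond, including the improper ones where points get identified. One must also check that the relevant embeddings $\alpha'$ match up, so that the products of $\nu$-values agree term by term.
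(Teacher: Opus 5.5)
Your proof is correct. The paper gives no argument of its own here: it only remarks that the statement is a rephrasing of \cite[Proposition~2.7]{arboreal}, pre-amalgamations being essentially the L-data used there. Your write-up therefore replaces that citation with a self-contained proof, and it is the natural one. You define $\nu$ by factoring into one-point extensions. Independence of the factorization comes from (b), and multiplicativity uses (a) to transport factors along embeddings. You then reduce axiom (c), via the two iterated-amalgamation correspondences, to the case where both maps are one-point extensions. In that case it is exactly condition (c), with $\epsilon$ accounting for the improper amalgamation. What the citation buys is brevity; what your route buys is that the reader sees why (a), (b), (c) suffice.

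Two small remarks dispose of the bookkeeping you flagged as the main obstacle.

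First, an amalgamation $(Q',\alpha',\beta')$ has no nontrivial automorphisms. Any automorphism commuting with $\alpha'$ and $\beta'$ fixes $\im\alpha'\cup\im\beta'=Q'$ pointwise. So isomorphisms of amalgamations are unique when they exist, and your correspondences descend to isomorphism classes without further checking. Improper amalgamations need no special treatment either. The intermediate structure is simply the substructure $\im\beta'\cup\im(\alpha'\beta_2)$, respectively $\im(\beta'\alpha_2)\cup\im\alpha'$, of $Q'$, which lies in $\fF$ by heredity. Any identifications are automatically recorded there.

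Second, your inductions also use the trivial cases where one of the two embeddings is an isomorphism. For instance, $\alpha'_{1,i}$ is an isomorphism when the first step produces the improper amalgamation. In those cases the amalgamation is unique and obvious, and (c) follows from the already-established axioms (a) and (b) for $\nu$.
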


\begin{proof}
This is simply a rephrasing of \cite[Proposition~2.7]{arboreal}.
\end{proof}

In what follows, we identify measures with rules defined on marked structures satisfying (a), (b), and (c).

\subsection{Weak measures} \label{ss:weak}

The measure axioms have both linear and quadratic equations. It is often advantageous to begin with the linear equations and temporarily ignore the quadratic ones. We now introduce a concept that formalizes this approach.

\begin{definition} \label{defn:weak}
A \defn{weak measure} on $\fF$ valued in an abelian group $M$ is a rule $\nu$ that assigns a value $\nu(P,x) \in M$ to each marked structure $(P,x)$, and a value $\nu(e) \in M$ to the formal symbol $e$, such that the following conditions hold:
\begin{enumerate}
\item If $(P,x)$ is isomorphic to $(P',x')$ then $\nu(P,x) = \nu(P',x')$.
\item Given a pre-amalgamation $(P,x,y)$ with proper amalgamations $Q_1, \ldots, Q_r$, we have
\begin{displaymath}
\nu(P^y,x) = \epsilon(P,x,y) \nu(e) + \sum_{i=1}^r \nu(Q_i, x).
\end{displaymath}
\end{enumerate}
\end{definition}

We make a few comments on the definition. The weak measure axioms are linear, which is why weak measures are allowed to be valued in an abelian group; in contrast, the measure axioms involve both linear and quadratic equations, and thus only make sense when the values belong to a ring. The symbol $e$ is introduced to homogenize the linear relations. Homogeneity ensures that the weak measures valued in a fixed abelian group $M$ themselves form an abelian group, under pointwise addition. A measure will satisfy $\nu(e)=1$, where~1 is the unit of the coefficient ring.

Let $\nu$ be a weak measure valued in a commutative ring $k$. For a structure $P$ and distinct points $x$ and $y$ in $P$, define the \defn{defect}
\begin{displaymath}
\delta_{\nu}(P,x,y) = \nu(P,x) \nu(P^x, y) - \nu(P, y) \nu(P^y, x).
\end{displaymath}
It follows from Proposition~\ref{prop:meas-marked} that $\nu$ is a measure\footnote{Technically, we should say ``the rule $(P,x) \mapsto \nu(P,x)$ comes from a measure.''} if and only if $\nu(e)=1$ and $\delta_{\nu}$ vanishes identically. Note that the defect is antisymmetric in $x$ and $y$, that is, $\delta_{\nu}(P,y,x) = - \delta_{\nu}(P,x,y)$. The following proposition shows that defects satisfy a version of the amalgamation equation. See Proposition~\ref{prop:cover-defect} for an illustration of how this can be useful.

\begin{proposition} \label{prop:defect-sum}
Let $\nu$ be as above. Let $(P,x,y)$ be a pre-amalgamation, and let $Q_1, \ldots, Q_r$ be the various proper amalgamations. Then
\begin{displaymath}
\sum_{i=1}^r \delta_{\nu}(Q_i, x, y) = 0.
\end{displaymath}
\end{proposition}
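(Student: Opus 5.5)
The plan is to expand the sum of defects directly and use the weak measure axiom (b) twice, once for $(P,x,y)$ and once for the swapped pre-amalgamation $(P,y,x)$.

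First I note that each proper amalgamation $Q_i$ has underlying set $P$, and its induced structures on $P^x$ and $P^y$ are the given ones. So $Q_i^x = P^x$ and $Q_i^y = P^y$ as structures, independently of $i$. Hence
\begin{displaymath}
\sum_{i=1}^r \delta_\nu(Q_i,x,y) = \nu(P^x,y)\sum_{i=1}^r \nu(Q_i,x) - \nu(P^y,x)\sum_{i=1}^r \nu(Q_i,y).
\end{displaymath}

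Next I apply Definition~\ref{defn:weak}(b) to $(P,x,y)$, which gives $\sum_i \nu(Q_i,x) = \nu(P^y,x) - \epsilon\,\nu(e)$, where $\epsilon=\epsilon(P,x,y)$. The triple $(P,y,x)$ is also a pre-amalgamation. It has the same proper amalgamations $Q_1,\ldots,Q_r$, and $\epsilon(P,y,x)=\epsilon(P,x,y)$, since the relevant bijection is $\gamma^{-1}$, which is an isomorphism exactly when $\gamma$ is. So the axiom also gives $\sum_i \nu(Q_i,y) = \nu(P^x,y) - \epsilon\,\nu(e)$. Substituting both, the products $\nu(P^x,y)\nu(P^y,x)$ cancel, and the sum reduces to
\begin{displaymath}
\epsilon\,\nu(e)\bigl(\nu(P^y,x) - \nu(P^x,y)\bigr).
\end{displaymath}

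Finally, if $\epsilon=0$ there is nothing to prove. If $\epsilon=1$, then $\gamma\colon P^x\to P^y$ is an isomorphism of structures sending $y$ to $x$, so $(P^x,y)\cong(P^y,x)$ as marked structures. Axiom (a) then gives $\nu(P^x,y)=\nu(P^y,x)$, and the sum vanishes. The only step needing any care is the symmetry check $\epsilon(P,y,x)=\epsilon(P,x,y)$, together with the identification of the proper amalgamations for the swapped triple; both are immediate from the definitions.
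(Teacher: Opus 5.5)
Your proof is correct and is essentially the paper's argument: expand the sum using $Q_i^x=P^x$ and $Q_i^y=P^y$, apply the weak-measure axiom to both $(P,x,y)$ and $(P,y,x)$, and kill the remaining term $\epsilon\,\nu(e)(\nu(P^y,x)-\nu(P^x,y))$ via the isomorphism $(P^x,y)\cong(P^y,x)$ when $\epsilon=1$. You are slightly more explicit than the paper: you check that the swapped triple $(P,y,x)$ has the same proper amalgamations and the same $\epsilon$, which the paper uses implicitly.
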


\begin{proof}
Let $\epsilon=\epsilon(P,x,y)$. Note that $Q_i^x=P^x$ is the same structure for all $i$, and similarly $Q_i^y=P^y$. We thus have
\begin{align*}
\sum_{i=1}^r \delta_{\nu}(Q_i, x, y)
&= \nu(P^x,y) \sum_{i=1}^r \nu(Q_i,x) - \nu(P^y,x) \sum_{i=1}^r \nu(Q_i,y) \\
&= \nu(P^x,y) (\nu(P^y,x)-\epsilon \nu(e)) - \nu(P^y,x)(\nu(P^x,y)-\epsilon \nu(e)) \\
&= \epsilon \nu(e) (\nu(P^y,x) - \nu(P^x,y)) = 0.
\end{align*}
In the second step we used Definition~\ref{defn:weak}(b), and in the final step we used the fact that $(P^x,y)$ and $(P^y,x)$ are isomorphic if $\epsilon=1$.
\end{proof}

Let $P$ be a structure and let $x$ and $y$ be distinct elements of $P$. We say that $x$ and $y$ are \defn{separated} if $P$ is the unique amalgamation of $P^x$ and $P^y$ over $P^{x,y}$; in particular, this means there is no improper amalgamation. Separated elements enjoy some useful properties:

\begin{proposition} \label{prop:sep}
Let $P$ be a structure, let $x \ne y$ be separated elements of $P$, and let $\nu$ be a weak measure on $\fF$ valued in an abelian group.
\begin{enumerate}
\item We have $\nu(P, x) = \nu(P^y, x)$.
\item If $\nu$ is valued in a ring then $\delta_{\nu}(P,x,y)=0$.
\item We have $\lbb P, x \rbb = \lbb P^y, x \rbb$ in $\Theta^1(\fF)$ and $[P,x]=[P^y,x]$ in $\Theta(\fF)$.
\end{enumerate}
\end{proposition}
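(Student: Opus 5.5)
Apply the weak measure axiom, Definition~\ref{defn:weak}(b), to the pre-amalgamation $(P,x,y)$ in which the underlying set is that of $P$ and $P^x$, $P^y$ carry the induced structures from $P$. These induced structures agree on $P^{x,y}$, so this is a legitimate pre-amalgamation. By definition of separatedness, $P$ is the unique amalgamation of $P^x$ and $P^y$ over $P^{x,y}$. In particular, no improper amalgamation exists, so $\epsilon(P,x,y)=0$, and the list of proper amalgamations is $Q_1=P$ alone. Axiom (b) then reads $\nu(P^y,x)=0\cdot\nu(e)+\nu(P,x)$, which is (a).

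For (b), first note that separatedness is symmetric in $x$ and $y$: the defining condition only refers to the amalgamations of $P^x$ and $P^y$ over $P^{x,y}$. Applying (a) with the roles of $x$ and $y$ swapped gives $\nu(P,y)=\nu(P^x,y)$. Substituting both identities into the defect gives
\begin{displaymath}
\delta_\nu(P,x,y)=\nu(P,x)\,\nu(P^x,y)-\nu(P,y)\,\nu(P^y,x)=\nu(P,x)\,\nu(P,y)-\nu(P,y)\,\nu(P,x)=0,
\end{displaymath}
using commutativity of $k$. Alternatively, this follows from Proposition~\ref{prop:defect-sum}, since the sum there has the single term $\delta_\nu(P,x,y)$.

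For (c), I expect $\Theta^1(\fF)$ to be the universal abelian group for weak measures, with generators $e$ and $\lbb P,x\rbb$, modulo the linear relations of Definition~\ref{defn:weak}. The relation attached to the pre-amalgamation above is then exactly $\lbb P^y,x\rbb=\lbb P,x\rbb$. Equivalently, one can apply (a) to the universal weak measure. For $\Theta(\fF)$, the universal measure restricted to marked structures is a weak measure with $e\mapsto 1$, so (a) gives $[P^y,x]=[P,x]$. One can also see this directly from measure axiom (c) of Definition~\ref{defn:meas} applied to $P^{x,y}\to P^x$ and $P^{x,y}\to P^y$, which have the single amalgamation $P$.

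The only real subtlety is bookkeeping. One has to check that the "unique amalgamation" in the definition of separated means exactly $r=1$ with $Q_1=P$ and $\epsilon=0$, including the convention that proper amalgamations are represented on the underlying set of $P$. One should also confirm the definition of $\Theta^1(\fF)$, which the paper has not yet stated at this point.
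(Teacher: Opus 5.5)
Your proposal is correct and follows the paper's proof: (a) is Definition~\ref{defn:weak}(b) applied to the pre-amalgamation $(P,x,y)$, whose only amalgamation is $P$ itself, and (c) is (a) applied to the universal weak measure and to the universal measure. For (b), the paper uses your alternative (Proposition~\ref{prop:defect-sum} with a single term), while your main argument applies (a) to both $(P,x,y)$ and $(P,y,x)$, using that separatedness is symmetric, and is an equally valid direct computation.
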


\begin{proof}
(a) follows from applying Definition~\ref{defn:weak}(b) to the pre-amalgamation $(P,x,y)$, while (b) follows from applying Proposition~\ref{prop:defect-sum} to the same pre-amalgamation. The two equalities in (c) follow by applying (a) to the universal weak measure and the universal measure, respectively.
\end{proof}

Similar to measures, there is a universal weak measure valued in an abelian group $\Theta^1(\fF)$. To define $\Theta^1(\fF)$, take the free abelian group on symbols indexed by marked structures, and one additional symbol $e$, and quotient by the relations defining weak measures. We write $\lbb P,x \rbb$ for the class of the marked structure $(P,x)$ in $\Theta^1(\fF)$. Let $\fa$ be the ideal in the symmetric algebra $\Sym{\Theta^1(\fF)}$ generated by $e-1$ and the elements
\begin{displaymath}
\lbb P,x \rbb \cdot \lbb P^x, y \rbb - \lbb P,y \rbb \cdot \lbb P^y, x \rbb
\end{displaymath}
whenever $P$ is a structure and $x$ and $y$ are distinct elements of $P$. It follows from the above discussion that $\Theta(\fF)$ is naturally isomorphic to the quotient of $\Sym{\Theta^1(\fF)}$ by $\fa$.

\subsection{Support} \label{ss:support}

Let $\nu$ be a measure on $\fF$ valued in a field $k$. We define the \defn{support} of $\nu$ to be the collection $\fF_{\nu}$ of all structures $P$ such that $\nu(P)$ is non-zero. This is itself a Fra\"iss\'e class, and $\nu$ restricts to a measure $\ol{\nu}$ on $\fF_{\nu}$ \cite[Proposition~2.18]{arboreal}. We say that $\nu$ is \defn{regular} if $\fF_{\nu}=\fF$, i.e., $\nu(P)$ is non-zero for all structures $P$. By Definition~\ref{defn:meas}(b), $\nu$ is regular if and only if $\nu(\alpha)$ is non-zero for all embeddings $\alpha$. The restricted measure $\ol{\nu}$ is always regular.

\section{Partial orders} \label{s:poset}

In \S \ref{s:poset}, we introduce notation and terminology related to partially ordered sets. We then examine amalgamations of posets and introduce the important ideas of twins and cuts.

\subsection{Basic definitions} \label{ss:poset}

All posets are finite unless otherwise stated. We will freely switch between reflexive partial orders and strict partial orders. Let $P$ be a poset and let $x$ and $y$ be elements of $P$.
\begin{itemize}
\item We write $P^x$ for $P \setminus \{x\}$, and $P^{x,y}$ for $P \setminus \{x,y\}$.
\item We write $x \parallel y$ if $x$ and $y$ are distinct and incomparable.
\item We let $P_{<x}$, $P_{>x}$, and $P_{\parallel x}$ denote the sets of elements $a \in P^x$ such that $a<x$, $a>x$, and $a \parallel x$, respectively. These form a partition: $P^x = P_{<x} \sqcup P_{>x} \sqcup P_{\parallel x}$.
\item We say $x$ is \defn{minimal} if there is no $a$ with $a<x$, i.e., $P_{<x}$ is empty. We say $x$ is \defn{least} if it is the unique minimal element, i.e., $P_{>x}=P^x$. We define \defn{maximal} and \defn{greatest} similarly.
\item We let $\Min(P)$ and $\Max(P)$ denote the sets of minimal and maximal elements. Note that $x$ is least if and only if $\Min(P) = \{x\}$.
\item We say that $y$ is a \defn{cover} of $x$, denoted $x \lessdot y$, if $x<y$ and there is no $a$ with $x<a<y$.
\item We say that $x$ is \defn{isolated} if it is incomparable to all other elements, i.e., $P^x = P_{\parallel x}$.
\item We say that $P$ is an \defn{antichain} if $x \parallel y$ for all $x \ne y$ in $P$.
\item Let $A$ and $B$ be subsets of $P$. We write $A<B$ to mean $a<b$ for all $a \in A$ and $b \in B$. We also write $x<B$ to mean $x<b$ for all $b \in B$, and similarly for $A<x$.
\item We define the \defn{transpose} of $P$ to be the same underlying set equipped with the opposite order.
\end{itemize}
We let $\fP$ be the class of all finite posets. This is well known to be a Fra\"iss\'e class. Except for the amalgamation property, the axioms are clear; we will give a self-contained proof of the amalgamation property in \S \ref{ss:amalg}.

\subsection{Amalgamations} \label{ss:amalg}

Let $(P,x,y)$ be a pre-amalgamation of posets. Recall that this means $P^x$ and $P^y$ have partial orders, and these partial orders have the same restriction to $P^{x,y}$. To extend the partially defined order $<$ on $P$ to a binary relation $\prec$ on $P$, one must specify the truth values of
\begin{displaymath}
x \prec y, \quad y \prec x, \quad x \prec x, \quad y \prec y.
\end{displaymath}
For $\prec$ to be a partial order, at most one of the first two can be true, and neither of the final two can be true. This leaves three extensions, which we denote $<_-$, $<_0$, and $<_+$: for $<_-$, we have $x <_- y$; for $<_+$, we have $y <_+ x$; and for $<_0$, both comparisons are false. Of course, these relations need not be partial orders, as the transitivity can fail. We let $P_-$, $P_0$, and $P_+$ denote the resulting partially ordered sets, when transitivity does hold. The proper amalgamations for $(P,x,y)$ are those $P_-$, $P_0$, and $P_+$ which exist. We now examine existence of these posets in more detail.

\begin{proposition} \label{prop:poset-amalg}
Let $(P,x,y)$ be a pre-amalgamation.
\begin{enumerate}
\item $P_-$ exists if and only if $P^y_{<x} \subset P^x_{<y}$ and $P^x_{>y} \subset P^y_{>x}$.
\item $P_+$ exists if and only if $P^x_{<y} \subset P^y_{<x}$ and $P^y_{>x} \subset P^x_{>y}$.
\item $P_0$ exists if and only if $P^y_{<x} \cap P^x_{>y} = \emptyset$ and $P^y_{>x} \cap P^x_{<y} = \emptyset$.
\end{enumerate}
\end{proposition}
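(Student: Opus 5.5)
Since each relation $\prec$ is already irreflexive, and antisymmetric by construction, the only thing to check is transitivity. The partial orders on $P^x$ and $P^y$ already cover every triple that avoids either $x$ or $y$. So the only triples in question are those involving both $x$ and $y$, plus at most one further element $a \in P^{x,y}$. Throughout, $P^y_{<x}$ denotes the set of $a \in P^{x,y}$ with $a<x$ in the order on $P^y$, and similarly for the other sets.

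For part (a), take $\prec$ to be $<_-$, so that $x \prec y$. I will check that transitivity is equivalent to the two inclusions, by going through the triples by hand.

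\emph{Necessity.} If $a<x$, then transitivity through $x \prec y$ forces $a \prec y$, which means $a \in P^x_{<y}$. This gives the first inclusion. Dually, if $y<a$, then $x \prec y \prec a$ forces $x<a$, which gives the second inclusion.

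\emph{Sufficiency.} I need to rule out every remaining failure of transitivity, which is a finite list.
\begin{itemize}
\item A chain $a \prec x \prec y$ or $x \prec y \prec a$ is handled by the two inclusions.
\item Chains $a \prec x \prec b$ or $a \prec y \prec b$ with $a,b \in P^{x,y}$ live entirely inside $P^y$ or $P^x$, so they are already transitive.
\item Chains $x \prec a \prec y$ (or $y \prec a \prec x$) deserve a check, since they pass from $P^y$ to $P^x$. The conclusion $x \prec y$ holds automatically. For $y \prec a \prec x$, the inclusions give $y<a \Rightarrow x<a$, which together with $a<x$ contradicts antisymmetry in $P^y$. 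So this case cannot occur, and no cycle arises.
\item Chains involving $x$, $y$ and two elements of $P^{x,y}$, such as $a \prec x \prec y \prec b$, reduce to shorter chains once the three-element cases are settled.
\end{itemize}
This also confirms antisymmetry, i.e.\ that $\prec$ has no cycles.

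Part (b) is part (a) with the roles of $x$ and $y$ swapped.

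For part (c), take $\prec$ to be $<_0$, where $x$ and $y$ are incomparable. Transitivity fails exactly when some $a$ gives $x \prec a \prec y$ or $y \prec a \prec x$. The first means $a \in P^y_{>x} \cap P^x_{<y}$, and the second means $a \in P^y_{<x} \cap P^x_{>y}$. Every other chain lies inside $P^x$ or $P^y$.

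The main care point is the sufficiency in (a): I must make sure that mixed chains through $P^{x,y}$ which touch both $x$ and $y$ never force a relation that contradicts the given orders. The plan handles this by the antisymmetry argument in the third case above.
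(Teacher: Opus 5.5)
Your proposal is correct and follows essentially the same route as the paper: you reduce to checking transitivity of $<_-$ and $<_0$ on triples containing both $x$ and $y$, and the only substantive case is $y \prec a \prec x$. You rule that case out via $P^x_{>y}\subset P^y_{>x}$, whereas the paper uses the other inclusion $P^y_{<x}\subset P^x_{<y}$; either works. You leave implicit the vacuous triples $(y,x,\ast)$, $(\ast,y,x)$, $(x,y,x)$, $(y,x,y)$, which cannot occur since $y \not\prec x$ and which the paper lists explicitly, and your bullet about longer chains is unnecessary because transitivity only concerns triples.
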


\begin{proof}
(a) Write $<$ for $<_-$. Suppose $P_-$ exists, i.e., $<$ is transitive. We have $x<y$, and so if $a<x$ then $a<y$; similarly, if $y<b$ then $x<b$. This shows that the inclusions in (a) hold.

Conversely, suppose the inclusions in (a) hold. We show $<$ is transitive. Thus suppose we have $a<b$ and $b<c$ with $a,b,c \in P$; we show $a<c$. Since $<$ is transitive on $P^x$ and $P^y$, we have $a<c$ if at most one of $x$ or $y$ occurs in the tuple $(a,b,c)$. Thus suppose that both occur. Since $y \nless x$, the tuple $(a,b,c)$ cannot have the form $(y,x,\ast)$ or $(\ast,y,x)$. We also cannot have $(x,\ast,x)$ since the middle point must be $y$, and we have already excluded this possibility; similarly for $(y,\ast,y)$. This leaves four cases:
\begin{itemize}
\item $(x, \ast, y)$. There is nothing to prove since $a<c$ holds.
\item $(y, \ast, x)$. This case cannot occur: $b$ belongs to $P^x_{>y} \cap P^y_{<x}$, which is contained in $P^x_{>y} \cap P^x_{<y}$ by assumption, which is empty.
\item $(x, y, \ast)$. Since $b<c$, we cannot have $c=x$. Thus $c$ belongs to $P^x_{>y}$. The latter set is contained in $P^y_{>x}$, and so $a<c$, as required.
\item $(\ast, x, y)$. This is similar to the previous case.
\end{itemize}
We thus see that $<$ is transitive, and so $P_-$ exists.

(b) This is just the transpose of (a).

(c) Write $<$ for $<_0$. Suppose $P_0$ exists, i.e., $<$ is transitive. If $x<a$ and $a<y$ then we would have $x<y$, which is a contradiction. Thus $P^y_{>x} \cap P^x_{<y}$ must be empty. The other condition in (c) is similar.

Now suppose the two conditions in (c) hold. We show that $<$ is transitive. Thus suppose we have $a<b$ and $b<c$. We claim that $x$ and $y$ cannot both occur in the tuple $(a,b,c)$; this will prove $a<c$ since $<$ is transitive on $P^x$ and $P^y$. Since $x$ and $y$ are incomparable, they cannot occur consecutively in $(a,b,c)$. We cannot have $(x,\ast,y)$ by the assumption in (c), and, similarly, we cannot have $(y,\ast,x)$. Of course, we also cannot have $(x,\ast,x)$ since the middle point would be $y$, which is not allowed; similarly, we cannot have $(y,\ast,y)$. This establishes the claim, and completes the proof.
\end{proof}

\begin{proposition} \label{prop:poset-amalg-2}
Let $(P,x,y)$ be a pre-amalgamation. Then at least one amalgamation exists. More precisely, suppose $P_0$ fails to exist. Then
\begin{enumerate}
\item If $P^x_{<y} \cap P^y_{>x}$ is non-empty then $P_-$ exists and $P_+$ does not exist.
\item If $P^y_{<x} \cap P^x_{>y}$ is non-empty then $P_+$ exists and $P_-$ does not exist.
\end{enumerate}
\end{proposition}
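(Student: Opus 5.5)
The proof will rest entirely on Proposition~\ref{prop:poset-amalg}, which gives explicit criteria for the existence of $P_-$, $P_+$, and $P_0$. Suppose $P_0$ does not exist. By Proposition~\ref{prop:poset-amalg}(c), at least one of the sets $P^x_{<y} \cap P^y_{>x}$ or $P^y_{<x} \cap P^x_{>y}$ is non-empty. So the first claim, that some amalgamation exists, follows from (a) and (b) of the present statement. Case (b) is the transpose of case (a): transposing the orders on $P^x$ and $P^y$ swaps $<$ with $>$ and $P_-$ with $P_+$. It therefore suffices to prove (a).

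For (a), fix $c \in P^x_{<y} \cap P^y_{>x}$, so that $x < c < y$ holds in the union of the two partial orders.

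\emph{$P_+$ does not exist.} By Proposition~\ref{prop:poset-amalg}(b), $P_+$ would require $P^x_{<y} \subset P^y_{<x}$. Then $c$ would satisfy $c<x$ and $x<c$ in $P^y$, which is impossible.

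\emph{$P_-$ exists.} By Proposition~\ref{prop:poset-amalg}(a), I must show $P^y_{<x} \subset P^x_{<y}$ and $P^x_{>y} \subset P^y_{>x}$.
\begin{itemize}
\item Take $a \in P^y_{<x}$. Then $a<x<c$ in $P^y$, and $a \neq c$ because $c \notin P^y_{<x}$. Transitivity in $P^y$ gives $a<c$, and both $a$ and $c$ lie in $P^{x,y}$. Since the two orders agree there, $a<c$ also holds in $P^x$. Combined with $c<y$ in $P^x$, transitivity gives $a<y$, so $a \in P^x_{<y}$.
\item Take $b \in P^x_{>y}$. Then $c<y<b$ in $P^x$ gives $c<b$ in $P^x$, hence in $P^{x,y}$ and so in $P^y$. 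Combined with $x<c$ in $P^y$, this gives $x<b$, so $b \in P^y_{>x}$.
\end{itemize}
Hence $P_-$ exists.

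The only point requiring care is this transfer of comparabilities between $P^x$ and $P^y$. Each transfer is valid only for pairs lying in $P^{x,y}$, so the plan is to always pass through the witness $c$, which lies in $P^{x,y}$. Beyond that, the proof is a direct verification.
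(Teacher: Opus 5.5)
Your proof is correct and follows essentially the same route as the paper. You reduce to (a) by transposition, fix a witness $c$ with $x<c<y$, and verify the inclusions of Proposition~\ref{prop:poset-amalg}(a) by transferring comparabilities through $c \in P^{x,y}$. The differences are minor: you rule out $P_+$ via the criterion of Proposition~\ref{prop:poset-amalg}(b), whereas the paper notes that transitivity through $c$ forces $x \prec y$ in any extension. You also spell out the deduction of existence from Proposition~\ref{prop:poset-amalg}(c), which the paper leaves implicit.
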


\begin{proof}
The two statements are exchanged by transpose, so we prove (a). Let $z$ be an element of $P^x_{<y} \cap P^y_{>x}$, meaning we have $x<z$ and $z<y$. If the partially defined order $<$ extends to a partial order $\prec$ on all of $P$ then we have $x \prec y$, and so $\prec$ must be $<_-$; this means that only $P_-$ can exist. If $a \in P^y_{<x}$ then $a<x$ in $P^y$, and so $a<z$ also holds since $<$ is transitive on $P^y$. Since $a<z$ holds in $P^x$ and $<$ is transitive on $P^x$, we have $a<y$, meaning $a \in P^x_{<y}$. Thus $P^y_{<x} \subset P^x_{<y}$. A similar argument shows $P^x_{>y} \subset P^y_{>x}$. Hence $P_-$ exists by Proposition~\ref{prop:poset-amalg}(a).
\end{proof}

\begin{remark}
Let $\alpha \colon P \to Q$ and $\beta \colon P \to P'$ be embeddings of posets. The amalgamation property asserts that there is at least one amalgamation of $Q$ and $P'$ over $P$ in $\fP$. Proposition~\ref{prop:poset-amalg-2} proves this when $\alpha$ and $\beta$ are one-point extensions. The general case can now easily be deduced by factoring $\alpha$ and $\beta$ into one-point extensions.
\end{remark}

Recall that distinct elements $x$ and $y$ of a poset $P$ are \defn{separated} if $P$ is the unique amalgamation of the pre-amalgamation $(P,x,y)$; thus $P$ must be the unique proper amalgamation, and we must also have $\epsilon(P,x,y)=0$. In fact, this has a concrete order-theoretic interpretation:

\begin{proposition} \label{prop:poset-sep}
Let $P$ be a poset and let $x$ and $y$ be distinct elements of $P$.
\begin{enumerate}
\item If $x<y$ then $x$ and $y$ are separated if and only if there exists $a \in P$ with $x<a<y$.
\item If $x \parallel y$ then $x$ and $y$ are separated if and only if both of the sets below are non-empty
\begin{displaymath}
(P_{<x} \setminus P_{<y}) \cup (P_{>y} \setminus P_{>x}), \qquad
(P_{<y} \setminus P_{<x}) \cup (P_{>x} \setminus P_{>y}).
\end{displaymath}
\end{enumerate}
\end{proposition}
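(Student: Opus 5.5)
The plan is to reduce both parts to Propositions~\ref{prop:poset-amalg} and~\ref{prop:poset-amalg-2}, applied to the pre-amalgamation $(P,x,y)$. Here $P^x$ and $P^y$ carry the orders induced from $P$. Hence $P^y_{<x}=P_{<x}\setminus\{y\}$, $P^x_{<y}=P_{<y}\setminus\{x\}$, and similarly for the upper sets. By definition, $x$ and $y$ are separated exactly when two things hold: the proper amalgamation isomorphic to $P$ itself is the only proper amalgamation, and $\epsilon(P,x,y)=0$. In each case, $P$ is one of $P_-,P_0,P_+$, namely $P_-$ if $x<y$ and $P_0$ if $x\parallel y$. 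So the task is to decide when the other two candidates fail to exist, and when the improper amalgamation fails to exist.

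\textbf{Part (a).} Suppose $x<y$, so $P=P_-$. First suppose some $a$ satisfies $x<a<y$. Then $a\in P^x_{<y}\cap P^y_{>x}$, so $P_0$ fails to exist by Proposition~\ref{prop:poset-amalg}(c). Proposition~\ref{prop:poset-amalg-2}(a) then shows that $P_+$ does not exist. The improper amalgamation requires $\gamma\colon P^x\to P^y$ to be an isomorphism. Under $\gamma$, the element $a$ (where $a<y$ in $P^x$) is sent to $a$, with $a$ compared to $x$ in $P^y$. But $x<a$, so $a<x$ is false, and $\gamma$ is not an isomorphism; hence $\epsilon=0$.

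Conversely, suppose no such $a$ exists. Then $P^x_{<y}\cap P^y_{>x}=\emptyset$. Also $P^y_{<x}\cap P^x_{>y}=\emptyset$, since $a<x$ and $y<a$ would give $y<x$. So $P_0$ exists by Proposition~\ref{prop:poset-amalg}(c), and $P_0\neq P_-$. Therefore $x$ and $y$ are not separated.

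\textbf{Part (b).} Suppose $x\parallel y$, so $P=P_0$. Write $S_1$ and $S_2$ for the two displayed sets. Note that $P_{<x}\setminus P_{<y}$ automatically excludes $y$, and similar remarks hold for the other differences. Then $S_1=\emptyset$ is literally the pair of inclusions $P^y_{<x}\subset P^x_{<y}$ and $P^x_{>y}\subset P^y_{>x}$. By Proposition~\ref{prop:poset-amalg}(a), this is exactly the condition for $P_-$ to exist. Symmetrically, $S_2=\emptyset$ exactly when $P_+$ exists, by Proposition~\ref{prop:poset-amalg}(b).

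So if $x$ and $y$ are separated, both sets are non-empty. Conversely, suppose both are non-empty. Then neither $P_-$ nor $P_+$ exists, and it remains to show $\epsilon=0$. If $\gamma$ were an isomorphism, then for $a\in P^{x,y}$ we would have $a<y\iff a<x$ and $a>y\iff a>x$. This would force $S_1\cap P^{x,y}=\emptyset$. Since $S_1$ never contains $x$ or $y$, this gives $S_1=\emptyset$, a contradiction.

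The only point I expect to need care is the bookkeeping of whether $x$ or $y$ lies in the various sets, and matching the improper amalgamation condition with the order relations on $P^{x,y}$. The rest is a direct translation of the earlier propositions.
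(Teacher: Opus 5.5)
Your proof is correct and follows essentially the same route as the paper: identify $P$ with $P_-$ (in (a)) or $P_0$ (in (b)), use Propositions~\ref{prop:poset-amalg} and~\ref{prop:poset-amalg-2} to decide whether the other two proper amalgamations exist, and rule out the improper amalgamation by exhibiting a relation that the swap $\gamma$ fails to preserve. The only differences are minor: in (a) you prove the converse contrapositively (if no intermediate element exists, then $P_0$ exists), which avoids the paper's appeal to Proposition~\ref{prop:poset-amalg-2}, and you spell out the improper-amalgamation checks and the bookkeeping of $x,y$ that the paper treats as clear.
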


\begin{proof}
(a) First observe that $P$ is $P_-$ for the pre-amalgamation $(P,x,y)$. If there is some $a \in P$ such that $x<a<y$ then $P_-$ is clearly the unique amalgamation (in particular, the improper amalgamation does not exist), and so $x$ and $y$ are separated. Conversely, suppose $x$ and $y$ are separated. Since $P_0$ fails to exist, Proposition~\ref{prop:poset-amalg}(c) shows that one of the sets $P^x_{<y} \cap P^y_{>x}$ or $P^y_{<x} \cap P^x_{>y}$ is non-empty. Since $P_-$ does exist, Proposition~\ref{prop:poset-amalg-2} shows that the first set is non-empty, which furnishes the required element $a$.

(b) Now observe that $P$ is $P_0$ for the pre-amalgamation $(P,x,y)$. By Proposition~\ref{prop:poset-amalg}, $P_-$ exists if and only if the first set in the statement of (b) is empty, and $P_+$ exists if and only if the second set is empty. Suppose both sets are non-empty. Then $P_-$ and $P_+$ do not exist; moreover, the non-emptiness of one of these sets shows that the improper amalgamation does not exist ($x$ and $y$ cannot be interchanged). Thus $x$ and $y$ are separated. Conversely, if $x$ and $y$ are separated then $P_-$ and $P_+$ do not exist, and so the two sets are non-empty.
\end{proof}

\subsection{Twins} \label{ss:twins}

Let $P$ be a poset, and let $x \ne y$ be elements of $P$. We say that $x$ and $y$ are \defn{upper twins} if $P_{>x}=P_{>y}$, and \defn{lower twins} if $P_{<x}=P_{<y}$. In either case, $x \parallel y$. We say that $x$ and $y$ are \defn{twins} if they are both upper and lower twins. Equivalently, $x$ and $y$ are twins if the transposition $P \to P$ switching $x$ and $y$ is a poset automorphism. For a marked poset $(P,x)$, we define $\tau(P,x)$ to be the number of twins of $x$ in $P$; we emphasize that $x$ does not count as a twin of itself. We will see (\S \ref{s:linear}) that this is a weak measure (with $\tau(e)=-1$).

We observe the following alternative characterization of lower twins.

\begin{proposition} \label{prop:lowertwin}
Let $P$ be a poset and let $x \ne y$ be elements of $P$. Then $y$ is a lower twin of $x$ if and only if $y \in \Min(P_{\parallel x})$ and $P_{<x}<y$.
\end{proposition}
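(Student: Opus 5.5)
We prove the two directions separately; both are direct unwindings of the definitions, and the only care needed is in how the sets $P_{<x}$, $P_{<y}$, and $P_{\parallel x}$ relate.

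\emph{Forward direction.} Suppose $y$ is a lower twin of $x$, so $P_{<x}=P_{<y}$. As noted after the definition, $x \parallel y$: if $x<y$ then $x \in P_{<y}=P_{<x}$, which is absurd, and similarly $y<x$ is impossible. Thus $y \in P_{\parallel x}$. The condition $P_{<x}<y$ holds because every element of $P_{<x}=P_{<y}$ lies below $y$. For minimality, let $a \in P_{\parallel x}$ with $a<y$. Then $a \in P_{<y}=P_{<x}$, so $a<x$, contradicting $a \parallel x$. Hence $y \in \Min(P_{\parallel x})$.

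\emph{Reverse direction.} Suppose $y \in \Min(P_{\parallel x})$ and $P_{<x}<y$. The second hypothesis gives $P_{<x} \subset P_{<y}$; note that $x \notin P_{<y}$ because $x \parallel y$. For the reverse inclusion, take $a<y$ with $a \ne x$, and split into three cases according to whether $a<x$, $a>x$, or $a \parallel x$.
\begin{itemize}
\item If $a<x$, then $a \in P_{<x}$, which is what we want.
\item If $a>x$, then $x<a<y$, so $x<y$, contradicting $y \parallel x$.
\item If $a \parallel x$, then $a \in P_{\parallel x}$ and $a<y$, contradicting the minimality of $y$ in $P_{\parallel x}$.
\end{itemize}
So $P_{<y} \subset P_{<x}$, hence $P_{<x}=P_{<y}$ and $y$ is a lower twin of $x$.

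The argument contains no real obstacle. The one point to keep straight is that $x$ and $y$ themselves lie outside the sets $P_{<\cdot}$ and $P_{\parallel x}$ in question, and the case $a>x$ in the reverse direction is where the incomparability $y \parallel x$, coming from $y \in P_{\parallel x}$, is used.
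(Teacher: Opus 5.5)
Your proof is correct and follows essentially the same route as the paper's. In the reverse direction the paper likewise excludes $x \le a$ using $x \parallel y$ and excludes $a \parallel x$ using minimality of $y$ in $P_{\parallel x}$; your separate handling of $a = x$ and $a > x$ is just an expanded version of that step.
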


\begin{proof}
Suppose $y$ is a lower twin of $x$. Then $y \in P_{\parallel x}$. It is a minimal element of this set, for if $z \in P_{\parallel x}$ satisfied $z<y$ then we would have $z \in P_{<y}$ and $z \notin P_{<x}$, a contradiction. We also have $P_{<x}<y$, as this amounts to $P_{<x} \subset P_{<y}$, and in fact these two sets are equal.

Now suppose $y$ is a minimal element of $P_{\parallel x}$ and $P_{<x} \subset P_{<y}$. We must show $P_{<y} \subset P_{<x}$. Thus suppose $a<y$. Since $x \parallel y$, we cannot have $x \le a$. We also cannot have $a \parallel x$, for then $y$ would not be minimal in $P_{\parallel x}$. Thus $a<x$, as required.
\end{proof}

\subsection{Cuts} \label{ss:cuts}

Let $(P,x)$ be a marked poset. A \defn{cut} is a pair $(L,R)$ consisting of subsets $L$ and $R$ of $P_{\parallel x}$ such that
\begin{displaymath}
L \cup P_{<x} < R \cup P_{>x}.
\end{displaymath}
We will see (\S \ref{s:linear}) that cuts can be used to define a whole family of weak measures.

We now describe a construction that explains why the notion of cut is natural. Let $(L,R)$ be a cut for $(P,x)$. Define a new binary relation $\prec$ on $P$ by starting with $<$, adding the relations $a \prec x$ and $x \prec b$ for $a \in L$ and $b \in R$, and taking the transitive closure.

\begin{proposition} \label{prop:cut-order}
The relation $\prec$ is a partial order, and it restricts to $<$ on $P^x$.
\end{proposition}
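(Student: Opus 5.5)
The plan is to describe $\prec$ explicitly rather than work with an abstract transitive closure. Put $L^+ = L \cup P_{<x}$ and $R^+ = R \cup P_{>x}$, so the cut condition says $L^+ < R^+$. Define $\prec$ on $P$ by: for $a,b \in P^x$, $a \prec b$ if and only if $a<b$; and for $a \in P^x$, $a \prec x$ if and only if $a \le \ell$ for some $\ell \in L^+$, and $x \prec a$ if and only if $a \ge r$ for some $r \in R^+$. Equivalently, the elements below $x$ form the down-closure of $L^+$ in $P^x$, and the elements above $x$ form the up-closure of $R^+$.

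First I will show this relation equals the transitive closure of $<$ together with the generating relations $a \prec x$ ($a \in L$) and $x \prec b$ ($b \in R$). One inclusion is immediate, since each listed pair is obtained by chaining $a \le \ell < x$ or $\ell \prec x$, and similarly above $x$. For the other inclusion, I take a chain in the generating relation. Any segment of the chain avoiding $x$ collapses to a single $<$ in $P$, since $P^x$ is transitive. If $x$ appears in the interior of a chain from $a$ to $b$ with $a,b\neq x$, then we get $a \le \ell$ and $r \le b$ with $\ell \prec x \prec r$, where $\ell \in L^+$ and $r \in R^+$. The cut condition gives $\ell < r$, hence $a<b$ already in $P$. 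This is the key point: passing through $x$ never creates new relations inside $P^x$. It gives the claim that $\prec$ restricts to $<$ on $P^x$, and it also shows that chains from or to $x$ reduce to the described form.

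Next I check that the explicit relation is a strict partial order. Transitivity splits into the cases according to where $x$ sits in a triple $a \prec b \prec c$:
\begin{itemize}
\item $x$ absent: this is transitivity in $P$.
\item $x$ in the middle: this is the computation above.
\item $x$ at an end: down-closures and up-closures are closed under composing with $<$.
\end{itemize}
For irreflexivity, I need $x \not\prec x$, and no $a$ with both $a \prec x$ and $x \prec a$. The latter would give $r \le a \le \ell$ with $\ell < r$, a contradiction.

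I expect the main obstacle to be bookkeeping: verifying that the down-closure of $L^+$ meets the up-closure of $R^+$ trivially, and that restricting to $P^x$ adds nothing. Both reduce to the single inequality $L^+ < R^+$. It is also worth noting that $\prec$ agrees with $<$ on $P_{<x}$ and $P_{>x}$ relative to $x$, since those sets already lie in $L^+$ and $R^+$, so it genuinely extends the original order.
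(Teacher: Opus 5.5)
Your proof is correct and takes essentially the same route as the paper. In both, the key step is that a chain through $x$ from $a$ to $b$ forces $a \le \ell < r \le b$ with $\ell \in L \cup P_{<x}$ and $r \in R \cup P_{>x}$ by the cut condition, and $x \prec x$ is excluded by the same inequality. The only difference is that you write the transitive closure out explicitly: the down-closure of $L \cup P_{<x}$ lies below $x$ and the up-closure of $R \cup P_{>x}$ lies above it. The paper leaves this implicit, and your explicit form makes later facts such as $p(P[L,R])=p(L)$ immediate.
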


\begin{proof}
We first show that $\prec$ restricts to $<$ on $P^x$. Thus suppose we have $a,b \in P^x$ with $a \prec b$. Then there is a chain $(c_0, \ldots, c_n)$ in $P$ where $c_0=a$ and $c_n=b$ and for each $0 \le i < n$ we have $c_i<c_{i+1}$, or $c_i \in L$ and $c_{i+1}=x$, or $c_i=x$ and $c_{i+1}\in R$. If $c_i \ne x$ for some $0<i<n$ then we have $a<c_i$ and $c_i<b$ by induction on the length of the chain, and so $a<b$ as desired. Since $x$ cannot appear consecutively in the chain, we reduce to the case of a chain of the form $(a,x,b)$. Necessarily, $a \in L \cup P_{<x}$ and $b \in R \cup P_{>x}$. By definition of cut, we thus have $a<b$, as required.

It remains to show that $\prec$ is a partial order. Transitivity is given. We now show that $\prec$ is irreflexive. Since $\prec$ restricts to $<$ on $P^x$, we have $a \not\prec a$ for $a \in P^x$. Suppose now that $x \prec x$. Then there is a chain $(c_0, \ldots, c_n)$ as above with $c_0=c_n=x$. Necessarily, $n \ge 2$, and $c_1$ and $c_{n-1}$ are not equal to $x$. We must also have $c_1 \in R \cup P_{>x}$ and $c_{n-1} \in L \cup P_{<x}$, and so $c_{n-1}<c_1$ by the cut condition; in particular, $n \ge 3$. The chain thus implies $c_1 \prec c_{n-1}$, and so $c_1<c_{n-1}$ by the previous paragraph, which is a contradiction.  Hence $x \not\prec x$, as required.
\end{proof}

There is a partial converse to the proposition. Suppose $\prec$ is a partial order on $P$ that restricts to $<$ on $P^x$ and is stronger than $<$ (i.e., $a<b$ implies $a \prec b$). Let $L$ be the set of elements $a \in P$ such that $a \prec x$ and $a$ is incomparable to $x$ in the order $<$. Analogously define $R$. Then $(L,R)$ is a cut. This cut has the property that $L$ is downwards closed ($b \in L$ and $a \le b$ implies $a \in L)$ and $R$ is upwards closed. Thus one does not necessarily obtain all cuts in this way.

\section{Weak measures} \label{s:linear}

In \S \ref{s:linear}, we classify the weak measures on $\fP$. We also give explicit formulas for some particularly important weak measures.

\subsection{The main theorem} \label{ss:linear}

Let $(P,x)$ be a marked poset. Given a cut $(L,R)$ for $(P,x)$ (see \S \ref{ss:cuts}), we put
\begin{displaymath}
p(L) = \vert \Max(L \cup P_{<x}) \vert, \qquad q(R) = \vert \Min(R \cup P_{>x}) \vert.
\end{displaymath}
Define
\begin{displaymath}
\lambda(P,x) = \sum (-1)^{\vert L \vert+\vert R \vert} u^{p(L)} v^{q(R)} \in \bZ[u,v]
\end{displaymath}
where the sum is over cuts $(L,R)$, and put $\lambda(e)=0$. We will see (\S \ref{ss:mobius}) that this somewhat mysterious looking formula has a natural M\"obius interpretation. Let $\lambda_{p,q}(P,x)$ be the coefficient of $u^pv^q$ in $\lambda(P,x)$, and also put $\lambda_{p,q}(e)=0$. Recall that $\tau(P,x)$ is the number of twins of $x$ (\S \ref{ss:twins}); also put $\tau(e)=-1$.

Recall that $J_{p,q}$ is the poset with elements $\{a_1, \ldots, a_p, x, b_1, \ldots, b_q\}$ with the order generated by $a_i<x$ and $x<b_j$; we often write $\ast$ for $x$, and use it as the marked point. We let $j_{p,q}$ be the class $\lbb J_{p,q}, \ast \rbb$ in $\Theta^1(\fP)$; we often still write $j_{p,q}$ for its image in $\Theta(\fP)$.

The following theorem is the main result of \S \ref{s:linear}.

\begin{theorem} \label{thm:weak}
We have the following:
\begin{enumerate}
\item $\Theta^1(\fP)$ is a free abelian group with basis $e$ and $j_{p,q}$ for $p,q \in \bN$.
\item Each $\lambda_{p,q}$ and $\tau$ is a $\bZ$-valued weak measure on $\fP$.
\item We have $\lambda_{p,q}(j_{r,s}) = \delta_{p,r} \delta_{q,s}$ and $\tau(j_{r,s})=0$ for all $p$, $q$, $r$, and $s$.
\end{enumerate}
\end{theorem}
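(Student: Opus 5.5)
The proof has three parts, done in the order (c), (b), (a).

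\emph{Part (c).} This is a direct computation. In $J_{r,s}$ the marked point has no incomparable elements, so $P_{\parallel x}=\emptyset$. Hence the only cut is $(\emptyset,\emptyset)$, with $p=\vert\Max(P_{<x})\vert=r$ and $q=\vert\Min(P_{>x})\vert=s$. Therefore $\lambda(J_{r,s},\ast)=u^rv^s$, which gives $\lambda_{p,q}(j_{r,s})=\delta_{p,r}\delta_{q,s}$. Likewise $x$ has no twins, so $\tau(j_{r,s})=0$.

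\emph{Part (b).} We must verify the amalgamation relation of Definition~\ref{defn:weak}(b) for an arbitrary pre-amalgamation $(P,x,y)$:
\begin{displaymath}
\nu(P^y,x)=\epsilon(P,x,y)\nu(e)+\sum_i \nu(Q_i,x),
\end{displaymath}
where the $Q_i$ range over those of $P_-,P_0,P_+$ that exist (Propositions~\ref{prop:poset-amalg} and~\ref{prop:poset-amalg-2}).

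\emph{The case $\tau$.} Fix $x$ and count twins $z\neq y$ of $x$. Such a $z$ is a twin of $x$ in $Q_i$ exactly when it is a twin in $P^y$ and $z$ relates to $y$ in $Q_i$ as $x$ does. The relation of $z$ to $y$ is fixed by $P^x$, while that of $x$ to $y$ varies with the amalgamation. So each such twin is counted in exactly one existing $Q_i$. It remains to account for $y$ itself. The point $y$ is a twin of $x$ in some $Q_i$ (necessarily $Q_i=P_0$) precisely when the swap $x\leftrightarrow y$ is an isomorphism, i.e.\ $\epsilon=1$. This surplus is cancelled by $\epsilon\tau(e)=-\epsilon$. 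One caveat: when $\epsilon=1$, the existence of $P_0$ and the matching of the twins of $x$ in $P^y$ with those of $y$ in $P^x$ both need a short check.

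\emph{The case $\lambda$.} Here I would prove the identity for the generating polynomial $\lambda\in\bZ[u,v]$. The plan is a sign-reversing bijection. Partition the cuts of $(P^y,x)$ according to how $y$ can be inserted, and compare them with the cuts of $(Q_i,x)$.
\begin{itemize}
\item In $P_0$, we have $y\in Q_{\parallel x}$, and cuts $(L,R)$ of $(P_0,x)$ come in three kinds: $y\notin L\cup R$, $y\in L$, or $y\in R$.
\item A cut with $y\in L$ in $P_0$ corresponds to a cut of $P_-$, where $y$ lies in $Q_{<x}$ instead. These two cuts have the same $p,q$ when $y$ is not a new maximum, and opposite signs because $\vert L\vert$ changes by one. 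They therefore cancel, except for boundary terms.
\item The same holds for $y\in R$ versus $P_+$.
\end{itemize}
The surviving terms should be exactly the cuts with $y\notin L\cup R$, and these should match the cuts of $(P^y,x)$ with the same $p,q$, because a non-maximal element $y$ is irrelevant to $\Max$.

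The main obstacle is the bookkeeping when only some of $P_\pm,P_0$ exist, together with the effect of $y$ on $\Max(L\cup P_{<x})$ and $\Min(R\cup P_{>x})$. I would organize this by the cases of Proposition~\ref{prop:poset-amalg-2}, and would try first to phrase each cut of $P^y$ as an interval of possible positions for $y$. The resulting alternating sum over positions should telescope to the single term for $P^y$.

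\emph{Part (a).} Parts (b) and (c) show that the map $\Theta^1(\fP)\to\bZ\oplus\prod\bZ$ given by $(\tau,\lambda_{p,q})$ separates $e$ and the $j_{p,q}$. Explicitly, $e$ maps to $(-1,0)$ and $j_{p,q}$ maps to basis vectors, so these classes are linearly independent. For spanning, I would show by induction that every class $\lbb P,x\rbb$ lies in the span. Order marked posets by $\vert P_{\parallel x}\vert$, and then by the number of non-cover comparabilities with $x$.
\begin{itemize}
\item If some $y\parallel x$ exists, apply the relation to the pre-amalgamation obtained by deleting $y$ and re-adding it. This writes $\lbb P,x\rbb$ as $\lbb P^y,x\rbb$ minus the classes where $y$ is comparable to $x$, which are smaller, plus possibly $\epsilon e$.
\item If $x$ is comparable to everything but some $y<x$ is not maximal in $P_{<x}$, then $x,y$ are separated by Proposition~\ref{prop:poset-sep}(a), so Proposition~\ref{prop:sep} removes $y$.
\end{itemize}
This reduces every class to some $J_{p,q}$.
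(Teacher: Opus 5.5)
Parts (a) and (c) are fine; your induction measure for spanning differs from the paper's $(\vert P\vert,\vert P_{\parallel x}\vert)$ but is valid. Your $\tau$ count is the paper's bijection, modulo one point noted at the end. The gap is the $\lambda$ half of (b), which is the real content of the theorem (Lemma~\ref{lem:weak-2}).

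Your pairing of cuts of $(P_0,x)$ with $y\in L$ or $y\in R$ against cuts of the corresponding comparable amalgamation is correct, and in fact exact. For example, $(L,R)\mapsto(L,R\cup\{y\})$, from cuts of the amalgamation with $x<y$ to cuts of $(P_0,x)$ with $y\in R$, preserves the sets $L\cup Q_{<x}$ and $R\cup Q_{>x}$ literally, so there are no ``boundary terms.'' (In the paper's convention $P_-$ is the amalgamation with $x<y$, so your labels are swapped; this is harmless.)

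What you defer as bookkeeping are exactly the two cases that need new ideas.

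(i) $P_0$ exists but the amalgamation with $x<y$ does not. Then the cuts of $(P_0,x)$ with $y\in R$ have no partner and must cancel among themselves. Either some $z\in P^{x,y}$ has $z<x$ but $z\nless y$, and then there are no such cuts. Or, by Proposition~\ref{prop:poset-amalg}, some $z\in P^{x,y}$ has $y<z$ and $z\ngtr x$. Then $z\parallel x$ because $P_0$ exists, and toggling $z$ in $R$ is a sign-reversing involution on these cuts preserving $p$ and $q$, since $y\in R$ and $y<z$. The case $y\in L$ is dual.

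(ii) $P_0$ does not exist, say $x<z<y$ for some $z\in P^{x,y}$. Then the unique amalgamation has exactly the same cuts as $(P^y,x)$, with the same invariants. Indeed $L\cup P_{<x}<z<y$, and $y$ never lies in $\Min(R\cup Q_{>x})$. Your description of the survivors as the cuts of $P_0$ avoiding $y$ does not apply here.

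The fallback you suggest does not work: for each cut of $(P^y,x)$, sum over positions of $y$ and telescope to that cut's term. In case (i) the cancellation pairs cuts lying over \emph{different} cuts of $(P^y,x)$. Take $P=\{x,y,z\}$ with $y<z$ in $P^x$ and $x\parallel z$ in $P^y$. Then $P_0$ ($x$ isolated, $y<z$) exists, the amalgamation with $y<x$ exists, the one with $x<y$ does not, and $\lambda(P^y,x)=1-u-v$. The cuts of $(P_0,x)$ with $y\in R$ are $(\emptyset,\{y\})$ and $(\emptyset,\{y,z\})$, contributing $-v$ and $+v$. They restrict to $(\emptyset,\emptyset)$ and $(\emptyset,\{z\})$ respectively. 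All contributions lying over $(\emptyset,\emptyset)$ total $1-v$ instead of $1$, and those over $(\emptyset,\{z\})$ total $0$ instead of $-v$. So the identity holds only globally, via the toggling of $z$.

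A smaller point about $\tau$. Your phrase ``counted in exactly one existing $Q_i$'' presupposes that the amalgamation in which $x$ relates to $y$ as $z$ does actually exists. This is true: pull back the order of $P^x$ along the map $P\to P^x$ sending $x\mapsto z$, as the paper does. But it is the substantive half of the $\tau$ argument, whereas the check you flag (existence of $P_0$ when $\epsilon=1$) is the easy one.
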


The proof is given in \S \ref{ss:weakpf}. We now discuss a few consequences of the theorem. Observe that for any marked poset $(P,x)$, we have $\lambda_{p,q}(P,x)=0$ if $p$ or $q$ exceeds $\vert P \vert$; in particular, $\lambda_{p,q}(P,x)$ is non-zero for only finitely many $(p,q)$. Hence any infinite linear combination of the $\lambda_{p,q}$ is well-defined.

\begin{corollary} \label{cor:weak1}
Let $\nu$ be a weak measure on $\fP$ valued in an abelian group. Put $\nu_e = \nu(e)$ and $\nu_{p,q} = \nu(j_{p,q})$. Then
\begin{displaymath}
\nu = -\nu_e \tau + \sum_{p,q} \nu_{p,q} \lambda_{p,q}.
\end{displaymath}
\end{corollary}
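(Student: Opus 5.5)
The plan is to deduce the corollary directly from Theorem~\ref{thm:weak}. By universality of $\Theta^1(\fP)$, a weak measure $\nu$ valued in an abelian group $M$ is the same thing as a group homomorphism $\phi\colon \Theta^1(\fP) \to M$, with $\nu(P,x) = \phi(\lbb P,x \rbb)$ and $\nu(e)=\phi(e)$. Theorem~\ref{thm:weak}(a) says $\Theta^1(\fP)$ is free with basis $e$ and the $j_{p,q}$. Hence two weak measures valued in $M$ agree as soon as they agree on $e$ and on every $j_{p,q}$. So I will show that the right-hand side
\begin{displaymath}
\nu' = -\nu_e \tau + \sum_{p,q} \nu_{p,q} \lambda_{p,q}
\end{displaymath}
is a weak measure, and that it takes the same values as $\nu$ on this basis.

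First I check that $\nu'$ makes sense and is a weak measure. Here $\nu_e\tau$ means the $M$-valued rule $(P,x) \mapsto \tau(P,x)\,\nu_e$, using the $\bZ$-module structure of $M$, and similarly for each $\nu_{p,q}\lambda_{p,q}$. For a fixed $(P,x)$, only finitely many $\lambda_{p,q}(P,x)$ are non-zero, since they vanish once $p$ or $q$ exceeds $|P|$. On $e$ all $\lambda_{p,q}$ vanish. So the sum is finite at each argument. The weak measure axioms (a) and (b) each involve only finitely many marked posets and are linear. Theorem~\ref{thm:weak}(b) says each $\tau$ and $\lambda_{p,q}$ satisfies them. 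A pointwise, locally finite $\bZ$-linear combination with coefficients in $M$ therefore satisfies them too: on any fixed finite set of arguments only finitely many terms are non-zero. So $\nu'$ is an $M$-valued weak measure.

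Next I evaluate $\nu'$ on the basis. On $e$ we get $\nu'(e) = -\nu_e\tau(e) + \sum \nu_{p,q}\lambda_{p,q}(e) = -\nu_e(-1) + 0 = \nu_e$. On $j_{r,s}$, Theorem~\ref{thm:weak}(c) gives $\tau(j_{r,s})=0$ and $\lambda_{p,q}(j_{r,s})=\delta_{p,r}\delta_{q,s}$, so $\nu'(j_{r,s}) = \nu_{r,s}$. Thus $\nu$ and $\nu'$ agree on a basis of $\Theta^1(\fP)$, and hence are equal.

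The only point requiring any care is the infinite sum. Theorem~\ref{thm:weak}(a) gives only an algebraic basis, so $\nu$ is a priori determined just on finite combinations of basis elements. This causes no trouble: every $\lbb P,x \rbb$ is such a finite combination, and both $\nu$ and $\nu'$ are homomorphisms on $\Theta^1(\fP)$. I expect no genuine obstacle; the substance lies entirely in Theorem~\ref{thm:weak}.
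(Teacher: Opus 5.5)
Your proposal is correct and is essentially the paper's own proof: define the right-hand side, use Theorem~\ref{thm:weak}(b) together with the local finiteness of the $\lambda_{p,q}$ to see that it is a weak measure, use Theorem~\ref{thm:weak}(c) (and $\tau(e)=-1$) to check agreement on $e$ and the $j_{p,q}$, and conclude by Theorem~\ref{thm:weak}(a). Your extra care about the pointwise finiteness of the sum and about linear combinations with coefficients in $M$ just spells out what the paper leaves implicit.
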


\begin{proof}
Let $\hat{\nu}$ be the right side of the equation. By Theorem~\ref{thm:weak}(b) and the preceding discussion, $\hat{\nu}$ is a well-defined weak measure on $\fP$. By Theorem~\ref{thm:weak}(c), $\nu$ and $\hat{\nu}$ take the same values on $e$ and the $j_{p,q}$. By Theorem~\ref{thm:weak}(a), it follows that $\nu=\hat{\nu}$.
\end{proof}

Due to the above corollary, the quantities $\nu_e$ and $\nu_{p,q}$ will play an important role whenever we work with weak measures. We call these the \defn{parameters} of $\nu$. Note that if $\nu$ is a measure then $\nu_e=1$; in this case, we simply refer to the $\nu_{p,q}$ as the parameters. From Theorem~\ref{thm:weak}(c), we see that the expression in Corollary~\ref{cor:weak1} is the unique expression for $\nu$ as a linear combination of $\tau$ and the $\lambda_{p,q}$. We therefore find:

\begin{corollary}
Every weak measure can be expressed uniquely as a (possibly infinite) linear combination of $\tau$ and the $\lambda_{p,q}$.
\end{corollary}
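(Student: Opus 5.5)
The plan is to deduce both halves of the statement directly from Theorem~\ref{thm:weak} and Corollary~\ref{cor:weak1}, with no new poset combinatorics. First I will make precise what a ``possibly infinite linear combination'' means. Given an element $c \in M$ and a family $(c_{p,q})_{p,q \in \bN}$ of elements of $M$, the combination is the rule
\begin{displaymath}
(P,x) \mapsto c\,\tau(P,x) + \sum_{p,q} c_{p,q} \lambda_{p,q}(P,x), \qquad e \mapsto c\,\tau(e) + \sum_{p,q} c_{p,q}\lambda_{p,q}(e) = -c.
\end{displaymath}
The sum is finite at each $(P,x)$ because $\lambda_{p,q}(P,x)=0$ once $p$ or $q$ exceeds $\vert P \vert$, as observed after Theorem~\ref{thm:weak}. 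Each weak measure axiom involves only finitely many marked posets, and each $\lambda_{p,q}$ and $\tau$ is a weak measure by Theorem~\ref{thm:weak}(b). Hence the axioms hold termwise, and such a combination is again a weak measure valued in $M$.

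\emph{Existence} is exactly Corollary~\ref{cor:weak1}: any weak measure $\nu$ equals $-\nu_e\tau + \sum \nu_{p,q}\lambda_{p,q}$.

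For \emph{uniqueness}, suppose $c\,\tau + \sum c_{p,q}\lambda_{p,q} = c'\tau + \sum c'_{p,q}\lambda_{p,q}$ as weak measures. Subtracting the two sides, it suffices to show that if $\hat\nu = c\,\tau + \sum c_{p,q}\lambda_{p,q}$ vanishes identically, then all coefficients are zero. First evaluate $\hat\nu$ on $e$. Since $\lambda_{p,q}(e)=0$ and $\tau(e)=-1$, this gives $0=\hat\nu(e) = -c$, so $c=0$. Next evaluate on $j_{r,s}$, meaning on the marked poset $(J_{r,s},\ast)$. By Theorem~\ref{thm:weak}(c), $\tau(j_{r,s})=0$ and $\lambda_{p,q}(j_{r,s})=\delta_{p,r}\delta_{q,s}$, so $0=\hat\nu(j_{r,s}) = c_{r,s}$. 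Thus all coefficients vanish, and the two expressions agree coefficientwise.

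There is essentially no obstacle here; all the substance lies in Theorem~\ref{thm:weak}. The only point needing care is the bookkeeping that infinite combinations are legitimate weak measures, i.e., the finiteness of the sums at each marked poset. Pointwise well-definedness suffices for this, since no topology on $M$ is needed.
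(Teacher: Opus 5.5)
Your proof is correct and follows the paper's argument. Existence is Corollary~\ref{cor:weak1}, and uniqueness comes from evaluating on $e$ and on the $j_{r,s}$ using Theorem~\ref{thm:weak}(c); your well-definedness check for infinite combinations matches the observation the paper makes just before Corollary~\ref{cor:weak1}.
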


Applying Corollary~\ref{cor:weak1} to the universal weak measure, we find:

\begin{corollary} \label{cor:weak3}
For a marked poset $(P,x)$, we have the following identity in $\Theta^1(\fP)$:
\begin{displaymath}
\lbb P, x \rbb = -\tau(P,x) e + \sum_{p,q \ge 0} \lambda_{p,q}(P,x) j_{p,q}
\end{displaymath}
\end{corollary}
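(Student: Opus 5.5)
This identity is Corollary~\ref{cor:weak1} applied to the universal weak measure. Let $\nu$ be the tautological weak measure valued in the abelian group $M=\Theta^1(\fP)$, given by $\nu(P,x)=\lbb P,x \rbb$ and $\nu(e)=e$. It satisfies Definition~\ref{defn:weak}, because $\Theta^1(\fP)$ is by construction the quotient of the free abelian group on marked posets and $e$ by exactly the weak measure relations. Its parameters are $\nu_e=e$ and $\nu_{p,q}=\nu(j_{p,q})=j_{p,q}$.

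Evaluating both sides of Corollary~\ref{cor:weak1} at $(P,x)$ then gives
\begin{displaymath}
\lbb P,x \rbb = -\tau(P,x)\, e + \sum_{p,q} \lambda_{p,q}(P,x)\, j_{p,q},
\end{displaymath}
which is the claimed identity. The coefficients $\tau(P,x)$ and $\lambda_{p,q}(P,x)$ are integers, so they act on elements of $\Theta^1(\fP)$ by the $\bZ$-module structure.

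The only point to check is that the sum on the right is finite, so that it makes sense in $\Theta^1(\fP)$ with no topology. This holds because $\lambda_{p,q}(P,x)=0$ once $p$ or $q$ exceeds $\vert P \vert$, as observed after Theorem~\ref{thm:weak}: $p(L)$ and $q(R)$ are cardinalities of subsets of $P$. Hence only finitely many terms are non-zero.

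The substantive input is Corollary~\ref{cor:weak1}, which rests on Theorem~\ref{thm:weak}. There, parts (b) and (c) show that the right side is a weak measure agreeing with $\nu$ on the generators $e$ and $j_{p,q}$, and part (a) shows these elements generate $\Theta^1(\fP)$, so the two weak measures coincide. Once that theorem is granted there is no real obstacle. The only care needed is to confirm that the corollary applies verbatim with $M=\Theta^1(\fP)$, since it was stated for weak measures valued in an arbitrary abelian group.
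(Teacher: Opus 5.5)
Your proposal is correct and is the paper's argument: the paper also obtains this identity by applying Corollary~\ref{cor:weak1} to the universal weak measure valued in $\Theta^1(\fP)$. Your check that the sum on the right is finite, since $\lambda_{p,q}(P,x)=0$ once $p$ or $q$ exceeds $\vert P \vert$, matches the remark the paper makes after Theorem~\ref{thm:weak}.
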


Finally, we record a simple corollary about measures.

\begin{corollary} \label{cor:linear}
The classes $j_{p,q}$ generate $\Theta(\fP)$ as a ring.
\end{corollary}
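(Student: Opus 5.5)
The statement is Corollary~\ref{cor:linear}: the classes $j_{p,q}$ generate $\Theta(\fP)$ as a ring. The plan is to combine the presentation of $\Theta(\fP)$ from \S\ref{ss:weak} with Corollary~\ref{cor:weak3}. There are three steps.

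\emph{Step 1: reduce to classes of marked posets.} By construction, $\Theta(\fP)$ is generated as a ring by the classes $[\alpha]$ of embeddings $\alpha$. Every embedding factors as a composition of one-point extensions. Every one-point extension is isomorphic to $P^x \to P$ for some marked poset $(P,x)$. Applying Definition~\ref{defn:meas}(a),(b) to the universal measure, each $[\alpha]$ is therefore a product of classes $[P,x]$, with the empty product equal to $1$ when $\alpha$ is an isomorphism. Hence the classes $[P,x]$ generate $\Theta(\fP)$ as a ring. Equivalently, one can use that $\Theta(\fP) \cong \Sym(\Theta^1(\fP))/\fa$: then $\Theta(\fP)$ is generated as a ring by the image of $\Theta^1(\fP)$, and that image is spanned additively by the classes $\lbb P,x \rbb$ together with $e$.

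\emph{Step 2: express each generator in the $j_{p,q}$.} Corollary~\ref{cor:weak3} gives
\begin{displaymath}
\lbb P,x \rbb = -\tau(P,x)\, e + \sum_{p,q} \lambda_{p,q}(P,x)\, j_{p,q}
\end{displaymath}
in $\Theta^1(\fP)$. As noted after Theorem~\ref{thm:weak}, this sum is finite because $\lambda_{p,q}(P,x)=0$ once $p$ or $q$ exceeds $\vert P \vert$. Pass to $\Theta(\fP)$. There the ideal $\fa$ contains $e-1$, so $e$ maps to $1$, and we obtain
\begin{displaymath}
[P,x] = -\tau(P,x) + \sum_{p,q} \lambda_{p,q}(P,x)\, j_{p,q},
\end{displaymath}
an integer combination of $1$ and finitely many $j_{p,q}$.

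\emph{Step 3: conclude.} Each generator $[P,x]$ lies in the subring generated by the $j_{p,q}$, so that subring is all of $\Theta(\fP)$.

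I do not expect any real obstacle. The only points needing care are that the image of $e$ is $1$ and that the sums in Corollary~\ref{cor:weak3} are finite, and both are already recorded in the paper.
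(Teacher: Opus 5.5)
Your proposal is correct and follows the paper's argument: $\Theta(\fP)$ is a quotient of $\Sym{\Theta^1(\fP)}$ in which $e \mapsto 1$, and $\Theta^1(\fP)$ is additively generated by $e$ and the $j_{p,q}$. The only difference is that you invoke the explicit formula of Corollary~\ref{cor:weak3}, whereas the generation statement of Lemma~\ref{lem:weak-1} (equivalently Theorem~\ref{thm:weak}(a)) already suffices.
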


\begin{proof}
This follows since $\Sym{\Theta^1(\fP)} \to \Theta(\fP)$ is surjective and maps $e$ to~1.
\end{proof}

\subsection{Proof of Theorem~\ref*{thm:weak}} \label{ss:weakpf}

We now prove the theorem. The plan is as follows: in the following lemmas, we show that $e$ and the $j_{p,q}$ generate $\Theta^1(\fP)$, and then show that $\lambda_{p,q}$ and $\tau$ are weak measures. From here, completing the proof is a simple matter.

\begin{lemma} \label{lem:weak-1}
$\Theta^1(\fP)$ is generated by $e$ and the $j_{p,q}$ for $p,q \in \bN$.
\end{lemma}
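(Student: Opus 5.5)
Let $M \subset \Theta^1(\fP)$ be the subgroup generated by $e$ and the $j_{p,q}$. We show $\lbb P,x \rbb \in M$ for every marked poset $(P,x)$, by induction on $\vert P \vert$ and, for fixed size, on a complexity measure of $x$ relative to $P$. The only tool is the weak measure relation of Definition~\ref{defn:weak}(b) applied to the universal weak measure. For a pre-amalgamation $(P,x,y)$ it reads
\begin{displaymath}
\lbb P^y, x\rbb = \epsilon(P,x,y)\, e + \sum_i \lbb Q_i, x\rbb .
\end{displaymath}
Read backwards, it writes $\lbb Q_i,x\rbb$ in terms of one class on a smaller poset, $e$, and the other amalgamations. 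By induction on size, $\lbb P^y,x\rbb\in M$. So it suffices to show that for any $(Q,x)$ not isomorphic to some $(J_{p,q},\ast)$, we can choose $y\ne x$ such that the other proper amalgamations $Q'$ of $(Q,x,y)$ are ``simpler'' than $(Q,x)$.

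The complexity measure I would use counts incomparabilities and non-cover relations involving $x$. Take the lexicographic pair (number of elements of $Q_{\parallel x}$, number of pairs in $Q^x$ that are not forced by being on opposite sides of $x$), so that $J_{p,q}$ is the minimal case. Here $J_{p,q}$ means $Q_{\parallel x}=\emptyset$, $Q_{<x}$ and $Q_{>x}$ antichains, and nothing else. The reduction proceeds in steps.

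\textbf{Step 1: eliminate $Q_{\parallel x}\neq\emptyset$.} Pick $y$ minimal in $Q_{\parallel x}$ (or maximal, symmetrically). In $(Q,x,y)$ the actual $Q$ is $Q_0$, and the other amalgamations are $Q_-$ (where $x<y$) and $Q_+$ (where $y<x$), when they exist. By Proposition~\ref{prop:poset-amalg}, each of these has $y$ comparable to $x$ and leaves every other element of $Q_{\parallel x}$ unchanged, because $\prec$ only changes the $x$–$y$ relation. So they have one fewer incomparable element, and $\lbb Q,x\rbb$ is expressed via smaller-complexity classes, $e$, and $\lbb Q^y,x\rbb$.

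\textbf{Step 2: make $Q_{<x}$ and $Q_{>x}$ antichains and mutually unrelated beyond transitivity.} Now $Q_{\parallel x}=\emptyset$. If $Q_{<x}$ is not an antichain, pick $y<x$ non-maximal in $Q_{<x}$, so $y<a<x$ for some $a$. Then $x,y$ are separated by Proposition~\ref{prop:poset-sep}(a), and Proposition~\ref{prop:sep}(a) gives $\lbb Q,x\rbb=\lbb Q^y,x\rbb\in M$ directly; the dual argument handles $Q_{>x}$. Once both are antichains, every $a<x<b$ forces $a<b$, so $Q=J_{p,q}$ and we are done.

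The main obstacle is Step 1. We must check that in $Q_\pm$ the marked point still has exactly the remaining incomparables, and that the induction is well founded when $\epsilon=1$ or when $Q_\pm$ fails to exist. Choosing $y$ minimal in $Q_{\parallel x}$ should make $Q_+$ fail unless $Q_{<y}\subset Q_{<x}$, which is harmless. I would also double-check that the complexity is measured on $(Q_\pm,x)$, which has the same size as $Q$, so the lexicographic induction (size, then $\vert Q_{\parallel x}\vert$) is genuinely decreasing.
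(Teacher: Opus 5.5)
Your proposal is correct and is essentially the paper's proof. The paper also inducts lexicographically on $(\vert P\vert, \vert P_{\parallel x}\vert)$: it applies the amalgamation relation to an incomparable $y$, where $P=P_0$ and each $P_\pm$ has one fewer element incomparable to $x$, and it uses separation (Propositions~\ref{prop:poset-sep} and~\ref{prop:sep}) to delete elements not covering or covered by $x$, until $(P,x)\cong(J_{p,q},\ast)$. The worries you flag are harmless: any incomparable $y$ works (minimality is not needed), the $e$ term and missing $P_\pm$ cause no trouble, and the extra ``non-forced pairs'' coordinate in your complexity measure is superfluous because your Step~2 already lowers $\vert P\vert$.
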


\begin{proof}
Let $(P,x)$ be a marked poset. We show that $\lbb P,x \rbb$ can be generated by the stated classes by induction on the pair $(\vert P \vert, \vert P_{\parallel x} \vert)$, with the lexicographic order. First suppose there is an element $y \in P^x$ that is incomparable to $x$. Let $Q$ be the pre-amalgamation $(P,x,y)$. Note that $P$ is the amalgamation $Q_0$. In $\Theta^1(\fP)$, we have the linear relation
\begin{displaymath}
\lbb P^y, x \rbb = \epsilon \cdot e + \lbb P, x \rbb + A + B,
\end{displaymath}
where $\epsilon=\epsilon(Q,x,y)$, $A$ is $\lbb Q_-, x \rbb$ if $Q_-$ exists and~0 otherwise, and $B$ is defined similarly with $Q_+$. Rewrite the above equation as
\begin{displaymath}
\lbb P, x \rbb = \lbb P^y, x \rbb - \epsilon \cdot e - A - B.
\end{displaymath}
The marked posets appearing on the right are smaller than $(P,x)$: indeed, $P^y$ has smaller cardinality, while $(Q_{\pm}, x)$ has fewer elements that are incomparable with $x$. The inductive hypothesis thus shows that $\lbb P, x \rbb$ is generated by the desired elements.

Now, let $(P,x)$ be a marked poset with $P_{\parallel x} = \emptyset$. If $y \in P^x$ is not immediately above or below $x$ then $y$ is separated from $x$ (Proposition~\ref{prop:poset-sep}), and so $\lbb P, x \rbb = \lbb P^y, x \rbb$ (Proposition~\ref{prop:sep}). Thus, by the inductive hypothesis, $\lbb P,x \rbb$ is generated by the desired elements. We thus reduce to the case where every element of $P^x$ is either immediately above or below $x$. The marked poset $(P,x)$ is isomorphic to some $(J_{p,q}, \ast)$, and so the result follows.
\end{proof}

\begin{lemma} \label{lem:weak-2}
The rule $\lambda$ defines a weak measure valued in $\bZ[u,v]$. That is, if $(P,x,y)$ is a pre-amalgamation then
\begin{displaymath}
\lambda(P^y,x) = \sum_Q \lambda(Q,x),
\end{displaymath}
where the sum is over those $P_+$, $P_0$, $P_-$ that exist.
\end{lemma}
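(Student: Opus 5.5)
The plan is to compare cuts directly, grouping them by how the new point $y$ enters. Write $\lambda(P^y,x)$ as a signed sum over cuts $(L',R')$ of $(P^y,x)$, where $L',R' \subset P^y_{\parallel x}$. Each cut $(L,R)$ of $(Q,x)$, for $Q \in \{P_-,P_0,P_+\}$, determines $L'=L\setminus\{y\}$ and $R'=R\setminus\{y\}$. For each fixed $(L',R')$, the point $y$ can sit in one of five ways:
\begin{enumerate}[(i)]
\item in $Q_{<x}$ with $Q=P_+$;
\item in $L$ with $Q=P_0$;
\item in $Q_{>x}$ with $Q=P_-$;
\item in $R$ with $Q=P_0$;
\item in neither $L$ nor $R$, with $Q=P_0$.
\end{enumerate}
Placements (i) and (ii) put $y$ in the same lower set $L \cup Q_{<x}$ and leave the upper set unchanged. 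So they have the same $p$ and $q$, the same cut condition (the order between $y$ and elements of $P^{x,y}$ is fixed by $P^x$), and opposite signs. Likewise (iii) and (iv) form a pair. In (v), both the cut condition and $p,q$ coincide with those of $(L',R')$ in $(P^y,x)$. Since $\lambda(e)=0$, the $\epsilon$ term is irrelevant. What remains is bookkeeping of which amalgamations exist, and I would split according to whether $P_0$ exists.

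\emph{Case $P_0$ exists.} The terms (v) reproduce $\lambda(P^y,x)$ exactly, so we must show the remaining terms sum to zero. Whenever both members of a pair are admissible they cancel. The issue is when, say, a term (ii) is admissible but $P_+$ does not exist. Suppose $(L,R)$ is a cut of $(P_0,x)$ with $y \in L$. Then $y < R \cup P_{>x}$, which gives $P^y_{>x} \subset P^x_{>y}$. By Proposition~\ref{prop:poset-amalg}(b), failure of $P_+$ must therefore come from some $a \in P^x_{<y}$ with $a \notin P^y_{<x}$. Such an $a$ cannot lie in $P_{>x}$: that would give $x<a<y$, contradicting the existence of $P_0$. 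So $a \parallel x$.

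I would then fix such an $a$ canonically, for example a fixed element of $P^x_{<y} \cap P_{\parallel x}$, chosen independently of $(L,R)$. Toggling $a$ in and out of $L$ is then a sign-reversing involution on the terms (ii):
\begin{itemize}
\item $a \notin R$, since $a<y\in L$ and $L<R$;
\item the cut condition persists, because $a<y<R\cup P_{>x}$;
\item $p(L)$ is unchanged, since $a<y$ means $a$ is never maximal in $L\cup P_{<x}$.
\end{itemize}
Hence the unmatched terms (ii) cancel among themselves. When $P_+$ does exist, one checks that (i) is admissible exactly when (ii) is, and the pair cancels. The pair (iii)/(iv) is handled by transposition.

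\emph{Case $P_0$ does not exist.} By Proposition~\ref{prop:poset-amalg-2}, exactly one of $P_\pm$ exists; by symmetry take $P_-$, with some $z$ satisfying $x<z<y$. Then every cut of $(P^y,x)$ gives a cut of $(P_-,x)$ with $y\in P_{>x}$. The cut condition for $y$ holds because $L'\cup P_{<x} < z < y$, using $z\in P_{>x}$. Moreover $y$ is not in $\Min(R'\cup P_{>x}\cup\{y\})$ because $z<y$. Conversely, $y$ cannot lie in $L$ or $R$, since $y \notin (P_-)_{\parallel x}$. So the cut sets and the weights match bijectively.

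I expect the main obstacle to be the first case: checking carefully that the admissibility conditions for (i) and (ii) coincide whenever $P_+$ exists, and making the toggling involution well defined (the choice of $a$ must not depend on the cut).
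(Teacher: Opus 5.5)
Your proposal is correct and follows essentially the same route as the paper's proof. It splits on whether $P_0$ exists and sorts the cuts of $(P_0,x)$ by where $y$ lies. It matches those cuts sign-reversingly with the cuts of $(P_\pm,x)$ when those posets exist, and otherwise uses a toggling involution. You work the mirror-image side, toggling a fixed $a<y$, $a\parallel x$, in $L$, where the paper toggles $z>y$, $z\parallel x$, in $R$. When $P_0$ does not exist, you identify the cuts of $(P^y,x)$ and $(P_\mp,x)$ exactly as the paper does.
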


\begin{proof}
Let $a_*=\lambda(P_*, x)$, where $*$ is $+$, $-$, or 0, with the convention that $a_*=0$ if $P_*$ does not exist. Thus we must show $\lambda(P^y, x) = a_0 + a_+ + a_-$. We break the proof into two cases.

\textit{Case 1: $P_0$ exists.} We have
\begin{displaymath}
(P_0)_{<x} = (P^y)_{<x}, \qquad
(P_0)_{>x} = (P^y)_{>x}, \qquad
(P_0)_{\parallel x} = (P^y)_{\parallel x} \cup \{y\}.
\end{displaymath}
We say that a cut $(L,R)$ for $(P_0,x)$ has type~1 if $y$ does not belong to $L \cup R$, type~2 if $y \in R$, and type~3 if $y \in L$; every cut has exactly one of the three types. Let $\cC_i$ be the set of cuts of type~$i$. Write $a_0=a_0^1+a_0^2+a_0^3$, where $a_0^i$ consists of the contributions from cuts of type $i$. Note that the type~1 cuts for $(P_0,x)$ correspond bijectively to cuts for $(P^y,x)$, and this correspondence preserves the $p$ and $q$ quantities. Thus $a_0^1=\lambda(P^y,x)$. We show that $a_0^2$ and $a_-$ cancel, and similarly $a_0^3$ and $a_+$ cancel. We divide the analysis into two subcases.

\textit{Case 1a: $P_-$ exists.} Suppose $L$ and $R$ are subsets of $(P_-)_{\parallel x}$, and hence do not contain $y$. Then
\begin{displaymath}
L \cup (P_-)_{<x} = L \cup (P_0)_{<x}, \qquad
R \cup (P_-)_{>x} = R \cup \{y\} \cup (P_0)_{>x}.
\end{displaymath}
Moreover, since $x$ does not appear in these sets, it follows that comparisons between any two elements are the same whether computed in $P_-$ or in $P_0$. It follows that the conditions
\begin{displaymath}
L \cup (P_-)_{<x} < R \cup (P_-)_{>x}, \qquad
L \cup (P_-)_{<x} < R \cup \{y\} \cup (P_0)_{>x}, \qquad
\end{displaymath}
are equivalent, where in the first we work in $P_-$ and in the second in $P_0$. Thus, letting $\cC_-$ be the set of cuts for $(P_-,x)$, we have a bijection
\begin{displaymath}
\cC_- \to \cC_2, \qquad (L,R) \mapsto (L, R \cup \{y\}).
\end{displaymath}
Moreover, we have
\begin{displaymath}
\Max(L \cup (P_-)_{<x}) = \Max(L \cup (P_0)_{<x}), \qquad
\Min(R \cup (P_-)_{>x}) = \Min(R \cup \{y\} \cup (P_0)_{>x}).
\end{displaymath}
Here the left sides are computed in $P_-$ and the right sides in $P_0$. Thus the bijection preserves the $p$ and $q$ invariants. Since $(L,R)$ and $(L, R \cup \{y\})$ contribute to $a_-$ and $a_0^2$ with opposite signs, we find $a_0^2+a_-=0$.

\textit{Case 1b: $P_-$ does not exist.} We claim that $a_0^2=0$. If $P^y_{<x} \not\subset P^x_{<y}$ then there is some $z \in P^{x,y}$ such that $z<x$ but $z \nless y$. If $(L,R)$ is a cut then the condition $L \cup (P_0)_{<x} < R \cup (P_0)_{>x}$ implies in particular that $z<b$ for all $b \in R$, and so we cannot have $y \in R$. Thus there are no type~2 cuts and so $a_0^2=0$, which proves the claim in this case.

Now suppose $P^y_{<x} \subset P^x_{<y}$. By Proposition~\ref{prop:poset-amalg}(a), it follows that $P^x_{>y} \not\subset P^y_{>x}$, and so there is some $z \in P^{x,y}$ with $z>y$ and $z \ngtr x$. Since $P_0$ exists, we have $P^x_{>y} \cap P^y_{<x}=\emptyset$ by Proposition~\ref{prop:poset-amalg}(c), and so $z \nless x$. Hence $z \parallel x$. Write $\cC_2=\cC^1_2 \sqcup \cC_2^2$, where $(L,R)$ belongs to $\cC_2^1$ if $z \notin R$, and to $\cC_2^2$ if $z \in R$. We then have a bijection
\begin{displaymath}
\cC_2^1 \to \cC_2^2, \qquad (L,R) \mapsto (L, R \cup \{z\})
\end{displaymath}
preserving the $(p,q)$ invariants, as we now explain. Since we are only working with type~2 cuts, we have $y \in R$. Since $y<z$, the condition $L \cup (P_0)_{<x} < R \cup (P_0)_{>x}$ is not disturbed by adding $z$ to $R$ or removing $z$ from it. Moreover, for the same reason, the set $\Min(R \cup P_{>x})$ is unaffected by adding $z$ to $R$ or removing $z$ from it. Finally, a type~2 cut cannot have $z \in L$ since $z>y$ and $y \in R$. This establishes the bijection. Since $(L,R)$ and $(L,R \cup \{z\})$ contribute with opposite signs to $a^2_0$, we find $a^2_0=0$, which completes the proof of the claim.

\textit{Completion of Case~1.} We have shown that $a^2_0 + a_-=0$ whether $P_-$ exists or not. Similarly, $a^3_0 + a_+=0$ in all cases. We thus find
\begin{displaymath}
a_0 + a_+ + a_- = a_0^1 = \lambda(P^y, x),
\end{displaymath}
as required.

\textit{Case 2: $P_0$ does not exist.} By Proposition~\ref{prop:poset-amalg}, at least one of $(P^y)_{<x} \cap (P^x)_{>y}$ and $(P^y)_{>x} \cap (P^x)_{<y}$ is non-empty. They cannot both be non-empty, by Proposition~\ref{prop:poset-amalg-2} (or by direct inspection).

Suppose $(P^y)_{>x} \cap (P^x)_{<y}$ is non-empty, and fix an element $z$ of this set. By Proposition~\ref{prop:poset-amalg-2}, $P_-$ is the unique amalgamation. Let $L$ and $R$ be subsets of $(P^y)_{\parallel x}=(P_-)_{\parallel x}$. Then
\begin{displaymath}
L \cup (P_-)_{<x} = L \cup P^y_{<x}, \qquad
R \cup (P_-)_{>x} = R \cup \{y\} \cup P^y_{>x} .
\end{displaymath}
Clearly, if $(L,R)$ is a cut for $(P_-,x)$ then it is one for $(P^y, x)$ as well. Conversely, if $(L,R)$ is a cut for $(P^y,x)$ then for any $a \in L \cup (P_-)_{<x}$ we have $a<z$ (since $z$ belongs to $P^y_{>x}$), and therefore $a<y$ (since $z<y$), which shows that $(L,R)$ is a cut for $(P_-,x)$. Moreover, we have
\begin{displaymath}
\Max(L \cup (P_-)_{<x}) = \Max(L \cup P^y_{<x}), \qquad
\Min(R \cup (P_-)_{>x}) = \Min(R \cup P^y_{>x}),
\end{displaymath}
where the left sides are computed in $P_-$ and the right sides in $P^y$. Indeed, the key point is that $z$ belongs to $(P_-)_{>x}$ and $z<y$, and so $y$ is not a minimal element of $R \cup (P_-)_{>x}$. We thus see that the cuts for $(P_-,x)$ and $(P^y,x)$ are exactly the same, and have the same $p$ and $q$ invariants. Therefore, $\lambda(P^y,x)=\lambda(P_-,x)$, as required.

The case where $P^y_{<x} \cap P^x_{>y}$ is non-empty follows from a similar argument (apply the above argument in the opposite poset).
\end{proof}

It follows from the above lemma that each $\lambda_{p,q}$ is a weak measure.

\begin{lemma} \label{lem:weak-3}
The rule $\tau$ defines a weak measure. That is, if $(P,x,y)$ is a pre-amalgamation then
\begin{displaymath}
\tau(P^y,x) = -\epsilon + \sum_Q \tau(Q,x),
\end{displaymath}
where $\epsilon=\epsilon(P,x,y)$ and $Q$ runs over the proper amalgamations.
\end{lemma}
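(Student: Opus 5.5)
We prove the identity by sorting the twins of $x$ in each poset according to how they interact with $y$. A twin of $x$ in $P^y$ is an element $w \in P^{x,y}$ with $P^y_{<w}=P^y_{<x}$ and $P^y_{>w}=P^y_{>x}$. A twin of $x$ in an amalgamation $Q$ is either $y$ itself or some $w \in P^{x,y}$ satisfying the same conditions in $Q$. Write $T$ for the set of twins of $x$ in $P^y$. Then $\tau(P^y,x)=\vert T \vert$, and the right side equals
\begin{displaymath}
-\epsilon \;+\; \#\{Q : y \text{ is a twin of } x \text{ in } Q\} \;+\; \sum_{w \in P^{x,y}} \#\{Q : w \text{ is a twin of } x \text{ in } Q\}.
\end{displaymath}

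\textbf{The contribution of $y$.} A twin is incomparable to $x$, so $y$ can be a twin of $x$ only in $P_0$. This happens exactly when $P^y_{<x}=P^x_{<y}$ and $P^y_{>x}=P^x_{>y}$. That condition says the swap $\gamma \colon P^x \to P^y$ is an isomorphism, i.e. $\epsilon=1$. When it holds, $P_0$ exists by Proposition~\ref{prop:poset-amalg}(c). So the term for $y$ is exactly $\epsilon$ and cancels the $-\epsilon$.

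\textbf{The contribution of $w \in P^{x,y}$.} It remains to show that for each such $w$ we have
\begin{displaymath}
\#\{Q : w \text{ is a twin of } x \text{ in } Q\} = [w \in T],
\end{displaymath}
where the bracket is $1$ if $w \in T$ and $0$ otherwise. In any amalgamation $Q$, the relations among $x$, $w$ and $P^{x,y}$ are those of $P^y$. The relations of $x$ and $w$ to $y$ are, respectively, the chosen relation ($x<_- y$, $y<_+ x$, or incomparable) and the relation in $P^x$. Hence $w$ is a twin of $x$ in $Q$ if and only if two things hold: $w \in T$, and $w$ relates to $y$ in $P^x$ exactly as $x$ relates to $y$ in $Q$.

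So $w \notin T$ contributes $0$ on both sides. For $w \in T$, let $\rho$ be the relation of $w$ to $y$ in $P^x$: one of $w<y$, $w>y$, $w \parallel y$. Only the single amalgamation $P_-$, $P_+$, or $P_0$ that matches $\rho$ can make $w$ a twin. We must check that this amalgamation exists; it then contributes exactly $1$.

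\textbf{Existence, the main obstacle.} Since $w$ is a twin of $x$ in $P^y$, we have $P^y_{<x}=P^y_{<w}$ and $P^y_{>x}=P^y_{>w}$. Apply these with everything computed inside $P^x \cup P^y$, whose orders agree on $P^{x,y}$.

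If $w<y$, we check the conditions of Proposition~\ref{prop:poset-amalg}(a). For $a \in P^y_{<x}$ we have $a<w<y$, so $a \in P^x_{<y}$. For $b \in P^x_{>y}$ we have $b>y>w$, so $b \in P^y_{>w}=P^y_{>x}$. Thus $P_-$ exists.

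The case $w>y$ is the transpose and gives $P_+$.

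If $w \parallel y$, we check Proposition~\ref{prop:poset-amalg}(c). An element $a \in P^y_{<x} \cap P^x_{>y}$ would satisfy $y<a<w$, contradicting $w \parallel y$. Symmetrically, $P^y_{>x} \cap P^x_{<y}$ is empty. Thus $P_0$ exists.

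In every case the required amalgamation exists. Summing over $w$ gives $\vert T \vert$, which is the identity.
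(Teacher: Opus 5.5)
Your proof is correct. Its skeleton is the same as the paper's: you split off the twin $y$, which can only occur in $P_0$ and does so exactly when $\epsilon=1$. You then show that each twin $w \in P^{x,y}$ of $x$ in $P^y$ is a twin of $x$ in exactly one proper amalgamation. Uniqueness comes from the fact that the relation of $x$ to $y$ in $Q$ must copy the relation of $w$ to $y$.

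The genuine difference is in the existence step.

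\emph{The paper's route.} It uses a single uniform construction. It pulls back the order of $P^x$ along the map $\pi \colon P \to P^x$ that fixes $P^x$ and sends $x$ to $w$. This is automatically a partial order in which $x$ and $w$ are twins, so one only has to check that it restricts to the given order on $P^y$. No case distinction is needed, and no appeal to the existence criteria.

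\emph{Your route.} You first characterize twin-ness in $Q$ as ``twin in $P^y$ plus matching relation to $y$.'' You then verify the hypotheses of Proposition~\ref{prop:poset-amalg} separately in the three cases $w<y$, $w>y$, $w \parallel y$.

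\emph{Comparison.} Your argument reuses criteria already proved and makes explicit which of $P_-$, $P_0$, $P_+$ carries the twin. The paper's argument is shorter and independent of Proposition~\ref{prop:poset-amalg}. In fact the pulled-back order is exactly the amalgamation you identify: $x \prec y$ holds if and only if $w<y$, and similarly in the other two cases.
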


\begin{proof}
We first examine twins that are not $y$. Let $S$ be the set of pairs $(Q,z)$ where $Q$ is a proper amalgamation and $z \in Q^{x,y}$ is a twin of $x$ in $Q$, and let $T$ be the set of elements in $P^{x,y}$ that are twins of $x$ in $P^y$. If $(Q,z) \in S$ then $Q_{<z}=Q_{<x}$ and $Q_{>z}=Q_{>x}$, and so $P^y_{<z}=P^y_{<x}$ and $P^y_{>z}=P^y_{>x}$, meaning $z$ is a twin of $x$ in $P^y$. We thus have a map
\begin{displaymath}
S \to T, \qquad (Q,z) \mapsto z.
\end{displaymath}
We claim that this map is a bijection. Let $z \in T$ be given. We must show that there is a unique proper amalgamation $Q$ such that $z$ is a twin of $x$ in $Q$. Uniqueness is clear: since $x$ and $z$ are twins, we have
\begin{displaymath}
x<y \iff z<y, \qquad x>y \iff z>y,
\end{displaymath}
and so the comparison of $x$ and $y$ in $Q$ is determined.

We now prove existence. Let $\pi \colon P \to P^x$ be the map that is the identity on $P^x$ and satisfies $\pi(x)=z$. Let $\prec$ be the pullback of $<$ under $\pi$, i.e., $a \prec b$ is defined to be $\pi(a)<\pi(b)$. It is clear that $\prec$ is a partial order on $P$, that $x$ and $z$ are twins in $(P, \prec)$, and that $\prec$ agrees with $<$ on $P^x$. Moreover, $\prec$ agrees with $<$ on $P^y$. Indeed, the two orders agree on $P^{x,y}$, and so it suffices to check comparisons involving $x$. For $a \in P^{x,y}$, we have
\begin{displaymath}
a<x \iff a<z \iff a \prec z \iff a \prec x,
\end{displaymath}
where in the first step we use that $x$ and $z$ are twins in $(P^y,<)$, in the second step we use that $\prec$ and $<$ agree on $P^x$, and in the third step we use that $x$ and $z$ are twins in $(P,\prec)$; the equivalence also holds when $a=x$ since $x<x$ and $x \prec x$ are both false. We have $a>x \iff a \succ x$ for $a \in P^y$ by the same reasoning. We thus see that $(P,\prec)$ is a proper amalgamation $Q$, and $(Q,z) \in S$, as required.

We now examine when $y$ is a twin. Let $S'$ be the set of proper amalgamations $Q$ such that $y$ is a twin of $x$ in $Q$. Since twins are incomparable, it follows that $S'$ can only contain $P_0$. It is clear that $P_0$ exists and belongs to $S'$ if and only if $\epsilon=1$.

We have
\begin{displaymath}
\sum_Q \tau(Q,x) = \vert S \vert + \vert S' \vert = \vert T \vert + \epsilon = \epsilon + \tau(P^y,x),
\end{displaymath}
which completes the proof.
\end{proof}

We now complete the proof of the theorem.

\begin{proof}[Proof of Theorem~\ref{thm:weak}]
Lemmas~\ref{lem:weak-2} and~\ref{lem:weak-3} cover part (b). Clearly, $\ast$ has no twin in $J_{r,s}$, and so $\tau(j_{r,s})=0$. Every element of $J_{r,s} \setminus \{\ast\}$ is comparable to $\ast$, and so the only cut is $(\emptyset, \emptyset)$. This has $p(\emptyset)=r$ and $q(\emptyset)=s$. We thus find $\lambda(j_{r,s})=u^rv^s$, and so $\lambda_{p,q}(j_{r,s})=\delta_{p,r} \delta_{q,s}$. This proves (c). Lemma~\ref{lem:weak-1} shows that $e$ and the $j_{p,q}$ generate $\Theta^1(\fP)$. Consider a linear relation $ae+\sum b_{p,q} j_{p,q} = 0$ where $a$ and $b_{p,q}$ belong to $\bZ$, and all but finitely many $b_{p,q}$ vanish. Applying $\tau$ gives $a=0$, and applying $\lambda_{p,q}$ gives $b_{p,q}=0$. Thus $e$ and the $j_{p,q}$ are $\bZ$-linearly independent. This proves (a).
\end{proof}

\subsection{Explicit calculations} \label{ss:explicit}

Put $\lambda_{p,\ast} = \sum_{q \ge 0} \lambda_{p,q}$, and analogously define $\lambda_{\ast,q}$. Also, let $\lambda_{\ast,\ast} = \sum_{p,q \ge 0} \lambda_{p,q}$. The $\lambda_{p,q}$ with $p,q \in \{0,1,\ast\}$ are particularly relevant to measures since the parameters of measures stabilize (Theorem~\ref{thm:quad}(a)). With the exception of $\lambda_{\ast,\ast}$, they admit reasonably direct descriptions, as we now explain. We say that elements $x<y$ in a poset $P$ are a \defn{series pair} if $y$ is the unique cover of $x$ and $x$ is the unique element covered by $y$; in other words, $P_{<y}=P_{\le x}$ and $P_{>x}=P_{\ge y}$. In this case, we say $x$ \defn{begins} the series pair and $y$ \defn{ends} it.

\begin{proposition} \label{prop:lambda-calc}
Let $(P,x)$ be a marked poset. Then:
\begin{enumerate}
\item $\lambda_{0,0}(P,x)$ is~1 if $x$ is an isolated point of $P$, and~0 otherwise.
\item $\lambda_{\ast,0}(P,x)$ is~1 if $x$ is the greatest element of $P$, and~0 otherwise.
\item If $x$ is not a maximal element of $P$ then $\lambda_{1,0}(P,x)=0$. Assume $x$ is maximal. Let $\epsilon$ be~1 if $x$ covers a unique element, and~0 otherwise, and let $n$ be the number of lower twins to $x$ (\S \ref{ss:twins}). Then $\lambda_{1,0}(P,x)=\epsilon-n$.
\item Let $\epsilon$ be~1 if $x$ begins a series pair and~0 otherwise. Then $\lambda_{\ast,1}(P,x)=\epsilon-\tau(P,x)$.
\end{enumerate}
See Table~\ref{t:weak} for a summary.
\end{proposition}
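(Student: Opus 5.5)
All four formulas come from restricting the defining sum
\[
\lambda(P,x)=\sum (-1)^{\vert L\vert+\vert R\vert}u^{p(L)}v^{q(R)}
\]
to the relevant cuts. The surviving terms are then evaluated by cancelling in pairs, which amounts to summing $(-1)^{\vert S\vert}$ over all subsets $S$ of some set. Throughout I will use the following fact: $p(L)=0$ if and only if $L\cup P_{<x}=\emptyset$, and $q(R)=0$ if and only if $R\cup P_{>x}=\emptyset$.

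For (a), a cut contributing to $\lambda_{0,0}$ must have $L=R=\emptyset$ and $P_{<x}=P_{>x}=\emptyset$. In that case $(\emptyset,\emptyset)$ is a cut and it contributes $1$. So $\lambda_{0,0}(P,x)=1$ exactly when $x$ is comparable to nothing, i.e.\ when $x$ is isolated.

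For (b), the cuts contributing to $\lambda_{\ast,0}$ require $P_{>x}=\emptyset$ and $R=\emptyset$. The cut condition is then vacuous, so $L$ ranges over all subsets of $P_{\parallel x}$. The signed sum $\sum_L(-1)^{\vert L\vert}$ is $1$ if $P_{\parallel x}=\emptyset$ and $0$ otherwise. Hence $\lambda_{\ast,0}(P,x)=1$ exactly when $x$ is maximal and comparable to everything else, i.e.\ when $x$ is greatest.

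For (c), the same reasoning forces $x$ to be maximal and $R=\emptyset$, and $L\subset P_{\parallel x}$ is arbitrary. We need $L\cup P_{<x}$ to have a unique maximal element $m$, i.e.\ a greatest element $m$, and I would sort the terms by $m$.
\begin{itemize}
\item If $m\in P_{<x}$, then every element of $L$ would lie below $m<x$, which is impossible, so $L=\emptyset$. This requires $P_{<x}$ to have a greatest element, which happens exactly when $x$ covers a unique element. This case contributes $\epsilon$.
\item If $m\in P_{\parallel x}$, we need $P_{<x}<m$, and $L=\{m\}\sqcup L'$ with $L'$ an arbitrary subset of $P_{<m}\cap P_{\parallel x}$. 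The signed sum is $-1$ if $P_{<m}\cap P_{\parallel x}=\emptyset$ and $0$ otherwise.
\end{itemize}
In the second case, the surviving $m$ are those with $P_{<x}<m$ that are minimal in $P_{\parallel x}$. By Proposition~\ref{prop:lowertwin} these are exactly the lower twins of $x$, which gives $\epsilon-n$.

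For (d), I fix $R$ with $\Min(R\cup P_{>x})=\{m\}$, so $m$ is the least element of $R\cup P_{>x}$. The admissible $L$ are then exactly the subsets of $D=\{a\in P_{\parallel x}: a<m\}$, provided also $P_{<x}<m$. Summing over $L$ kills every $R$ with $D\neq\emptyset$, and I would again split on where $m$ lies.
\begin{itemize}
\item If $m\in P_{>x}$, then $R=\emptyset$, since any element of $R$ would lie above $m$ and hence above $x$. The surviving condition is that $m$ is the least element of $P_{>x}$ and nothing incomparable to $x$ lies below $m$. Since any element below $m$ is $<x$, $=x$, $\parallel x$, or $>x$, and the last is excluded by leastness, this says $P_{<m}=P_{\le x}$ and $P_{>x}=P_{\ge m}$. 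That is, $x$ begins a series pair ending at $m$, and this contributes $+1$.
\item If $m\in P_{\parallel x}$, then $R=\{m\}\sqcup R'$ with $R'$ an arbitrary subset of $P_{>m}\cap P_{\parallel x}$, and we need $m<P_{>x}$. Summing over $R'$ leaves $-1$ exactly when $P_{>m}\cap P_{\parallel x}=\emptyset$ and $D=\emptyset$. Together with $P_{<x}<m<P_{>x}$, this says $P_{<m}=P_{<x}$ and $P_{>m}=P_{>x}$, i.e.\ $m$ is a twin of $x$. Conversely, every twin satisfies these conditions, which gives $-\tau(P,x)$.
\end{itemize}

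The main obstacle is the bookkeeping in (d): confirming that the two stages of cancellation (first over $L$, then over $R'$) are legitimate, and that the surviving conditions are exactly the series-pair and twin conditions. In particular, one must rule out elements $>x$ lying below $m$ when $m\parallel x$, which holds because otherwise $m>x$.
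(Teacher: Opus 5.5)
Your proposal is correct and follows essentially the same route as the paper. Both restrict the defining sum to cuts with the relevant $(p,q)$, organize the terms by the greatest element of $L\cup P_{<x}$ in (c) and by the least element of $R\cup P_{>x}$ in (d), and cancel via alternating sums over freely chosen subsets. The survivors are then identified as the unique lower cover and the lower twins (via Proposition~\ref{prop:lowertwin}) in (c), and as the series pair and the twins in (d). The only cosmetic difference is in (d): you sum first over $L$ for fixed $R$ and then over $R'$, whereas the paper treats $(L,R_0)$ as a single pair of independent free choices.
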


\begin{table}
\caption{Qualitative meaning of weak measures; see Proposition~\ref{prop:lambda-calc} for details.}
\label{t:weak}
\centering
\begin{tabular}{cl}
\toprule
weak measure & meaning \\
\midrule
$\lambda_{0,0}$ & detects isolated marked point \\
$\lambda_{\ast,0}$ & detects greatest marked point \\
$\lambda_{1,0}$ & unique lower cover minus lower twin count \\
$\lambda_{\ast,1}$ & begins a series pair minus twin count \\
\bottomrule
\end{tabular}
\end{table}

\begin{proof}
Recall that for a cut $(L,R)$ of $(P,x)$, we let $p(L)=\vert \Max(L \cup P_{<x}) \vert$ and $q(R)=\vert \Min(R \cup P_{>x}) \vert$. The map $\lambda_{p,q} \colon \Theta^1(\fP) \to \bZ$ kills $e$ and takes a marked poset $(P,x)$ to $\sum (-1)^{\vert L \vert+\vert R \vert}$, where the sum is over cuts $(L,R)$ with $p(L)=p$ and $q(R)=q$. A cut $(L,R)$ has $q(R)=0$ if and only if $R$ and $P_{>x}$ are empty. The condition $P_{>x}=\emptyset$ is equivalent to $x$ being maximal in $P$. If $x$ is maximal then any pair $(L, \emptyset)$ with $L \subset P_{\parallel x}$ is a cut. There is a dual description of cuts with $p(L)=0$.

(a) From the above discussion, we see that a cut $(L,R)$ has $p(L)=q(R)=0$ if and only if $L$ and $R$ are empty and $x$ is isolated. The result thus follows.

(b) Again, $\lambda_{\ast,0}(P,x)=0$ if $x$ is not a maximal element of $P$, so assume $x$ is maximal. Then $\lambda_{\ast,0} = \sum (-1)^{\vert L \vert}$, where the sum is over all subsets $L$ of $P_{\parallel x}$. This vanishes if $P_{\parallel x}$ is non-empty, and is~1 otherwise. The result thus follows; indeed, $x$ is the greatest element if and only if it is maximal and $P_{\parallel x}$ is empty.

(c) Once again, $\lambda_{1,0}(P,x)=0$ unless $x$ is maximal, so we assume that $x$ is maximal. We have $\lambda_{1,0}(P,x) = \sum (-1)^{\vert L \vert}$, where the sum is over subsets $L$ of $P_{\parallel x}$ with $p(L)=1$. We have $p(L)=1$ if and only if $L \cup P_{<x}$ has a greatest element. We thus have
\begin{displaymath}
\lambda_{1,0}(P,x) = \sum_{a \in P_{<x} \cup P_{\parallel x}} \sum_{\substack{L \subset P_{\parallel x}, \\ \Max(L \cup P_{<x})=\{a\}}} (-1)^{\vert L \vert}.
\end{displaymath}
Suppose $a \in P_{<x}$. Since $L<a<x$ and $L \subset P_{\parallel x}$, we see that $L=\emptyset$. The condition $\Max(P_{<x})=\{a\}$ means that $a$ is the greatest element of $P_{<x}$, which exactly means that $a$ is the unique element covered by $x$. We thus see that the total contribution of the terms with $a \in P_{<x}$ is $\epsilon$.

Now consider the case $a \in P_{\parallel x}$. We must have $P_{<x}<a$, or there are no terms in the inner sum; thus assume this. The inner sum varies over sets $L$ of the form $\{a\} \cup L_0$, where $L_0$ is an arbitrary subset of $P_{\parallel x} \cap P_{<a}$. If this set is empty, meaning $a$ is a minimal element of $P_{\parallel x}$, then the inner sum is $-1$; otherwise, it vanishes. We thus see that each minimal element $a$ of $P_{\parallel x}$ satisfying $P_{<x}<a$ contributes $-1$, and the other elements $a$ of $P_{\parallel x}$ contribute~0. Such elements $a$ are exactly the lower twins of $x$ by Proposition~\ref{prop:lowertwin}. The result thus follows.

(d) We have $\lambda_{\ast,1} = \sum_{p \ge 0} \lambda_{p,1}$. For a cut $(L,R)$, we have $q(R)=1$ if and only if $R \cup P_{>x}$ has a least element. As in (c), we organize the computation according to this least element. Thus write
\begin{displaymath}
\lambda_{\ast,1}(P,x) = \sum_{a \in P_{>x} \cup P_{\parallel x}} \sum_{\substack{(L,R), \\ \Min(R \cup P_{>x})=\{a\}}} (-1)^{\vert L \vert+\vert R \vert},
\end{displaymath}
where the inner sum is over cuts $(L,R)$ as specified.

Suppose $a \in P_{>x}$. Consider a cut $(L,R)$ in the inner sum. Since $a$ is the least element of $R \cup P_{>x}$ and $x<a$ and $R \subset P_{\parallel x}$, it follows that $R$ is empty and $a$ is the least element of $P_{>x}$. We thus see that the inner sum is empty unless $a$ is the least element of $P_{>x}$, i.e., $a$ is the unique cover of $x$; thus assume this. The inner sum is then over subsets $L$ of $P_{<a} \cap P_{\parallel x}$. If $P_{<a} \cap P_{\parallel x}$ is empty then the inner sum is~1. In this case, if $b<a$ then necessarily $b \le x$, which means $x$ is the unique element covered by $a$; thus $x<a$ is a series pair. If $P_{<a} \cap P_{\parallel x}$ is non-empty then the inner sum is zero. In this case, $a$ covers an element other than $x$, and so $x<a$ is not a series pair. We thus see that the total contribution from the case $a \in P_{>x}$ is $\epsilon$.

Now suppose $a \in P_{\parallel x}$. For the inner sum to be non-zero, we must have $a<P_{>x}$ (otherwise $a$ cannot be the least element of $R \cup P_{>x}$) and $P_{<x}<a$ (otherwise there is no cut with $a \in R$); thus assume these hold. The inner sum is taken over pairs $(L,\{a\} \cup R_0)$ where $L \subset P_{\parallel x} \cap P_{<a}$ and $R_0 \subset P_{\parallel x} \cap P_{>a}$ are arbitrary subsets. If $P_{\parallel x} \cap P_{<a}$ and $P_{\parallel x} \cap P_{>a}$ are empty then the inner sum is $-1$, as $\vert R \vert=1$; otherwise the inner sum vanishes. The condition that both $P_{\parallel x} \cap P_{<a}$ and $P_{\parallel x} \cap P_{>a}$ are empty exactly means that $a$ is incomparable to every element of $P_{\parallel x}$; combined with the condition $P_{<x}<a<P_{>x}$, this exactly means that $a$ is a twin of $x$. We thus see that the total contribution from the case $a \in P_{\parallel x}$ is $-\tau(P,x)$.
\end{proof}

Of course, there are similar formulas for $\lambda_{0,\ast}$, $\lambda_{0,1}$, and $\lambda_{1,\ast}$ obtained by transposition.

\begin{remark}
The formula for $\lambda_{1,1}$ is a little more complicated. As we do not actually use it, we simply provide the statement. Let $(P,x)$ be a marked poset. Define the following:
\begin{itemize}
\item $\delta$ is~1 if $x$ covers a unique element, and~0 otherwise.
\item $\epsilon$ is~1 if $x$ has a unique cover, and~0 otherwise.
\item $A = \{ a \in \Min(P_{\parallel x}) \mid P_{<x} < a < P_{>x} \}$.
\item $B = \{ b \in \Max(P_{\parallel x}) \mid P_{<x} < b < P_{>x} \}$.
\item $n$ is the number of pairs $(a,b) \in A \times B$ with $a<b$.
\end{itemize}
Then $\lambda_{1,1}(P,x) = \delta \epsilon - \epsilon \vert A \vert - \delta \vert B \vert + n$.
\end{remark}

\subsection{Connection to M\"obius functions} \label{ss:mobius}

We now show how the weak measures $\tau$ and $\lambda_{p,q}$ are related to the M\"obius function on a certain poset. This will be an important perspective when we prove Theorem~\ref{thm:some-meas}(a).

Let $P$ be a poset and let $n \in\bN$. An \defn{$n$-extension} of $P$ is a triple $(Q, \alpha, x_1, \ldots, x_n)$ where $Q$ is a poset, $\alpha \colon P \to Q$ is an embedding of posets, and the $x_i$ are elements of $Q$ such that $Q=\im(\alpha) \cup \{x_1, \ldots, x_n\}$. For notational simplicity, we identify $P$ with its image in $Q$ and suppress $\alpha$ from the notation. Suppose $Q=(Q, x_1, \ldots, x_n)$ and $T=(T, y_1, \ldots, y_n)$ are two $n$-extensions. The \defn{canonical function} $\phi \colon Q \to T$, when it exists, is the unique function that is the identity on $P$ and maps $x_i$ to $y_i$. We define $Q \le T$ if the canonical function $\phi \colon Q \to T$ exists and is strictly monotone, meaning $a<b$ implies $\phi(a)<\phi(b)$ for all $a,b \in Q$. Let $\cE_n(P)$ denote the (finite) set of isomorphism classes of $n$-extensions of $P$. This set is partially ordered by the relation $\le$ just defined.

For the present discussion, the $n=1$ case is most relevant. We say that a 1-extension $(Q, x)$ is \defn{proper} if $x \notin P$, and \defn{improper} otherwise. We let $\cE_1^{\circ}(P)$ denote the set of proper extensions. Note that improper extensions are maximal elements of the poset $\cE_1(P)$.

Let $(P,x)$ be a marked poset and let $(L,R)$ be a cut. In \S \ref{ss:cuts}, we defined a modified order $\prec$ associated to the cut. We denote the poset $(P, \prec)$ by $P[L,R]$, and mark it by $x$.

\begin{proposition} \label{prop:mu-cuts}
Let $(P,x)$ be a marked poset, and let $T$ be a proper 1-extension of $P^x$ such that $P \le T$ holds in $\cE_1(P^x)$. Then
\begin{displaymath}
\mu(P,T) = \sum_{P[L,R]=T} (-1)^{\vert L \vert + \vert R \vert}
\end{displaymath}
where $(L,R)$ varies over cuts of $(P,x)$ satisfying the stated condition.
\end{proposition}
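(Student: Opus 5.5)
The plan is to identify the interval $[P,T]$ in $\cE_1(P^x)$ with the lattice of closed sets of a closure operator on a Boolean lattice. The cuts of $(P,x)$ then appear as the elements of that Boolean lattice, and the claimed formula becomes an instance of Rota's closure theorem for M\"obius functions.

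\emph{Step 1: describe the interval.} Every $Q$ with $P \le Q \le T$ is a proper 1-extension, since improper extensions are maximal and $T$ is proper. So $Q$ has underlying set $P$, and all canonical functions between $P$, $Q$ and $T$ are the identity. The condition $P \le Q \le T$ then says that $<_Q$ is a partial order on $P$ that restricts to $<$ on $P^x$, contains $<$, and is contained in $<_T$.

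To such a $Q$, attach $L_Q = \{a \in P_{\parallel x} : a <_Q x\}$ and $R_Q = \{b \in P_{\parallel x} : x <_Q b\}$. Since $Q$ agrees with $P$ on $P^x$, the order $<_Q$ is determined by $(L_Q,R_Q)$, and $Q = P[L_Q,R_Q]$. Moreover $Q \le Q'$ if and only if $L_Q \subset L_{Q'}$ and $R_Q \subset R_{Q'}$. Hence $[P,T]$ is isomorphic to a subposet $\mathcal{K}$ of the Boolean lattice $\mathcal{B}$ of pairs $(L,R)$ with $L \subset L_T$ and $R \subset R_T$, ordered componentwise.

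\emph{Step 2: build the closure operator.} First check that every $(L,R) \in \mathcal{B}$ is a cut of $(P,x)$. In $T$ we have $L \cup P_{<x} <_T x <_T R \cup P_{>x}$, and $<_T$ restricts to $<$ on $P^x$; so $L \cup P_{<x} < R \cup P_{>x}$, as required.

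Since $P[L,R]$ is the transitive closure of $<$ together with the added relations, and $<_T$ already contains all of these and is transitive, we get $P[L,R] \le T$. By Proposition~\ref{prop:cut-order}, $P[L,R]$ is a partial order restricting to $<$ on $P^x$, so it lies in $[P,T]$. Define $c(L,R) = (L_{P[L,R]}, R_{P[L,R]})$. This is extensive and monotone. It is idempotent because $P[c(L,R)] = P[L,R]$. The minimality of the transitive closure gives that $P[L,R] \le Q$ if and only if $(L,R) \subset (L_Q,R_Q)$, for every $Q$ in the interval. So the closed elements of $c$ are exactly $\mathcal{K}$, and $c(\emptyset,\emptyset)$ corresponds to $P$. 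Finally, $P[L,R] = T$ if and only if $c(L,R)$ corresponds to $T$.

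\emph{Step 3: apply the closure theorem.} For a closure operator $c$ on a finite lattice $\mathcal{B}$, Rota's theorem gives, for closed $K$,
\begin{displaymath}
\mu_{\mathcal{K}}(c(\hat 0), K) = \sum_{S \in \mathcal{B},\ c(S)=K} \mu_{\mathcal{B}}(\hat 0, S).
\end{displaymath}
If citing it is undesirable, I would prove it directly. Fix closed $K$ and set $g(K) = \sum_{c(S)=K} \mu_{\mathcal{B}}(\hat 0,S)$. For closed $K' \ge c(\hat 0)$, we have
\begin{displaymath}
\sum_{K \le K'} g(K) = \sum_{S \le K'} \mu_{\mathcal{B}}(\hat 0, S),
\end{displaymath}
because $c(S) \le K'$ if and only if $S \le K'$. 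The right side is $\delta_{\hat 0, K'}$, and $\hat 0 = K'$ holds only when $\hat 0$ is closed and $K' = c(\hat 0)$. By uniqueness of the M\"obius function, $g$ satisfies its defining recursion.

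Here $\mu_{\mathcal{B}}(\hat 0,(L,R)) = (-1)^{\vert L\vert + \vert R\vert}$, which gives the proposition. The cuts with $P[L,R] = T$ automatically have $L \subset L_T$ and $R \subset R_T$, so the sum over $\mathcal{B}$ is the sum over all such cuts.

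I expect the main care to be needed in Step 1: checking that comparison in $\cE_1(P^x)$ reduces to inclusion of relations on the fixed set $P$, and that $Q$ is recovered from $(L_Q,R_Q)$. The rest is formal.
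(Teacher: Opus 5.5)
Your proof is correct and is essentially the paper's argument. The paper defines $\hat\mu(P,T)$ as the signed count of cuts with $P[L,R]=T$. It then checks the M\"obius recursion
\begin{displaymath}
\sum_{P\le S\le T}\hat\mu(P,S)=\sum_{L\subset L_T,\,R\subset R_T}(-1)^{\vert L\vert+\vert R\vert}=\delta_{T,P},
\end{displaymath}
using that $P[L,R]\le T$ if and only if $L\subset L_T$ and $R\subset R_T$. This is exactly your direct proof of the closure theorem, with $c(S)\le K'$ if and only if $S\le K'$. One small point: your displayed general form of Rota's theorem needs $\hat 0$ to be closed (otherwise the right side vanishes identically), but that holds here since $c(\emptyset,\emptyset)=(\emptyset,\emptyset)$.
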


\begin{proof}
Let $L_T$ be the set of elements $a \in P_{\parallel x}$ such that $a <_T x$, and similarly define $R_T$. Then $(L_T, R_T)$ is a cut. In fact, for any subsets $L \subset L_T$ and $R \subset R_T$, the pair $(L,R)$ is a cut. Indeed, if $a \in L \cup P_{<x}$ and $b \in R \cup P_{>x}$ then $a <_T x <_T b$, and so $a <_T b$, which implies $a<b$, as required. For a cut $(L,R)$, we have $P[L,R] \le T$ if and only if $L \subset L_T$ and $R \subset R_T$. Now, put
\begin{displaymath}
\hat{\mu}(P,T) = \sum_{P[L,R]=T} (-1)^{\vert L \vert + \vert R \vert}.
\end{displaymath}
We have
\begin{displaymath}
\sum_{P \le S \le T} \hat{\mu}(P,S) = \sum_{L \subset L_T, R \subset R_T} (-1)^{\vert L \vert + \vert R \vert} = \delta_{T,P}.
\end{displaymath}
Note that the sum is~1 if $L_T$ and $R_T$ are empty, i.e., $T=P$, and~0 otherwise; this explains the final equality above. Since this identity characterizes the M\"obius function, we have $\mu=\hat{\mu}$, and so the result follows.
\end{proof}

For a marked poset $(T,y)$, put $p(T)=\vert \Max(T_{<y}) \vert$ and $q(T)=\vert \Min(T_{>y}) \vert$.

\begin{proposition}
For a marked poset $(P,x)$, we have
\begin{displaymath}
\lambda(P,x) = \sum_{\substack{T \in \cE^{\circ}_1(P^x), \\ P \le T}} \mu(P,T) u^{p(T)} v^{q(T)}.
\end{displaymath}
\end{proposition}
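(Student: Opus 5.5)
Starting from the definition $\lambda(P,x)=\sum_{(L,R)} (-1)^{\vert L\vert+\vert R\vert} u^{p(L)}v^{q(R)}$, the plan is to group the cuts $(L,R)$ of $(P,x)$ according to the poset $P[L,R]$ they produce, and then apply Proposition~\ref{prop:mu-cuts}. Three things are needed: each $P[L,R]$ is a proper 1-extension $T$ of $P^x$ with $P\le T$; the monomial $u^{p(L)}v^{q(R)}$ depends only on $T=P[L,R]$ and equals $u^{p(T)}v^{q(T)}$; and the terms with $P\le T$ but $T$ not of the form $P[L,R]$ contribute nothing.

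\emph{Step 1: $P[L,R]$ lies in the sum.} By Proposition~\ref{prop:cut-order}, $\prec$ is a partial order restricting to $<$ on $P^x$. Hence $(P[L,R],x)$ is a 1-extension of $P^x$, and it is proper because $x\notin P^x$. The relation $\prec$ contains $<$, so the identity map $P\to P[L,R]$ is strictly monotone, which gives $P\le P[L,R]$ in $\cE_1(P^x)$.

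\emph{Step 2: identify the exponents.} Put $T=P[L,R]$. We need $p(T)=\vert\Max(T_{<x})\vert=\vert\Max(L\cup P_{<x})\vert=p(L)$, and dually for $q$. The claim is that $\Max(T_{<x})=\Max(L\cup P_{<x})$, even though $T_{<x}$ may be larger than $L\cup P_{<x}$. Look at how $a\prec x$ arises for $a\in P^x$: it comes from a chain ending at $x$, and we may take the last step into $x$ to come from an element $c\in L\cup P_{<x}$. Every earlier step can be taken inside $P^x$, by the chain-shortening argument in the proof of Proposition~\ref{prop:cut-order}; this is the delicate point of this step. So $a\le c$ in $P$, i.e.\ $T_{<x}$ is exactly the down-closure in $P^x$ of $L\cup P_{<x}$. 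Both sets have the same order on $P^x$, and a set and its down-closure have the same maximal elements. Hence $p(T)=p(L)$, and dually $T_{>x}$ is the up-closure of $R\cup P_{>x}$, giving $q(T)=q(R)$.

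\emph{Step 3: regroup.} Now
\[
\lambda(P,x)=\sum_{T}\Big(\sum_{P[L,R]=T}(-1)^{\vert L\vert+\vert R\vert}\Big)u^{p(T)}v^{q(T)},
\]
where $T$ runs over proper 1-extensions of $P^x$ with $P\le T$. Proposition~\ref{prop:mu-cuts} identifies the inner sum with $\mu(P,T)$ for every such $T$. When no cut produces $T$, both sides are zero: the inner sum is empty, and Proposition~\ref{prop:mu-cuts} still gives $\mu(P,T)=0$, since its hypotheses cover every proper $T\ge P$. Summing over $T$ gives the stated formula.

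The main obstacle is Step 2: the transitive closure can add elements below or above $x$ beyond $L\cup P_{<x}$ and $R\cup P_{>x}$, and we must check that this changes neither the maximal elements below $x$ nor the minimal elements above $x$. The down-closure description resolves it.
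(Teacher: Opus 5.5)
Your proposal is correct and follows the same route as the paper: group the cuts by the extension $P[L,R]$, identify each inner sum with $\mu(P,T)$ via Proposition~\ref{prop:mu-cuts}, and use $p(P[L,R])=p(L)$ and $q(P[L,R])=q(R)$. The paper only asserts these last two identities, and your Step~2 supplies the missing justification: $T_{<x}$ is the down-closure of $L\cup P_{<x}$ in $P^x$, so it has the same maximal elements.
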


\begin{proof}
We have
\begin{align*}
\sum_{P \le T} \mu(P,T) u^{p(T)} v^{q(T)}
&= \sum_{P \le T} \sum_{P[L,R]=T} (-1)^{\vert L \vert + \vert R \vert} u^{p(T)} v^{q(T)} \\
&= \sum_{(L,R)} (-1)^{\vert L \vert + \vert R \vert} u^{p(P[L,R])} v^{q(P[L,R])},
\end{align*}
where $T \in \cE_1^{\circ}(P^x)$, the inner sum on the right varies over cuts $(L,R)$ with the stated property, and the final sum is over all cuts. In the first step, we used Proposition~\ref{prop:mu-cuts}. Since $p(P[L,R]) = p(L)$ and $q(P[L,R])=q(R)$, the result follows.
\end{proof}

\begin{corollary} \label{cor:euler-char}
We have
\begin{displaymath}
\lambda_{\ast,\ast}(P,x) = \sum_{\substack{T \in \cE^{\circ}_1(P^x), \\ T \ge P}} \mu(P,T).
\end{displaymath}
\end{corollary}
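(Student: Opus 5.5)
The corollary should follow by specializing the preceding proposition at $u=v=1$. By definition, $\lambda(P,x)$ is a polynomial in $\bZ[u,v]$ whose coefficient of $u^pv^q$ is $\lambda_{p,q}(P,x)$. Evaluating at $u=v=1$ therefore gives
\begin{displaymath}
\lambda(P,x)\big|_{u=v=1} = \sum_{p,q \ge 0} \lambda_{p,q}(P,x) = \lambda_{\ast,\ast}(P,x).
\end{displaymath}
This sum is finite, since $\lambda_{p,q}(P,x)=0$ once $p$ or $q$ exceeds $\vert P \vert$. So the left side of the corollary is exactly the evaluation of $\lambda(P,x)$ at $u=v=1$.

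Now evaluate the right side of the proposition at $u=v=1$. Each monomial $u^{p(T)}v^{q(T)}$ becomes $1$, leaving $\sum_T \mu(P,T)$. Here $T$ ranges over proper 1-extensions $T \in \cE_1^{\circ}(P^x)$ with $P \le T$, which is the stated index set. Since evaluation at $u=v=1$ is a ring homomorphism $\bZ[u,v] \to \bZ$, applying it to both sides of the proposition's identity gives the claimed formula.

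The only point to check is that both sides are finite sums, so that the evaluation makes sense. This holds because $\cE_1(P^x)$ is a finite set. I do not expect any real obstacle; the work was done in Proposition~\ref{prop:mu-cuts} and in the proposition immediately preceding the corollary.
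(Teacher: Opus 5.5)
Your proposal is correct and is essentially the paper's argument. The paper states this corollary without a separate proof, as the specialization $u=v=1$ of the preceding proposition: $\lambda(P,x)$ becomes $\sum_{p,q}\lambda_{p,q}(P,x)=\lambda_{\ast,\ast}(P,x)$, and each monomial $u^{p(T)}v^{q(T)}$ becomes $1$.
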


\begin{remark}
Let $(P,x)$ be a marked poset and put $\cE=\cE^{\circ}_1(P^x)$. Corollary~\ref{cor:euler-char} can be phrased as follows:
\begin{displaymath}
\lambda_{\ast,\ast}(P,x) = -\tilde{\chi}(\cE_{>(P,x)})
\end{displaymath}
where $\tilde{\chi}$ is the reduced Euler characteristic, assuming $\cE_{>(P,x)}$ is non-empty. Thus $\lambda_{\ast,\ast}$ is a natural invariant attached to $(P,x)$.
\end{remark}

We now turn our attention to twins.

\begin{proposition} \label{prop:twin-mobius}
Let $(P,x)$ be a proper 1-extension of $Q$ and let $T=(Q,y)$ be an improper 1-extension of $Q$. Then $\mu(P,T)$ is~$-1$ if $x$ and $y$ are twins in $P$, and~0 otherwise.
\end{proposition}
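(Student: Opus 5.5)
The plan is to compute $\mu(P,T)$ with the defining recursion $\mu(P,T) = -\sum_{P \le S < T} \mu(P,S)$. I will evaluate the right side using Proposition~\ref{prop:mu-cuts} applied to the marked poset $(P,x)$, where $P^x = Q$. Throughout, $P$ denotes the proper extension $(P,x)$ and $T$ the improper extension $(Q,y)$.

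\emph{Step 1: when $P \le T$.} The canonical map $\phi \colon P \to Q$ is the identity on $Q$ and sends $x \mapsto y$. It is strictly monotone exactly when three conditions hold:
\begin{itemize}
\item $x \parallel y$ in $P$, since $x<y$ or $y<x$ would force $y<y$;
\item $P_{<x} \subset Q_{<y}$;
\item $P_{>x} \subset Q_{>y}$.
\end{itemize}
If $x,y$ are twins these conditions clearly hold. If $P \not\le T$ then $\mu(P,T)=0$ and $x,y$ are not twins, so the statement holds in that case. From now on I assume $P \le T$.

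\emph{Step 2: the elements strictly between.} Since $T$ is improper, every $S$ with $P \le S < T$ is a proper $1$-extension of $Q$ with $P \le S$. Hence $\sum_{P \le S<T}\mu(P,S) = \sum_{S} \sum_{P[L,R]=S} (-1)^{\vert L\vert+\vert R\vert}$ by Proposition~\ref{prop:mu-cuts}. This equals the sum of $(-1)^{\vert L \vert+\vert R\vert}$ over all cuts $(L,R)$ of $(P,x)$ with $P[L,R] \le T$.

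I then characterize that condition. Put $A = P_{\parallel x} \cap Q_{<y}$ and $B = P_{\parallel x} \cap Q_{>y}$. I claim that $P[L,R] \le T$ iff $L \subset A$ and $R \subset B$. For necessity, note that $a \prec x$ must map to $a<y$, which gives $L \subset A$; dually $R \subset B$. For sufficiency, every element $\prec x$ in $P[L,R]$ lies below some element of $L \cup P_{<x} \subset Q_{<y}$, and the latter set is downward closed; dually for elements $\succ x$. Also $x$ and $y$ stay incomparable, because $y \notin L \cup R$ (as $y \not< y$) and $y \notin Q_{<y}\cup Q_{>y}$.

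Conversely, any $L \subset A$ and $R \subset B$ automatically form a cut. Indeed, $L \cup P_{<x} \subset Q_{<y}$ and $R \cup P_{>x} \subset Q_{>y}$, so $L \cup P_{<x} < y < R \cup P_{>x}$. The sum is therefore $\sum_{L \subset A, R\subset B} (-1)^{\vert L\vert+\vert R\vert}$. This is $1$ if $A=B=\emptyset$ and $0$ otherwise, so $\mu(P,T) = -1$ if $A=B=\emptyset$ and $0$ otherwise.

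\emph{Step 3: identify the condition.} It remains to show that, given $P\le T$, the condition $A=B=\emptyset$ is equivalent to $x,y$ being twins in $P$. Suppose $A = B = \emptyset$ and take $a \in Q_{<y}$. Then $a \notin P_{\parallel x}$, since $A = \emptyset$. Also $a \notin P_{>x}$, since otherwise $a \in P_{>x} \subset Q_{>y}$, contradicting $a<y$. Hence $a \in P_{<x}$. Combined with Step 1 this gives $Q_{<y} = P_{<x}$, and dually $Q_{>y} = P_{>x}$. Since also $x \parallel y$, the elements $x$ and $y$ are twins. The converse is immediate.

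The main obstacle is the bookkeeping in Step 2. One must check that $P[L,R] \le T$ depends only on $L$ and $R$, not on the transitive closure, and that every strictly intermediate $S$ is proper. The downward and upward closedness of $Q_{<y}$ and $Q_{>y}$ handles the first point. The second holds because improper extensions are maximal and $S \ne T$.
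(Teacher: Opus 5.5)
Your proposal is correct and follows essentially the same route as the paper: the M\"obius recursion over intermediate extensions (all proper, since improper ones are maximal), Proposition~\ref{prop:mu-cuts} to convert this into a signed sum over cuts, the identification of the relevant cuts as exactly the pairs $L \subset P_{\parallel x} \cap Q_{<y}$, $R \subset P_{\parallel x} \cap Q_{>y}$, and a Boolean sum that is nonzero precisely in the twin case. You spell out two steps the paper leaves implicit, namely why these $(L,R)$ satisfy $P[L,R] \le T$ and why $A=B=\emptyset$ is equivalent to $x$ and $y$ being twins.
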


\begin{proof}
If $P \not\le T$ then one easily sees that $x$ and $y$ are not twins, and $\mu(P,T)=0$ by convention. Suppose now that $P \le T$. By general properties of the M\"obius function, we have
\begin{displaymath}
\mu(P,T) = -\sum_{P \le S < T} \mu(P,S).
\end{displaymath}
Since improper 1-extensions are maximal elements of $\cE_1(Q)$, the extensions $S$ appearing above must be proper 1-extensions. Appealing to Proposition~\ref{prop:mu-cuts}, we have
\begin{displaymath}
\mu(P,T) = -\sum_{P \le S < T} \sum_{P[L,R]=S} (-1)^{\vert L \vert + \vert R \vert},
\end{displaymath}
where the inner sum is over cuts $(L,R)$ of $(P,x)$ satisfying the stated condition.

Now, suppose $(L,R)$ is an arbitrary cut and put $S=P[L,R]$. The canonical function $\phi \colon S \to T$ exists, and is the unique function that is the identity on $Q$ and maps $x$ to $y$. It is strictly monotone if and only if all elements of $P_{<x} \cup L$ are below $y$, and all elements of $P_{>x} \cup R$ are above $y$. We thus see that $S \le T$ if and only if these conditions hold. We therefore have
\begin{equation} \label{eq:twin-mobius}
\mu(P,T) = \sum_{(L,R)} (-1)^{\vert L \vert + \vert R \vert+1},
\end{equation}
where the sum is over all cuts $(L,R)$ for $(P,x)$ with $P_{<x} \cup L$ below $y$ and $P_{>x} \cup R$ above $y$.

Suppose that one of the containments $P_{<x} \subset P_{<y}$ or $P_{>x} \subset P_{>y}$ fails to hold. Then there are no cuts satisfying the above conditions, and so the sum in \eqref{eq:twin-mobius} is empty, i.e., $\mu(P,T)=0$. We also see that $x$ and $y$ are not twins. Thus the proposition holds.

Now suppose $P_{<x} \subset P_{<y}$ and $P_{>x} \subset P_{>y}$. The cuts appearing in \eqref{eq:twin-mobius} are exactly those pairs $(L,R)$ where $L$ is an arbitrary subset of $P_{<y} \cap P_{\parallel x}$ and $R$ is an arbitrary subset of $P_{>y} \cap P_{\parallel x}$; indeed, given such $L$ and $R$, we have
\begin{displaymath}
P_{<x} \cup L < y < P_{>x} \cup R,
\end{displaymath}
and so $(L,R)$ is a cut. The sum in \eqref{eq:twin-mobius} is therefore $-1$ precisely when both $P_{<y} \cap P_{\parallel x}$ and $P_{>y} \cap P_{\parallel x}$ are empty, which is equivalent to $x$ and $y$ being twins; otherwise it is~0.
\end{proof}

\section{Upper bounds on measures} \label{s:upper}

In \S \ref{s:upper}, we give an upper bound on the space of measures on $\fP$: we show that any measure is determined by its associated $3 \times 3$ matrix, and that this matrix (or its transpose) must be among those in Table~\ref{t:matrix}.

\subsection{The main theorem} \label{ss:quadthm}

The following is the main result of \S \ref{s:upper}:

\begin{theorem} \label{thm:quad}
Let $\nu$ be a measure for $\fP$ valued in a field $k$, and let $\nu_{p,q}=\nu(j_{p,q})$.
\begin{enumerate}
\item We have $\nu_{p,q}=\nu_{\ol{p}, \ol{q}}$, where $\ol{n}=\min(n,2)$. Consequently, $\nu$ is completely determined by the $3 \times 3$ matrix $N = (\nu_{p,q})_{0 \le p,q \le 2}$.
\item The matrix $N$ belongs to the 2-parameter family $\AA$, one of the eight 1-parameter families $\BB_i$ or $\BB_i^{\top}$ ($1 \le i \le 5$), or is one of the 15 matrices $\CC_i$ or $\CC_i^{\top}$ ($1 \le i \le 8$).
\end{enumerate}
\end{theorem}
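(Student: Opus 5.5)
The plan is to work entirely with the quadratic relations $\delta_\nu(P,x,y)=0$, writing every $\nu(P,x)$ through Corollary~\ref{cor:weak1} as $\nu(P,x) = -\tau(P,x) + \sum_{p,q} \nu_{p,q}\lambda_{p,q}(P,x)$. Each choice of a small poset $P$ with two points $x,y$ then gives an explicit quadratic polynomial identity in the parameters $\nu_{p,q}$. The coefficients come from counting twins and cuts, and Proposition~\ref{prop:lambda-calc} and its transposes make these counts easy for $\lambda_{0,0}$, $\lambda_{\ast,0}$, $\lambda_{1,0}$, and $\lambda_{\ast,1}$. Separated pairs give nothing (Proposition~\ref{prop:sep}). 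The useful pairs $(x,y)$ are therefore cover pairs, or incomparable pairs for which an improper or alternative amalgamation exists. Proposition~\ref{prop:defect-sum} will be used to discard redundant relations.

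For part (a), I would first prove the stabilization $\nu_{p,q}=\nu_{\ol p,\ol q}$. By transposition symmetry it suffices to show $\nu_{p,q}=\nu_{p+1,q}$ for $p\ge 2$. The idea is to find a poset $E$ containing $J_{p,q}$-like configurations, with two points $x,y$, such that:
\begin{itemize}
\item $\nu(E,x)$ and $\nu(E^y,x)$ evaluate (via cuts) to combinations involving $\nu_{p,q}$, $\nu_{p+1,q}$, and lower terms;
\item $\nu(E,y)$ and $\nu(E^x,y)$ are computed by induction or are nonzero constants.
\end{itemize}
A natural first attempt is $E$ obtained from $J_{p+1,q}$ by adding an extra element $y$ lying above $a_1,a_2$ and below $x$, or incomparable to $x$ with the same lower set as some $a_i$. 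Adding such an element forces most cuts to cancel, so the defect equation becomes something like $\nu_{*}(\nu_{p,q}-\nu_{p+1,q})=0$ with $\nu_{*}$ an invertible or independently controlled factor. The condition $p\ge2$ should enter because only with at least two lower points does the added element produce a genuine new lower configuration. If the leading factor can vanish, I would treat that degenerate branch separately, using a second poset (e.g.\ the transpose construction, or a variant involving $b_j$'s) to force equality there as well. Once stabilization holds, Corollary~\ref{cor:weak1} shows that $\nu$ is determined by $N$, since $\nu_e=1$ for a measure.

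For part (b), I would generate the defect equations for all posets of size up to about five or six with marked pairs $(x,y)$. Each is expressed in terms of the nine entries of $N$ using (a) and the cut formulas. This yields a finite system of quadratic equations in nine variables; the paper's Table~\ref{t:eqs} presumably records a sufficient subset. Solving it is a case analysis driven by a few factored equations. I expect small configurations to give equations such as
\[
\nu_{0,0}\nu_{1,0}=\nu_{0,0}\nu_{0,0}\quad\text{or}\quad \nu_{1,1}(\nu_{1,1}+1)=0 .
\]
The analysis would then split on whether $\nu_{1,1}\in\{0,-1\}$ and whether various entries are zero. Each branch either pins the entries down to one of the $\CC_i$ or leaves a line or plane, matching $\AA$ or some $\BB_i$, up to transpose.

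The main obstacle is step (a): finding the right auxiliary poset $E$ whose defect isolates $\nu_{p+1,q}-\nu_{p,q}$ with a coefficient one can control, uniformly in $p$ and $q$. The cut sums for large $J$-type configurations must be computed by hand rather than by machine. The case analysis in (b) is lengthy but routine once the right list of small equations is chosen.
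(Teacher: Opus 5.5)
\textbf{The gap is in part~(a).} Part~(a) is where the real content of the theorem lies. You do not produce an auxiliary poset. The mechanism you anticipate is a relation $\nu_*(\nu_{p,q}-\nu_{p+1,q})=0$ with $\nu_*$ invertible or ``independently controlled'', with degenerate branches handled by unspecified further posets. That leaves the argument open exactly where it matters. You cannot use the classification to dispose of the branch $\nu_*=0$, because (b) is obtained only after (a) has been used to rewrite the defect equations in terms of the nine entries of $N$.

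Your own first candidate shows the branch is real. Insert $y$ with $a_1,a_2<y<x$ into $J_{p+1,q}$ to get a poset $E'$. Then $[E',x]=j_{p,q}$, $[E'^y,x]=j_{p+1,q}$, $[E',y]=T_1$ and $[E'^x,y]=T_q$, where $T_q=\sum_{r=0}^{p-1}(-1)^r\binom{p-1}{r}j_{r+2,q}$. So the defect relation is $j_{p,q}T_q=j_{p+1,q}T_1$.
\begin{itemize}
\item For $q=1$ and $p\ge2$ it reads $(j_{p,1}-j_{p+1,1})T_1=0$ with $T_1\equiv\pm(j_{p+1,1}-j_{p,1})$ modulo lower differences. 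This does stabilize that column.
\item For $q\neq1$, once that column is stabilized we have $\nu(T_1)=0$. Induction on $p$ then only gives $\nu_{p,q}(\nu_{p+1,q}-\nu_{p,q})=0$.
\item The case $\nu_{p,q}=0$ genuinely occurs: for example, $\nu_{2,0}=0$ on the whole plane $\AA$.
\end{itemize}

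\textbf{The missing idea} is the poset $E_{m,n}$ of \S\ref{ss:jstab}, built so that one of the four factors of the defect vanishes identically. The cuts of $(E,y)$ are the pairs $(\emptyset,R)$ with $R\subseteq Q\cup\{b\}$. Adding or removing $b$ preserves the $(p,q)$ invariants and flips the sign, so $[E,y]=0$. The identity $[E,x][E^x,y]=[E,y][E^y,x]$ then becomes $\delta_{m,n}\cdot\sum_{r=0}^{n}(-1)^r\binom{n}{r}\delta_{m,r}=0$, where $\delta_{m,r}=j_{m,r+3}-j_{m,r+2}$. The second factor is $\pm\delta_{m,n}$ modulo lower differences. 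Hence induction on $n$ shows each $\delta_{m,n}$ is nilpotent in $\Theta(\fP)$, so it is killed by every field-valued measure. Because the ``coefficient'' is the difference itself, no degenerate branch arises.

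\textbf{On part (b).} Your outline matches the paper's: a finite list of defect equations (Table~\ref{t:eqs}) followed by a case split. However, your sample equations are wrong.
\begin{itemize}
\item The two-element chain gives $\nu_{0,0}(\nu_{1,0}-\nu_{0,1})=0$, not $\nu_{0,0}\nu_{1,0}=\nu_{0,0}^2$.
\item $\nu_{1,1}(\nu_{1,1}+1)=0$ cannot be a valid relation, since $\nu_{1,1}=s$ is arbitrary on $\AA(s,t)$. So your planned split on $\nu_{1,1}\in\{0,-1\}$ is the wrong one.
\end{itemize}
The right dichotomy is $\nu_{1,1}=-1$ versus $\nu_{1,1}\neq-1$. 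Off that locus, $h_9$--$h_{12}$ and $h_{20}$ make $\nu_{p,q}$ depend only on which of $p,q$ vanish. Then $h_1,h_5,h_6,h_{13}$ leave only $\AA$, $\BB_1$, $\BB_2$ and $\BB_2^{\top}$. Note also that in two of the branches with $\nu_{1,1}=-1$, the paper pins down $\nu_{2,2}$ only through the seven-element poset $P_{23}$. So a cutoff at five or six elements may not suffice.
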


This theorem allows us to parametrize measures by $3 \times 3$ matrices. Precisely, a measure $\nu$ corresponds to the matrix $N = N(\nu)$ appearing above. We can recover $\nu$ from $N$ using the formula from Corollary~\ref{cor:weak1}:
\begin{equation} \label{eq:meas}
\nu = -\tau + \sum_{p,q \ge 0} \nu_{\ol{p}, \ol{q}} \lambda_{p,q}.
\end{equation}
If we start with an arbitrary matrix $N$ and define $\nu = \nu(N)$ by the above formula, then $\nu$ is a weak measure, but may or may not be an actual measure. Theorem~\ref{thm:quad}(b) shows that for $\nu$ to be a measure a necessary condition is that $N$ (or its transpose) must appear in Table~\ref{t:matrix}. We will eventually show this is sufficient as well.

The proof of Theorem~\ref{thm:quad} will take the entirety of \S \ref{s:upper}. We first prove Theorem~\ref{thm:quad}(a), then obtain a list of equations the entries of $N$ must satisfy, and finally solve these equations.

\subsection{Stabilization of the $j$ classes} \label{ss:jstab}

For $m,n \in \bN$, let $\delta_{m,n}=j_{m,n+3}-j_{m,n+2}$, regarded as an element of $\Theta(\fP)$. We show:

\begin{proposition} \label{prop:jstab}
The elements $\delta_{m,n}$ are nilpotent for all $m,n \in \bN$.
\end{proposition}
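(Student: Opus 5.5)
Since $\Theta(\fP)$ is a commutative ring, an element is nilpotent if and only if it lies in every prime ideal, that is, if and only if it maps to zero under every ring homomorphism $\Theta(\fP) \to k$ with $k$ a field. By universality, I therefore plan to prove the following: for every field-valued measure $\nu$ we have $\nu_{m,n+3}=\nu_{m,n+2}$. The tools are the defect equations $\delta_\nu(E,x,y)=0$ for well-chosen posets $E$ and points $x,y$. Each factor $\nu(E,x)$, $\nu(E^x,y)$, and so on will be rewritten as $-\tau + \sum \nu_{p,q}\lambda_{p,q}$ using Corollary~\ref{cor:weak3}. The values of $\tau$ and of the $\lambda_{p,q}$ will be computed either directly from cuts or via Proposition~\ref{prop:lambda-calc}. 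A more direct route, if it works, is to exhibit an explicit identity in $\Sym \Theta^1(\fP)$ modulo $\fa$ of the shape $\delta_{m,n}^2 = 0$, or $\delta_{m,n}\cdot u = 0$ with $u$ a unit.

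As a first test I will try $E=J_{m,n+3}$ with $x=\ast$ and $y=b_1$. The defect relation gives
\begin{displaymath}
j_{m,n+3}\,[E^{\ast},b_1] = [E,b_1]\, j_{m,n+2}.
\end{displaymath}
Here $E^\ast$ is the complete bipartite poset with the $a_i$ below the $b_j$. The marked poset $(E,b_1)$ has $b_1$ maximal with the unique lower cover $\ast$, and it has $n+2$ lower twins. Both factors are therefore explicit combinations of $j$'s, and by Proposition~\ref{prop:lambda-calc} they essentially involve only the columns $q\in\{0,1\}$. If these two factors coincide modulo $\fa$, this already gives $\delta_{m,n}\cdot[E,b_1]=0$. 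By itself this yields nothing when $[E,b_1]$ vanishes under $\nu$. So I will need a second poset to handle the locus where that factor vanishes.

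This is where the main obstacle lies: constructing an auxiliary poset $E$ (the paper hints at one) with two points $x,y$ such that all four one-point restrictions either equal $J_{m,n+3}$ or $J_{m,n+2}$, or reduce to them by repeatedly discarding separated points (Proposition~\ref{prop:sep}(c)). The defect equation should then read, after expansion, $j_{m,n+3}^2 - 2j_{m,n+3}j_{m,n+2}+j_{m,n+2}^2 = 0$, or some multiple $c\,\delta_{m,n}^2$ together with $\delta_{m,n}\cdot(\text{known zero divisor})$.

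My first attempt would take $x$ and $y$ to be two incomparable points, each lying above the $a$'s and below many $b$'s, and choose their upper sets so that the following hold:
\begin{itemize}
\item $E^y$ marked at $x$ looks like $J_{m,n+3}$ after removing separated points;
\item $E^x$ marked at $y$ looks like $J_{m,n+2}$ after removing separated points;
\item in $E$ itself, $x$ and $y$ are twins up to one extra upper cover.
\end{itemize}
With this arrangement, $\tau$ and the cut sums contribute symmetric cross terms. I would then compute $\lambda(E,x)$ and $\lambda(E,y)$ by enumerating cuts. Only the cuts involving the other point and its neighbours should contribute, and the cancellation arguments used in Lemma~\ref{lem:weak-2} should apply. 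Combining the two resulting relations and checking that the product forces $\nu_{m,n+3}=\nu_{m,n+2}$ completes the argument for every field-valued $\nu$, and hence proves nilpotence.
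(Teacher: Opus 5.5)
You are right that nilpotence is equivalent to $\nu_{m,n+3}=\nu_{m,n+2}$ for every field-valued measure. But there is a genuine gap: you never produce a twice-marked poset whose defect relation forces this, and neither candidate you discuss does so.

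\emph{The first test.} The two factors do not coincide. Every cut of the form $(\emptyset,R)$ has $p=m$ for $(E^\ast,b_1)$ but $p=1$ for $(E,b_1)$, so $[E^\ast,b_1]-[E,b_1]=\sum_{k=0}^{n+2}(-1)^k\binom{n+2}{k}(j_{m,k}-j_{1,k})$. This vanishes only for $m=1$. These cuts also contribute columns $q$ up to $n+2$, not just $q\in\{0,1\}$.

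\emph{The second design} fails as stated, under its natural reading $E_{<x}=E_{<y}$ and $E_{>x}=E_{>y}\sqcup\{c\}$. Then $E_{<y}\subseteq E_{<c}$ and $E_{>c}\subseteq E_{>y}$. By Proposition~\ref{prop:poset-sep}(b), $c$ is not separated from $y$ in $E^x$, nor in any subposet containing both, so $(E^x,y)$ never reduces to $J_{m,n+2}$. Take the smallest such poset: $m$ points below $x$ and $y$, $n+2$ common upper covers, and one extra cover $c$ of $x$. One computes
\[
[E,x]=j_{m,n+3}-j_{m,2},\quad [E^x,y]=-\delta_{m,n},\quad [E,y]=\delta_{1,n}-\delta_{m,n},\quad [E^y,x]=j_{m,n+3}.
\]
The defect relation then collapses to $j_{m,2}\,\delta_{m,n}=\delta_{1,n}\,j_{m,n+3}$. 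This settles $m=1$ by induction, but says nothing about $\delta_{m,n}$ for measures with $\nu_{m,2}=0$. Incomparable pairs buy nothing anyway: by Proposition~\ref{prop:defect-sum}, their defect is minus the sum of the defects at the comparable amalgamations.

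\emph{The missing idea} is to make one of the two products in the defect identity vanish in $\Theta(\fP)$. The other product should then be $\delta_{m,n}$ times something congruent to $\pm\delta_{m,n}$ modulo the $\delta_{m,r}$ with $r<n$.

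The paper's poset $E_{m,n}$ does this. It has an antichain $P$ of size $m$ below both $x$ and $a$. Above $x$ sit $y$, $b$, and an antichain $Q$ of size $n$. Finally $y<c$, $y<d$, $a<d$, $b<d$, and one uses the cover pair $x\lessdot y$. The cut computations are as follows.
\begin{itemize}
\item $[E,x]=-\delta_{m,n}$: the only cuts are $(\emptyset,\emptyset)$ and $(\emptyset,\{a\})$.
\item $[E,y]=0$: the cuts are $(\emptyset,R)$ with $R\subseteq Q\cup\{b\}$. Toggling $b\in R$ just swaps $d$ for $b$ in $\Min(R\cup E_{>y})$, so the terms cancel in pairs.
\item $[E^x,y]=\sum_{r=0}^n(-1)^r\binom nr\delta_{m,r}$.
\end{itemize}
The defect identity therefore reads $\delta_{m,n}\sum_{r}(-1)^r\binom nr\delta_{m,r}=0$. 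So $\delta_{m,n}^2$ lies in the ideal generated by $\delta_{m,0},\dots,\delta_{m,n-1}$, and induction on $n$ gives nilpotence. Note that this relation is neither of the shapes you aimed for ($\delta_{m,n}^2=0$, or $\delta_{m,n}u=0$ with $u$ a unit). The inductive form is what is actually attainable.
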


Using the transpose involution, it follows that the analogous elements $j_{m+3,n}-j_{m+2,n}$ are also nilpotent. We have seen that the classes $j_{p,q}$ generate $\Theta(\fP)$ as a ring (Corollary~\ref{cor:linear}). We can now refine this result:

\begin{corollary} \label{cor:quad}
The classes $j_{p,q}$ with $0 \le p,q \le 2$ generate $\Theta(\fP)$ modulo its nilradical.
\end{corollary}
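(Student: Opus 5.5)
We deduce Corollary~\ref{cor:quad} formally from Proposition~\ref{prop:jstab}, its transpose, and Corollary~\ref{cor:linear}. Let $\fQ$ be the nilradical of $\Theta(\fP)$, and let $S \subset \Theta(\fP)/\fQ$ be the subring generated by the images of the $j_{p,q}$ with $0 \le p,q \le 2$. By Corollary~\ref{cor:linear}, it suffices to show that the image of every $j_{p,q}$ with $p,q \in \bN$ lies in $S$. We will in fact show $j_{p,q} \equiv j_{\ol{p},\ol{q}}$ modulo $\fQ$.

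First fix $m \in \bN$ and fix the second index. Proposition~\ref{prop:jstab} says $j_{m,n+3}-j_{m,n+2}$ is nilpotent, hence lies in $\fQ$, for all $n \ge 0$. Induction on $q \ge 2$ then gives $j_{m,q} \equiv j_{m,2}$ modulo $\fQ$. The step from $q$ to $q+1$ is exactly the element $\delta_{m,q-2}$, and we use that $\fQ$ is an ideal, so congruences are transitive. Thus $j_{m,q} \equiv j_{m,\ol{q}}$ for all $m$ and $q$.

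Next apply the transposed statement. The transpose involution on posets induces a ring automorphism of $\Theta(\fP)$ sending $j_{p,q}$ to $j_{q,p}$, so $j_{p+3,n}-j_{p+2,n}$ is nilpotent for all $p,n$. By the same induction, $j_{p,n} \equiv j_{\ol{p},n}$ for every $n$. Taking $n=\ol{q}$ and combining with the first step gives $j_{p,q} \equiv j_{p,\ol{q}} \equiv j_{\ol{p},\ol{q}}$.

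Hence every generator $j_{p,q}$ of $\Theta(\fP)$ is congruent modulo $\fQ$ to one of the nine classes $j_{p,q}$ with $0 \le p,q \le 2$. The images of these nine classes therefore generate $\Theta(\fP)/\fQ$ as a ring.

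There is no genuine obstacle, since all the substance lies in Proposition~\ref{prop:jstab}. The only point to check is that transpose really gives a ring automorphism of $\Theta(\fP)$ exchanging the indices of $j_{p,q}$. This holds because transposition preserves embeddings and amalgamations, and the transpose of $J_{p,q}$ is $J_{q,p}$ with the same marked point.
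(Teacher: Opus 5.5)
Your proposal is correct and follows the paper's own (largely implicit) argument. The paper likewise deduces the corollary from Proposition~\ref{prop:jstab}, its transpose via the transpose involution (which sends $J_{p,q}$ to $J_{q,p}$), and Corollary~\ref{cor:linear}. Your explicit induction showing $j_{p,q} \equiv j_{\ol{p},\ol{q}}$ modulo the nilradical just spells out those steps.
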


Proposition~\ref{prop:jstab} implies Theorem~\ref{thm:quad}(a). Indeed, the measure $\nu$ in Theorem~\ref{thm:quad} corresponds to a ring homomorphism $\Theta(\fP) \to k$, and this must kill nilpotents since $k$ is a field.

In the remainder of \S \ref{ss:jstab}, we prove Proposition~\ref{prop:jstab}. For $m,n \in \bN$, let $E=E_{m,n}$ denote the poset depicted by the following Hasse diagram
\begin{center}
\begin{tikzpicture}[
    x=1.5cm,
    y=1cm,
    dot/.style={
        circle,
        fill,
        inner sep=1.6pt
    },
    boxed/.style={
        draw,
        rectangle,
        minimum width=7mm,
        minimum height=6mm,
        inner sep=1pt
    }
]

\node[boxed] (P) at (0,0) {$P$};
\node[dot,label=below:$x$] (x) at (-1,0.5) {};
\node[dot,label=below:$a$] (a) at (1,0.5) {};
\node[boxed] (Q) at (-2,1) {$Q$};
\node[dot,label=left:$y$] (y) at (-1,1) {};
\node[dot,label=below:$b$] (b) at (0,1) {};
\node[dot,label=above:$c$] (c) at (-1,1.5) {};
\node[dot,label=above:$d$] (d) at (0,1.5) {};

\draw
    (P) -- (x)
    (P) -- (a)
    (x) -- (y)
    (x) -- (b)
    (x) -- (Q)
    (y) -- (c)
    (y) -- (d)
    (a) -- (d)
    (b) -- (d);
\end{tikzpicture}
\end{center}
where $P=\{p_1, \ldots, p_m\}$ and $Q=\{q_1, \ldots, q_n\}$ are antichains. Thus the $p_i$ are the minimal elements (if $m>0$), while $c$, $d$, and the $q_i$ are the maximal elements. As we will see, vanishing of the defect for $(E,x,y)$ will provide the key relation to show nilpotence of $\delta_{m,n}$.

\begin{lemma} \label{lem:quad-1}
We have $[E, x] = -\delta_{m,n}$.
\end{lemma}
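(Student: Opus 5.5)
The plan is to compute $[E,x]$ directly from Corollary~\ref{cor:weak3}. That corollary gives
\begin{displaymath}
\lbb E,x \rbb = -\tau(E,x)\, e + \sum_{p,q} \lambda_{p,q}(E,x)\, j_{p,q}
\end{displaymath}
in $\Theta^1(\fP)$. We then pass to $\Theta(\fP)$. So it suffices to show that $\tau(E,x)=0$ and that $\lambda(E,x) = u^m v^{n+2} - u^m v^{n+3}$. Once this is done, $[E,x] = j_{m,n+2} - j_{m,n+3} = -\delta_{m,n}$.

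\emph{Step 1: the position of $x$ in $E$.} Reading off the Hasse diagram, we have $E_{<x} = P$. We also have $E_{>x} = \{y, b, c, d\} \cup Q$. The only element incomparable to $x$ is $a$: it lies above $P$ and below $d$, but it is not comparable with $x$. Hence $E_{\parallel x} = \{a\}$. In particular, $\Max(E_{<x}) = P$ has $m$ elements. Also $\Min(E_{>x}) = \{y,b\} \cup Q$ has $n+2$ elements, since $c$ and $d$ lie above $y$.

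\emph{Step 2: twins.} The only candidate twin of $x$ is $a$, because twins are incomparable. But $a$ is not a twin of $x$, since $y \in E_{>x}$ while $y \notin E_{>a}$. Hence $\tau(E,x) = 0$.

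\emph{Step 3: cuts.} The candidate cuts are pairs $(L,R)$ with $L, R \subset \{a\}$. Since $L$ and $R$ must be disjoint, there are three possibilities.
\begin{itemize}
\item The pair $(\emptyset,\emptyset)$ is always a cut. It contributes $+u^m v^{n+2}$.
\item The pair $(\{a\},\emptyset)$ is not a cut. It would require $a < E_{>x}$, which fails because $a \not< y$.
\item The pair $(\emptyset,\{a\})$ is a cut, because it only requires $P < a$, which holds. Here $R \cup E_{>x} = \{a, y, b, c, d\} \cup Q$, whose minimal elements are $\{a, y, b\} \cup Q$. Note that $a < d$, and $a$ is incomparable with $y$, $b$, $c$ and the $q_i$. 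So $q = n+3$, while $p = m$ as before, and the sign is $-1$. This cut contributes $-u^m v^{n+3}$.
\end{itemize}
Summing the contributions gives $\lambda(E,x) = u^m v^{n+2} - u^m v^{n+3}$, as claimed.

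The main point needing care is Step 3. There one must verify the minimal elements of $R \cup E_{>x}$ carefully, in particular that $a$ is incomparable to $y$, $b$ and the $q_i$. One must also correctly exclude the cut with $a \in L$. The degenerate cases $m=0$ or $n=0$ go through unchanged.
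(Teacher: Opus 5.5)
Your proposal is correct and follows the paper's proof essentially verbatim. Like the paper, you identify $E_{\parallel x}=\{a\}$, find the two cuts $(\emptyset,\emptyset)$ and $(\emptyset,\{a\})$ with invariants $(m,n+2)$ and $(m,n+3)$, observe that $x$ has no twin, and conclude via Corollary~\ref{cor:weak3}. The only cosmetic difference is that you exclude $a\in L$ using $a\not<y$, whereas the paper uses $a\not<b$.
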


\begin{proof}
We have
\begin{displaymath}
E_{<x}=P, \qquad E_{>x} = Q \cup \{b,c,d,y\}, \qquad E_{\parallel x}=\{a\}.
\end{displaymath}
A cut for $(E,x)$ consists of $L,R \subset \{a\}$ such that $L \cup E_{<x} < R \cup E_{>x}$. Since $a \nless b$, we cannot have $a \in L$, and so $L=\emptyset$ is the only option. Both $R=\emptyset$ and $R=\{a\}$ give cuts. We have
\begin{displaymath}
\vert \Max(E_{<x}) \vert = m, \quad
\vert \Min(E_{>x}) \vert = n+2, \quad
\vert \Min(\{a\} \cup E_{>x}) \vert = n+3.
\end{displaymath}
We thus find
\begin{displaymath}
\lambda(E, x) = u^m v^{n+2} - u^m v^{n+3}.
\end{displaymath}
Clearly, $x$ does not have a twin in $E$. The result thus follows from Corollary~\ref{cor:weak3}.
\end{proof}

\begin{lemma} \label{lem:quad-2}
We have $[E, y]=0$.
\end{lemma}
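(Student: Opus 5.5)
The plan is to compute $[E,y]$ exactly as in Lemma~\ref{lem:quad-1}. By Corollary~\ref{cor:weak3}, pushed forward to $\Theta(\fP)$ where $e \mapsto 1$, we have
\begin{displaymath}
[E,y] = -\tau(E,y) + \sum_{p,q} \lambda_{p,q}(E,y)\, j_{p,q}.
\end{displaymath}
So it suffices to show that $\tau(E,y)=0$ and that the polynomial $\lambda(E,y)$ vanishes identically.

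First I read off the relevant sets from the Hasse diagram. The generating relations are $P<x$, $P<a$, $x<y$, $x<b$, $x<Q$, $y<c$, $y<d$, $a<d$ and $b<d$. This gives
\begin{displaymath}
E_{<y}=P\cup\{x\}, \qquad E_{>y}=\{c,d\}, \qquad E_{\parallel y}=\{a,b\}\cup Q.
\end{displaymath}
Now check twins. A twin of $y$ lies in $E_{\parallel y}$ and has upper set $\{c,d\}$. But $E_{>b}=\{d\}$, $E_{>a}=\{d\}$, and $E_{>q_i}=\emptyset$. Hence $y$ has no twin, and $\tau(E,y)=0$.

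Next I enumerate the cuts $(L,R)$ for $(E,y)$, i.e.\ $L,R\subset\{a,b\}\cup Q$ with $L\cup P\cup\{x\} < R\cup\{c,d\}$.
\begin{itemize}
\item Every element of $L$ must lie below $c$. The only elements below $c$ are $y$, $x$ and $P$, so $L=\emptyset$.
\item Every element of $R$ must lie above $x$ (and above $P$). This excludes $a$, while $b$ and all $q_i$ qualify.
\end{itemize}
So the cuts are exactly $(\emptyset,R)$ with $R\subset\{b\}\cup Q$ arbitrary. For all of them, $p(\emptyset)=\vert\Max(P\cup\{x\})\vert=1$.

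For $q(R)$, note that the $q_i$ are incomparable to $b$, $c$ and $d$, and that $b<d$ while $b\parallel c$. Write $R=R_0$ or $R=R_0\cup\{b\}$ with $R_0\subset Q$.
\begin{itemize}
\item For $R=R_0$, we get $\Min(R_0\cup\{c,d\})=R_0\cup\{c,d\}$, so $q=\vert R_0\vert+2$.
\item For $R=R_0\cup\{b\}$, the element $d$ is no longer minimal, so $\Min=R_0\cup\{b,c\}$ and again $q=\vert R_0\vert+2$.
\end{itemize}
The two cuts in each pair have the same $(p,q)$ but signs differing by $-1$. Thus all contributions cancel and $\lambda(E,y)=0$, giving $[E,y]=0$.

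There is no real obstacle here. The only care needed is correctly reading all comparabilities in $E$ from the Hasse diagram, particularly that $b<d$ but $b\parallel c$, and that $a\not>x$. The pairing $R_0\leftrightarrow R_0\cup\{b\}$ is the key cancellation, mirroring the pairing argument in Case~1b of Lemma~\ref{lem:weak-2}.
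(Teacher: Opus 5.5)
Your proof is correct and follows the paper's argument essentially verbatim: you identify the same cuts $(\emptyset,R)$ with $R\subset Q\cup\{b\}$, observe that $p=1$ and $q=2+\vert R\cap Q\vert$ regardless of whether $b\in R$ so the terms cancel in pairs, and note that $y$ has no twin before applying Corollary~\ref{cor:weak3}. Your explicit check of the upper sets of $a$, $b$, and the $q_i$ to rule out twins is a small addition; the paper just asserts this with ``clearly.''
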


\begin{proof}
We have
\begin{displaymath}
E_{<y}=P \cup \{x\}, \qquad E_{>y} = \{c,d\}, \qquad E_{\parallel y}=Q \cup \{a,b\}.
\end{displaymath}
Consider a cut $(L,R)$ for $(E,y)$. Since every element of $L$ must be $<c$, as $c \in E_{>y}$, we see that $L=\emptyset$. Since every element of $R$ must be $>x$, we see that $a$ cannot belong to $R$. These are the only constraints. The cuts are thus $(\emptyset, R)$, where $R$ is an arbitrary subset of $Q \cup \{b\}$. Let $R$ be a subset of $Q \cup \{b\}$, and write $\vert R \vert=r+\delta$, where $r=\vert R \cap Q \vert$ and $\delta$ is~1 if $b \in R$ and~0 otherwise. We have
\begin{displaymath}
\vert \Max(E_{<y}) \vert = 1, \qquad
\vert \Min(R \cup E_{>y}) \vert = 2+r.
\end{displaymath}
We thus find
\begin{displaymath}
\lambda(E,y) = \sum_{r=0}^n \sum_{\delta=0}^1 \binom{n}{r} (-1)^{r+\delta} u v^{2+r} = 0,
\end{displaymath}
as the $\delta=0$ terms cancel the $\delta=1$ terms. Clearly, $y$ has no twin in $E$. The result thus follows from Corollary~\ref{cor:weak3}.
\end{proof}

\begin{lemma} \label{lem:quad-3}
We have $[E^x, y] = \sum_{r=0}^n (-1)^r \binom{n}{r} \delta_{m,r}$.
\end{lemma}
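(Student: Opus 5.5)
The plan is the same as for Lemmas~\ref{lem:quad-1} and~\ref{lem:quad-2}. We compute $\lambda(E^x,y)$ directly from the definition of cuts, check that $y$ has no twin in $E^x$, and then read off $[E^x,y]$ from Corollary~\ref{cor:weak3}, passing to the image in $\Theta(\fP)$.

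\emph{The order on $E^x$.} First I would record the order on $E^x$. In $E$, every $p_i$ lies below $x$, and hence below $y$, $b$, the $q_j$, $c$ and $d$; also $p_i<a<d$. Deleting $x$ keeps all of these relations, since they are relations of the induced order. This gives
\begin{displaymath}
E^x_{<y}=P, \qquad E^x_{>y}=\{c,d\}, \qquad E^x_{\parallel y}=Q\cup\{a,b\}.
\end{displaymath}
No element of $E^x_{\parallel y}$ has both lower set $P$ and upper set $\{c,d\}$. For instance, $a$ and $b$ lie below $d$ but not below $c$, and each $q_j$ lies below nothing. So $\tau(E^x,y)=0$.

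\emph{The cuts.} Take $L,R\subset Q\cup\{a,b\}$ with $L\cup P<R\cup\{c,d\}$. No element of $Q\cup\{a,b\}$ lies below $c$, so $L=\emptyset$. On the other hand, every element of $Q\cup\{a,b\}$ lies above all of $P$ (this is vacuous if $m=0$), so $R$ can be an arbitrary subset of $Q\cup\{a,b\}$. For every such cut, $p(L)=\vert\Max(P)\vert=m$.

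\emph{The invariant $q(R)$.} The elements of $R\cap Q$ are incomparable to everything else in $R\cup\{c,d\}$, so they are all minimal. The elements $a$ and $b$ are incomparable to each other and to $c$, and both lie below $d$. Hence $d$ is minimal exactly when $R\cap\{a,b\}=\emptyset$. Write $r=\vert R\cap Q\vert$ and sum over the four choices of $R\cap\{a,b\}$:
\begin{itemize}
\item $R\cap\{a,b\}=\emptyset$ gives $q=r+2$ with sign $(-1)^r$;
\item $\{a\}$ or $\{b\}$ each give $q=r+2$ with sign $-(-1)^r$;
\item $\{a,b\}$ gives $q=r+3$ with sign $(-1)^r$.
\end{itemize}
Summing over $R\cap Q$ then yields
\begin{displaymath}
\lambda(E^x,y)=\sum_{r=0}^n(-1)^r\binom{n}{r}\bigl(u^mv^{r+3}-u^mv^{r+2}\bigr).
\end{displaymath}

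\emph{Conclusion.} Corollary~\ref{cor:weak3} gives $[E^x,y]=\sum_r(-1)^r\binom{n}{r}(j_{m,r+3}-j_{m,r+2})$. By the definition $\delta_{m,r}=j_{m,r+3}-j_{m,r+2}$, this is the stated formula.

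The only step needing care is the description of $\Min(R\cup\{c,d\})$, in particular the fact that $a$ and $b$ both kill the minimality of $d$ while $c$ always survives. That is what produces the factor $v^2-2v^2+v^3=v^3-v^2$. The rest is routine bookkeeping.
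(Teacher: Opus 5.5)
Your proof is correct and follows the paper's argument. You identify $E^x_{<y}=P$, $E^x_{>y}=\{c,d\}$ and $E^x_{\parallel y}=Q\cup\{a,b\}$, and show the cuts are exactly $(\emptyset,R)$ with $R\subset Q\cup\{a,b\}$ arbitrary; you then count $\Min(R\cup\{c,d\})$ through the minimality of $d$ (the paper writes this as $1+r+\max(1,\epsilon+\delta)$), note that $y$ has no twin, and conclude with Corollary~\ref{cor:weak3}.
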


\begin{proof}
We have
\begin{displaymath}
E^x_{<y}=P, \qquad E^x_{>y} = \{c,d\}, \qquad E^x_{\parallel y}=Q \cup \{a,b\}.
\end{displaymath}
Consider a cut $(L,R)$ for $(E^x, y)$. As in Lemma~\ref{lem:quad-2}, we must have $L=\emptyset$. However, there is now no restriction on $R$. We thus see that the cuts are $(\emptyset, R)$, where $R$ is any subset of $Q \cup \{a,b\}$. Write $\vert R \vert=r+\epsilon+\delta$, where $r=\vert R \cap Q \vert$, and $\epsilon$ and $\delta$ detect if $b$ and $a$ belong to $R$. We have
\begin{displaymath}
\vert \Max(E^x_{<y}) \vert = m, \qquad
\vert \Min(R \cup E^x_{>y}) \vert = 1+r+\max(1,\epsilon+\delta).
\end{displaymath}
In the second expression, the~1 comes from $c$, the $r$ from $R \cap Q$, and the final term from the identity
\begin{displaymath}
\vert \Min(\{d\} \cup (R \cap \{a,b\})) \vert = \max(1,\epsilon+\delta).
\end{displaymath}
We thus have
\begin{displaymath}
\lambda(E^x,y) = \sum_{r=0}^n \sum_{0 \le \delta, \epsilon \le 1} \binom{n}{r} (-1)^{r+\delta+\epsilon} u^m v^{1+r+\max(1,\epsilon+\delta)}.
\end{displaymath}
Now, we have
\begin{displaymath}
\sum_{0 \le \delta, \epsilon \le 1} (-1)^{\delta+\epsilon} v^{\max(1,\epsilon+\delta)} = v^2 - v,
\end{displaymath}
and so
\begin{displaymath}
\lambda(E^x,y) = \sum_{r=0}^n \binom{n}{r} (-1)^r u^m (v^{r+3} - v^{r+2}).
\end{displaymath}
As $y$ does not have a twin in $E^x$, the result follows from Corollary~\ref{cor:weak3}.
\end{proof}

\begin{proof}[Proof of Proposition~\ref{prop:jstab}]
In $\Theta(\fP)$, we have the identity
\begin{displaymath}
[E,x][E^x,y] = [E,y][E^y,x].
\end{displaymath}
Applying Lemmas~\ref{lem:quad-1},~\ref{lem:quad-2}, and~\ref{lem:quad-3}, we find
\begin{displaymath}
\delta_{m,n} \cdot \big( \sum_{r=0}^n (-1)^r \binom{n}{r} \delta_{m,r} \big) = 0
\end{displaymath}
for all $m,n \in \bN$. Fixing $m$ and proceeding by induction on $n$, we can assume that $\delta_{m,0}, \ldots, \delta_{m,n-1}$ are nilpotent. The above identity then shows that $\delta_{m,n}^2$ is a sum of nilpotents, and so $\delta_{m,n}$ is nilpotent. This completes the proof.
\end{proof}

\begin{remark}
The proof of Proposition~\ref{prop:jstab} relies on the cleverly constructed poset $E$. ChatGPT found this for $m=n=0$ by a brute force search, and then observed that one can add the $P$ and $Q$ vertices.
\end{remark}

\subsection{Some explicit equations} \label{ss:some-eq}

Let $\nu$ be a measure for $\fP$ valued in a field $k$, and let $\nu_{p,q} = \nu(j_{p,q})$. As we have seen, $\nu$ is completely determined by the parameters $\nu_{p,q}$ with $0 \le p,q \le 2$. These nine values cannot be chosen freely: they are subject to some constraints. Indeed, if $P$ is a poset and $x$ and $y$ are distinct points, the defect $\delta_{\nu}(P,x,y)$ must vanish. This amounts to the equation
\begin{displaymath}
\nu(P, x) \nu(P^x, y) = \nu(P, y) \nu(P^y, x).
\end{displaymath}
We can express the four factors above as affine linear combinations of the values $\nu_{p,q}$ for $0 \le p,q \le 2$ using \eqref{eq:meas}. In this way, the equation becomes a (possibly inhomogeneous) degree two polynomial equation in these quantities.

In Table~\ref{t:eqs}, we have listed 23 specific equations obtained in this way. We explain the notation. The $i$th poset is denoted $P_i$, and its underlying set is taken to be $[n]=\{1, \ldots, n\}$, where $n = \vert P_i \vert$. The column labeled $P_i$ specifies the poset structure by giving a generating set for the relations; each pair $ij$ in the brackets indicates a relation $i<j$. The underlined numbers indicate the two marked points ($x$ and $y$); if there is only one underlined point then one of the marked points is an isolated point (it does not matter which one). For instance, $P_{17}$ has underlying set $[5]$ and generating relations $2<5$ and $3<4$ and $3<5$, and the marked points are $x=1$ and $y=2$. The polynomial $h_i$ records the relation, that is, $h_i=0$ is the equation $\delta_{\nu}(P_i,x,y)=0$. To make the equations easier to read, we rename the variables as follows:
\begin{displaymath}
\begin{pmatrix}
x & b & q \\
a & y & t \\
p & s & z
\end{pmatrix}
=
\begin{pmatrix}
\nu_{0,0} & \nu_{0,1} & \nu_{0,2} \\
\nu_{1,0} & \nu_{1,1} & \nu_{1,2} \\
\nu_{2,0} & \nu_{2,1} & \nu_{2,2}
\end{pmatrix}.
\end{displaymath}
So, for instance, $h_1=0$ says that any measure satisfies $\nu_{0,0}=0$ or $\nu_{1,0}=\nu_{0,1}$. We solve the system of equations in \S \ref{ss:soln}.

We walk through a few examples to explain how to obtain the stated value of $h_i$.

\begin{example}
We explain the calculation of $h_1$. Let $P=[2]$ with order $1<2$. Then $h_1=0$ is the relation
\begin{displaymath}
\nu(P, 1) \nu(P^1, 2) = \nu(P, 2) \nu(P^2, 1).
\end{displaymath}
We have isomorphisms of marked posets
\begin{displaymath}
(P,1) \cong (J_{0,1}, \ast), \qquad
(P,2) \cong (J_{1,0}, \ast), \qquad
(P^1, 2) \cong (P^2, 1) \cong (J_{0,0}, \ast),
\end{displaymath}
and so
\begin{displaymath}
\nu(P,1) = \nu_{0,1} = b, \qquad
\nu(P,2) = \nu_{1,0} = a, \qquad
\nu(P^1,2) = \nu(P^2, 1) = \nu_{0,0} = x.
\end{displaymath}
Thus $h_1=x(a-b)$, as in the Table~\ref{t:eqs}.
\end{example}

\begin{example}
We explain the calculation of $h_7$. Let $P=[4]$ with order $2<3<4$; the element~1 is isolated. The marked points are~2 and~3, and so $h_7=0$ is the relation
\begin{displaymath}
\nu(P, 2) \nu(P^2, 3) = \nu(P, 3) \nu(P^3, 2).
\end{displaymath}
We have
\begin{align*}
\nu(P, 2) &= b-q & \nu(P^2, 3) &= b-q \\
\nu(P, 3) &= y & \nu(P^3, 2) &= b-q,
\end{align*}
which yields the formula for $h_7$ in the table. We explain the calculation of $\nu(P,2)$. We have $P_{\parallel 2} = \{1\}$. A cut $(L,R)$ for $(P,2)$ must satisfy $L < \{3,4\} \cup R$, and so we cannot have $1 \in L$. We thus find that there are two cuts, namely $(\emptyset, \emptyset)$ and $(\emptyset, \{1\})$. The $(p,q)$ invariants for these two cuts are $(0,1)$ and $(0,2)$. Since~2 does not have a twin in $P$, we find
\begin{displaymath}
[ P, 2 ] = j_{0,1} - j_{0,2}
\end{displaymath}
in $\Theta(\fP)$. Thus
\begin{displaymath}
\nu(P,2) = \nu_{0,1} - \nu_{0,2} = b-q,
\end{displaymath}
as claimed. The other calculations are similar.
\end{example}

\begin{example} \label{ex:eq-3}
We say a little about $h_{20}$. Let $P=[6]$ with order generated by
\begin{displaymath}
2<4, \quad 2<5, \quad 3<4, \quad 3<5.
\end{displaymath}
The points~1 and~6 are isolated. The marked points are~1 and~2. Thus $h_{20}=0$ is the relation
\begin{displaymath}
\nu(P, 1) \nu(P^1, 2) = \nu(P, 2) \nu(P^2, 1).
\end{displaymath}
We have
\begin{align*}
\nu(P, 1) &= - 1 + x - 3b + 2q - 3a + 4y - 2t + 2p - 2s + z \\
\nu(P^1, 2) &= - 1 - b + q - t \\
\nu(P, 2) &= - 1 - b + q - t \\
\nu(P^2, 1) &= -1 + x - 3b + 2q - 2a + 2y - t + p,
\end{align*}
which, after some simplification, yields the formula for $h_{20}$ in the table. We explain the formula for $\nu(P^1, 2)$. For $(P^1, 2)$, there are five cuts:
\begin{center}
\begin{tabular}{llll}
\toprule
$L$ & $R$ & $p$ & $q$ \\
\midrule
$\emptyset$ & $\emptyset$ & 0 & 2 \\
$\emptyset$ & $\{3\}$ & 0 & 1 \\
$\emptyset$ & $\{6\}$ & 0 & 3 \\
$\emptyset$ & $\{3,6\}$ & 0 & 2 \\
$\{3\}$ & $\emptyset$ & 1 & 2 \\
\bottomrule
\end{tabular}
\end{center}
Furthermore, 2 has one twin in $P^1$, namely, 3. We thus find
\begin{displaymath}
[ P^1, 2 ] = -1 + j_{0,2} - j_{0,1} - j_{0,3} + j_{0,2} - j_{1,2},
\end{displaymath}
in $\Theta(\fP)$, and so
\begin{displaymath}
\nu(P^1, 2) = -1 + q - b - t,
\end{displaymath}
as claimed. The other cases are similar, but more laborious: e.g., for $(P,1)$ there are~70 cuts.
\end{example}

\begin{remark}
We had ChatGPT create an interactive webpage \cite{webpage} where one can enter a marked poset and it will list all of the cuts, as well as the corresponding affine linear expression in the nine variables used in the table. Using this tool, one can, for example, easily obtain the list of 70 cuts mentioned in Example~\ref{ex:eq-3}.
\end{remark}

\begin{remark}
The equations in Table~\ref{t:eqs} were obtained by brute force: we (or really ChatGPT) generated all equations for $\vert P \vert \le 7$ and discarded redundant equations. This is a somewhat involved computational task, so one might worry that errors could have been made in the process. Fortunately, this is not important. For our proof, all that matters is that each equation appearing in the table is valid, and the table contains certificates for this (namely, the twice marked posets $P_i$). We have verified (independently of ChatGPT) that each $h_i$ has been correctly obtained from $P_i$.
\end{remark}

\begin{remark}
The collection $\{h_1, \ldots, h_{23}\}$ is stable under transpose: each $h_i$ is either its own transpose or adjacent to its transpose.
\end{remark}

\begin{table}
\caption{Some quadratic equations. See \S \ref{ss:some-eq} for an explanation.}
\label{t:eqs}
\renewcommand{\sc}[1]{\scalebox{0.7}{#1}}
\centering
\begin{tabular}{llll}
\toprule
\sc{$i$} & \sc{$\vert P_i \vert$} & \sc{$P_i$} & \sc{$h_i$} \\
\midrule
 \sc{1} & \sc{2} & \sc{$[\ul{1}\ul{2}]$} & \sc{$x(a-b)$} \\
 \sc{2} & \sc{3} & \sc{$[\ul{2}\ul{3}]$} & \sc{$(-a+b+p-q)(a+b-x+1)$} \\
 \sc{3} & \sc{3} & \sc{$[\ul{2}3]$} & \sc{$aq+bq-by-b-x q+q$} \\
 \sc{4} & \sc{3} & \sc{$[2\ul{3}]$} & \sc{$ap-ay-a+bp-x p+p$} \\
 \sc{5} & \sc{3} & \sc{$[\ul{1}\ul{2}; 23]$} & \sc{$b(y-b)$} \\
 \sc{6} & \sc{3} & \sc{$[12;\ul{2}\ul{3}]$} & \sc{$a(y-a)$} \\
 \sc{7} & \sc{4} & \sc{$[\ul{2}\ul{3}; 34]$} & \sc{$(-b+q)(-b+q+y)$} \\
 \sc{8} & \sc{4} & \sc{$[23;\ul{3}\ul{4}]$} & \sc{$(-a+p)(-a+p+y)$} \\
 \sc{9} & \sc{4} & \sc{$[\ul{3}4]$} & \sc{$(y+1)(q-b)$} \\
\sc{10} & \sc{4} & \sc{$[3\ul{4}]$} & \sc{$(y+1)(p-a)$} \\
\sc{11} & \sc{4} & \sc{$[23; 2\ul{4}]$} & \sc{$(y+1)(-b+q+y-t)$} \\
\sc{12} & \sc{4} & \sc{$[\ul{2}4; 34]$} & \sc{$(y+1)(-a+p+y-s)$} \\
\sc{13} & \sc{4} & \sc{$[\ul{2}3; 24]$} & \sc{$q(-a-2b+x+q+2y-t)$} \\
\sc{14} & \sc{4} & \sc{$[2\ul{4}; 34]$} & \sc{$p(-2a-b+x+p+2y-s)$} \\
\sc{15} & \sc{5} & \sc{$[15; 12; \ul{3}\ul{4}]$} & \sc{$(-a+b+p-q)(-a-2b+x+q+2y-t)$} \\
\sc{16} & \sc{5} & \sc{$[15; 25; \ul{3}\ul{4}]$} & \sc{$(-a+b+p-q)(-2a-b+x+p+2y-s)$} \\
\sc{17} & \sc{5} & \sc{$[\ul{2}5; 34; 35]$} & \sc{$(-b+q+y-t)(-a+p+y-s)$} \\
\sc{18} & \sc{5} & \sc{$[23; 24; 2\ul{5}]$} & \sc{$(-b+q+y-t)(-a+p-2y+t-2)$} \\
\sc{19} & \sc{5} & \sc{$[\ul{2}5; 35; 45]$} & \sc{$(-a+p+y-s)(-b+q-2y+s-2)$} \\
\sc{20} & \sc{5} & \sc{$[\ul{2}4; 25; 34; 35]$} & \sc{$(-b+q-t-1)(-a+p+2y-t-2s+z)$} \\
\sc{21} & \sc{5} & \sc{$[2\ul{4}; 25; 34; 35]$} & \sc{$(-a+p-s-1)(-b+q+2y-2t-s+z)$} \\
\sc{22} & \sc{6} & \sc{$[15; 16; 25; 26; \ul{3}\ul{4}]$} & \sc{$(-a+b+p-q)(-2a-2b+x+p+q+4y-2t-2s+z)$} \\
\sc{23} & \sc{7} & \sc{$[1\ul{2}; 2\ul{4}; 25; 35; 46; 47; 56; 57]$} & \sc{$(y-t-s)(y-t-s+z)$} \\
\bottomrule
\end{tabular}
\end{table}

\subsection{Solutions to the equations} \label{ss:soln}

We now solve the system of equations $h_1=\cdots=h_{23}=0$. We show that any solution is one of the matrices appearing in Table~\ref{t:matrix}, or the transpose of such a matrix. This will complete the proof of Theorem~\ref{thm:quad}.

We begin with some general observations. The equations $h_9, \ldots, h_{12}$ essentially make $y=-1$ the ``exceptional locus.'' Off of this locus, the equations collapse considerably. On the exceptional locus, equations $h_5, \ldots, h_8$ give two possibilities for each of $a$, $b$, $p-a$, and $q-b$. Our analysis breaks into cases along these dividing lines.

\textit{Case 1: $y \ne -1$.} From $h_9$ and $h_{10}$ we find $p=a$ and $q=b$. From $h_{11}$ and $h_{12}$, we $s=t=y$. From equation $h_{20}$, we find $z=y$. Our matrix thus has the form.
\begin{displaymath}
\begin{pmatrix}
x & b & b \\
a & y & y \\
a & y & y
\end{pmatrix}
\end{displaymath}
Now, $h_5$ implies $b=0$ or $b=y$, and $h_6$ implies $a=0$ or $a=y$. Also, if $a \ne b$ then $h_1$ implies $x=0$. If $a=b=0$ then we have $\AA$. If $a=0$ and $b\ne 0$ then $b=y$ and $x=0$, and we have $\BB_2$. If $a \ne 0$ and $b=0$ then $a=y$ and $x=0$, and we have $\BB_2^{\top}$. Finally, if $a$ and $b$ are both non-zero then $a=b=y$. Equation $h_{13}$ gives $a+b=x+y$, and so $x=y$, and we have $\BB_1$.

\textit{Case 2: $y = -1$.} From $h_5$ we see $b \in \{0, -1\}$ and from $h_6$ we see $a \in \{0, -1\}$. From $h_7$, we see $q=b+\epsilon$ with $\epsilon \in \{0, 1\}$ and from $h_8$ we see $p=a+\delta$ with $\delta \in \{0, 1\}$. We now break into subcases.

\textit{Case 2a: $\epsilon \ne \delta$.} Note $-a+b+p-q=\delta-\epsilon$ is non-zero, and so $h_2$, $h_{15}$, $h_{16}$, and $h_{22}$ give
\begin{displaymath}
x = a+b+1, \quad
t = \epsilon-1, \quad
s = \delta-1, \quad
z = \epsilon+\delta-1 = 0.
\end{displaymath}
Our matrix is now
\begin{displaymath}
\begin{pmatrix}
a+b+1 & b & b+\epsilon \\
a & -1 &  \epsilon-1 \\
a+\delta & \delta-1 & 0
\end{pmatrix}
\end{displaymath}
There are eight possible values for $(a, b, \delta, \epsilon)$ (since $\delta \ne \epsilon$), all of which yield solutions of the desired form, as follows:
\renewcommand{\sc}[1]{\scalebox{0.8}{#1}}
\begin{center}
\begin{tabular}{ccccl}
$a$ & $b$ & $\delta$ & $\epsilon$ & sol \\
\midrule
 0  &  0  &  0  & $+$ & \sc{$\CC_6^{\top}$} \\
 0  &  0  & $+$ &  0  & \sc{$\CC_6$} \\
 0  & $-$ &  0  & $+$ & \sc{$\CC_5^{\top}$} \\
 0  & $-$ & $+$ &  0  & \sc{$\CC_1$} \\
\end{tabular}
\qquad
\begin{tabular}{ccccl}
$a$ & $b$ & $\delta$ & $\epsilon$ & sol \\
\midrule
$-$ &  0  &  0  & $+$ & \sc{$\CC_1^{\top}$} \\
$-$ &  0  & $+$ &  0  & \sc{$\CC_5$} \\
$-$ & $-$ &   0 & $+$ & \sc{$\CC_2^{\top}$} \\
$-$ & $-$ & $+$ &  0  & \sc{$\CC_2$}
\end{tabular}
\end{center}

\textit{Case 2b: $\delta=\epsilon$ and $t \ne \epsilon-1$.} The first factors in $h_{17}$, $h_{18}$, and $h_{20}$ are non-zero, and so these equations give us:
\begin{displaymath}
t = -\epsilon, \quad
s = \epsilon-1, \quad
z = 0
\end{displaymath}
Since $y=-1$, we have $h_3=q(a+b-x+1)$, and so $h_3+h_{13}=q(\epsilon-t-1)$. Since $t \ne \epsilon-1$, it follows that $q=0$ and so $\epsilon=-b$. Our matrix thus has the form
\begin{displaymath}
\begin{pmatrix}
x & b & 0 \\
a & -1 & b \\
a-b & -b-1 & 0
\end{pmatrix}
\end{displaymath}
As usual, if $a \ne b$ then $h_1$ gives $x=0$. Every choice of $(a,b)$ gives a solution on our list: $(0,0)$ is $\BB_3$, $(0,-1)$ is $\CC_4^{\top}$, $(-1, 0)$ is $\CC_3$, and $(-1, -1)$ is $\BB_4^{\top}$.

\textit{Case 2c: $\delta=\epsilon$ and $s \ne \epsilon-1$.} This is analogous to Case~2b. Equations $h_{17}$, $h_{19}$, and $h_{21}$ give $t = \epsilon-1$, $s = -\epsilon$, and $z = 0$, while $h_4+h_{14}$ gives $p=0$, and so $\epsilon=-a$. (Note now that it is the second factor of $h_{17}$ which is non-zero.) Our matrix is:
\begin{displaymath}
\begin{pmatrix}
x & b & b-a \\
a & -1 & -a-1 \\
0 & a & 0
\end{pmatrix}
\end{displaymath}
Every choice of $(a,b)$ gives a solution on our list: $(0,0)$ is $\BB_3^{\top}$, $(0,-1)$ is $\CC_3^{\top}$, $(-1,0)$ is $\CC_4$, and $(-1,-1)$ is $\BB_4$.

\textit{Case 2d: $\delta=\epsilon=1$ and $s=t=\epsilon-1$.} From $h_{23}$ we find $z=1$. Our matrix is thus
\begin{displaymath}
\begin{pmatrix}
x & b & b+1 \\
a & -1 & 0 \\
a+1 & 0 & 1
\end{pmatrix}
\end{displaymath}
If $(a,b)=(-1,-1)$ then we have $\BB_5$. Otherwise, $p \ne 0$ or $q \ne 0$, and so $h_3$ or $h_4$ gives $x=a+b+1$. The remaining possible values for $(a,b)$ give solutions on our list: $(0,0)$ is $\CC_8$, $(0,-1)$ is $\CC_7$, and $(-1,0)$ is $\CC_7^{\top}$.

\textit{Case 2e: $\delta=\epsilon=0$ and $s=t=\epsilon-1$.} From $h_{23}$ we find $z=-1$. Our matrix is thus
\begin{displaymath}
\begin{pmatrix}
x & b & b \\
a & -1 & -1 \\
a & -1 & -1
\end{pmatrix}
\end{displaymath}
If $(a,b)=(0,0)$ then we have $\AA$. Otherwise, $p\ne 0$ or $q\ne 0$, and so $h_3$ or $h_4$ gives $x=a+b+1$. The remaining possible values for $(a,b)$ gives solutions on our list: $(-1, -1)$ is $\BB_1$, $(0,-1)$ is $\BB_2$, and $(-1,0)$ is $\BB_2^{\top}$.

We have thus shown that any solution to the equations is one of the matrices in Table~\ref{t:matrix}, up to transpose. This completes the proof of Theorem~\ref{thm:quad}.

\section{Existence of certain measures} \label{s:some-meas}

In \S \ref{s:some-meas}, we construct some measures for $\fP$. Precisely, we construct the one-parameter family of measures with matrix $\BB_1(t)$, and the three measures with matrices $\CC_3$, $\CC_4$, and $\CC_8$. We will construct the remaining measures from these four in \S \ref{s:monoid} using the monoid action.

\subsection{Statement of results}

Recall that we have weak measures $\lambda_{p,q}$ for $p,q \in \{0,1,\ast\}$ (\S \ref{ss:explicit}), as well as the weak measure $\tau$ (\S \ref{ss:linear}).

\begin{theorem} \label{thm:some-meas}
The following weak measures are measures:
\begin{enumerate}
\item $-\tau+t \lambda_{\ast,\ast}$, for any $t \in k$.
\item $-\tau+\lambda_{\ast,\ast}-\lambda_{1,\ast}-\lambda_{\ast,1}$.
\item $-\tau+\lambda_{0,0}+\lambda_{0,1}-\lambda_{\ast,0}-\lambda_{\ast,1}$
\item $-\tau+\lambda_{0,\ast}-\lambda_{0,0}-\lambda_{1,0}-\lambda_{\ast,1}$
\end{enumerate}
\end{theorem}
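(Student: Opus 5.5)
Each of the four expressions is a weak measure by Theorem~\ref{thm:weak}(b), and each takes the value $1$ on $e$, since $\tau(e)=-1$ and $\lambda_{p,q}(e)=0$. By the discussion after Definition~\ref{defn:weak}, it therefore suffices to show that the defect $\delta_\nu(P,x,y)$ vanishes for every poset $P$ and distinct $x,y\in P$. I would first cut down the pairs to be checked. If $x$ and $y$ are separated, the defect vanishes by Proposition~\ref{prop:sep}(b). If they are not separated, Proposition~\ref{prop:poset-sep} leaves two situations: either $x\lessdot y$ (or $y\lessdot x$), or $x\parallel y$ with one of the two sets in Proposition~\ref{prop:poset-sep}(b) empty. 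In the second situation I would use Proposition~\ref{prop:defect-sum} on the pre-amalgamation $(P,x,y)$: the defects of $P_0=P$ and of whichever of $P_\pm$ exist sum to zero. So it suffices to treat the comparable case, plus the incomparable case in which $x,y$ are twins or near-twins and the sum relation does not already finish the job.

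For (a), write $\nu_t=-\tau+t\lambda_{\ast,\ast}$. I would use the M\"obius interpretation of \S\ref{ss:mobius}. Corollary~\ref{cor:euler-char} gives $\lambda_{\ast,\ast}(P,x)$ as the sum of $\mu(P,T)$ over proper $T\ge P$ in $\cE_1(P^x)$. Proposition~\ref{prop:twin-mobius} gives $-\tau(P,x)$ as the same sum over improper $T$. Hence
\begin{displaymath}
\nu_t(P,x)=\sum_{T\ge P}\mu(P,T)\,t^{c(T)},
\end{displaymath}
where $T$ ranges over $\cE_1(P^x)$ and $c(T)$ is $1$ for proper and $0$ for improper $T$. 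The plan is to prove the analogous formula one level up:
\begin{displaymath}
\nu_t(P,x)\,\nu_t(P^x,y)=\sum_{S\ge P}\mu(P,S)\,t^{c(S)}.
\end{displaymath}
Here $S$ ranges over $2$-extensions of $P^{x,y}$ in $\cE_2(P^{x,y})$, with $P$ regarded as the $2$-extension $(P,x,y)$, and $c(S)$ is the number of new points of $S$. The right side is symmetric under swapping the roles of $x$ and $y$, since $(P,x,y)\mapsto(P,y,x)$ identifies $\cE_2(P^{x,y})$ with itself compatibly with $c$. That symmetry would give $\delta_{\nu_t}=0$.

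To get the factorization, I would describe $S\ge P$ by a ``cut'' for $y$ followed by a cut for $x$, generalizing Proposition~\ref{prop:mu-cuts}, so that $\mu(P,S)$ becomes a signed count of pairs of cuts. Summing this count would factor as a product of the two $1$-point sums. I expect this to be the main obstacle. The proper/improper dichotomy interacts with the order in which the points are added: $x$ may collapse onto $y$, or onto a point of $P^{x,y}$. For these degenerate strata I would fall back on a direct comparison of the twin counts, in the style of the proof of Lemma~\ref{lem:weak-3}.

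For (b)--(d), I would avoid M\"obius functions and use the explicit descriptions in Proposition~\ref{prop:lambda-calc} and its transposes. The term $\lambda_{1,\ast}$ counts ``ends a series pair minus twins''; $\lambda_{\ast,0}$ and $\lambda_{0,\ast}$ detect greatest and least points; $\lambda_{0,0}$ detects isolated points; and $\lambda_{1,0}$, $\lambda_{0,1}$ are cover-minus-twin counts. For (b) the one outstanding term, $\lambda_{\ast,\ast}$, I would handle using part (a) with $t=1$ and expanding the defect. In each case $\nu(P,x)$ is then a small integer determined by local data: the covers of $x$, its twins, and whether it is extremal or isolated. I would verify $\nu(P,x)\nu(P^x,y)=\nu(P,y)\nu(P^y,x)$ by case analysis over the surviving non-separated configurations. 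For $x\lessdot y$, the cases are organized by whether $x<y$ is a series pair and by how deleting one point changes the covers and twins of the other. For $x\parallel y$, they are organized by whether $x$ and $y$ are twins or lower/upper twins. In each case the defect-sum relation reduces the work to a handful of explicit checks.
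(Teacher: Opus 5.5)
Your plan for (c) and (d) follows the paper's route in outline: explicit local formulas from Proposition~\ref{prop:lambda-calc}, reduction to covers, then case analysis. One correction there: Proposition~\ref{prop:defect-sum} disposes of every incomparable pair once comparable pairs are done, because $x,y$ are comparable in $P_\pm$. So your residual ``twin or near-twin'' incomparable case is empty; this is exactly Proposition~\ref{prop:cover-defect}.

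For (a), your target identity and the symmetry argument are precisely the paper's global formula. But the multiplicativity is the entire content, and you leave it open. The idea you are missing is an adjunction. For an $n$-extension $Q$ of a base $B$, put $\theta_B(Q)=\sum_{T\ge Q}\mu(Q,T)\,t^{|T|-|B|}$, and let $Q^\flat$ forget the last marked point. The map $T\mapsto T^\flat$, from $\cE_n(B)_{\ge Q}$ to $\cE_{n-1}(B)_{\ge Q^\flat}$, has a left adjoint $S\mapsto S^\sharp$. This is the push-out of $S$ and $Q$ over $Q^\flat$, or just a re-marking if the last point of $Q$ is old. It satisfies $(S^\sharp)^\flat=S$, and $T\ge Q$ if and only if $T^\flat\ge Q^\flat$ and $T\ge (T^\flat)^\sharp$. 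Hence $\{T\ge Q\}$ fibres over $\{S\ge Q^\flat\}$ with fibre $\{T\in\cE_1(S): T\ge S^\sharp\}$. The identity $\sum_{T\ge Q}\theta_B(T^\flat)\,\theta_{T^\flat}(T)=t^{|Q|-|B|}$ then follows by M\"obius inversion in each fibre and then in the base. One more inversion gives $\theta_B(Q)=\theta_B(Q^\flat)\,\theta_{Q^\flat}(Q)$ for every $Q$. No M\"obius function on $\cE_2$ is ever computed, and collapsed points need no separate twin count. Your idea of writing $\mu(P,S)$ as a signed count of pairs of cuts might work on the stratum where both points stay new. However, the $t^1$ and $t^0$ coefficients, where $x$ collapses onto $y$ or onto an old point, are exactly where it gets hard, and you give no argument there.

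Part (b) has a genuine gap. Its $\nu$ is not determined by local data, because it contains $\lambda_{\ast,\ast}$, a reduced Euler characteristic. So the local case analysis you describe cannot apply to (b). Suppose you write $\nu=\nu_1+\gamma$, with $\nu_1=-\tau+\lambda_{\ast,\ast}$ (a measure by (a) at $t=1$) and $\gamma=-\lambda_{1,\ast}-\lambda_{\ast,1}$ (local). The defect of $\nu$ then reduces to
\[
\nu_1(P,x)\gamma(P^x,y)+\gamma(P,x)\nu_1(P^x,y)+\gamma(P,x)\gamma(P^x,y)-(x\leftrightarrow y).
\]
This still involves four values of $\lambda_{\ast,\ast}$. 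Making it vanish would need relations you neither state nor prove, for instance how $\lambda_{\ast,\ast}(P,x)$ changes when a point beginning or ending a series pair, or having twins, is deleted.

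The paper instead passes to weak monotonicity. It sets $\omega_S(P)=\sum_F(-1)^{|F|}$, where $F$ runs over sets of new relations $(a,b)$ with $a\not\le b$, each meeting $S$, whose generated quasi-order restricts to $\le$ on $P\setminus S$. For $S=\{x\}$, these $F$ are the pairs $(L,R)$ with $P_{<x}\cup L\le P_{>x}\cup R$. Those with disjoint sides are exactly the cuts, and they give $\lambda_{\ast,\ast}$. Those in which $x$ becomes equivalent to a point $y$ contribute $+1$ for each twin and $-1$ for each series pair through $x$. Hence $\omega_x(P)=\nu(P,x)$. Next, $\omega_{\{x,y\}}(P)=\omega_x(P)\,\omega_y(P^x)$, proved by a sign-reversing toggle of a redundant pair $(y,z)$. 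Since the left side is symmetric in $x$ and $y$, the defect vanishes.
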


The matrices associated to the measures in the theorem are $\BB_1(t)$, $\CC_8$, $\CC_3$, and $\CC_4$. Thus these matrices are actually realized by measures.

The proof of Theorem~\ref{thm:some-meas} will take the entirety of \S \ref{s:some-meas}. We make a few remarks on the structure of the proofs. The measures in (a) and (b) admit global formulas. By a global formula, we mean a formula for the value of the measure on an embedding of posets that does not rely on factoring the embedding into a sequence of one-point extensions. This makes the proofs for (a) and (b) somewhat natural. We do not have global formulas in cases (c) and (d). However, the $\lambda_{p,q}$ appearing in (c) and (d) are exactly the ones that admit a simple combinatorial interpretation (Proposition~\ref{prop:lambda-calc}). For this reason, the proofs in these cases are elementary. These is a significant amount of case work, but Proposition~\ref{prop:cover-defect} helps to reduce it. Finally, we remind the reader that to show a weak measure is a measure, we must show that its defect vanishes (\S \ref{ss:weak}).

\subsection{The first measure} \label{ss:meas1}

Fix $t \in k$, and put
\begin{displaymath}
\nu = -\tau+t \lambda_{\ast,\ast}.
\end{displaymath}
We show that $\nu$ is a measure.  Recall the poset $\cE_n(P)$ of $n$-extensions of $P$ defined in \S \ref{ss:mobius}. Let $\mu$ be the M\"obius function of the poset $\cE_n(P)$. For $Q \in \cE_n(P)$, put $\omega_P(Q) = t^{\vert Q \vert-\vert P \vert}$, and define
\begin{displaymath}
\theta_P(Q) = \sum_{T \ge Q} \mu(Q, T) \omega_P(T).
\end{displaymath}
The following is the key result:

\begin{proposition} \label{prop:meas1}
Let $P \subset Q$ be an embedding of posets, write $Q=P \sqcup \{x_1, \ldots, x_n\}$, and thus regard $Q$ as an $n$-extension of $P$. Put $Q_i = P \sqcup \{x_1, \ldots, x_i\}$. Then
\begin{displaymath}
\theta_P(Q) = \nu(Q_1, x_1) \cdots \nu(Q_n, x_n).
\end{displaymath}
\end{proposition}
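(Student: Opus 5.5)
The plan is to argue by induction on $n$. The main tool is the Möbius-inversion characterization of $\theta_P$: since $\mu$ is the Möbius function of $\cE_n(P)$, the function $\theta_P$ on $\cE_n(P)$ is the unique function satisfying
\begin{displaymath}
\sum_{T \ge Q} \theta_P(T) = \omega_P(Q) \qquad \text{for all } Q \in \cE_n(P).
\end{displaymath}
So it suffices to show that the right side of the proposition, viewed as a function of $Q \in \cE_n(P)$, satisfies the same identity.

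\emph{The case $n=1$.} Here the statement is a direct consequence of \S\ref{ss:mobius}. The elements $T \ge Q$ of $\cE_1(P)$ split into proper and improper $1$-extensions.
\begin{itemize}
\item If $Q$ is improper, it is maximal in $\cE_1(P)$. Then $\theta_P(Q)=\omega_P(Q)=t^0=1$. On the other side, $\nu(Q_1,x_1)$ must be interpreted as $\nu$ of an isomorphism, which is also $1$.
\item If $Q=(Q,x)$ is proper, the proper $T$ contribute $t\sum_{T} \mu(Q,T)$, which equals $t\lambda_{\ast,\ast}(Q,x)$ by Corollary~\ref{cor:euler-char}.
\item Still for $Q$ proper, each improper $T=(P,y)$ has $\omega_P(T)=1$ and contributes $\mu(Q,T)$. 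By Proposition~\ref{prop:twin-mobius} this is $-1$ exactly when $y$ is a twin of $x$, and $0$ otherwise, so the improper $T$ contribute $-\tau(Q,x)$.
\end{itemize}
Hence $\theta_P(Q)=-\tau(Q,x)+t\lambda_{\ast,\ast}(Q,x)=\nu(Q,x)$. By the same Möbius inversion, this gives the identity $\sum_{S \ge R}\nu(S)=\omega(R)$ on $\cE_1(P')$ for every poset $P'$. Here $\nu(S)$ means $\nu$ of the one-point extension, and is $1$ for improper $S$.

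\emph{Inductive step.} For a general $n$-extension $T=(T,y_1,\dots,y_n)$, I will define $\theta'(T)=\prod_i \nu(T_i,y_i)$, with the convention that a factor is $1$ when $y_i$ coincides with an earlier point. The claim is that
\begin{displaymath}
\sum_{T \ge Q}\theta'(T)=\omega_P(Q).
\end{displaymath}
The steps are as follows.
\begin{enumerate}
\item Group the $T \ge Q$ according to the $(n-1)$-extension $T'=T_{n-1}$ obtained by forgetting $y_n$. Then $\theta'(T)=\theta'(T')\,\nu(T,y_n)$.
\item Note that $T \ge Q$ forces $T' \ge Q_{n-1}$.
\item Study the fibre over a fixed $T'$. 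It consists of the $1$-extensions $S$ of $T'$, marked by $y_n$, such that the canonical map $Q \to S$ is strictly monotone. I want to show this fibre is an upper set in $\cE_1(T')$ of the form $\{S \ge R\}$. Here $R=R(T')$ is obtained from $T'$ by adjoining $x_n$ with the relations it has in $Q$ (pushed forward along $Q_{n-1}\to T'$) and taking the transitive closure, in the spirit of Proposition~\ref{prop:cut-order}.
\item Apply the $n=1$ identity to get $\sum_{S\ge R}\nu(S)=\omega(R)$. This equals $t$ when $x_n$ stays a new point.
\item The outer sum then becomes $t\sum_{T'\ge Q_{n-1}}\theta'(T')=t\,\omega_P(Q_{n-1})=\omega_P(Q)$, by induction.
\end{enumerate}
Strictly, the inductive hypothesis gives $\theta'=\theta_P$ on $\cE_{n-1}(P)$, and the defining identity for $\theta_P$ then yields the equality in step 5. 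A by-product is that $\theta'$ is independent of the ordering of the $x_i$, as it should be.

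\emph{Main obstacle.} I expect the main obstacle to be step 3. One must check that the transitive closure defining $R(T')$ is irreflexive and restricts to the order of $T'$, so that $R(T')$ is a genuine proper extension and the fibre is exactly $\{S\ge R(T')\}$. The difficulty is the case where the pushed-forward relations conflict, so that no proper extension exists. In that case the fibre may still contain improper extensions, where $y_n$ is identified with a point of $T'$. I would first try to show that the fibre is then empty or consists of a single improper extension, and that these degenerate cases cancel in pairs in the outer sum. If this fails, I would instead factor the poset $\cE_n(P)_{\ge Q}$ as a fibration over $\cE_{n-1}(P)_{\ge Q_{n-1}}$ and compute its Möbius function directly, using the cut description of Proposition~\ref{prop:mu-cuts} on each fibre.
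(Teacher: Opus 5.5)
Your plan is the paper's proof, up to packaging. The $n=1$ case via Proposition~\ref{prop:twin-mobius} and Corollary~\ref{cor:euler-char} is Lemma~\ref{lem:meas1-3}. Your steps 1--5 are Lemmas~\ref{lem:sharp} and~\ref{lem:meas1-2}: your $R(T')$ is the paper's $(T')^{\sharp}$, and the fibre identification in step 3 is exactly the adjunction $S^{\sharp}\le T \iff S\le T^{\flat}$.

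The one weakness is that you leave step 3 open and misjudge it. The ``conflict'' case you fear never occurs. Your fallback, with degenerate fibres cancelling in pairs, is also not what step 5 needs: step 5 uses that every $T'\ge Q_{n-1}$ contributes exactly $t\,\theta'(T')$.

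The missing observation is that $T'\ge Q_{n-1}$ means the canonical map $\phi\colon Q_{n-1}\to T'$ is \emph{strictly} monotone. Suppose some $a\in T'$ satisfied $a\le\phi(b)$ with $b<x_n$ and $\phi(c)\le a$ with $c>x_n$. Then $b<c$ in $Q_{n-1}$, so $\phi(c)\le a\le\phi(b)<\phi(c)$, a contradiction. So make the following declaration on $T'\sqcup\{y_n\}$: $a<y_n$ iff $a\le\phi(b)$ for some $b\in Q_{<x_n}$, and $y_n<a$ iff $a\ge\phi(c)$ for some $c\in Q_{>x_n}$.
\begin{itemize}
\item This is already a partial order restricting to the order of $T'$; transitivity through $y_n$ follows from the same inequality $\phi(b)<\phi(c)$.
\item Hence $R(T')$ is always a proper $1$-extension of $T'$, and $Q\le R(T')$.
\item The fibre identification is then immediate. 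If $Q\le S$, then for $a<_{R(T')}y_n$ we get $a\le\phi(b)<_S y_n$, so $R(T')\le S$. Conversely, if $R(T')\le S$, compose canonical maps.
\end{itemize}

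You should also treat the case where $x_n$ coincides with a point of $Q_{n-1}$. There the fibre is the single improper extension $(T',\phi(x_n))$, which contributes $1$. This case is needed because Möbius inversion requires the identity on all of $\cE_n(P)_{\ge Q}$, and that set contains such extensions even when $Q$ itself does not.
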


The value of $\theta_P(Q)$ is easily seen to be independent of the order of the points $x_1, \ldots, x_n$. Thus, when $n=2$, the proposition shows that the defect for $\nu$ vanishes identically, and so $\nu$ is a measure. Once we know that $\nu$ is indeed a measure, the proposition shows that $\nu(P \subset Q)$ is equal to $\theta_P(Q)$. This is the global formula for $\nu$.

We now start the proof of the proposition. We begin with the $n=1$ case.

\begin{lemma} \label{lem:meas1-3}
Proposition~\ref{prop:meas1} holds if $n=1$.
\end{lemma}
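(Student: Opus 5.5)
For $n=1$, write $Q=P\sqcup\{x\}$ with $x=x_1$, so $Q$ is a proper $1$-extension of $P$ and $(Q,x)$ is a marked poset with $Q^x=P$. The claim is
\begin{displaymath}
\theta_P(Q)=\sum_{T\ge Q}\mu(Q,T)\,t^{\vert T\vert-\vert P\vert}=\nu(Q,x)=-\tau(Q,x)+t\,\lambda_{\ast,\ast}(Q,x).
\end{displaymath}
The plan is to split the sum defining $\theta_P(Q)$ according to whether $T\in\cE_1(P)$ is proper or improper. Then compare each piece with the Möbius interpretations established in \S\ref{ss:mobius}.

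For proper $T$, we have $\vert T\vert=\vert P\vert+1$, so $\omega_P(T)=t$. The proper part of the sum is therefore
\begin{displaymath}
t\sum_{T\in\cE_1^{\circ}(P),\,T\ge Q}\mu(Q,T).
\end{displaymath}
By Corollary~\ref{cor:euler-char}, applied to the marked poset $(Q,x)$ with $Q^x=P$, this equals $t\,\lambda_{\ast,\ast}(Q,x)$.

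For improper $T$, we have $T=(P,y)$ with $y\in P$ and $\omega_P(T)=t^0=1$. Distinct $y$ give distinct isomorphism classes of $1$-extensions, since an isomorphism of extensions must be the identity on $P$. Proposition~\ref{prop:twin-mobius} gives $\mu(Q,T)=-1$ exactly when $x$ and $y$ are twins in $Q$, and $\mu(Q,T)=0$ otherwise; this covers the case $Q\not\le T$ by the convention stated there. Summing over $y\in P=Q^x$, the improper part of the sum is $-\tau(Q,x)$. Adding the two parts gives $\nu(Q,x)$, which proves the lemma.

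The only point needing care, and the step I expect to be the main (though mild) obstacle, is bookkeeping. We must confirm that $\cE_1(P)$ consists exactly of the proper extensions together with the improper ones $(P,y)$, each $y$ counted once. We must also confirm that the Möbius function of $\cE_1(P)$ restricted to the interval above $Q$ is the function appearing in both cited results. Both hold by definition, since both results are computed in the same poset $\cE_1(Q^x)$.
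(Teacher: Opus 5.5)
Your proof is correct and follows the paper's argument. Like the paper, you split the sum defining $\theta_P(Q)$ into improper and proper $1$-extensions above $Q$, and evaluate the two pieces by Proposition~\ref{prop:twin-mobius} and Corollary~\ref{cor:euler-char}. You also correctly write the marked poset as $(Q,x)$, whereas the paper's text slips and writes $\tau(P,x)$ and $\lambda_{\ast,\ast}(P,x)$.
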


\begin{proof}
Suppose $n=1$, so that $Q=P \sqcup \{x\}$. Let $\cF \subset \cE_1(P)$ be the set of 1-extensions of $P$ above $Q$, and write $\cF=\cF_1 \sqcup \cF_2$, where $\cF_1$ are the improper 1-extensions and $\cF_2$ are the proper 1-extensions. Then
\begin{displaymath}
\theta_P(Q) = \sum_{T \in \cF_1} \mu(Q,T) + t \sum_{T \in \cF_2} \mu(Q, T)
\end{displaymath}
The sum over $\cF_1$ is $-\tau(P,x)$ by Proposition~\ref{prop:twin-mobius}, and the sum over $\cF_2$ is $\lambda_{\ast,\ast}(P,x)$ by Corollary~\ref{cor:euler-char}. The result follows.
\end{proof}

Let $Q$ be an $n$-extension of $P$ with marked points $x_1, \ldots, x_n$. Let $Q^{\flat}$ be the $(n-1)$-extension of $P$ with underlying set $P \cup \{x_1, \ldots, x_{n-1}\}$ and marked points $x_1, \ldots, x_{n-1}$.

\begin{lemma} \label{lem:sharp}
Let $Q$ be an $n$-extension of $P$, and put $\cE=\cE_n(P)_{\ge Q}$ and $\cE^{\flat} = \cE_{n-1}(P)_{\ge Q^{\flat}}$. Then the monotone function $\cE \to \cE^{\flat}$ given by $T \mapsto T^{\flat}$ admits a left adjoint, denoted $S \mapsto S^{\sharp}$. Precisely, this means that if $T \in \cE$ and $S \in \cE^{\flat}$ then $S^{\sharp} \le T$ if and only if $S \le T^{\flat}$. Moreover:
\begin{enumerate}
\item We have $(S^{\sharp})^{\flat}=S$ for any $S \in \cE^{\flat}$.
\item If $T \in \cE_n(P)$ then $T \ge Q$ if and only if $T^{\flat} \ge Q^{\flat}$ and $T \ge (T^{\flat})^{\sharp}$.
\end{enumerate}
\end{lemma}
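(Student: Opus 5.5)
We will construct $S^{\sharp}$ explicitly as the ``freest'' way to add the point $x_n$ to $S$ compatibly with $Q$. Let $S \in \cE^{\flat}$, so $S$ is an $(n-1)$-extension with underlying set $P \cup \{y_1,\ldots,y_{n-1}\}$, and the canonical map $\phi \colon Q^{\flat} \to S$ (identity on $P$, $x_i \mapsto y_i$) is strictly monotone. Define $S^{\sharp}$ on the set $S \sqcup \{x_n\}$, with marked points $y_1,\ldots,y_{n-1},x_n$. Take the order on $S$, and add $a \prec x_n$ whenever $a \ge_S \phi(a')$ for some $a'<_Q x_n$. Likewise add $x_n \prec b$ whenever $b \le_S$-dominates some $\phi(b')$ with $x_n <_Q b'$, i.e.\ $b \ge_S \phi(b')$. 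Concretely, $S^{\sharp}$ is the poset generated by $S$ together with the relations $\phi(a) \prec x_n$ for $a <_Q x_n$ and $x_n \prec \phi(b)$ for $x_n <_Q b$, with $a,b \in Q^{\flat}$.

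The first step is to check that this is a partial order restricting to $S$ on $S$. The argument mirrors Proposition~\ref{prop:cut-order}. In a chain through $x_n$ we must pass $\phi(a) \prec x_n \prec \phi(b)$ with $a<_Q x_n<_Q b$. Hence $a<_Q b$, so $\phi(a)<_S\phi(b)$ by strict monotonicity of $\phi$. This shows both that no new relations appear inside $S$ and that $x_n \not\prec x_n$.

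Next we check $S^{\sharp} \in \cE$, i.e.\ that the canonical map $Q \to S^{\sharp}$ is strictly monotone. This holds for relations inside $Q^{\flat}$ because $\phi$ is strictly monotone, and it holds for relations involving $x_n$ by construction. We note one subtlety: $S^{\sharp}$ should be a proper extension, with $x_n$ a new point and not identified with an existing one. If $Q$ itself has $x_n$ equal to some $x_i$ or to a point of $P$, the construction must instead identify $x_n$ with the image of that point. So we split into the case where $x_n$ is a new point and the case where it is an identification, and handle the latter by setting $S^{\sharp}=S$ with the last marked point equal to that image. In that case the adjunction is immediate from the definitions.

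Then we verify the adjunction. Given $T \in \cE$, suppose $S^{\sharp} \le T$. Restricting the canonical strictly monotone map to $S$ gives $S \le T^{\flat}$. Conversely, suppose $S \le T^{\flat}$ via $\psi \colon S \to T^{\flat}$, and extend $\psi$ by $x_n \mapsto$ the $n$th marked point $z_n$ of $T$. We must show the extension is strictly monotone. It suffices to check the generating relations. For $a <_Q x_n$, the map $Q \to T$ is strictly monotone, so $\psi\phi(a)=$ the image of $a$ in $T$ lies below $z_n$. The relations above $x_n$ are symmetric. Since strict monotonicity is preserved under transitive closure, this handles everything. We note that $\psi\circ\phi$ is the canonical map $Q^{\flat}\to T^{\flat}$ by uniqueness of canonical functions.

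Part (a) is then immediate, since the order on $S^{\sharp}$ restricts to that of $S$. For (b), the forward direction uses that $T \ge Q$ implies $T^{\flat} \ge Q^{\flat}$ (restriction), and the adjunction applied with $S=T^{\flat}$ gives $(T^{\flat})^{\sharp} \le T$. For the converse, use transitivity: $Q \le (T^{\flat})^{\sharp}$ since $(T^{\flat})^{\sharp} \in \cE$ as shown above, and $(T^{\flat})^{\sharp}\le T$.

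The main obstacle is the careful bookkeeping with isomorphism classes and the improper case. Here $x_n$ may coincide with an element of $P$ or with another $x_i$, so $S^{\sharp}$ must be defined so that it is a genuine $n$-extension and so that the canonical function into $T$ exists, meaning the required identifications of points are compatible. We would handle this by first treating the proper case as above and then checking that identification constraints in $T$ are forced by those in $Q$.
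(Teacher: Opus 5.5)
Your proposal is correct and follows essentially the same route as the paper. You make the same case split on whether $x_n \in Q^{\flat}$: in the proper case you use the same push-out order on $S \sqcup \{x_n\}$, and otherwise you set $S^{\sharp}=S$ with last marked point $\phi(x_n)$. You then verify the adjunction the same way, by extending $\psi$ via $x_n \mapsto z_n$, and derive (a) and (b) the same way, via the counit and transitivity.

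One slip: your first defining clause should read $a \prec x_n$ whenever $a \le_S \phi(a')$ for some $a' <_Q x_n$ (a downward closure, as in the paper), not $a \ge_S \phi(a')$, which could create cycles. Your ``concretely'' description, the order generated by the relations $\phi(a) \prec x_n \prec \phi(b)$, is the correct one, and it is what the rest of your argument actually uses.
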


\begin{proof}
Let $x_1, \ldots, x_n$ be the marked points in $Q$. First suppose $x_n \in Q^{\flat}$. Let $(S, y_1, \ldots, y_{n-1})$ be an element of $\cE^{\flat}$ with canonical function $\phi \colon Q^{\flat} \to S$. We define $S^{\sharp}$ to be the poset $S$ with marked points $(y_1, \ldots, y_{n-1}, \phi(x_n))$. It is a simple matter to verify the claims of the lemma, which we leave to the reader.

We now treat the case where $x_n \notin Q^{\flat}$. Let $(S, y_1, \ldots, y_{n-1})$ be an element of $\cE^{\flat}$ with canonical function $\phi \colon Q^{\flat} \to S$. We define $S^{\sharp}$ to be the set $S \sqcup \{y_n\}$, where $y_n$ is a formal symbol. The order on $S^{\sharp}$ is defined as follows. On $S$, it is the given order. For $a \in S$, we have $a<y_n$ if and only if $a \le \phi(b)$ for some $b \in Q_{<x_n}$; similarly, $a>y_n$ if and only if $a \ge \phi(c)$ for some $c \in Q_{>x_n}$. One readily verifies that this is indeed a partial order. The canonical function $\phi^{\sharp} \colon Q \to S^{\sharp}$ exists, and is the unique extension of $\phi$ satisfying $\phi^{\sharp}(x_n)=y_n$. This is clearly strictly monotone, and so $Q \le S^{\sharp}$. Hence $S \mapsto S^{\sharp}$ is a well-defined function $\cE^{\flat} \to \cE$. It is clear that (a) is satisfied.

We now verify the adjointness property. Maintain the above notation, and let $(T, z_1, \ldots, z_n)$ be an element of $\cE$ with canonical function $\theta \colon Q \to T$. If $S^{\sharp} \le T$ then applying $(-)^{\flat}$ and using (a) gives $S \le T^{\flat}$. Conversely, suppose $S \le T^{\flat}$. Let $\psi \colon S \to T^{\flat}$ be the canonical function. Then the canonical function $\psi^{\sharp} \colon S^{\sharp} \to T$ exists: it is the unique function extending $\psi$ and satisfying $\psi^{\sharp}(y_n)=z_n$. Suppose $a < b$ are elements of $S^{\sharp}$. We must show $\psi^{\sharp}(a) < \psi^{\sharp}(b)$. If $a$ and $b$ belong to $S$ this is clear, since $\psi$ is order preserving. Suppose $a \in S$ and $b=y_n$. Then, by definition, $a \le \phi(\ol{b})$ for some $\ol{b} \in Q_{<x_n}$. Thus $\psi^{\sharp}(a) \le \psi^{\sharp}(\phi(\ol{b}))$ since $\psi^{\sharp}$ is order preserving on $S$, and $\phi(\ol{b})$ belongs to $S$. We have $\psi^{\sharp}(\phi(b)) = \theta(b)$ by definition of the canonical function. Since $\theta$ is strictly monotone, it follows that $\theta(\ol{b})<\theta(x_n)=z_n$. Thus $\psi^{\sharp}(a)<z_n=\psi^{\sharp}(b)$, as required. The case where $a=y_n$ is similar.

We finally verify (b). If $T \ge Q$ then $T^{\flat} \ge Q^{\flat}$ since $(-)^{\flat}$ is a monotone map, while $T \ge (T^{\flat})^{\sharp}$ is the co-unit of the adjunction. Now suppose $T^{\flat} \ge Q^{\flat}$ and $T \ge (T^{\flat})^{\sharp}$. Then
\begin{displaymath}
Q \le (T^{\flat})^{\sharp} \le T
\end{displaymath}
as required. Here we use the assumption that $T^{\flat}$ belongs to $\cE^{\flat}$ to ensure that $(T^{\flat})^{\sharp}$ is defined; since this belongs to $\cE$ by definition, we have the first inequality. The second inequality is our other assumption.
\end{proof}

\begin{remark}
In Lemma~\ref{lem:sharp}, $S^{\sharp}$ is the push-out of $S$ and $Q$ over $Q^{\flat}$. We will study push-outs of posets in detail in \S \ref{ss:poset-cat}.
\end{remark}

For $Q$ as above, regard $Q$ as a 1-extension of $Q^{\flat}$, with marked point $x_n$. Put
\begin{displaymath}
\hat{\theta}_P(Q) = \theta_P(Q^{\flat}) \cdot \theta_{Q^{\flat}}(Q).
\end{displaymath}
Note that the second factor on the right is $\nu(Q, x_n)$ when $x_n \notin Q^{\flat}$ by Lemma~\ref{lem:meas1-3}. Thus the following lemma (combined with a simple inductive argument) completes the proof of Proposition~\ref{prop:meas1}.

\begin{lemma} \label{lem:meas1-2}
We have $\hat{\theta}_P(Q) = \theta_P(Q)$ for any $n$-extension $Q$ of $P$.
\end{lemma}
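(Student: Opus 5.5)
We expand both sides as sums over $\cE=\cE_n(P)_{\ge Q}$ and reorganize the sum along the fibers of $T \mapsto T^{\flat}$. By definition,
\begin{displaymath}
\theta_P(Q)=\sum_{T \in \cE} \mu(Q,T)\, t^{\vert T\vert-\vert P\vert},
\end{displaymath}
while
\begin{displaymath}
\hat\theta_P(Q)=\Big(\sum_{S \in \cE^{\flat}} \mu(Q^{\flat},S)\, t^{\vert S\vert-\vert P\vert}\Big)\cdot \theta_{Q^{\flat}}(Q).
\end{displaymath}
The plan is to show that the fibration $\cE \to \cE^{\flat}$, $T\mapsto T^\flat$, which has the left adjoint $\sharp$ from Lemma~\ref{lem:sharp}, lets us factor the M\"obius function, and that the inner fiber sums equal $\theta_{Q^\flat}(Q)$ after the weight is transported along $S$.

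\textbf{Step 1: a M\"obius factorization.} Group the terms of $\theta_P(Q)$ according to $S=T^{\flat}$. By Lemma~\ref{lem:sharp}(b), the fiber over $S$ is the interval $\cE_{\ge S^\sharp}$ intersected with $\{T^\flat=S\}$. This is the set of 1-point data for the last marked point over the fixed $(n-1)$-extension $S$, i.e.\ it is $\cE_1(S)_{\ge S^\sharp}$, once $S^\sharp$ is regarded as a 1-extension of the underlying poset of $S$. I would prove a Galois-connection-type identity. For $T\ge Q$ with $T^\flat=S$, the M\"obius function satisfies $\mu_{\cE}(Q,T)=\mu_{\cE^\flat}(Q^\flat,S)\cdot \mu_{\cE_1(S)}(S^\sharp,T)$. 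The natural route is to verify that the right side satisfies the defining recursion $\sum_{Q\le T'\le T}(\cdot)=\delta_{Q,T}$. One would decompose $[Q,T]$ by $T'^\flat\in[Q^\flat,S]$ and use that the fiber of $[Q,T]$ over $S'$ is $[S'^\sharp,T]\cap\{\flat=S'\}$, by adjointness and (b). The comparison of order within a fiber with order in $\cE_1(S')$ must be checked directly; note that for $T'\le T$ in different fibers, adjointness gives $S'^\sharp\le T$.

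\textbf{Step 2: transport of the inner sum.} Using Step~1, we get
\begin{displaymath}
\theta_P(Q)=\sum_{S\in\cE^\flat}\mu(Q^\flat,S)\,t^{\vert S\vert-\vert P\vert}\,\theta_S(S^\sharp),
\end{displaymath}
since $\vert T\vert-\vert P\vert=(\vert S\vert-\vert P\vert)+(\vert T\vert-\vert S\vert)$. It then remains to show that $\theta_S(S^\sharp)=\theta_{Q^\flat}(Q)$ for every $S\ge Q^\flat$. If $x_n\in Q^\flat$, then $S^\sharp$ is improper and both sides equal $1$; here one uses that an improper extension is maximal, so $\mu$ is trivial. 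If $x_n\notin Q^\flat$, Lemma~\ref{lem:meas1-3} identifies both sides with $-\tau+t\lambda_{\ast,\ast}$ evaluated on $(S^\sharp,y_n)$ and on $(Q,x_n)$, respectively. So we need $\nu(S^\sharp,y_n)=\nu(Q,x_n)$. The idea is that $S^\sharp$ is the pushout, so that $y_n$ lies below exactly the down-closure of $\phi(Q_{<x_n})$ and above the up-closure of $\phi(Q_{>x_n})$. I would then show, via the defining axioms of weak measures, that adding a point of $S$ with no new relations to $y_n$ does not change $\nu$. Concretely, induct on $\vert S\vert-\vert Q^\flat\vert$ and strengthen relations one step at a time. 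Alternatively, directly biject the cuts of $(S^\sharp,y_n)$ with those of $(Q,x_n)$ together with canceling pairs, mimicking Lemma~\ref{lem:weak-2}.

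\textbf{Main obstacle.} I expect Step~2's invariance $\theta_S(S^\sharp)=\theta_{Q^\flat}(Q)$ to be the crux. A new element of $S$ incomparable to $y_n$ creates new cuts, and also possibly new twins when $S$ identifies points, so one must show these contributions cancel. My first attempt would be to exhibit a sign-reversing involution that toggles the membership of a suitable minimal new element $z$ in $R$ (or $L$), as in Case~1b of Lemma~\ref{lem:weak-2}. Twins would be handled through Proposition~\ref{prop:twin-mobius}, by showing that the improper extensions above $S^\sharp$ correspond to those above $Q$. A second, more delicate point is the fiber-order check in Step~1.
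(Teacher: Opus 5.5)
\textbf{Both termwise claims fail.} Your Step~1 and Step~2 are each false, so the argument cannot be finished as planned. Here is a counterexample to both.

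Take $P=\{p\}$, $n=2$, and $Q=\{p,x_1,x_2\}$ with the single relation $x_1<x_2$. Then $Q^\flat$ is the antichain $\{p,x_1\}$. The poset $\cE^\flat$ consists of $Q^\flat$ below three maximal elements: $S_+$ ($p<y_1$), $S_-$ ($y_1<p$), and $S_{=}$ ($y_1=p$).

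\emph{Step~2 fails.} We have $\theta_{Q^\flat}(Q)=\nu(Q,x_2)=0$: there are no twins, and the cuts $(\emptyset,\emptyset)$ and $(\{p\},\emptyset)$ cancel. However:
\begin{itemize}
\item $S_+^\sharp$ is the chain $p<y_1<y_2$, so $\theta_{S_+}(S_+^\sharp)=t$.
\item In $S_-^\sharp$ the point $p$ becomes a twin of $y_2$, so $\theta_{S_-}(S_-^\sharp)=-1-t$.
\end{itemize}
The pushout transfers relations of $S$ to $y_n$, so $\nu(S^\sharp,y_n)$ genuinely depends on $S$. No sign-reversing involution can repair this.

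\emph{Step~1 fails in the same example.} Let $T=S_+^\sharp$. The interval $[Q,T]$ in $\cE$ is the chain $Q<T_c<T$, where $T_c$ has relations $z_1<z_2$ and $p<z_2$ and lies in the fiber over $Q^\flat$. Hence $\mu(Q,T)=0$, while your product gives $\mu(Q^\flat,S_+)\,\mu(S_+^\sharp,T)=-1$. The interval is not built fiberwise the way a product would be. The fiber over $Q^\flat$ contributes the extra element $T_c$ below $T$, and this breaks the recursion you intended to verify.

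\textbf{Missing idea: work on the zeta side, not the M\"obius side.} This is what the paper does. Since $\theta_P$ is characterized by $\sum_{T\ge Q}\theta_P(T)=\omega_P(Q)$ for all $Q$, it suffices to prove this identity with $\hat\theta_P$ in place of $\theta_P$.
\begin{itemize}
\item In that sum each $T$ is split along its own $T^\flat$. Group the terms by $S=T^\flat$ and use Lemma~\ref{lem:sharp}(b).
\item The fiber sum is then $\sum_{T\in\cE_1(S),\,T\ge S^\sharp}\theta_S(T)=\omega_S(S^\sharp)=t^{\vert Q\vert-\vert Q^\flat\vert}$, by M\"obius inversion in $\cE_1(S)$.
\item A second M\"obius inversion, $\sum_{S\ge Q^\flat}\theta_P(S)=\omega_P(Q^\flat)$, finishes the proof.
\end{itemize}
The quantity that must be independent of $S$ is therefore $\omega_S(S^\sharp)$, which trivially is, rather than $\theta_S(S^\sharp)$, which is not. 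This route never requires computing $\mu(Q,T)$.
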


\begin{proof}
By M\"obius inversion, it suffices to show
\begin{displaymath}
\omega_P(Q) = \sum_{T \ge Q} \hat{\theta}_P(T).
\end{displaymath}
We fix $Q$ in what follows, and prove this identity. We break the right sum up according to the value of $S=T^{\flat}$ to obtain
\begin{displaymath}
\sum_{T \ge Q} \hat{\theta}_P(T) = \sum_{\substack{S \in \cE_{n-1}(P), \\ S \ge Q^{\flat}}} \sum_{\substack{T \in \cE_n(P), \\ T^{\flat}=S, T \ge S^{\sharp}}} \big( \theta_P(S) \cdot \theta_S(T) \big).
\end{displaymath}
Here we have made use of the various statements in Lemma~\ref{lem:sharp}. Now, the $\theta_P(S)$ pulls out of the inner sum. Moreover, in the inner sum $T$ is varying over those elements of $\cE_1(S)$ with $T \ge S^{\sharp}$. By M\"obius inversion, we have
\begin{displaymath}
\sum_{\substack{T \in \cE_1(S), \\ T \ge S^{\sharp}}} \theta_S(T) = \omega_S(S^{\sharp}) = t^{\delta},
\end{displaymath}
where $\delta=\vert Q \vert - \vert Q^{\flat} \vert$ . We thus have
\begin{displaymath}
\sum_{T \ge Q} \hat{\theta}_P(T) = t^{\delta} \sum_{\substack{S \in \cE_{n-1}(P), \\ S \ge Q^{\flat}}} \theta_P(S) = t^{\delta} \cdot t^{\vert Q^{\flat} \vert - \vert P \vert} = \omega_P(Q),
\end{displaymath}
as required. In the second step, we used M\"obius inversion again.
\end{proof}

\subsection{The second measure} \label{ss:meas4}

Put $\alpha=\tau+\lambda_{\ast,1}$ and $\beta = \tau + \lambda_{1,\ast}$. We must show that
\begin{displaymath}
\nu = \tau+\lambda_{\ast,\ast}-\alpha-\beta
\end{displaymath}
is a measure. We note that Proposition~\ref{prop:lambda-calc}(d) shows that $\alpha(P,x)$ is~1 if $x$ begins a series pair, and is~0 otherwise; $\beta(P,x)$ is described by the transposed rule.

Let $P$ be a finite partially ordered set, and let $S$ be a subset of $P$. We are interested in ways of modifying the order on $P$ that do not affect the order on $P \setminus S$. Given a subset $F$ of $P \times P$, we let $\preceq_F$ be the transitive relation generated by $a \preceq_F b$ if $a \le b$ or $(a,b) \in F$; note that $\preceq_F$ is only a quasi-order in general. We want $\preceq_F$ to restrict to $\le$ on $P \setminus S$. An obvious necessary condition for this is that for any $(a,b) \in F$ we have $a \in S$ or $b \in S$. Also, there is no reason for $F$ to contain any pairs that are already comparable, since this will not affect $\preceq_F$. Motivated by these observations, we let $\tilde{\cF}_S(P)$ be the set of subsets $F \subset P^2$ such that for all $(a,b) \in F$ we have $a \not\le b$, and $\{a,b\} \cap S$ is non-empty. We let $\cF_S(P)$ be the subset of $\tilde{\cF}_S(P)$ consisting of those $F$ for which the restriction of $\preceq_F$ to $P \setminus S$ is $\le$.

Now, define
\begin{displaymath}
\omega_S(P) = \sum_{F \in \cF_S(P)} (-1)^{\vert F \vert}.
\end{displaymath}
If $S=\{x\}$, we write $\omega_x(P)$ in place of $\omega_S(P)$, and similarly for $\cF_S(P)$. The following result shows why this definition is relevant.

\begin{lemma} \label{lem:meas4-1}
For a marked poset $(P,x)$, we have $\omega_x(P) = \nu(P,x)$.
\end{lemma}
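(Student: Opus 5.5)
The plan is to compute $\omega_x(P)$ by directly classifying the sets $F \in \cF_x(P)$. We then match the resulting signed count term by term with
\begin{displaymath}
\nu=\tau+\lambda_{\ast,\ast}-\alpha-\beta .
\end{displaymath}

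\textbf{Step 1: describe $F$ by two sets.} Since $S=\{x\}$, each pair in $F$ has the form $(a,x)$ with $a \not\le x$, or $(x,b)$ with $x \not\le b$. So $F$ amounts to a pair of sets
\begin{displaymath}
L'=\{a : (a,x)\in F\} \subset P_{\parallel x}\cup P_{>x}, \qquad R'=\{b:(x,b)\in F\}\subset P_{\parallel x}\cup P_{<x},
\end{displaymath}
and $\vert F\vert=\vert L'\vert+\vert R'\vert$.

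\textbf{Step 2: the restriction condition.} Take a chain for $\preceq_F$ between two points of $P^x$ and shorten it as in the proof of Proposition~\ref{prop:cut-order}. This reduces the condition to chains through $x$ once. Hence $\preceq_F$ restricts to $\le$ on $P^x$ if and only if
\begin{displaymath}
L'\cup P_{<x} \le R'\cup P_{>x}
\end{displaymath}
holds elementwise, with non-strict inequality. Now write $L'=L\sqcup L_+$ with $L=L'\cap P_{\parallel x}$, and $R'=R\sqcup R_-$ with $R=R'\cap P_{\parallel x}$.

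\textbf{Step 3: the sets $L_+$ and $R_-$.} Each $m\in L_+$ satisfies $m>x$ and $m \le P_{>x}$. So $L_+$ is empty, or equals $\{m\}$ with $m$ the unique cover of $x$. Dually, $R_-$ is empty or consists of the unique element covered by $x$.

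Both cannot be non-empty: that would force $m\le g$ with $g<x<m$.

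Suppose $L_+=\{m\}$. Any $r\in R$ would satisfy $r\ge m>x$, contradicting $r \parallel x$, so $R=\emptyset$. The only remaining constraint is $L\subset P_{\parallel x}\cap P_{<m}$. The signed sum over such $L$ is $-1$ exactly when $P_{\parallel x}\cap P_{<m}=\emptyset$, i.e.\ when $x<m$ is a series pair, and it is $0$ otherwise. By the proof of Proposition~\ref{prop:lambda-calc}(d), this contribution is $-\alpha(P,x)$. Dually, the case $R_-\neq\emptyset$ contributes $-\beta(P,x)$.

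\textbf{Step 4: $L_+=R_-=\emptyset$.} Here $L,R\subset P_{\parallel x}$ with $L\cup P_{<x}\le R\cup P_{>x}$.

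If $L\cap R=\emptyset$, the inequality is strict, because the elements involved are distinct. So $(L,R)$ is precisely a cut, with the same sign $(-1)^{\vert L\vert+\vert R\vert}$. Summing over all cuts gives $\lambda_{\ast,\ast}(P,x)$, since the $u,v$-weights are simply set to $1$.

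If $L\cap R\ne\emptyset$, then any two elements $a,b$ of the intersection satisfy $a\le b\le a$. Hence the intersection is a single element $a$, with
\begin{displaymath}
P_{<x}\cup L\le a\le P_{>x}\cup R .
\end{displaymath}
Here $L\setminus\{a\}$ and $R\setminus\{a\}$ range freely over $P_{\parallel x}\cap P_{<a}$ and $P_{\parallel x}\cap P_{>a}$. The element $a$ is counted twice, so it contributes sign $+1$. The sum is therefore $+1$ exactly when both of these sets are empty and $P_{<x}<a<P_{>x}$, and $0$ otherwise. Exactly as in Proposition~\ref{prop:lambda-calc}(d), this condition says that $a$ is a twin of $x$. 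Summing over $a$ gives $+\tau(P,x)$.

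\textbf{Conclusion and main obstacle.} Adding the four contributions gives $\omega_x(P)=\tau+\lambda_{\ast,\ast}-\alpha-\beta=\nu(P,x)$. I expect the main care to be needed in Step 2. There one must check that the quasi-order $\preceq_F$ introduces no new relations on $P^x$ beyond those through a single visit to $x$. In particular, cycles $a\preceq_F x\preceq_F a$ are allowed and are exactly what produce the $\tau$ term in Step 4.
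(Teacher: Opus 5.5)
Your proof is correct and follows essentially the same route as the paper. The paper also identifies $\cF_x(P)$ with pairs $(L,R)$ satisfying $P_{<x}\cup L \le P_{>x}\cup R$, and splits according to the at most one common element $y$ of the two sides: no common element gives exactly the cuts, hence $\lambda_{\ast,\ast}$, while a common element contributes $+1$ for a twin and $-1$ for a series pair. Your split by $L_+$, $R_-$ and $L\cap R$ is the same decomposition, organized by whether $y$ lies above $x$, below $x$, or is incomparable to $x$.
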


\begin{proof}
Let $\Sigma$ be the set of pairs $(L,R)$ where $L$ is a subset of $P^x \setminus P_{<x}$, $R$ is a subset of $P^x \setminus P_{>x}$, and
\begin{displaymath}
P_{<x} \cup L \le P_{>x} \cup R
\end{displaymath}
Given $F \in \cF_x(P)$, put
\begin{displaymath}
L = \{ a \in P^x \mid (a,x) \in F \}, \qquad
R = \{ a \in P^x \mid (x,a) \in F \}.
\end{displaymath}
Note that
\begin{equation} \label{eq:F}
F = (L \times \{x\}) \sqcup (\{x\} \times R).
\end{equation}
We claim that $(L,R)$ belongs to $\Sigma$. Certainly $L$ is a subset of $P^x \setminus P_{<x}$ and $R$ is a subset of $P^x \setminus P_{>x}$. Suppose $a \in L$ and $b \in P_{>x} \cup R$. Then we have $a \preceq_F x \preceq_F b$, and so $a \le b$ since $\preceq_F$ restricts to $\le$ on $P^x$. Thus $L \le P_{>x} \cup R$, and similarly $P_{<x} \cup L \le R$. This establishes the claim. We thus have a well-defined map
\begin{displaymath}
\cF_x(P) \to \Sigma, \qquad F \mapsto (L,R)
\end{displaymath}
which is injective by \eqref{eq:F}. We claim that it is a bijection. Thus let $(L,R) \in \Sigma$ be given, and define $F$ by \eqref{eq:F}. We show $F \in \cF_x(P)$. It is clear that $F \in \tilde{\cF}_x(P)$. Suppose $a,b \in P^x$ satisfy $a \preceq_F b$; we show $a \le b$. By definition, there is a chain $a=z_1 \preceq_F \cdots \preceq_F z_r=b$ such that for each $i$ we have $z_i \le z_{i+1}$ or $(z_i,z_{i+1}) \in F$. If there is some $1<i<r$ such that $z_i \in P^x$ then we have $a \le z_i$ and $z_i \le b$ by induction on the length of the chain, and so $a \le b$. We are thus reduced to the case of a chain $a \preceq_F x \preceq_F b$. Thus $a \in L \cup P_{<x}$ and $b \in R \cup P_{>x}$. By definition of $\Sigma$, we have $a \le b$, as required. Thus $\preceq_F$ restricts to $\le$ on $P^x$, and so $F \in \cF_x(P)$. This establishes the claim.

From the above discussion we find
\begin{displaymath}
\omega_x(P) = \sum_{(L,R) \in \Sigma} (-1)^{\vert L \vert + \vert R \vert}.
\end{displaymath}
We now analyze this sum. We begin by observing that $\Sigma$ admits a natural decomposition, as follows. Suppose $(L,R) \in \Sigma$. If $a,b \in (P_{<x} \cup L) \cap (P_{>x} \cup R)$ then $a\le b$ and $b\le a$, and so $a=b$. Thus $P_{<x} \cup L$ and $P_{>x} \cup R$ have at most one common element. Let $\Sigma^0$ be the subset of $\Sigma$ where these sets are disjoint, and for $y \in P^x$, let $\Sigma_y$ be the subset where $y$ is the unique common element. Thus $\Sigma$ is partitioned into $\Sigma^0$ and the $\Sigma_y$ with $y \in P^x$. We analyze the corresponding pieces of the sum separately.

Consider $(L,R) \in \Sigma^0$. Then $L$ is disjoint from both $P_{<x}$ and $P_{>x}$, and thus contained in $P_{\parallel x}$; similarly for $R$. Moreover, we have a strict inequality $P_{<x} \cup L < P_{>x} \cup R$. Thus $(L,R)$ is a cut for $(P,x)$. It is clear that $\Sigma^0$ contains every cut, and so we see that $\Sigma^0$ is exactly the set of cuts for $(P,x)$. The contribution of $\Sigma^0$ to $\omega_x(P)$ is exactly $\lambda_{\ast,\ast}(P,x)$ by Corollary~\ref{cor:euler-char} and Proposition~\ref{prop:mu-cuts}.

Now fix $y \in P^x$ and consider $\Sigma_y$. If $(L,R) \in \Sigma_y$ then $P_{<x} \cup L \le P_{>x} \cup R$. Since $y$ belongs to both sides, we must have
\begin{displaymath}
P_{<x} \subset P_{\le y}, \quad P_{>x} \subset P_{\ge y}, \quad L \subset P_{\le y}, \quad R \subset P_{\ge y}.
\end{displaymath}
If either of the first two containments fails then $\Sigma_y$ is empty. Thus suppose they hold. Let $L_0=\emptyset$ if $y<x$ and $L_0=\{y\}$ otherwise. Similarly, let $R_0=\emptyset$ if $y>x$ and $R_0=\{y\}$ otherwise. Note that $y$ belongs to both $P_{<x} \cup L_0$ and $P_{>x} \cup R_0$. Let $\Sigma'_y$ be the set of pairs $(L',R')$ with $L' \subset P^x_{<y} \setminus P_{<x}$ and $R' \subset P^x_{>y} \setminus P_{>x}$. Then we have a bijection
\begin{displaymath}
\Sigma'_y \to \Sigma_y, \qquad (L',R') \mapsto (L_0 \cup L', R_0 \cup R')
\end{displaymath}
We thus find
\begin{displaymath}
\sum_{(L,R) \in \Sigma_y} (-1)^{\vert L \vert+\vert R \vert} = (-1)^{\vert L_0 \vert + \vert R_0 \vert} \sum_{(L',R') \in \Sigma'_y} (-1)^{\vert L' \vert + \vert R' \vert}
\end{displaymath}
The sum on the right is equal to~1 if both $P^x_{<y} \setminus P_{<x}$ and $P^x_{>y} \setminus P_{>x}$ are empty, and vanishes otherwise. We thus see that the above quantity is $+1$ if $y$ is a twin of $x$, $-1$ if $(x,y)$ or $(y,x)$ is a series pair, and vanishes otherwise. We thus find
\begin{displaymath}
\sum_{y \in P^x} \sum_{(L,R) \in \Sigma_y} (-1)^{\vert L \vert+\vert R \vert} = \tau(P,x) - \alpha(P,x) - \beta(P,x),
\end{displaymath}
which completes the proof.
\end{proof}

We next analyze $\omega_S(P)$ when $\vert S \vert = 2$.

\begin{lemma} \label{lem:meas4-2}
Let $P$ be a finite poset, let $x \ne y$ be elements of $P$, and let $S=\{x,y\}$. Then
\begin{displaymath}
\omega_S(P) = \omega_x(P) \cdot \omega_y(P^x).
\end{displaymath}
\end{lemma}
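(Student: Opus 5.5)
We decompose each $F \in \cF_S(P)$ according to whether its pairs involve $x$. Write $F = F_x \sqcup F_y$. Here $F_x$ consists of the pairs in $F$ containing $x$. The set $F_y$ consists of the remaining pairs; each of these contains $y$ but not $x$, so $F_y$ is a subset of $(P^x)^2$. The goal is to show that $F \mapsto (F_x, F_y)$ is close to a bijection between $\cF_S(P)$ and pairs $(F_y, G)$. In such a pair, $F_y \in \cF_y(P^x)$ and $G$ belongs to a set whose signed count equals $\omega_x(P)$. A sign-reversing involution should then cancel the discrepancies.

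We first record what the condition $F \in \cF_S(P)$ means. Let $\le'$ be the quasi-order $\preceq_{F_y}$ on $P^x$. We claim that $F \in \cF_S(P)$ holds if and only if two things are true. First, $\le'$ restricts to $\le$ on $P^{x,y}$, i.e.\ $F_y \in \cF_y(P^x)$. Second, adjoining $F_x$ does not change $\le'$ on $P^{x,y}$. The proof mimics the chain argument in Lemma~\ref{lem:meas4-1}: any chain between elements of $P^{x,y}$ can be split at its visits to $x$. In the cleanest case $\le'$ is a genuine partial order $P'$ on $P^x$. Then the second condition, analyzed exactly as in Lemma~\ref{lem:meas4-1}, becomes a condition on $L=\{a : (a,x)\in F\}$ and $R=\{a:(x,a)\in F\}$. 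Its signed count is $\omega_x$ of the poset obtained from $P$ by replacing the order on $P^x$ with $P'$. There is one subtlety: the pairs in $F_x$ are required to satisfy $a \not\le b$ in the original order, and the set of such pairs must be compared with the one built from the modified order.

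The main obstacle is that $\omega_x(P')$ need not equal $\omega_x(P)$. Here $P'$ means $P$ with $P^x$ reordered via $F_y$, and by Lemma~\ref{lem:meas4-1} its value is $\nu(P',x)$, which depends on the new comparabilities involving $y$. A second problem is that $\preceq_{F_y}$ may fail to be antisymmetric at $y$. This happens when $F_y$ produces $y \preceq a \preceq y$, so that $y$ is glued to some $a$.

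To handle these, we would not sum over $F_y$ first. Instead we fix $(L,R)$ and sum over $F_y$, which reverses the order of summation. For fixed $L,R \subset P^x$, we consider the $F_y$ that are compatible with $(L,R)$ in the combined sense. We would then find an element $z$, chosen canonically, for which toggling the pair $(y,z)$ or $(z,y)$ in $F_y$ preserves compatibility. This gives a sign-reversing involution that kills every term except those where compatibility with $(L,R)$ is the same as in the original order on $P^x$. The surviving terms factor as $\omega_y(P^x)\cdot\omega_x(P)$. The natural candidate for the toggle is an element witnessing the change in $\Max(L\cup P_{<x})$ or in the twin and series-pair structure. Finding an involution that is well defined uniformly is the step we expect to require real work.

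If that fails, a fallback is to generalize Lemma~\ref{lem:meas4-1} to arbitrary $S$. We would express $\omega_S(P)$ as an alternating sum over "multi-cuts", interpret it as a Möbius-type sum over $\cE_{|S|}(P\setminus S)$ in the spirit of \S\ref{ss:mobius}, and then factor it using the adjunction of Lemma~\ref{lem:sharp}, exactly as in Lemma~\ref{lem:meas1-2}.
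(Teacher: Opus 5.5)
Your overall shape is right: split $F=F_x\sqcup F_y$, fix $F_x$ (equivalently $(L,R)$), and cancel over $F_y$ by toggling a pair involving $y$. The gap is that the proposal stops at the step that constitutes the proof. You never say which pair to toggle, and the candidates you suggest (witnesses to a change in $\Max(L\cup P_{<x})$, or in twins and series pairs) are features of the value $\nu(P,x)$, not of the set $\cF_S(P)$.

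The missing idea is that $F_x$ alone dictates the toggle, and the right dichotomy is whether $F_x\in\cF_x(P)$. There are three cases:
\begin{enumerate}
\item If $\preceq_{F_x}$ does not restrict to $\le$ on $P^{x,y}$, then no $F$ with this $F_x$ lies in $\cF_S(P)$.
\item If $\preceq_{F_x}$ restricts to $\le$ on $P^{x,y}$ but not on $P^x$, there is $z\in P^{x,y}$ with $y\preceq_{F_x}z$ but $y\not\le z$ (or the dual). Then $(y,z)$ is an allowed pair in $\tilde{\cF}_y(P^x)$. Toggling it in $F_y$ leaves $\preceq_F$ literally unchanged, because $F_x$ already generates $y\preceq z$. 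So toggling is a sign-reversing involution on the $F\in\cF_S(P)$ with this $F_x$, and those terms cancel with nothing further to check.
\item If $F_x\in\cF_x(P)$, every visit of a generating chain to $x$ can be excised, since $z_{i-1}\preceq_{F_x}x\preceq_{F_x}z_{i+1}$ forces $z_{i-1}\le z_{i+1}$. Hence $F_x\sqcup F_y\in\cF_S(P)$ if and only if $F_y\in\cF_y(P^x)$, and the surviving sum is exactly $\omega_x(P)\,\omega_y(P^x)$.
\end{enumerate}

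Your intermediate claims are also wrong, which is why the obstacle you identify is misdirected. For fixed $F_y$, the admissible $F_x$ need only preserve the order on $P^{x,y}$. They may therefore create new comparabilities between $y$ and other points, and their signed count is not $\omega_x$ of a reordered poset. Take the three-element antichain $P=\{x,y,z\}$ with $F_y=\emptyset$: every element of $\tilde{\cF}_x(P)$ is admissible, so the signed count is $0$, whereas $\omega_x(P)=1$. So the difficulty is not that $P'$ differs from $P$, and the failure of antisymmetry at $y$ never has to be confronted.

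The fallback does not work as stated either. The M\"obius description of $\omega_S$ uses extensions ordered by weakly monotone canonical maps. The strictly monotone $\cE_n$ with Lemmas~\ref{lem:sharp} and~\ref{lem:meas1-2} computes a different measure, $-\tau+t\lambda_{\ast,\ast}$, so that argument cannot be reused without changing the underlying poset.
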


\begin{proof}
Suppose $F \in \tilde{\cF}_S(P)$. By definition, every pair in $F$ meets $S$. Let $F_x$ be the pairs in $F$ that meet $\{x\}$, and let $F_y$ be the remaining pairs in $F$, so that $F = F_x \sqcup F_y$. One easily sees that $F \mapsto (F_x, F_y)$ defines a bijection $\tilde{\cF}_S(P) \to \tilde{\cF}_x(P) \times \tilde{\cF}_y(P^x)$.

For $\preceq_F$ to restrict to $\le$ on $P^{x,y}$, it is obviously necessary that $\preceq_{F_x}$ does. Suppose now that we have $E \in \tilde{\cF}_x(P)$ such that $\preceq_E $ restricts to $\le$ on $P^{x,y}$, but does not restrict to $\le$ on $P^x$. This means there is some $z \in P^{x,y}$ such that $y \preceq_{F_x} z$ but $y \not\le z$, or $z \preceq_{F_x} y$ but $z \not\le y$; without loss of generality, suppose the first. We analyze the $F \in \cF_S(P)$ with $F_x=E$. Given $F \in \tilde{\cF}_S(P)$, let $F^{\dag}$ be the result of ``toggling'' $(y,z)$ in $F$; that is, $F^{\dag} = F \cup \{(y,z)\}$ if $(y,z) \notin F$, and $F^{\dag} = F \setminus \{(y,z)\}$ if $(y,z) \in F$. If $F_x=E$ then $\preceq_F$ and $\preceq_{F^{\dag}}$ coincide, since the relation $y \preceq z$ is already generated by $F_x$, and $F_x$ is unchanged by $(-)^{\dag}$. In particular, $F \in \cF_S(P)$ if and only if $F^{\dag} \in \cF_S(P)$. Thus $(-)^{\dag}$ is involutive on the terms in $\omega_S(P)$ with $F_x=E$, and each pair occurs with opposite signs. Thus the total contribution of these terms is~0.

The above analysis shows that we need only consider those $F$ for which $F_x \in \cF_x(P)$. Precisely,
\begin{displaymath}
\sum_{F \in \cF_S(P)} (-1)^{\vert F \vert} = \sum_{F_x \in \cF_x(P)} \sum_{\substack{F_y \in \tilde{\cF}_y(P^x), \\ F_x \sqcup F_y \in \cF_S(P)}} (-1)^{\vert F_x \vert + \vert F_y \vert}.
\end{displaymath}
We claim that for $F_x \in \cF_x(P)$ and $F_y \in \tilde{\cF}_y(P^x)$, we have $F_x \sqcup F_y \in \cF_S(P)$ if and only if $F_y \in \cF_y(P^x)$. Granted this, the right side above becomes $\omega_x(P) \cdot \omega_y(P^x)$, which establishes the lemma.

We now prove the claim. Fix $F_x \in \cF_x(P)$ and $F_y \in \tilde{\cF}_y(P^x)$, and put $F=F_x \sqcup F_y$. Suppose $F$ belongs to $\cF_S(P)$. If $a,b \in P^{x,y}$ satisfy $a \preceq_{F_y} b$ then certainly they also satisfy $a \preceq_F b$; but this implies $a \le b$ since $F \in \cF_S(P)$. We thus see that $\preceq_{F_y}$ restricts to $\le$ on $P^{x,y}$, and so $F_y \in \cF_y(P^x)$, as required.

Now suppose that $F_y \in \cF_y(P^x)$. We show that $F \in \cF_S(P)$, meaning $\preceq_F$ restricts to $\le$ on $P^{x,y}$. Consider a relation $a \preceq_F b$ with $a,b \in P^{x,y}$; we show that $a \le b$. By definition, there is a chain $a=z_1 \preceq_F \cdots \preceq_F z_r=b$, where for each $i$ we have $z_i \le z_{i+1}$ or $(z_i, z_{i+1}) \in F$. We assume that consecutive elements in the chain are distinct. Suppose $z_i=x$ for some $i$. Then we must have $z_{i-1} \le z_i$ or $(z_{i-1}, z_i) \in F_x$, since $F_y$ does not contain any pairs with $x$, and so $z_{i-1} \preceq_{F_x} x$. Similarly, $x \preceq_{F_x} z_{i+1}$. Thus $z_{i-1} \preceq_{F_x} z_{i+1}$ since $\preceq_{F_x}$ restricts to $\le$ on $P^x$. Hence we can eliminate any $x$'s from our chain. It then follows that for each $i$ we have $z_i \le z_{i+1}$ or $(z_i, z_{i+1}) \in F_y$, and so $a \preceq_{F_y} b$, and so $a \le b$ as required.
\end{proof}

Of course, $x$ and $y$ play a symmetric role in Lemma~\ref{lem:meas4-2}, and so we also have
\begin{displaymath}
\omega_S(P) = \omega_y(P) \cdot \omega_x(P^y).
\end{displaymath}
Combined with Lemma~\ref{lem:meas4-1}, it follows that the defect of $\nu$ vanishes, and so $\nu$ is indeed a measure. In fact, one can show that $\nu(P \setminus S \to P) = \omega_S(P)$ holds in general; this is the global formula.

\begin{remark}
Recall that $\cE_n(P)$ is the set of $n$-extensions of $P$, equipped with the order $T \le S$ if the canonical function $\phi \colon T \to S$ is strictly monotone, $x<y$ implies $\phi(x)<\phi(y)$. Let $\ol{\cE}_n(P)$ be the same set, but equipped with the order $T \le S$ if the canonical function is (weakly) monotone, i.e., $x \le y$ implies $\phi(x) \le \phi(y)$. Let $\ol{\mu}$ be the M\"obius function for $\ol{\cE}_n(P)$. Let $S$ be a subset of $P$ of size $n$, and put $Q= P \setminus S$. One can show
\begin{displaymath}
\omega_S(P) = \sum_{T \in \ol{\cE}_n(Q), P \le T} \ol{\mu}(P, T).
\end{displaymath}
It is possible to give an alternative proof of Theorem~\ref{thm:some-meas}(b) using this perspective.
\end{remark}

\subsection{An observation}

Recall that a weak measure $\nu$ is a measure if $\nu(e)=1$ and the defect $\delta_{\nu}(P,x,y)$ vanishes for all $(P, x, y)$. In fact, we only need to check vanishing of the defect on certain $(P,x,y)$. This will simplify the proofs of the remaining cases of Theorem~\ref{thm:some-meas}. Recall that $\lessdot$ is the covering relation in a poset (\S \ref{ss:poset}).

\begin{proposition} \label{prop:cover-defect}
Let $\nu$ be a weak measure on $\fP$ valued in a ring $k$. Suppose $\nu(e)=1$ and $\delta_{\nu}(P,x,y)=0$ whenever $x \lessdot y$. Then $\nu$ is a measure.
\end{proposition}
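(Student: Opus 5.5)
We must show $\delta_\nu(P,x,y)=0$ for all $(P,x,y)$. Since $\delta_\nu$ is antisymmetric, there are three cases: $x<y$ with $x \lessdot y$, $x<y$ not a cover, and $x \parallel y$. If $x<y$ is not a cover, then some $a$ satisfies $x<a<y$. By Proposition~\ref{prop:poset-sep}(a), $x$ and $y$ are separated, so $\delta_\nu(P,x,y)=0$ by Proposition~\ref{prop:sep}(b). The cover case is the hypothesis. It remains to treat incomparable pairs.

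For $x \parallel y$ I would induct on the size of $P$, and for fixed size on a measure of how far $x$ and $y$ are from being comparable. The tool is Proposition~\ref{prop:defect-sum}, applied to the pre-amalgamation $(P,x,y)$ given by the structures on $P^x$ and $P^y$. Here $P=P_0$ is one proper amalgamation, and the others are $P_-$ and $P_+$ when they exist. The proposition gives
\begin{displaymath}
\delta_\nu(P_0,x,y) = -\delta_\nu(P_-,x,y)-\delta_\nu(P_+,x,y),
\end{displaymath}
where a term is omitted if the poset does not exist. In $P_-$ we have $x<y$, so its defect vanishes: either $x \lessdot y$ in $P_-$ (the hypothesis) or $x<y$ is not a cover (separation, as above). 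The same holds for $P_+$ with $y<x$, using antisymmetry. Hence $\delta_\nu(P,x,y)=0$.

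So the incomparable case follows in one step, and no induction is actually needed. The only points to check are that every proper amalgamation other than $P_0$ has $x$ and $y$ comparable, and that Proposition~\ref{prop:defect-sum} needs only that $\nu$ is a weak measure valued in a ring, which is given. The first point holds by the description of amalgamations in \S\ref{ss:amalg}: the proper amalgamations are exactly those of $P_-$, $P_0$, $P_+$ that exist. Having $\nu(e)=1$ together with the vanishing of all defects, $\nu$ is a measure by the discussion after Definition~\ref{defn:weak}, which rests on Proposition~\ref{prop:meas-marked}.

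I expect the main obstacle to be making sure no case is missed. In particular the non-cover comparable case uses separation directly, rather than the cover hypothesis.
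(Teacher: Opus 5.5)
Your proposal is correct and follows the paper's proof essentially verbatim. You handle non-cover comparable pairs by separation (Propositions~\ref{prop:poset-sep} and~\ref{prop:sep}), cover pairs by hypothesis, and reversed pairs by antisymmetry; you then settle the incomparable case in one step via Proposition~\ref{prop:defect-sum}, since every other proper amalgamation makes $x$ and $y$ comparable. As you noticed yourself, no induction is needed.
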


\begin{proof}
Let $P$ be a poset and let $x$ and $y$ be distinct elements of $P$. If $x<y$ and $y$ is not a cover of $x$, then $x$ and $y$ are separated (Proposition~\ref{prop:poset-sep}), and so $\delta_{\nu}(P,x,y)=0$ (Proposition~\ref{prop:sep}). If $x<y$ and $y$ is a cover of $x$ then $\delta_{\nu}(P,x,y)=0$ by assumption. If $x>y$ then $\delta_{\nu}(P,x,y) = - \delta_{\nu}(P,y,x)=0$. Finally, suppose $x$ and $y$ are incomparable. Consider the pre-amalgamation $(P,x,y)$. One amalgamation is $P$. In any other amalgamation $Q$, the elements $x$ and $y$ are comparable, and so $\delta_{\nu}(Q,x,y)=0$ by the cases considered above. Proposition~\ref{prop:defect-sum} thus shows $\delta_{\nu}(P,x,y)=0$. We have thus shown that $\delta_{\nu}$ vanishes identically, and so $\nu$ is a measure.
\end{proof}

\subsection{The third measure} \label{ss:meas2}

Let $\alpha=\tau+\lambda_{\ast,1}$, as above. We must show that
\begin{displaymath}
\nu = \lambda_{0,0}+\lambda_{0,1}-\lambda_{\ast,0}-\alpha
\end{displaymath}
is a measure. We recall the values of the individual terms given by Proposition~\ref{prop:lambda-calc}:
\begin{itemize}
\item $\lambda_{0,0}(P,x)$ is~1 if $x$ if isolated, and~0 otherwise.
\item $\lambda_{0,1}(P,x)$ is~0 if $x$ is not minimal. If $x$ is minimal, it equals $\epsilon-n$, where $\epsilon$ is~1 if $x$ has a unique cover and~0 otherwise, and $n$ is the number of upper twins of $x$.
\item $\lambda_{\ast,0}(P,x)$ is~1 if $x$ is the greatest element, and~0 otherwise.
\item $\alpha(P,x)$ is~1 if $x$ begins a series pair, and is~0 otherwise.
\end{itemize}
Let $\hat{P} = P \sqcup \{\bone\}$, where $x<\bone$ for all $x \in P$. Note that the $\hat{P}$ construction commutes with deleting points in $P$, i.e., if $x \in P$ then $(\hat{P})^x$ is obtained from $P^x$ by adjoining the maximal point $\bone$. We begin with the following useful observation.

\begin{lemma} \label{lem:meas2-1}
For a marked poset $(P,x)$, we have
\begin{displaymath}
\lambda_{0,0}(P,x)+\lambda_{0,1}(P,x) = \lambda_{0,1}(\hat{P}, x), \qquad
\lambda_{\ast,0}(P,x) + \alpha(P,x) = \alpha(\hat{P}, x)
\end{displaymath}
and so
\begin{displaymath}
\nu(P,x) = \lambda_{0,1}(\hat{P}, x) - \alpha(\hat{P}, x).
\end{displaymath}
\end{lemma}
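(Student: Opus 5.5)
The plan is to verify the two identities separately, using the combinatorial descriptions from Proposition~\ref{prop:lambda-calc}. The third formula then follows formally. Indeed, $\nu=\lambda_{0,0}+\lambda_{0,1}-\lambda_{\ast,0}-\alpha$, so substituting the two identities gives $\nu(P,x)=\lambda_{0,1}(\hat{P},x)-\alpha(\hat{P},x)$. Throughout, note that in $\hat{P}$ the marked point $x\in P$ is never maximal, since $x<\bone$. Also, $\hat{P}_{>x}=P_{>x}\cup\{\bone\}$, $\hat{P}_{<x}=P_{<x}$, and $\hat{P}_{\parallel x}=P_{\parallel x}$.

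\emph{First identity.} By Proposition~\ref{prop:lambda-calc} (transposed form of (c)), $\lambda_{0,1}(\hat{P},x)$ vanishes unless $x$ is minimal in $\hat{P}$, which is equivalent to $x$ being minimal in $P$. The same holds for both terms on the left: isolated points are minimal, and $\lambda_{0,1}(P,x)=0$ for non-minimal $x$. So assume $x$ is minimal. Then $\lambda_{0,1}(\hat{P},x)=\hat{\epsilon}-\hat{n}$, where $\hat{\epsilon}$ records whether $x$ has a unique cover in $\hat{P}$, and $\hat{n}$ counts upper twins of $x$ in $\hat{P}$. Upper twins are unchanged, since $\hat{P}_{>z}=P_{>z}\cup\{\bone\}$ for every $z\in P$, so $\hat{n}=n$. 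For covers: if $x$ is isolated in $P$, then $\bone$ is its unique cover in $\hat{P}$ while $\epsilon=0$ in $P$, so $\hat{\epsilon}=1=\epsilon+1$. Otherwise $P_{>x}\neq\emptyset$ (as $x$ is minimal and non-isolated), so the covers of $x$ in $\hat{P}$ are exactly those in $P$, and $\hat{\epsilon}=\epsilon$. This gives $\lambda_{0,1}(\hat{P},x)=\lambda_{0,0}(P,x)+\lambda_{0,1}(P,x)$.

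\emph{Second identity.} Here $\alpha(\hat{P},x)$ is $1$ exactly when $x$ begins a series pair $x<y$ in $\hat{P}$. There are two cases. If $y\in P$, then $y$ is the unique cover of $x$ in $\hat{P}$ and $x$ is the unique element covered by $y$. Both conditions transfer to $P$, since adding $\bone$ only adds an element above everything and creates a new cover only for maximal elements of $P$; here $x$ has the cover $y$, so it is not maximal. Conversely, a series pair in $P$ remains one in $\hat{P}$. If $y=\bone$, then $\bone$ being the unique cover of $x$ means $x$ is maximal in $P$, and $x$ being the unique element covered by $\bone$ means $\Max(P)=\{x\}$. Together these say $x$ is the greatest element of $P$, which is exactly $\lambda_{\ast,0}(P,x)=1$ by Proposition~\ref{prop:lambda-calc}(b). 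The two cases are mutually exclusive, because a greatest element begins no series pair in $P$. Hence $\alpha(\hat{P},x)=\alpha(P,x)+\lambda_{\ast,0}(P,x)$.

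The only real care needed is the bookkeeping of covers when passing to $\hat{P}$. I expect this to be the main (mild) obstacle, particularly making sure that covers of non-maximal elements and the "unique element covered" condition are unaffected by adjoining $\bone$.
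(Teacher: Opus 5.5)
Your proof is correct and essentially the same as the paper's. Both arguments reduce the first identity to comparing the covers and upper twins of a minimal $x$ in $P$ and $\hat{P}$, splitting on whether $x$ is isolated. For the second identity, both use the facts that series pairs $x<y$ with $y\in P$ are unchanged, while $x<\bone$ is a series pair exactly when $x$ is greatest in $P$. The only cosmetic difference is that you split by the endpoint $y$ of the series pair in $\hat{P}$, rather than by the paper's three cases on $x$ in $P$; this makes the converse direction (a series pair in $\hat{P}$ ending in $P$ already comes from one in $P$) slightly more explicit than the paper's brief remark in its case (3).
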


\begin{proof}
We first prove the formula for $\lambda_{0,1}(\hat{P}, x)$. If $x$ is not minimal in $P$ then all three quantities vanish. Thus assume $x$ is minimal. Let $a=\lambda_{0,0}(P,x)$, and write $\lambda_{0,1}(P,x) = \epsilon - n$ and $\lambda_{0,1}(\hat{P}, x) = \epsilon' - n'$. Clearly, the twins of $x$ in $P$ are the same as those in $\hat{P}$, and so $n=n'$. We must therefore show $a+\epsilon = \epsilon'$. If $x$ is isolated in $P$ then $a=\epsilon'=1$ and $\epsilon=0$, as required. Assume $x$ is not isolated, so $a=0$. Since $x$ is minimal, it follows that $P_{>x}$ is non-empty. Thus the covers of $x$ in $P$ are the same as those in $\hat{P}$, and so $\epsilon=\epsilon'$. This establishes the formula.

We now prove the formula for $\alpha(\hat{P}, x)$. Put $a=\lambda_{\ast,0}(P,x)$, $b=\alpha(P,x)$, and $c=\alpha(\hat{P}, x)$. We consider three cases. (1) Suppose $x<y$ is a series pair in $P$. Then $x$ is not maximal in $P$, and $x<y$ remains a series pair in $\hat{P}$. Thus $a=0$ and $b=c=1$. (2) Suppose $x$ is greatest in $P$. Then $x$ does not begin a series pair in $P$, while $x<\bone$ is a series pair in $\hat{P}$. Thus $a=c=1$ and $b=0$. (3) Finally, suppose $x$ does not begin a series pair in $P$ and is not greatest in $P$. Then $x$ does not begin a series pair in $\hat{P}$ either; indeed, the only possible series pair is $x<\bone$, but this implies $x$ is greatest in $P$. We thus have $a=b=c=0$.
\end{proof}

The above lemma allows us to give a convenient formula for $\nu$.

\begin{lemma} \label{lem:meas2-2}
Let $(P,x)$ be a marked poset. Define quantities $\delta$, $\epsilon$, and $n$ as follows:
\begin{itemize}
\item $\delta$ is~1 if $x$ has a unique cover in $\hat{P}$, and~0 otherwise.
\item $\epsilon$ is~1 if $x$ begins a series pair in $\hat{P}$, and~0 otherwise.
\item $n$ is the number of upper twins of $x$ (in $P$ or $\hat{P}$).
\end{itemize}
Then:
\begin{displaymath}
\nu(P,x) = \begin{cases}
-\epsilon & \text{if $x$ is not minimal in $P$} \\
\delta-\epsilon-n & \text{if $x$ is minimal in $P$.}
\end{cases}
\end{displaymath}
\end{lemma}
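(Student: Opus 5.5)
By Lemma~\ref{lem:meas2-1}, $\nu(P,x) = \lambda_{0,1}(\hat{P},x) - \alpha(\hat{P},x)$. It therefore suffices to evaluate the two terms on the right using Proposition~\ref{prop:lambda-calc}. Recall that $\alpha(\hat{P},x) = \tau(\hat{P},x)+\lambda_{\ast,1}(\hat{P},x)$, and that by Proposition~\ref{prop:lambda-calc}(d) this is~1 if $x$ begins a series pair in $\hat{P}$ and~0 otherwise. So $\alpha(\hat{P},x)=\epsilon$ in all cases. The whole argument then reduces to computing $\lambda_{0,1}(\hat{P},x)$.

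For that term I use the transpose of Proposition~\ref{prop:lambda-calc}(c). It says $\lambda_{0,1}(\hat{P},x)=0$ unless $x$ is minimal in $\hat{P}$. When $x$ is minimal, it says $\lambda_{0,1}(\hat{P},x)$ equals $\delta'-n'$, where $\delta'$ is~1 if $x$ has a unique cover in $\hat{P}$ and $n'$ is the number of upper twins of $x$ in $\hat{P}$. Minimality of $x$ in $\hat{P}$ is the same as minimality in $P$, because the added point $\bone$ lies above everything. If $x$ is not minimal in $P$, this gives $\nu(P,x) = 0-\epsilon=-\epsilon$. If $x$ is minimal, then $\delta'=\delta$ by definition, which gives $\nu(P,x)=\delta-n'-\epsilon$.

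The remaining point, which is the only step with any content, is that $n'=n$: the upper twins of $x$ are the same in $P$ and in $\hat{P}$. Take $y\in P^x$. Then $\hat{P}_{>y}=P_{>y}\cup\{\bone\}$, and likewise for $x$, so $\hat{P}_{>x}=\hat{P}_{>y}$ if and only if $P_{>x}=P_{>y}$. The point $\bone$ itself is not an upper twin of $x$, since $\bone > x$ and twins are incomparable. This justifies the parenthetical ``(in $P$ or $\hat{P}$)'' in the statement.

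Combining the two cases gives the stated formula.
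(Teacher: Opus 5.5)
Your proposal is correct and takes the same route as the paper. The paper's proof is the one-line remark that the result follows from Lemma~\ref{lem:meas2-1} together with the formulas for $\lambda_{0,1}$ (the transpose of Proposition~\ref{prop:lambda-calc}(c)) and for $\alpha$ (Proposition~\ref{prop:lambda-calc}(d)). Your checks that minimality of $x$ and the upper twins of $x$ are unchanged in passing from $P$ to $\hat{P}$ spell out details the paper leaves implicit; the twin observation also appears in its proof of Lemma~\ref{lem:meas2-1}.
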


\begin{proof}
This follows from Lemma~\ref{lem:meas2-1} and the formulas for $\lambda_{0,1}$ and $\alpha$.
\end{proof}

We now carry out two auxiliary computations of $\nu$.

\begin{lemma} \label{lem:meas2-3}
Let $x \lessdot y$ be elements of $P$, and suppose $P_{<y}=\{x\}$. Then $\nu(P,x)=0$.
\end{lemma}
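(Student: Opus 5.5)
The plan is to apply Lemma~\ref{lem:meas2-2} directly, splitting according to whether $x$ is minimal in $P$. The hypotheses say that $y$ covers $x$ and that $x$ is the only element below $y$. In particular $x$ lies below $y$, so $x$ is not maximal. Hence $\bone$ is not a cover of $x$ in $\hat{P}$: any cover of $x$ in $\hat{P}$ is a cover of $x$ in $P$, and $y$ is one of them.

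\textit{Case 1: $x$ not minimal in $P$.} Then $\nu(P,x)=-\epsilon$, so it suffices to show that $x$ does not begin a series pair in $\hat{P}$. Suppose $x \lessdot w$ is a series pair in $\hat{P}$. Then $w$ is the unique cover of $x$, so $w=y$, and $y$ covers only $x$. Since $x<y$ is a series pair, $\hat{P}_{<y} = \hat{P}_{\le x}$. But $x$ is not minimal, so there is some $z<x$, and then $z<y$. This contradicts $P_{<y}=\{x\}$. Hence $\epsilon=0$.

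\textit{Case 2: $x$ minimal in $P$.} We must show $\delta-\epsilon-n=0$.
\begin{itemize}
\item If $y$ is the unique cover of $x$, then $\delta=1$. Since $P_{<y}=\{x\}=\hat{P}_{\le x}$ (using minimality), the pair $x<y$ is a series pair, so $\epsilon=1$. It remains to see that $n=0$. An upper twin $z$ of $x$ satisfies $P_{>z}=P_{>x}\ni y$, so $z<y$. This forces $z \in P_{<y}=\{x\}$, which is impossible. So $\nu=1-1-0=0$.
\item If $x$ has another cover $w \neq y$, then $\delta=0$ and $\epsilon=0$, since a series pair requires a unique cover. The twin count $n$ vanishes by the same argument as above: any upper twin of $x$ lies below $y$. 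So $\nu=0$.
\end{itemize}

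The only delicate point is bookkeeping with $\hat{P}$ versus $P$ in the definitions of $\delta$ and $\epsilon$. This is resolved by the observation that $x$ is not maximal, so $\bone$ never enters as a cover of $x$.
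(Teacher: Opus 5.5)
Your proof is correct and follows the paper's argument: apply Lemma~\ref{lem:meas2-2}, show $n=0$ because any upper twin of $x$ would lie below $y$, and check that $\delta=\epsilon$. The only difference is that your Case~1 is vacuous, since $P_{<y}=\{x\}$ already forces $x$ to be minimal (your own contradiction there comes from $z<x<y$ and never uses the series-pair assumption); the paper simply observes this at the outset and skips that case.
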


\begin{proof}
The assumption implies $x$ is minimal. Let $\delta$, $\epsilon$, and $n$ be as in Lemma~\ref{lem:meas2-2}. Clearly, $x$ cannot have an upper twin since $x$ is the unique point below $y$, and so $n=0$. If $x$ has a unique upper cover then that cover is $y$, and $x<y$ is a series pair; thus $\epsilon=\delta$. The result follows.
\end{proof}

\begin{lemma} \label{lem:meas2-4}
Let $x \lessdot y \lessdot z$ be elements of $\hat{P}$, and assume $y<z$ is a series pair. Then $\nu(P,x) = \nu(P^y, x)$. (Note: $z=\bone$ is allowed.)
\end{lemma}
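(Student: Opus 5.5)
We will compare the data $(\delta,\epsilon,n)$ of Lemma~\ref{lem:meas2-2} for $(P,x)$ and $(P^y,x)$, together with the minimality of $x$, and show they produce the same value. Note that $y \in P$ necessarily: $y<z$ in $\hat P$ with $z$ existing, so $y \ne \bone$. Also, $\widehat{P^y}=(\hat P)^y$. The hypotheses say that $x\lessdot y\lessdot z$ in $\hat P$, that $z$ is the unique cover of $y$, and that $y$ is the unique element covered by $z$. In particular, $\hat P_{<z}=\hat P_{\le y}$ and $\hat P_{>y}=\hat P_{\ge z}$.

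First, minimality of $x$ is unchanged by deleting $y$, since $y>x$ and so $y$ is not below $x$. Second, consider the covers of $x$. In $\hat P$ the element $y$ is a cover of $x$. In $(\hat P)^y$ the element $z$ becomes a cover of $x$: any $w$ with $x<w<z$ and $w\neq y$ would satisfy $w<z$, hence $w\le y$, hence $x<w<y$, contradicting $x\lessdot y$. Every other cover $c\neq y$ of $x$ in $\hat P$ stays a cover in $(\hat P)^y$. Conversely, a new cover $c$ of $x$ in $(\hat P)^y$ must have had $x<y<c$ in $\hat P$, hence $c\ge z$, and so $c=z$. Also, $z$ was not a cover of $x$ in $\hat P$. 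Thus deleting $y$ replaces the cover $y$ by the cover $z$ bijectively, so the number of covers, and hence $\delta$, is unchanged.

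Next, $\epsilon$. Suppose $x$ begins a series pair in $\hat P$. The partner must be $y$, since it is the unique cover, and $y$ covers only $x$. In $(\hat P)^y$, $z$ is then the unique cover of $x$, and $z$ covers only $x$, because its lower covers in $\hat P$ were only $y$ and $y$'s lower covers are now only $x$. The converse is the same argument run backwards, using that the elements covered by $z$ in $(\hat P)^y$ are exactly the elements covered by $y$ in $\hat P$. So $\epsilon$ is preserved.

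Finally, upper twins, which is the step I expect to need the most care. Suppose $w$ is an upper twin of $x$ in $P$, so that $P_{>w}=P_{>x}\ni y$. Then $w<y$ with $w\neq x$. This contradicts nothing by itself, but the twin condition puts $w\parallel x$. Deleting $y$ gives $P^y_{>w}=P^y_{>x}$, so $w$ remains an upper twin. Conversely, let $w\neq y$ be an upper twin of $x$ in $P^y$. Then $w<z$ if $z\in P$; in that case $w\le y$, and since $w\ne y$ we get $w<y$. Then $P_{>w}=P_{>x}$, so $w$ is a twin in $P$.

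The case $z=\bone$ needs attention, since then $z\notin P$. In this case the upper sets of $x$ in $P^y$ and in $P$ differ only by $y$, and every element above $y$ is above $z$. Since $z=\bone$, this means $y$ is maximal in $P$. The only upper twins that could appear after deleting $y$ are then maximal elements $w$ with $P_{>x}=\{y\}$. I would handle this by checking that, in this situation, such $w$ are compensated: the minimality and cover data change accordingly, or the situation is excluded by $x\lessdot y$ together with $y$ being the unique lower cover of $\bone$, which forces $\Max(P)=\{y\}$, so no such $w$ exists. Hence $n$ is preserved, and Lemma~\ref{lem:meas2-2} gives $\nu(P,x)=\nu(P^y,x)$.
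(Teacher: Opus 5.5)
Your plan is the paper's: show that the minimality of $x$ and the invariants $\delta$, $\epsilon$, $n$ of Lemma~\ref{lem:meas2-2} do not change when $y$ is deleted. The following parts are correct and match the paper:
\begin{itemize}
\item minimality of $x$;
\item $\delta$, where the cover $y$ is traded for the cover $z$;
\item $\epsilon$, where the elements covered by $z$ in $(\hat P)^y$ are exactly those covered by $y$ in $\hat P$;
\item upper twins when $z \in P$.
\end{itemize}

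The weak spot is the case $z=\bone$. You say that, since $y$ is maximal, the only upper twins that could appear after deleting $y$ are maximal $w$ with $P_{>x}=\{y\}$. This does not follow from maximality of $y$. For example, take $P=\{x,w,y,u\}$ with $x<y$, $x<u$, $w<u$. Then the non-maximal element $w$ becomes an upper twin of $x$ in $P^y$.

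Your first remedy, that such $w$ might be ``compensated'' by the other data, cannot work. You have already shown that minimality, $\delta$ and $\epsilon$ agree, so $\nu(P,x)=\nu(P^y,x)$ forces $n=n'$ exactly.

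Your second remedy is the right one. It should be phrased through the half of the series-pair hypothesis you actually need, namely $\hat P_{<z}=\hat P_{\le y}$. For $z=\bone$ this says $P=P_{\le y}$, so every $w\in P^y$ satisfies $w<y$. Then $P_{>w}=P^y_{>w}\cup\{y\}=P^y_{>x}\cup\{y\}=P_{>x}$, exactly as in your $z\in P$ argument.

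The paper avoids the case split by counting upper twins in $\hat P$ and $(\hat P)^y$, which give the same counts as in $P$ and $P^y$. There $z$ is always present, so $z\in(\hat P)^y_{>x}=(\hat P)^y_{>w}$ gives $w<z$, hence $w<y$, uniformly in $z$.
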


\begin{proof}
Let $\delta$, $\epsilon$, and $n$ be the invariants for $(P,x)$ in Lemma~\ref{lem:meas2-2}, and let $\delta'$, $\epsilon'$, and $n'$ be those for $(P^y, x)$. We claim that they agree, that is, $\delta=\delta'$, $\epsilon=\epsilon'$, and $n=n'$. Since $x$ is minimal in $P$ if and only if it is minimal in $P^y$, we will therefore have $\nu(P,x) = \nu(P^y, x)$ by Lemma~\ref{lem:meas2-2}.

We first look at $\delta$. Clearly, $y$ is a cover of $x$ in $\hat{P}$, and $z$ is a cover of $x$ in $\hat{P}^y$. If $a \notin \{x,y,z\}$, then $a$ is a cover of $x$ in $\hat{P}$ if and only if it is so in $\hat{P}^y$. This shows that $\delta=\delta'$.

We now look at $\epsilon$. Clearly, if $\delta=\delta'=0$ then $\epsilon=\epsilon'=0$ as well. Thus suppose $\delta=\delta'=1$, meaning $y$ is the unique cover of $x$ in $\hat{P}$, and $z$ is the unique cover of $x$ in $\hat{P}^y$. Since $\hat{P}_{<y} = \hat{P}^y_{<z}$, we see that $x$ is the unique point covered by $y$ in $\hat{P}$ if and only if $x$ is the unique point covered by $z$ in $\hat{P}^y$. Thus $x<y$ is a series pair in $\hat{P}$ if and only if $x<z$ is a series pair in $\hat{P}^y$. Hence $\epsilon=\epsilon'$.

Finally, we look at $n$. Let $a \notin \{x,y,z\}$. Suppose $a$ is an upper twin of $x$ in $\hat{P}$, meaning $\hat{P}_{>x} = \hat{P}_{>a}$. It is clear that $a$ is then an upper twin of $x$ in $\hat{P}^y$. Now, conversely, suppose that $a$ is an upper twin of $x$ in $\hat{P}^y$, meaning $\hat{P}^y_{>x} = \hat{P}^y_{>a}$. Then $a<z$. Since $y<z$ is a series pair, we thus have $a \le y$, and since $a \ne y$, we have $a<y$. Thus $y \in \hat{P}_{>a}$, and so $a$ and $x$ are upper twins in $\hat{P}$. We thus see that the upper twins of $x$ in $\hat{P}$ and $\hat{P}^y$ agree (note that $x$ is not an upper twin of $x$, $y$, or $z$ in either poset), and so $n=n'$.
\end{proof}

Finally, we reach the main result.

\begin{lemma}
$\nu$ is a measure.
\end{lemma}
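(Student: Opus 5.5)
By Proposition~\ref{prop:cover-defect}, I only need to check two things: that $\nu(e)=1$, and that $\delta_\nu(P,x,y)=0$ whenever $x \lessdot y$. The first is immediate. We have $\lambda_{p,q}(e)=0$ and $\tau(e)=-1$, and $\nu=-\tau+\lambda_{0,0}+\lambda_{0,1}-\lambda_{\ast,0}-\lambda_{\ast,1}$, so $\nu(e)=1$.

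So fix $x \lessdot y$ in $P$. The target identity is
\begin{displaymath}
\nu(P,x)\,\nu(P^x,y) = \nu(P,y)\,\nu(P^y,x).
\end{displaymath}
Throughout I will work in $\hat{P}$, using the formula of Lemma~\ref{lem:meas2-2}. By that formula $\nu$ takes values in a small set, determined by local data: minimality, unique cover in $\hat{P}$, series pairs in $\hat{P}$, and upper twins. I split into cases according to the local structure around $x$ and $y$.

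\emph{Case A: $P_{<y}=\{x\}$.} Lemma~\ref{lem:meas2-3} gives $\nu(P,x)=0$. So I must show $\nu(P,y)\nu(P^y,x)=0$. The point $y$ is not minimal in $P$, so $\nu(P,y)=-\epsilon_y$, where $\epsilon_y=1$ exactly when $y$ begins a series pair $y<z$ in $\hat{P}$. If $\epsilon_y=0$ we are done. If $\epsilon_y=1$, then $x \lessdot y \lessdot z$ with $y<z$ a series pair, and Lemma~\ref{lem:meas2-4} gives $\nu(P^y,x)=\nu(P,x)=0$.

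\emph{Case B: $y$ covers some element other than $x$.} Now $x<y$ is not a series pair, and $y$ is not minimal in $P$ or in $P^x$. So $\nu(P,y)=-\epsilon$ and $\nu(P^x,y)=-\epsilon'$, where these detect whether $y$ begins a series pair in $\hat{P}$ and in $\hat{P}^x$ respectively. I will split further according to whether $y$ begins a series pair $y<z$ in $\hat{P}$.
\begin{itemize}
\item If $y<z$ is a series pair in $\hat{P}$, then the same holds in $\hat{P}^x$, because $y$ still covers other elements. Thus $\epsilon=\epsilon'=1$. Lemma~\ref{lem:meas2-4} gives $\nu(P^y,x)=\nu(P,x)$, and both sides of the identity equal $-\nu(P,x)$.
\item If $y$ begins no series pair in $\hat{P}$, then $\epsilon=0$. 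Suppose $y$ had a unique cover $z$ in $\hat{P}^x$ and $y<z$ were a series pair there. Then $y$ would also have $z$ as its unique cover in $\hat{P}$, and $y$ would be the unique element covered by $z$ in $\hat{P}$, contradicting $\epsilon=0$. Hence $\epsilon'=0$ as well, and both sides of the identity vanish.
\end{itemize}

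Having dealt with Cases A and B, the only remaining configurations are those with $P_{<y}\ne\{x\}$ in which $x$ is nevertheless the unique element covered by $y$. In that situation $P_{<y}=P_{\le x}$, so $x$ is not minimal, and $\nu(P,x)=-\epsilon_x$. Here $\epsilon_x=1$ exactly when $y$ is the unique cover of $x$, that is, when $x<y$ is a series pair. I will handle this by a direct comparison of the four quantities.
\begin{itemize}
\item If $x<y$ is a series pair, then deleting $y$ or deleting $x$ behaves like contracting the pair. I expect $(P^x,y)\cong(P^y,x)$ after identification, giving $\nu(P^x,y)=\nu(P^y,x)$. I also expect $\nu(P,y)=\nu(P^x,y)$, by the same reasoning as in Lemma~\ref{lem:meas2-4}, and $\nu(P,x)=-1$. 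I would then check directly that $\nu(P,y)=-1$ as well.
\item If $x<y$ is not a series pair, then $\nu(P,x)=0$, and I must show $\nu(P,y)\nu(P^y,x)=0$. I would follow the pattern of Case A: either $\nu(P,y)=-\epsilon_y=0$, or $y$ begins a series pair, and then Lemma~\ref{lem:meas2-4} transfers the vanishing of $\nu(P,x)$ to $\nu(P^y,x)$.
\end{itemize}

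\textbf{Main obstacle.} I expect the hardest part to be the bookkeeping of how covers and series pairs in $\hat{P}$ change under deleting $x$ or $y$. This is especially delicate when $\bone$ is involved, as in the situation where $y$ is the greatest element, since then $y$ begins the series pair $y<\bone$. I would handle that sub-situation explicitly, and use Lemma~\ref{lem:meas2-4} with $z=\bone$ wherever possible.
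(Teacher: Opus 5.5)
Your Cases A and B are the paper's argument. The paper splits into $P_{<y}=\{x\}$, handled by Lemma~\ref{lem:meas2-3} and Lemma~\ref{lem:meas2-4} exactly as in your Case A, and $P_{<y}\neq\{x\}$. In the second case it observes that $y$ is not minimal in $P^x$ and that deleting $x$ does not change whether $y$ begins a series pair in $\hat P$. Hence $\nu(P^x,y)=\nu(P,y)$, and Lemma~\ref{lem:meas2-4} finishes.

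Your Case B argument never uses that $y$ covers an element other than $x$. It only uses that $y$ is not minimal in $P^x$, which holds whenever $P_{<y}\neq\{x\}$. Your stated reason, ``because $y$ still covers other elements,'' for the series pair $y<z$ surviving in $\hat P^x$ is not the operative one. What matters is that deleting an element below $y$ changes neither the upper covers of $y$ nor the lower covers of $z$. So Case B already covers your ``remaining configurations'', and the third case is superfluous.

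The third case matters because, as sketched, it contains a step that fails. When $x<y$ is a series pair, you propose to check directly that $\nu(P,y)=-1$, but this is false in general. Take $P$ with $w<x<y$, $y<c$, $y<d$, and $c\parallel d$. Then $x<y$ is a series pair and $P_{<y}=\{w,x\}\neq\{x\}$. Since $y$ has two covers in $\hat P$, it begins no series pair, so $\nu(P,y)=0$, while $\nu(P,x)=-1$.

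The defect still vanishes there, but for a different reason. Your other expectations are correct: $(P^x,y)\cong(P^y,x)$ and $\nu(P,y)=\nu(P^x,y)$. With them the defect equation reduces to $\nu(P,y)\bigl(\nu(P,x)-\nu(P,y)\bigr)=0$. This holds because $\nu(P,x)=-1$ and $\nu(P,y)\in\{0,-1\}$. The cleaner repair is to state Case B under the hypothesis $P_{<y}\neq\{x\}$, which turns your proof into the paper's two-case proof.
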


\begin{proof}
By Proposition~\ref{prop:cover-defect}, it suffices to show that the defect $\delta_{\nu}(P, x, y)$ vanishes whenever $y$ is a cover of $x$ in a poset $P$. Thus fix a poset $P$ and a cover $x \lessdot y$, and put $\delta=\delta_{\nu}(P,x,y)$. We show that $\delta=0$. Note that since $y$ is not minimal in $P$, we have $\nu(P,y) = -\alpha(\hat{P}, y)$. We distinguish two cases.

\textit{Case 1: $x$ is the unique element below $y$.} By Lemma~\ref{lem:meas2-3}, we have $\nu(P,x)=0$.  If $y$ does not begin a series pair in $\hat{P}$ then $\nu(P,y)=0$, and so $\delta=0$. If $y$ does begin a series pair then Lemma~\ref{lem:meas2-4} gives $\nu(P^y,x) = \nu(P,x) = 0$, and so $\delta=0$.

\textit{Case 2: $x$ is not the unique element below $y$.} In this case, $y$ is not minimal in $P^x$. Removing $x$ does not change whether or not $y$ begins a series pair in $\hat{P}$, and so $\nu(P^x,y)=\nu(P,y)$. If $y$ does not begin a series pair then both of these quantities vanish and $\delta=0$. If $y$ does begin a series pair then Lemma~\ref{lem:meas2-4} gives $\nu(P,x)=\nu(P^y,x)$, and once again we find $\delta=0$.
\end{proof}

\subsection{The fourth measure}

As above, let $\alpha = \tau+\lambda_{\ast,1}$. We must show that
\begin{displaymath}
\nu=\lambda_{0,\ast}-\lambda_{0,0}-\lambda_{1,0}-\alpha
\end{displaymath}
is a measure. We once again begin by giving a more convenient formula for $\nu$. For a poset $P$, let $\check{P} = P \sqcup \{\bzero\}$, where $\bzero<x$ for all $x \in P$.

\begin{lemma} \label{lem:meas3-1}
Let $(P, x)$ be a marked poset. If $x$ is not maximal then
\begin{displaymath}
\nu(P,x) = \begin{cases}
+1 & \text{if $x$ is least and does not begin a series pair} \\
-1 & \text{if $x$ is not least and begins a series pair} \\
0 & \text{otherwise.} \end{cases}
\end{displaymath}
If $x$ is maximal and $\vert P \vert>1$ then $\nu(P,x) = n-\epsilon$, where $n$ is the number of lower twins to $x$, and $\epsilon$ is~1 if $x$ covers exactly one element of $\check{P}$, and~0 otherwise. If $P=\{x\}$ then $\nu(P,x)=0$.
\end{lemma}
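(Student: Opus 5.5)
We compute each of the four terms of $\nu=\lambda_{0,\ast}-\lambda_{0,0}-\lambda_{1,0}-\alpha$ from Proposition~\ref{prop:lambda-calc} and its transposes, and then combine them case by case according to whether $x$ is maximal. The needed values are as follows. $\lambda_{0,0}(P,x)$ is~$1$ exactly when $x$ is isolated. By transposing (b), $\lambda_{0,\ast}(P,x)$ is~$1$ exactly when $x$ is least. By (c), $\lambda_{1,0}(P,x)$ vanishes unless $x$ is maximal, and equals $\epsilon_0-n$ when it is; here $\epsilon_0$ records whether $x$ covers a unique element of $P$, and $n$ is the number of lower twins. Finally, by (d), $\alpha(P,x)$ is~$1$ exactly when $x$ begins a series pair in $P$.

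\emph{Case $x$ not maximal.} Then $\lambda_{1,0}(P,x)=0$. Also $x$ is not isolated, since an isolated point is maximal, so $\lambda_{0,0}(P,x)=0$. Hence $\nu(P,x)=[x\text{ least}]-[x\text{ begins a series pair}]$. Sorting the four possibilities gives $+1$, $-1$, or $0$, which matches the stated trichotomy. (When $x$ is least and begins a series pair, the two terms cancel to $0$.)

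\emph{Case $x$ maximal.} Here $x$ begins no series pair, because $P_{>x}=\emptyset$, so $\alpha(P,x)=0$. This leaves
\[
\nu(P,x)=[x\text{ least}]-[x\text{ isolated}]-\epsilon_0+n .
\]
To reach the stated formula we must show $[x\text{ least}]-[x\text{ isolated}]-\epsilon_0=-\epsilon$, where $\epsilon$ records whether $x$ covers exactly one element of $\check P$. We compare covers in $P$ and in $\check P$.

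If $P_{<x}\neq\emptyset$, then $x$ is not isolated, and the elements covered by $x$ in $P$ and in $\check P$ coincide, because $\bzero$ lies below every element of $P_{<x}$. So $\epsilon=\epsilon_0$. Moreover a maximal $x$ with $P_{<x}\ne\emptyset$ is not least, since least would force $P_{>x}=P^x\neq\emptyset$. This gives the identity in this case.

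If $P_{<x}=\emptyset$, then $\epsilon_0=0$ and $x$ covers only $\bzero$ in $\check P$, so $\epsilon=1$. For a maximal and minimal $x$, being isolated is automatic. Being least means $P_{>x}=P^x$, which together with maximality forces $P=\{x\}$. So when $|P|>1$ the left side is $0-1-0=-1=-\epsilon$, as required.

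\emph{Case $P=\{x\}$.} Here $x$ is least and isolated, $n=\epsilon_0=0$, and $\alpha=0$, so $\nu=1-1=0$.

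There is no real obstacle, as the proof is bookkeeping. The only delicate point is that $\epsilon$ is defined using covers in $\check P$ rather than in $P$. The $\bzero$ trick is exactly what absorbs the $-\lambda_{0,0}$ term for isolated maximal points, and the step to watch is confirming that least together with maximal forces $|P|=1$.
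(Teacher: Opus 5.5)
Your proof is correct and takes essentially the same route as the paper: both evaluate the four terms of $\nu$ using Proposition~\ref{prop:lambda-calc}, then split on whether $x$ is maximal. The only difference is that the paper combines $\lambda_{0,0}+\lambda_{1,0}$ into $\lambda_{1,0}(\check{P},x)$ by citing the transpose of Lemma~\ref{lem:meas2-1}, whereas you carry out that cover comparison between $P$ and $\check{P}$ by hand, and you also treat the case $P=\{x\}$ explicitly, which the paper leaves implicit.
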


\begin{proof}
By the transpose of Lemma~\ref{lem:meas2-1}, we have
\begin{displaymath}
\lambda_{0,0}(P,x) + \lambda_{1,0}(P,x) = \lambda_{1,0}(\check{P}, x).
\end{displaymath}
Suppose $x$ is not maximal. Then $\lambda_{1,0}(\check{P}, x)=0$, and so
\begin{displaymath}
\nu(P,x) = \lambda_{0,\ast}(P,x) - \alpha(P,x).
\end{displaymath}
This is exactly equal to the stated quantity. Now suppose $x$ is maximal and $\vert P \vert>1$. This implies $x$ is not least, and also that $x$ does not begin a series pair, and so $\lambda_{0,\ast}(P,x) = \alpha(P,x)=0$. Thus $\nu(P,x) = -\lambda_{1,0}(\check{P}, x)$, which is equal to the stated quantity.
\end{proof}

Now, fix a poset $P$ and a covering relation $x \lessdot y$, and let $\delta=\delta_{\nu}(P,x,y)$ be the defect. We show that $\delta=0$. This will show that $\nu$ is a measure by Proposition~\ref{prop:cover-defect}.

\begin{lemma}
If $x<y$ is a series pair then $\delta=0$.
\end{lemma}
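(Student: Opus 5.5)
The plan is to compute the four quantities in $\delta=\nu(P,x)\nu(P^x,y)-\nu(P,y)\nu(P^y,x)$ directly from Lemma~\ref{lem:meas3-1}, after first establishing two structural facts about a series pair $x<y$. Recall that series pair means $P_{<y}=P_{\le x}$ and $P_{>x}=P_{\ge y}$.

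\emph{First fact.} The marked posets $(P^x,y)$ and $(P^y,x)$ are isomorphic, via the bijection that is the identity on $P^{x,y}$ and sends $y\mapsto x$. The series-pair conditions give $P_{<y}\setminus\{x\}=P_{<x}$ and $P_{>x}\setminus\{y\}=P_{>y}$. It remains to check that an element $a$ with $a\parallel x$ is also incomparable to $y$, and conversely:
\begin{itemize}
\item if $a\parallel x$ and $a<y$, then $a\le x$, which is impossible;
\item if $a\parallel x$ and $a>y$, then $a>x$, which is impossible;
\item the converse direction is symmetric.
\end{itemize}
Hence $\nu(P^x,y)=\nu(P^y,x)$, and therefore
\[
\delta=\nu(P^y,x)\bigl(\nu(P,x)-\nu(P,y)\bigr).
\]
So it suffices to show that either $\nu(P^y,x)=0$ or $\nu(P,x)=\nu(P,y)$.

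\emph{Second fact.} The covers of $x$ in $P^y$ are exactly the covers of $y$ in $P$, namely $\Min(P_{>y})$. Moreover, such a cover $z$ covers only $x$ in $P^y$ if and only if $z$ covers only $y$ in $P$. Consequently, $x$ begins a series pair in $P^y$ if and only if $y$ begins a series pair in $P$.

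\emph{Values of $\nu(P,x)$ and $\nu(P,y)$.} Since $x$ is not maximal and begins a series pair, Lemma~\ref{lem:meas3-1} gives $\nu(P,x)=0$ if $x$ is least and $\nu(P,x)=-1$ otherwise. For $y$, which is never least, there are two cases.
\begin{itemize}
\item If $y$ is maximal, Lemma~\ref{lem:meas3-1} gives $\nu(P,y)=n-\epsilon$. A lower twin $a$ of $y$ would satisfy $a>x$ and $a\neq y$, contradicting that $y$ is the unique cover of $x$; so $n=0$. Also $y$ covers only $x$ in $\check P$, so $\epsilon=1$. Thus $\nu(P,y)=-1$.
\item If $y$ is not maximal, then $\nu(P,y)$ is $-1$ or $0$ according to whether $y$ begins a series pair.
\end{itemize}

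\emph{Case: $x$ is least.} Then $\nu(P,x)=0$, and we need $\nu(P,y)\nu(P^y,x)=0$.
\begin{itemize}
\item If $y$ is maximal, then $P=\{x,y\}$, so $P^y=\{x\}$ and $\nu(P^y,x)=0$.
\item Otherwise $x$ is least and non-maximal in $P^y$. By Lemma~\ref{lem:meas3-1}, $\nu(P^y,x)$ is nonzero only when it equals $+1$, which requires that $x$ not begin a series pair in $P^y$. By the second fact, $y$ then does not begin a series pair in $P$, so $\nu(P,y)=0$.
\end{itemize}

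\emph{Case: $x$ is not least.} Then $\nu(P,x)=-1$.
\begin{itemize}
\item If $y$ is maximal, then $\nu(P,y)=-1=\nu(P,x)$.
\item Otherwise $x$ is neither maximal nor least in $P^y$; the element witnessing non-leastness in $P$ is not $y$, so it survives in $P^y$. By Lemma~\ref{lem:meas3-1} and the second fact, $\nu(P^y,x)=\nu(P,y)\in\{0,-1\}$. If this common value is $0$, then $\delta=0$; if it is $-1$, then $\nu(P,y)=\nu(P,x)$ and again $\delta=0$.
\end{itemize}

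The main thing to verify carefully is the second fact, that the series-pair status of $y$ in $P$ matches that of $x$ in $P^y$. Everything else is bookkeeping with Lemma~\ref{lem:meas3-1}.
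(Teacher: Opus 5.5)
Your proof is correct and follows essentially the paper's argument. Both use the isomorphism $(P^x,y)\cong(P^y,x)$, the fact that $x$ begins a series pair in $P^y$ iff $y$ does in $P$, and a case analysis via Lemma~\ref{lem:meas3-1}. The paper orders the cases differently: it splits first on whether $y$ is maximal, and in the non-maximal case it collapses the four subcases into the identity $(a+b-1)(a-b)=0$.
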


\begin{proof}
Since $x<y$ is a series pair, the marked posets $(P^x, y)$ and $(P^y, x)$ are isomorphic. We consider two cases separately.

\textit{Case~1: $y$ is not maximal.} Letting $a$ be~1 if $x$ is least and~0 otherwise, we have $\nu(P,x)=a-1$ by Lemma~\ref{lem:meas3-1}. Letting $b$ be~1 if $y$ begins a series pair and~0 otherwise, we have $\nu(P,y) = -b$ by Lemma~\ref{lem:meas3-1}. Now, $x$ is least in $P^y$ if and only if it is so in $P$, and begins a series pair if and only if $y$ begins a series pair in $P$ (using the isomorphism of $(P^y,x)$ with $(P^x,y)$). We thus find $\nu(P^x,y)=a-b$ by Lemma~\ref{lem:meas3-1}. Therefore
\begin{displaymath}
\delta=(a-1)(a-b)- (-b)(a-b) = (a+b-1)(a-b) = 0
\end{displaymath}
as required.

\textit{Case~2: $y$ is maximal.} Since $x<y$ is a series pair, we have $\nu(P,y)=-1$ by Lemma~\ref{lem:meas3-1} (with $n=0$ and $\epsilon=1$). If $x$ is not least then $\nu(P,x)=-1$ by Lemma~\ref{lem:meas3-1}, and so $\delta=0$. If $x$ is least then $P=\{x,y\}$, and we find $\nu(P^x,y)=0$, from which $\delta=0$.
\end{proof}

\begin{lemma}
If $x<y$ is not a series pair and $x$ is not least then $\delta=0$.
\end{lemma}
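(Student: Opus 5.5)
The approach is to evaluate the three factors of
\[
\delta=\nu(P,x)\,\nu(P^x,y)-\nu(P,y)\,\nu(P^y,x)
\]
directly with Lemma~\ref{lem:meas3-1}. First I show $\nu(P,x)=0$, so that only $\nu(P,y)\,\nu(P^y,x)=0$ remains to be checked. Since $x<y$, the element $x$ is not maximal; by hypothesis it is not least. So Lemma~\ref{lem:meas3-1} gives $\nu(P,x)=-1$ if $x$ begins a series pair and $0$ otherwise. If $x$ began a series pair $x<w$, then $w$ would be the unique cover of $x$, so $w=y$. This contradicts the assumption that $x<y$ is not a series pair, hence $\nu(P,x)=0$.

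For the remaining product I first record two facts about $P^y$. Since $y$ is not minimal, $\Min(P^y)=\Min(P)$, so $x$ is still not least in $P^y$. Since $x<y$ is not a series pair, either $x$ has a cover $w\ne y$, or $y$ covers some $u\ne x$. I then split according to whether $y$ is maximal.

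\emph{Case $y$ not maximal.} Here $\nu(P,y)$ is nonzero only when $y$ begins a series pair $y<z$, so assume this. Then $x<z$ in $P^y$, so $x$ is not maximal there, and $\nu(P^y,x)$ is $-1$ or $0$ according to whether $x$ begins a series pair in $P^y$. The covers of $x$ in $P^y$ are $z$ together with the covers of $x$ in $P$ other than $y$. Moreover $P^y_{<z}=P_{<y}$, so $z$ covers in $P^y$ exactly the elements that $y$ covers in $P$. If $x$ has a second cover $w\neq y$, then $x$ has two covers in $P^y$. If instead $y$ covers some $u\ne x$, then $z$ covers both $x$ and $u$ in $P^y$. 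Either way $x$ begins no series pair in $P^y$, so $\nu(P^y,x)=0$.

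\emph{Case $y$ maximal.} By Lemma~\ref{lem:meas3-1} (note $|P|>1$), $\nu(P,y)=n-\epsilon$, where $n$ counts lower twins of $y$. The key observation is that every lower twin $w$ of $y$ satisfies $P_{<w}=P_{<y}$. Such a $w$ covers exactly the elements covered by $y$, and in particular $x\lessdot w$.
\begin{itemize}
\item If $y$ covers some $u\ne x$, then $\epsilon=0$. Every twin $w$ covers $x$ and $u$, so no twin of $y$ can form a series pair with $x$.
 \begin{itemize}
 \item If $x$ is maximal in $P^y$, then $x$ has no cover besides $y$, so there are no twins and $n=0$.
 \item Otherwise, suppose $x$ begins a series pair $x<w$ in $P^y$. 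Then $w$ is the unique cover of $x$ in $P^y$, so any twin would have to equal $w$, which is impossible; hence $n=0$.
 \end{itemize}
 In both subcases $\nu(P,y)\,\nu(P^y,x)=0$.
\item If $y$ covers only $x$, then $\epsilon=1$, because $x$ is not least and so $\bzero$ is not covered by $y$ in $\check{P}$. The twins of $y$ are then the covers $w$ of $x$ with $P_{<w}=P_{\le x}$, and $x$ has some other cover $w$.
 \begin{itemize}
 \item If $x$ begins a series pair $x<w$ in $P^y$, then $w$ is the unique such cover and it is a twin of $y$. Hence $n=1=\epsilon$ and $\nu(P,y)=0$.
 \item Otherwise $\nu(P^y,x)=0$. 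Here $x$ is not maximal in $P^y$, since it has the cover $w$.
 \end{itemize}
\end{itemize}
In every case the product vanishes, so $\delta=0$.

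The main obstacle is the bookkeeping in the maximal case. It requires a correct reading of ``covers exactly one element of $\check{P}$'' and of the relation between lower twins of $y$ and covers of $x$ in $P^y$. I would check these small configurations by hand against Lemma~\ref{lem:meas3-1}.
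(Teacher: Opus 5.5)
Your proposal is correct and follows essentially the same route as the paper: get $\nu(P,x)=0$ from Lemma~\ref{lem:meas3-1}, then split on whether $y$ is maximal. In the maximal case you compare the lower twins of $y$ (each of which covers $x$) with the covers of $x$ in $P^y$; you split first on $\epsilon$ rather than on whether $x$ is maximal in $P^y$, which is only a reordering of the paper's cases. One small slip: when $y$ covers only $x$, $\epsilon=1$ holds simply because $\bzero<x<y$, so $\bzero$ is not covered by $y$ in $\check{P}$, and not because $x$ is not least.
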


\begin{proof}
We have $\nu(P,x)=0$ by Lemma~\ref{lem:meas3-1}. We show that $\nu(P,y)=0$ or $\nu(P^y,x)=0$.

\textit{Case~1: $y$ is not maximal.} Suppose $\nu(P,y)$ and $\nu(P^y,x)$ are both non-zero. Since $\nu(P,y)$ is non-zero, it follows from Lemma~\ref{lem:meas3-1} that $y$ begins a series pair, say $y<z$. Since $\nu(P^y,x)$ is non-zero and $x$ is not least, it follows from Lemma~\ref{lem:meas3-1} that $x$ begins a series pair in $P^y$. However, this implies that $x<y$ is a series pair, which is a contradiction. (Since $x \lessdot y$ and $y \lessdot z$, and $y$ is the unique element covered by $z$, it follows that $y$ is the only element strictly between $x$ and $z$. Thus $z$ covers $x$ in $P^y$, and so $x<z$ must be the series pair that $x$ begins in $P^y$. It now follows easily that $x<y$ is a series pair in $P$.)

\textit{Case~2: $y$ is maximal.} We have $\nu(P,y) = n-\epsilon$, with notation as in Lemma~\ref{lem:meas3-1}. We assume in what follows that $\nu(P^y,x)$ is non-zero, and show $n=\epsilon$, i.e., $\nu(P,y)=0$.

\textit{Case~2a: $x$ is maximal in $P^y$.} Since $x<y$ is not a series pair, it follows that $y$ covers an element of $P$ besides $x$, and so $\epsilon=0$. There can be no lower twin $z$ to $y$, for such a point would satisfy $x<z$, and then $x$ would not be maximal in $P^y$. Thus $n=0$ as well, as required.

\textit{Case~2b: $x$ is not maximal in $P^y$.} Since $x$ is neither maximal nor least in $P^y$ and $\nu(P^y, x)$ is non-zero, it follows from Lemma~\ref{lem:meas3-1} that $x$ begins a series pair $x<z$ in $P^y$. Suppose $\epsilon=1$. Then $x$ is the unique element covered by $y$, and so $P_{<y} = P_{\le x}$. Since $x<y$ is not a series pair, $x$ must be covered by some element other than $y$, which is necessarily $z$ (as it is the only cover of $x$ in $P^y$). Since $x<z$ is a series pair in $P^y$, we have $P^y_{<z} = P^y_{\le x}$, and so $z$ is a lower twin of $y$ (note $y \nless z$ since $y$ and $z$ cover $x$). It is the only one: indeed, if $a$ is a lower twin of $y$ then $x<a$, and so $z \le a$ (since $x<z$ is a series pair in $P^y$), and so $a=z$ (as otherwise $z$ would belong to $P_{<a}$). Hence $n=1$, as required.

Now suppose $\epsilon=0$, meaning $y$ covers some element besides $x$. Observe that $z$ is not a lower twin of $y$: indeed, either $y \in P_{<z}$ or $x$ is the greatest element of $P_{<z}$, and in either case $P_{<z} \ne P_{<y}$. Suppose $a$ is a lower twin of $y$. Since $x<y$, we have $x<a$. Again, since $x<z$ is a series pair in $P^y$, this implies $z \le a$, and so $z<a$. But $z \nless y$, since $x \lessdot y$, a contradiction. Thus $y$ has no lower twins, meaning $n=0$, as required.
\end{proof}

\begin{lemma}
If $x<y$ is not a series pair and $x$ is least then $\delta=0$.
\end{lemma}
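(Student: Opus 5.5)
Throughout, $\delta=\nu(P,x)\nu(P^x,y)-\nu(P,y)\nu(P^y,x)$, and all values of $\nu$ will be read off from Lemma~\ref{lem:meas3-1}. The plan is first to pin down $\nu(P,x)$, and then to split according to whether $y$ is maximal.

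\emph{Computing $\nu(P,x)$.} Since $x<y$, $x$ is not maximal. I claim $x$ does not begin a series pair. If $x<z$ were a series pair, then $z$ would be the unique cover of $x$; since $y$ covers $x$, this forces $z=y$, contradicting the hypothesis. Since $x$ is least, Lemma~\ref{lem:meas3-1} gives $\nu(P,x)=1$. Hence it suffices to show
\[
\nu(P^x,y)=\nu(P,y)\,\nu(P^y,x).
\]
A basic observation I will use repeatedly: because $x$ is least, $P_{<y}=\{x\}$. Since $x<y$ is not a series pair, $y$ cannot be the unique cover of $x$. So $x$ has another cover $w\ne y$, which is a minimal element of $P^x$ with $w\ne y$. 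In particular, $y$ is never least in $P^x$.

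\emph{Case 1: $y$ is not maximal.} As $y$ is not least in $P$ or in $P^x$, Lemma~\ref{lem:meas3-1} gives $\nu(P,y)=-b$ and $\nu(P^x,y)=-b'$, where $b$ and $b'$ record whether $y$ begins a series pair in $P$ and in $P^x$. Suppose $y<z$ in $P$. Then $z$ cannot cover $x$, since $y$ lies between them. Hence deleting $x$ changes neither the covers of $y$ nor the elements covered by $z$, so $b=b'$.
\begin{itemize}
\item If $b=0$, both sides of the required identity vanish.
\item If $b=1$, say $y<z$ is a series pair, I need $\nu(P^y,x)=1$. In $P^y$, $x$ is least and not maximal. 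It is covered by $z$, because the only element strictly between $x$ and $z$ in $P$ is $y$, as $z$ covers only $y$ and $P_{<y}=\{x\}$. It is also covered by $w$. So $x$ has two covers in $P^y$, does not begin a series pair there, and Lemma~\ref{lem:meas3-1} gives $\nu(P^y,x)=1$.
\end{itemize}

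\emph{Case 2: $y$ is maximal.} Here $|P^x|>1$, since $w\in P^x$. So Lemma~\ref{lem:meas3-1} gives
\[
\nu(P,y)=n-\epsilon, \qquad \nu(P^x,y)=n'-\epsilon',
\]
where the primed quantities refer to $P^x$ (and $\check{P^x}$). In $P$, $y$ covers exactly $x$, so $\epsilon=1$. The lower twins of $y$ in $P$ are the elements $a$ with $P_{<a}=\{x\}$, i.e.\ the covers of $x$ other than $y$ that cover only $x$. In $P^x$, $y$ is minimal, so in $\check{P^x}$ it covers exactly $\bzero$, giving $\epsilon'=1$. Its lower twins in $P^x$ are the minimal elements of $P^x$ other than $y$. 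Meanwhile $\nu(P^y,x)$ is $+1$, $-1$ or $0$ according to the three clauses of Lemma~\ref{lem:meas3-1}, with $x$ least in $P^y$. The required identity then becomes a statement comparing:
\begin{itemize}
\item the number of minimal elements of $P^{x,y}$;
\item the number of those minimal elements that cover only $x$;
\item whether $x$ begins a series pair in $P^y$, which means $P^{x,y}$ has a least element covering only $x$;
\item the degenerate situation $P^y=\{x\}$ or $x$ maximal in $P^y$, which is excluded here since $w\in P^{x,y}$ is above $x$.
\end{itemize}
I would check this identity by subcases on $|\Min(P^{x,y})|$: equal to $1$ versus at least $2$, and in the first subcase whether that unique minimal element $w$ covers only $x$.

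The main obstacle is this last bookkeeping in Case~2. Note that the minimal elements of $P^{x,y}$ are exactly the covers of $x$ other than $y$. The concern is whether the count of lower twins in $P^x$ matches on the nose, or whether extra minimal elements of $P^x$ not covering $x$ appear. They do not, since every minimal element of $P^x$ covers the least element $x$. If the naive count fails in some subcase, I would fall back to computing $\lambda_{1,0}(\check{P^x},y)$ directly from Proposition~\ref{prop:lambda-calc}(c).
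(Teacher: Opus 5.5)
Your reduction to $\nu(P^x,y)=\nu(P,y)\,\nu(P^y,x)$ and your Case~1 are correct and follow the paper. The paper also shows $\nu(P,y)=\nu(P^x,y)$ and writes $\nu(P^y,x)=1-a$, with $a$ recording whether $x$ begins a series pair in $P^y$, so that $\delta=a\,\nu(P,y)$. It then proves $a=1\Rightarrow\nu(P,y)=0$. Your argument that $b=1$ forces two covers $w\ne z$ of $x$ in $P^y$ is exactly the contrapositive. The weak point is Case~2: you leave it as a plan, with a hedge about a fallback to $\lambda_{1,0}$, so as written the proof is incomplete there.

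No new idea is needed to close it. Your subcase on whether $w$ ``covers only $x$'' is vacuous: since $x$ is least, every cover $a$ of $x$ satisfies $P_{<a}=\{x\}$. So the lower twins of $y$ in $P$ and in $P^x$ form the same set, namely the minimal elements of $P^x$ other than $y$. Since $y$ is isolated in $P^x$, this set is $\Min(P^{x,y})$. Hence $n=n'=\vert\Min(P^{x,y})\vert\ge 1$, and with $\epsilon=\epsilon'=1$ you get $\nu(P,y)=\nu(P^x,y)=n-1$. On the other side, $x$ is least and not maximal in $P^y$, so $\nu(P^y,x)$ is $0$ if $x$ begins a series pair in $P^y$ and $1$ otherwise. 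Beginning a series pair requires a unique cover, i.e.\ $\vert\Min(P^{x,y})\vert=1$. Therefore $\delta=(n-1)\bigl(1-\nu(P^y,x)\bigr)=0$, both when $n=1$ and when $n\ge 2$. This matches the paper's finish, where $a=1$ produces a least element $z$ of $P^{x,y}$ that is the unique lower twin of $y$.
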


\begin{proof}
We claim that $\nu(P,y) = \nu(P^x, y)$. First suppose $y$ is not maximal in $P$, and therefore not maximal in $P^x$ either. Of course, $y$ begins a series in $P$ if and only if it does so in $P^x$; moreover, $y$ is not least in $P^x$, for if it were then $x<y$ would be a series pair in $P$. Thus the claim follows. Now suppose $y$ is maximal in $P$, and therefore maximal in $P^x$ as well. Write $\nu(P,y)=n-\epsilon$ and $\nu(P^x,y)=n'-\epsilon'$ as in Lemma~\ref{lem:meas3-1}; note $\vert P \vert>2$ for otherwise $x<y$ would be a series pair. Since $x$ is least we have $P_{<y}=\{x\}$, and so $\epsilon=\epsilon'=1$; note that $\bzero$ is the unique element covered by $y$ in $\check{P}^x$. For an element $w \ne x$ of $P$, we have $P_{<w}=\{x\}$ if and only if $P^x_{<w}=\emptyset$ (since $x$ is least), which shows that $w$ is a lower twin of $y$ in $P$ if and only if it is so in $P^x$. Hence $n=n'$, as required.

Since $x$ is least in $P^y$ and not maximal (as $\vert P \vert>2$), Lemma~\ref{lem:meas3-1} shows $\nu(P^y,x)=1-a$, where $a=1$ if $x$ begins a series pair in $P^y$, and~0 otherwise. Let $b$ be the common value of $\nu(P,y) = \nu(P^x, y)$. Lemma~\ref{lem:meas3-1} gives $\nu(P,x)=1$. We thus have
\begin{displaymath}
\delta = 1 \cdot b - b \cdot (1-a) = ab
\end{displaymath}
In the remainder of the proof, we assume $a=1$ and show $b=0$.

Since $a=1$, it follows that $x$ begins a series pair in $P^y$, say $x<z$. Since $x$ is least, it follows that $z$ is least in $P^{x,y}$. We cannot have $z<y$ since $y$ covers $x$. We also cannot have $y<z$, as then $x<y$ would be a series pair. Thus $y \parallel z$.

Suppose $y$ is not maximal. Since $z$ is minimal in $P^{x,y}$, we have $z<P_{>y}$. Since $z \parallel y$, it follows that $y$ cannot begin a series pair. Thus $b=0$ by Lemma~\ref{lem:meas3-1}, as required.

Now suppose $y$ is maximal. Write $\nu(P,y)=n-\epsilon$ as above. We have already remarked that $\epsilon=1$ and $P_{<y}=\{x\}$. Since $z$ is least in $P^{x,y}$ and incomparable to $y$, we have $P_{<z}=\{x\}$. Thus $z$ is a lower twin of $y$. It is the only one since it is least in $P^{x,y}$. Thus $n=1$ and $b=0$, as required.
\end{proof}

\section{The monoid of marked posets} \label{s:monoid}

In \S \ref{s:monoid}, we define the monoid $\cP$ of marked posets and show that it acts on the space of measures. Using this action and the measures constructed in \S \ref{s:some-meas}, we construct the remaining measures.

\subsection{Composition} \label{ss:poset-comp}

Let $Q=(Q,y)$ be a marked poset. Given a poset $P$, we define a new poset, denoted either $Q \circ P$ or $\Pi_Q(P)$, by substituting $P$ into $y$; we call this operation \defn{composition}. Precisely, the set underlying $\Pi_Q(P)$ is $Q^y \sqcup P$. The order restricts to the given orders on $Q^y$ and $P$. For $a \in Q^y$ and $b \in P$, we declare $a<b$ if and only if $a<y$, and $b<a$ if and only if $y<a$. One readily verifies that this is a well-defined poset. If $\alpha \colon P \to P'$ is an embedding of posets, then there is a natural embedding $\Pi_Q(\alpha) \colon \Pi_Q(P) \to \Pi_Q(P')$, given by the identity on $Q^y$ and $\alpha$ on $P$. We thus see that $\Pi_Q \colon \fP \to \fP$ is a functor, where $\fP$ is regarded as a category whose morphisms are embeddings.

If $P=(P,x)$ is a marked poset, then $\Pi_Q(P)$ is naturally marked by $x$. If $T$ is a third poset, then we have a natural isomorphism $(Q \circ P) \circ T = Q \circ (P \circ T)$; in other words, $\Pi_Q \circ \Pi_P = \Pi_{Q \circ P}$. Moreover, the one-point marked poset is the identity for composition. The operation $\circ$ thus defines a monoidal structure on the category $\fP_{\ast}$ of marked posets. Letting $\cP$ be the set of isomorphism classes of $\fP_{\ast}$, we therefore find that $\cP$ is a monoid under composition.

\subsection{Measures} \label{ss:meas-monoid}

Fix a marked poset $Q=(Q,y)$. We now investigate how composition interacts with measures. The following is the key observation.

\begin{proposition}
Let $\alpha \colon S \to T$ and $\beta \colon S \to S'$ be poset embeddings, and let $(T'_i, \alpha'_i, \beta'_i)$ for $1 \le i \le r$ be the various amalgamations (up to isomorphism). Then the $\Pi_Q(T'_i)$, for $1 \le i \le r$, are exactly the amalgamations (up to isomorphism) of $\Pi_Q(T)$ and $\Pi_Q(S')$ over $\Pi_Q(S)$.
\end{proposition}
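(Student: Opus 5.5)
The plan is to show that $\Pi_Q$ induces a bijection between isomorphism classes of amalgamations of $T$ and $S'$ over $S$ and isomorphism classes of amalgamations of $\Pi_Q(T)$ and $\Pi_Q(S')$ over $\Pi_Q(S)$. Functoriality of $\Pi_Q$ gives the forward map. If $(T',\alpha',\beta')$ is an amalgamation, then $(\Pi_Q(T'),\Pi_Q(\alpha'),\Pi_Q(\beta'))$ is an amalgamation, because the square still commutes and the images of $Q^y \sqcup S'$ and $Q^y \sqcup T$ cover $Q^y \sqcup T'$. Isomorphic amalgamations clearly go to isomorphic ones.

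For surjectivity, let $(W,\gamma',\delta')$ be an amalgamation of $\Pi_Q(T)$ and $\Pi_Q(S')$ over $\Pi_Q(S)$. Since the two embeddings agree on $Q^y$ (they restrict to the identity there via the commuting square), $W$ contains a copy of $Q^y$. Let $T'\subset W$ be the complement of $Q^y$, which is the union of the images of $T$ and $S'$. Give $T'$ the induced order. Then $T'$ with the restricted maps is an amalgamation of $T$ and $S'$ over $S$, since the squares commute and the images cover $T'$.

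The key check is that $W=\Pi_Q(T')$ as posets, i.e.\ that comparisons between $a\in Q^y$ and $w\in T'$ are the prescribed ones ($a<w$ iff $a<y$, and $w<a$ iff $y<a$). This is automatic: $w$ lies in the image of $T$ or of $S'$, that image is an embedded copy of $\Pi_Q(T)$ or $\Pi_Q(S')$ in $W$, and inside those posets the comparison of $a$ with $w$ is exactly the substitution rule. So $W\cong\Pi_Q(T')$ compatibly with the structure maps.

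For injectivity, suppose $\Pi_Q(T'_i)\cong\Pi_Q(T'_j)$ as amalgamations. Such an isomorphism commutes with the structure maps, so it is the identity on $Q^y$. Hence it carries the complement $T'_i$ onto $T'_j$ and restricts to an isomorphism of amalgamations $T'_i\cong T'_j$. This gives exactly the stated list.

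The only step needing care is the boundary case where an amalgamation identifies points. An improper identification can only identify a point of $T\setminus S$ with a point of $S'\setminus S$. Under $\Pi_Q$ these are the points outside $Q^y$, and the amalgamation conditions never force identifications involving $Q^y$, because $Q^y$ is already matched through $\Pi_Q(S)$. So the correspondence above respects the proper/improper distinction, and this is where I expect the main (minor) bookkeeping effort to lie. I will phrase everything in terms of the set-theoretic decomposition $W=Q^y\sqcup T'$, which makes the argument essentially immediate.
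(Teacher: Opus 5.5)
Your proposal is correct and follows essentially the same route as the paper: write an arbitrary amalgamation as $W=Q^y\sqcup T'$, observe that $T'$ is an amalgamation of $T$ and $S'$ over $S$, and verify the comparisons between $Q^y$ and $T'$ using the embedded copies of $\Pi_Q(T)$ and $\Pi_Q(S')$ in $W$. Your explicit injectivity argument supplies the ``pairwise non-isomorphic'' step that the paper calls clear; the paragraph on improper identifications is harmless but unnecessary, since the decomposition argument handles all amalgamations uniformly.
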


\begin{proof}
It is clear that each $\Pi_Q(T_i')$ is an amalgamation, and that they are pairwise non-isomorphic. We now show that they account for all amalgamations. Thus let $X$ be an amalgamation of $\Pi_Q(T)$ and $\Pi_Q(S')$ over $\Pi_Q(S)$. For notational simplicity, we identify $\Pi_Q(S)$, $\Pi_Q(T)$, $\Pi_Q(S')$ with subsets of $X$. Since $\Pi_Q(S)$ contains $Q^y$, we can write $X=Q^y \sqcup X'$. The posets $T$ and $S'$ embedd in $X'$, and $X'=T \cup S'$. Thus $X'$ is an amalgamation of $T$ and $S'$, and is therefore isomorphic to one of the $T'_i$. To complete the proof, it suffices to show that $X=\Pi_Q(X')$ as posets. Let $a \in Q^y$ and $b \in X'$. Then $b$ belongs to $T$ or $S'$; without loss of generality, suppose the former. In $\Pi_Q(T)$, we have $a<b$ if and only if $a<y$. Since $\Pi_Q(T)$ is an embedded poset in $X$, the same holds in $X$. Similarly, $b<a$ if and only if $y<a$. This shows that the order on $X$ is exactly the one on $\Pi_Q(X')$, as required.
\end{proof}

\begin{corollary}
Let $\nu$ be a measure on $\fP$. Then $\alpha \mapsto \nu(\Pi_Q(\alpha))$ defines a measure $\Pi_Q^*(\nu)$ on $\fP$.
\end{corollary}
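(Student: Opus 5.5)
We verify the three axioms of Definition~\ref{defn:meas} for $\Pi_Q^*(\nu)$ directly, using that $\Pi_Q$ is a functor on the category of embeddings together with the preceding proposition on amalgamations.

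Axiom (a): if $\alpha \colon S \to T$ is an isomorphism, then $\Pi_Q(\alpha)$ is the identity on $Q^y$ and $\alpha$ on $S$. It is therefore a bijective embedding, i.e.\ an isomorphism, so $\nu(\Pi_Q(\alpha))=1$.

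Axiom (b): functoriality gives $\Pi_Q(\beta \circ \alpha) = \Pi_Q(\beta) \circ \Pi_Q(\alpha)$, which is immediate from the definition since both sides are the identity on $Q^y$ and $\beta\circ\alpha$ on the substituted part. Multiplicativity of $\nu$ then yields $\nu(\Pi_Q(\beta\circ\alpha)) = \nu(\Pi_Q(\beta))\,\nu(\Pi_Q(\alpha))$.

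Axiom (c) is the substantive step. Let $\alpha \colon S \to T$ and $\beta \colon S \to S'$ be given, with amalgamations $(T'_i,\alpha'_i,\beta'_i)$ for $1 \le i \le r$. By the proposition, the amalgamations of $\Pi_Q(T)$ and $\Pi_Q(S')$ over $\Pi_Q(S)$ are, up to isomorphism, exactly the $(\Pi_Q(T'_i), \Pi_Q(\alpha'_i), \Pi_Q(\beta'_i))$. Hence axiom (c) for $\nu$, applied to $\Pi_Q(\alpha)$ and $\Pi_Q(\beta)$, gives
\begin{displaymath}
\nu(\Pi_Q(\alpha)) = \sum_{i=1}^r \nu(\Pi_Q(\alpha'_i)).
\end{displaymath}

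The one point needing care is that the proposition is stated for underlying posets, whereas axiom (c) concerns amalgamations as triples. I would check that the proof of the proposition identifies the full structure: given an amalgamation $X$, it writes $X = Q^y \sqcup X'$ with $X'$ an amalgamation of $T$ and $S'$, and the embeddings of $\Pi_Q(T)$ and $\Pi_Q(S')$ into $X$ are the identity on $Q^y$ and the given maps on $T$ and $S'$. Thus $X \cong \Pi_Q(X')$ as amalgamations, so the embedding $\Pi_Q(S') \to X$ corresponds to $\Pi_Q(\alpha'_i)$. The proposition also shows that distinct $T'_i$ give non-isomorphic amalgamations, so there is no double counting in the sum.
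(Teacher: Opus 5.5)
Your proof is correct and follows the paper's route: the paper states this corollary with no separate proof, as an immediate consequence of the preceding proposition on amalgamations together with functoriality of $\Pi_Q$, and you verify exactly this, axiom by axiom. Your remark that the identification must hold for amalgamation triples, and not just for the underlying posets, is a fair point, and the paper's proof of that proposition does cover it implicitly.
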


The corollary implies that the monoid $\cP$ acts on $\Theta(\fP)$ by ring homomorphisms (on the left), and on the space of measures (on the right). We now compute how $\Pi_Q$ affects the matrix associated to a measure.

\begin{proposition} \label{prop:comp-meas}
Let $\nu$ be a measure on $\fP$, let $\theta=\Pi_Q^*(\nu)$, let $\nu_{p,q}=\nu(j_{p,q})$, and analogously define $\theta_{p,q}$. Let
\begin{displaymath}
r = \min(\vert \Max(Q_{<y}) \vert, 2), \qquad
s = \min(\vert \Min(Q_{>y}) \vert, 2).
\end{displaymath}
Let $S$ (resp.\ $T$) be the set of elements $z \in Q_{\parallel y}$ such that $z<Q_{>y}$ (resp.\ $Q_{<y}<z$). Let $\epsilon$ be~0 if $S$ is empty, and~1 otherwise, and analogously define $\delta$ for $T$. Then
\begin{displaymath}
\begin{pmatrix}
\theta_{0,0} & \theta_{0,1} & \theta_{0,2} \\
\theta_{1,0} & \theta_{1,1} & \theta_{1,2} \\
\theta_{2,0} & \theta_{2,1} & \theta_{2,2}
\end{pmatrix}
=
\begin{pmatrix}
\nu(Q,y) & \nu_{r,1} - \delta \nu_{r,2} & (1-\delta) \nu_{r,2} \\
\nu_{1,s}  - \epsilon \nu_{2,s} & \nu_{1,1} & \nu_{1,2} \\
(1-\epsilon) \nu_{2,s} & \nu_{2,1} & \nu_{2,2}
\end{pmatrix}
\end{displaymath}
\end{proposition}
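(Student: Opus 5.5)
By definition $\theta_{p,q}=\theta(J_{p,q},\ast)=\nu(\Pi_Q(J_{p,q}),\ast)$. The marked poset $(\Pi_Q(J_{p,q}),\ast)$ is just $J_{p,q}$ substituted into $y$. So the plan is to compute $\nu$ of this marked poset using the universal formula \eqref{eq:meas}, i.e.\ Corollary~\ref{cor:weak3} together with the stabilization $\nu_{p,q}=\nu_{\ol p,\ol q}$ of Theorem~\ref{thm:quad}(a). Concretely, set $X=\Pi_Q(J_{p,q})$, with underlying set $Q^y\sqcup\{a_1,\dots,a_p,\ast,b_1,\dots,b_q\}$. For each $(p,q)$ with $0\le p,q\le 2$ I will determine the twins of $\ast$ and the cuts of $(X,\ast)$ with their invariants $(p(L),q(R))$.

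The case $p=q=0$ is immediate, since then $(X,\ast)\cong(Q,y)$. Otherwise:
\begin{itemize}
\item $X_{<\ast}=A\cup Q_{<y}$ and $X_{>\ast}=B\cup Q_{>y}$, where $A=\{a_i\}$ and $B=\{b_j\}$.
\item $X_{\parallel\ast}=Q_{\parallel y}$.
\item Every element of $Q_{<y}$ lies below $A$, and every element of $Q_{>y}$ lies above $B$.
\end{itemize}

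\emph{Twins.} When $p+q\ge1$, $\ast$ has no twin. A twin $z\in Q_{\parallel y}$ would need $z>a_i$ or $z<b_j$, which is impossible because $z$ is incomparable to $y$. So $\tau(X,\ast)=0$.

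\emph{Cuts when $p,q\ge1$.} Cuts are pairs $(L,R)\subset Q_{\parallel y}$ with $L\cup A\cup Q_{<y}<R\cup B\cup Q_{>y}$. Since $a_i<z$ forces $y<z$ and $z<b_j$ forces $z<y$, any nonempty $L$ or $R$ violates this. Hence only $(\emptyset,\emptyset)$ survives, $\lambda(X,\ast)=u^pv^q$, and $\theta_{p,q}=\nu_{p,q}$. This gives the lower-right $2\times2$ block.

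\emph{Cuts when $p=0$, $q\ge1$ (the first row).} Now $A=\emptyset$, so $L$ must be empty as before, because $L<B$ is impossible. The set $R$ must satisfy $Q_{<y}<R$, so $R\subset T$. There is no further constraint: $R$ need not lie above $B$, since the condition only requires the lower side to be below $R$.
\begin{itemize}
\item We have $p(\emptyset)=|\Max(Q_{<y})|$, whose truncation is $r$.
\item For $q(R)=|\Min(R\cup B\cup Q_{>y})|$, note that $Q_{>y}$ lies above $B$, and $R\subset Q_{\parallel y}$ is incomparable to $B$. Elements of $R$ can only be below elements of $Q_{>y}$, or comparable within $R$.
\item So $\Min(R\cup B\cup Q_{>y})=B\cup\Min(R)$, and $q(R)=q+|\Min R|$.
\end{itemize}
Therefore
\[
\theta_{0,q}=\sum_{R\subset T}(-1)^{|R|}\,\nu_{r,\ol{q+|\Min R|}}.
\]
For $q\ge2$ every term equals $\nu_{r,2}$, and the alternating sum over subsets of $T$ gives $(1-\delta)\nu_{r,2}$. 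For $q=1$, the $R=\emptyset$ term gives $\nu_{r,1}$ and the other terms have index $\ge2$, giving $-\delta\nu_{r,2}$. The first column is handled by transposing the argument, with $S$, $\epsilon$ and $s$ in place of $T$, $\delta$ and $r$.

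The main obstacle is the careful bookkeeping in this last step. One must verify the exact cut conditions for $(X,\ast)$ after substitution, since the cut definition is asymmetric in which side must dominate which. One must also check that $\Min$ really splits as claimed, which uses the stabilization to avoid needing exact counts. I would double-check this against a small example such as $Q=J_{0,0}$ with one isolated point added.
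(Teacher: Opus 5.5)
Your proposal is correct and takes essentially the same route as the paper: compute the twins and cuts of $(Q \circ J_{p,q}, \ast)$, then apply $\nu = -\tau + \sum \nu_{\ol{p},\ol{q}} \lambda_{p,q}$. The only differences are that the paper writes out the first column, using just the bound $p(L) \ge 2$ for nonempty $L$, and leaves the first row to symmetry, whereas you write out the first row using the exact count $q(R) = q + \vert \Min(R) \vert$ and obtain the first column by transposing.
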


Before giving the proof, we make two comments on the shape of the formula. First, the most striking feature is that the bottom right $2 \times 2$ block is unchanged by $\Pi_Q^*$. This gives an easy way to see that certain measures, such as $\CC_1$ and $\CC_8$, cannot be in the same $\cP$ orbit. Second, apart from the $(0,0)$ entry, the dependence on $Q$ is only through the invariants $(r, s, \delta, \epsilon)$.

\begin{proof}
We have
\begin{displaymath}
\theta_{p,q} = \theta(J_{p,q}, \ast) = \nu(Q \circ (J_{p,q}, \ast)).
\end{displaymath}
Since $Q \circ J_{0,0} \cong Q$, the formula for $\theta_{0,0}$ follows.

We now examine $\theta_{1,0}$. Let $P=Q \circ J_{1,0}$. We identify the set underlying $P$ with $Q^y \sqcup \{a,x\}$, where $a<x$, $a$ and $x$ compare to points in $Q^y$ as $y$ does, and $x$ is the marked point. Note that $x$ does not have a twin in $P$; indeed, if $z \in Q^y$ were a twin then we would have $z>a$, but then $z>y$ in $Q$, and so $z>x$ in $P$, a contradiction. We have
\begin{displaymath}
P_{<x}=\{a\} \cup Q_{<y}, \qquad
P_{>x} = Q_{>y}, \qquad
P_{\parallel x} = Q_{\parallel y}.
\end{displaymath}
Let $(L,R)$ be a cut for $(P,x)$. We must have $R=\emptyset$, since no element of $P_{\parallel x}$ is $>a$, and we must have $L \subset S$. In fact, for any $L \subset S$, the pair $(L, \emptyset)$ is a cut. We have $q(\emptyset)= \vert \Min(Q_{>y}) \vert$. The set $P_{<x}$ has $a$ as its greatest element, and so $p(\emptyset)=1$. If $L \subset S$ is non-empty then a maximal element of $L$ is incomparable to $a$, and so $p(L) \ge 2$. We thus have
\begin{displaymath}
\theta_{1,0} = \nu(P,x) =\sum_{L \subset S} (-1)^{\vert L \vert} \nu_{p(L), q(\emptyset)}
= \nu_{1,s} - \epsilon \nu_{2,s}.
\end{displaymath}
The term $\nu_{1,s}$ is the contribution from $L=\emptyset$. For all $L \ne \emptyset$, we have $\nu_{p(L), q(\emptyset)} = \nu_{2,s}$. The sum of $(-1)^{\vert L \vert}$ over non-empty subsets of $S$ is $-\epsilon$. The formula for $\theta_{0,1}$ is similar.

We now examine $\theta_{2,0}$. Again, put $P=Q \circ J_{2,0}$, and identify $P$ with $Q^y \sqcup \{a_1,a_2,x\}$, where $x$ is the marked point. The order on $P$ is as follows: $a_1<x$, $a_2<x$, and $a_1$, $a_2$, and $x$ compare to points in $Q^y$ as $y$ does. As in the previous case, $x$ does not have a twin, and the cuts are exactly those pairs $(L, \emptyset)$ with $L \subset S$. However, $\Max(L \cup P_{<x})$ now always contains both $a_1$ and $a_2$, and so we have $p(L) \ge 2$ for any cut. We thus find
\begin{displaymath}
\theta_{2,0} = \sum_{L \subset S} (-1)^{\vert L \vert} \nu_{p(L), q(\emptyset)} = (1-\epsilon) \nu_{2,s},
\end{displaymath}
since $\nu_{p(L), q(\emptyset)} = \nu_{2,s}$ for all terms in the sum. The formula for $\theta_{0,2}$ is similar.

We now examine the remaining cases. Let $p,q \ge 1$, and put $P = Q \circ J_{p,q}$. We identify $P$ with the set $Q^y \sqcup \{a_1, \ldots, a_p, b_1, \ldots, b_q, x\}$. Again, $x$ does not have a twin. Every element of $P_{\parallel x} = Q_{\parallel y}$ is incomparable to both $a_1 \in P_{<x}$ and $b_1 \in P_{>x}$. It follows that $(\emptyset, \emptyset)$ is the only cut. We have $p(\emptyset) = \vert \Max(P_{<x}) \vert = p$, and similarly $q(\emptyset)=q$. Thus $\theta_{p,q} = \nu_{p,q}$.
\end{proof}

We now specialize the above computation to a few important cases. For a $3 \times 3$ matrix $N=(n_{p,q})_{0 \le p,q \le 2}$, put
\begin{displaymath}
\bE(N) = \begin{pmatrix}
n_{0,0}-n_{1,0}-n_{0,1}-1 & n_{0,1}-n_{0,2} & 0 \\
n_{1,0}-n_{2,0} & n_{1,1} & n_{1,2} \\
0 & n_{2,1} & n_{2,2} \end{pmatrix},
\end{displaymath}
\begin{displaymath}
\bF(N) = \begin{pmatrix}
n_{1,0} & n_{1,1} & n_{1,2} \\
n_{1,0} & n_{1,1} & n_{1,2} \\
n_{2,0} & n_{2,1} & n_{2,2} \end{pmatrix},
\qquad
\bG(N) = \begin{pmatrix}
n_{2,0} & n_{2,1} & n_{2,2} \\
n_{1,0} & n_{1,1} & n_{1,2} \\
n_{2,0} & n_{2,1} & n_{2,2} \end{pmatrix}.
\end{displaymath}

\begin{proposition} \label{prop:EFG}
Let $\nu$ be a measure for $\fP$, and let $N=(\nu_{p,q})_{0 \le p,q \le 2}$ be the associated matrix. Let $Q$ be the discrete 2-element poset with one point marked.
\begin{enumerate}
\item The matrix for $\Pi_Q^*(\nu)$ is $\bE(N)$.
\item The matrix for $\Pi_{J_{1,0}}^*(\nu)$ is $\bF(N)$.
\item The matrix for $\Pi_{J_{2,0}}^*(\nu)$ is $\bG(N)$.
\end{enumerate}
\end{proposition}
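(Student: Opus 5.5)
This is a direct specialization of Proposition~\ref{prop:comp-meas}. For each of the three marked posets $Q$ we compute the invariants $r$, $s$, $\epsilon$, $\delta$ and the value $\nu(Q,y)$, substitute them into the matrix formula there, and use the stabilization $\nu_{p,q}=\nu_{\ol{p},\ol{q}}$ of Theorem~\ref{thm:quad}(a) to reduce the indices.

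\textit{(a) $Q=\{y,z\}$ discrete, marked at $y$.} Here $Q_{<y}=Q_{>y}=\emptyset$, so $r=s=0$, and $Q_{\parallel y}=\{z\}$. The conditions $z<Q_{>y}$ and $Q_{<y}<z$ hold vacuously, so $S=T=\{z\}$ and $\epsilon=\delta=1$. The formula then gives the following entries:
\begin{itemize}
\item $\theta_{0,1}=\nu_{0,1}-\nu_{0,2}$ and $\theta_{0,2}=0$;
\item $\theta_{1,0}=\nu_{1,0}-\nu_{2,0}$ and $\theta_{2,0}=0$;
\item the bottom right $2\times 2$ block is unchanged.
\end{itemize}
For $\theta_{0,0}=\nu(Q,y)$ we apply formula \eqref{eq:meas}. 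The point $z$ is a twin of $y$, so $\tau=1$. The cuts of $(Q,y)$ are $(L,R)$ with $L,R\subset\{z\}$ and $L<R$, namely $(\emptyset,\emptyset)$, $(\{z\},\emptyset)$ and $(\emptyset,\{z\})$, with $(p,q)$ invariants $(0,0)$, $(1,0)$, $(0,1)$ and signs $+,-,-$. Hence $\nu(Q,y)=-1+\nu_{0,0}-\nu_{1,0}-\nu_{0,1}$, which is the $(0,0)$ entry of $\bE(N)$.

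\textit{(b) $Q=J_{1,0}$, marked at $\ast$.} We have $Q_{<y}=\{a\}$, so $r=1$, and $Q_{>y}=\emptyset$, so $s=0$. Since $Q_{\parallel y}=\emptyset$, we get $S=T=\emptyset$ and $\epsilon=\delta=0$. The first row becomes $(\nu_{1,0},\nu_{1,1},\nu_{1,2})$, using $\nu(Q,y)=\nu_{1,0}$. The first column becomes $(\nu_{1,0},\nu_{1,0},\nu_{2,0})$. Together with the unchanged block, this is $\bF(N)$.

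\textit{(c) $Q=J_{2,0}$, marked at $\ast$.} The computation is identical except that $r=2$ and $\nu(Q,y)=\nu_{2,0}$. This yields $\bG(N)$.

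The only non-routine step is the twin and cut count for $\nu(Q,y)$ in (a). Note also that the case $p,q\ge1$ of Proposition~\ref{prop:comp-meas} already gives the block entries directly, so stabilization is needed only to read $\nu_{r,q}$ and $\nu_{p,s}$ with $r,s\le 2$. Since $r$ and $s$ are capped at $2$, all indices are already in range.
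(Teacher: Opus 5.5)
Your proposal is correct and matches the paper's proof: you specialize Proposition~\ref{prop:comp-meas} to the three marked posets, and the only real computation is $[Q,\ast]=j_{0,0}-j_{1,0}-j_{0,1}-1$ from one twin and three cuts, which you carry out correctly. (Your opening appeal to stabilization is not needed, since Proposition~\ref{prop:comp-meas} already caps $r$ and $s$ at~$2$, as your last paragraph effectively acknowledges.)
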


\begin{proof}
This follows from Proposition~\ref{prop:comp-meas} and some simple calculations. We note that
\begin{displaymath}
[Q,\ast] = j_{0,0}-j_{1,0}-j_{0,1}-1
\end{displaymath}
holds in $\Theta(\fP)$, as one sees by computing twins and cuts; this accounts for the $(0,0)$ entry of $\bE(N)$.
\end{proof}

\begin{corollary} \label{cor:EFG}
Suppose $N$ is a matrix that comes from a measure. Then the matrices $\bE(N)$, $\bF(N)$, and $\bG(N)$ also come from measures.
\end{corollary}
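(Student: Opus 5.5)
This corollary follows from Proposition~\ref{prop:EFG} together with the monoid action. By the corollary following the amalgamation-compatibility proposition in \S\ref{ss:meas-monoid}, for any marked poset $Q$ the rule $\alpha \mapsto \nu(\Pi_Q(\alpha))$ is a measure $\Pi_Q^*(\nu)$. So if $N$ comes from a measure $\nu$, then each of $\Pi_Q^*(\nu)$, $\Pi_{J_{1,0}}^*(\nu)$ and $\Pi_{J_{2,0}}^*(\nu)$ is again a measure. Here $Q$ is the discrete two-element poset with one point marked.

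The remaining point is to identify the matrices of these three measures. I will invoke Proposition~\ref{prop:EFG}, which says they are $\bE(N)$, $\bF(N)$ and $\bG(N)$ respectively.

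If one wanted to double-check Proposition~\ref{prop:EFG} against Proposition~\ref{prop:comp-meas}, it suffices to compute the invariants $(r,s,\delta,\epsilon)$ and the value $\nu(Q,y)$ in each case.

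\begin{itemize}
\item For the discrete two-point $Q$ we have $Q_{<y}=Q_{>y}=\emptyset$, so $r=s=0$. The other point $z$ satisfies both vacuous conditions, so $\delta=\epsilon=1$. The twin/cut computation gives $[Q,\ast]=j_{0,0}-j_{1,0}-j_{0,1}-1$. The point has one twin, and the cuts $(\emptyset,\emptyset)$, $(\{z\},\emptyset)$, $(\emptyset,\{z\})$ contribute $j_{0,0}$, $-j_{1,0}$ and $-j_{0,1}$; the pair $(\{z\},\{z\})$ is not a cut.
\item For $J_{1,0}$ we have $r=1$, $s=0$, and $S=T=\emptyset$.
\item For $J_{2,0}$ we have $r=2$, $s=0$, and $S=T=\emptyset$.
\end{itemize}

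Substituting these values into the formula of Proposition~\ref{prop:comp-meas} yields exactly $\bE$, $\bF$ and $\bG$. In the last two cases this uses $\nu(J_{p,0},\ast)=\nu_{p,0}$.

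There is no real obstacle here. The only care needed is in the $(0,0)$ entry for $\bE$, which relies on the cut enumeration just described.
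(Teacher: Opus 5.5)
Your proposal is correct and takes the same approach as the paper, which states this corollary without further argument. It is immediate from Proposition~\ref{prop:EFG}, since $\Pi_Q^*(\nu)$, $\Pi_{J_{1,0}}^*(\nu)$ and $\Pi_{J_{2,0}}^*(\nu)$ are measures whose matrices are $\bE(N)$, $\bF(N)$, $\bG(N)$. Your check of $(r,s,\delta,\epsilon)$ and of $[Q,\ast]=j_{0,0}-j_{1,0}-j_{0,1}-1$ against Proposition~\ref{prop:comp-meas} is also accurate.
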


\subsection{Existence of the remaining measures} \label{ss:remaining}

We now verify that the remaining matrices in Table~\ref{t:matrix} come from measures, thereby completing the proof of Theorem~\ref{mainthm}.

\begin{theorem} \label{thm:remaining}
Every matrix in Table~\ref{t:matrix} come from a measure.
\end{theorem}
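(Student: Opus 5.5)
The plan is to start from the four families realized in Theorem~\ref{thm:some-meas}, namely $\BB_1(t)$, $\CC_3$, $\CC_4$ and $\CC_8$, and reach every other matrix in Table~\ref{t:matrix} by the monoid action. The tools are Corollary~\ref{cor:EFG} (the operations $\bE$, $\bF$, $\bG$) and, more generally, Proposition~\ref{prop:comp-meas} for other marked posets $Q$. I will also use transposition: reversing the order of posets gives an involution on measures which transposes the matrix $N$. So it suffices to realize one matrix of each transpose pair, and I may freely use the transposed operations $\bE^{\top}$, $\bF^{\top}$, $\bG^{\top}$ (e.g.\ $\bG^{\top}$ replaces column~0 by column~2).

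\textbf{Isolated points.} These are direct computations.
\begin{itemize}
\item $\bF(\CC_8)=\CC_7$, since row~0 is replaced by row~1.
\item $\bG^{\top}(\CC_4)=\CC_6$, since column~2 of $\CC_4$ is $(1,0,0)^{\top}$.
\item $\Pi_{J_{0,1}}^*(\CC_4)=\CC_5$. To check this with Proposition~\ref{prop:comp-meas}, take $Q=J_{0,1}$. Then $r=0$, $s=1$, $S=T=\emptyset$ (so $\delta=\epsilon=0$), and the corner entry is $\nu(J_{0,1},\ast)=0$. The formula then reproduces $\CC_5$ from the entries of $\CC_4$.
\item $\bF(\CC_5)=\CC_2$.
\end{itemize}
The matrix $\CC_1$ will come from $\BB_3$, treated below.

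\textbf{One-parameter families.} These come from $\bE$ together with a density argument. The base points are obtained by direct computation:
\begin{itemize}
\item $\bE(\CC_8)=\BB_5(0)$, $\bE(\CC_3)=\BB_3(0)$, and $\bE(\CC_4)=\BB_4(0)$.
\item $\bE$ acts on these lines by translation of the parameter: $\bE(\BB_5(t))=\BB_5(t+1)$, $\bE(\BB_3(t))=\BB_3(t-1)$, and $\bE(\BB_4(t))=\BB_4(t-1)$ (the last two need checking but are routine).
\end{itemize}
So each of these lines contains all the points with parameter in $\bZ_{\ge 0}$ or $\bZ_{\le 0}$.

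To pass to the whole line I use the following argument. By Corollary~\ref{cor:weak1} and equation~\eqref{eq:meas}, the matrix $N(t)$ gives a measure exactly when every defect $\delta_{\nu}(P,x,y)$ vanishes. Each defect is a polynomial in $t$ with integer coefficients, because the $\lambda_{p,q}$ and $\tau$ are $\bZ$-valued. A polynomial in $\bZ[t]$ vanishing at infinitely many integers is identically zero, so it vanishes for every $t$ in every field $k$. This step will be stated carefully once, since it is what handles positive characteristic.

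The remaining families then follow:
\begin{itemize}
\item $\bF(\BB_3(t))=\CC_1$.
\item $\bE(\BB_1(t))=\AA(t,-t-1)$, and $\bE(\AA(s,u))=\AA(s,u-1)$. Hence $\AA(s,u)$ is a measure whenever $s\in k$ and $u\in -s-1-\bZ_{\ge0}$. The same polynomial argument, now in the two variables $(s,u)$, gives the whole plane: for each fixed integer $s$, infinitely many $u$ work, and there are infinitely many such $s$.
\item $\bF(\AA(s,u))=\BB_2(s)$.
\end{itemize}

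The main obstacle is not any single computation. It is organizing the reachability so that nothing is missed, and making the passage from integer parameter values to arbitrary $t\in k$ rigorous. For the latter I will apply the Zariski-density principle to the integral defect polynomials rather than to the variety of measures over $k$. I will also double-check the $(0,0)$ entries of the $\bE$-images, since that is where a sign slip would break the translation claims.
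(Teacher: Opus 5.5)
Your proposal is correct and is essentially the paper's argument: the same seeds $\BB_1(t)$, $\CC_3$, $\CC_4$, $\CC_8$ are moved around by $\bE$, $\bF$, $\bG$ and transposition. The only differences are cosmetic choices of route (e.g.\ $\CC_1=\bF(\BB_3(0))$ instead of $\bE(\CC_6)$, and $\bF^{\top}$, $\bG^{\top}$ applied to $\CC_4$ instead of $\bF$, $\bG$ applied to $\CC_4^{\top}$), and the packaging of the density step via integral defect polynomials, which covers all characteristics at once rather than using the paper's separate $\bQ(S,T)$ integrality argument. One slip: $\bE(\BB_4(t))=\BB_4(t+1)$, not $\BB_4(t-1)$, since the corner entry is $t-(-1)-(-1)-1$; this is harmless, because you only need infinitely many integer parameter values.
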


\begin{proof}
We first suppose that $k$ has characteristic~0. Let $\bA^{3 \times 3}$ denote the affine space of $3 \times 3$ matrices, and let $X \subset \bA^{3 \times 3}$ be the set of matrices that come from measures. This is a Zariski closed subset; indeed, if $N \in \bA^{3 \times 3}$ and $\nu=\nu(N)$ is as in \S \ref{ss:quadthm}, then $\nu$ is a measure if and only if $\delta_{\nu}(P,x,y)=0$ for all $(P,x,y)$, and this is a quadratic polynomial equation in the entries of $N$, as we saw in \S \ref{ss:some-eq}. From Theorem~\ref{thm:some-meas}, we know that the matrices $\BB_1(t)$, $\CC_3$, $\CC_4$, and $\CC_8$ belong to $X$ for any $t \in k$. From Corollary~\ref{cor:EFG}, we know that $X$ is stable under the operations $\bE$, $\bF$, and $\bG$. We also know that $X$ is stable under transpose. We deduce the theorem from these pieces of information.

We have
\begin{displaymath}
\bF(\CC_8)=\CC_7, \quad
\bF(\CC_4^{\top}) = \CC_5^{\top}, \quad
\bG(\CC_4^{\top}) = \CC_6^{\top}, \quad
\bE(\CC_6) = \CC_1, \quad
\bF(\CC^{\top}_1) = \CC^{\top}_2.
\end{displaymath}
We thus see that $X$ contains all of the $\CC_i$'s. We have
\begin{displaymath}
\bE(\BB_1(t)) = \begin{pmatrix}
-t-1 & 0 & 0 \\
0 & t & t \\
0 & t & t \end{pmatrix} = \AA(t, -t-1),
\end{displaymath}
from which one finds
\begin{displaymath}
\bE^m(\BB_1(t)) = \AA(t, -t-m)
\end{displaymath}
for $m \ge 1$. We thus see that $X$ contains $\AA(t, -t-m)$ for all $t \in k$ and all positive integers $m$. The points $(t, -t-m)$, with $t \in k$ and $m \ge 1$, are Zariski dense in $\bA^2$ since $k$ has characteristic~0. As $X$ is Zariski closed, it follows that $X$ contains $\AA(s,t)$ for all $s,t \in k$. We have $\bF(\AA(s,t)) = \BB_2(t)$, and so this also belongs to $X$ for all $t$. Similarly, for a positive integer $m$, we have
\begin{displaymath}
\bE^m(\CC_8) = \BB_5(m-1), \quad
\bE^m(\CC_4) = \BB_4(m-1), \quad
\bE^m(\CC_3) = \BB_3(1-m),
\end{displaymath}
and so $X$ contains $\BB_3(t)$, $\BB_4(t)$, $\BB_5(t)$ for all $t$ (using a similar Zariski density argument). This completes the proof in characteristic~0.

We now handle the case where $k$ has positive characteristic. We first construct a measure with matrix $\AA(s,t)$, for arbitrary $s,t \in k$. By the characteristic~0 case, there is a measure valued in the rational function field $\bQ(S,T)$ with matrix $\AA(S,T)$. This means that there is a ring homomorphism $\phi \colon \Theta(\fP) \to \bQ(S,T)$ such that $\phi(j_{p,q})$ is the $(p,q)$ entry of $\AA(S,T)$ for $p,q \in \{0,1,2\}$; in particular, $\phi(j_{p,q})$ is equal to 0, $S$, or $T$ for such $p$ and $q$. By \eqref{eq:meas}, it follows that $\phi(j_{p,q})$ is equal to 0, $S$, or $T$ for all $p,q \in \bN$. Since the $j_{p,q}$ generate $\Theta(\fP)$ as a ring (Corollary~\ref{cor:linear}), we see that $\phi$ actually takes values in $\bZ[S,T]$. Composing $\phi$ with the ring homomorphism $\bZ[S,T] \to k$ given by $S \mapsto s$ and $T \mapsto t$, we obtain a $k$-valued measure with matrix $\AA(s,t)$. The other cases follow from the same argument.
\end{proof}

\subsection{The Zariski closure of the action} \label{ss:monoid-closure}

We close \S \ref{s:monoid} with a brief discussion illustrating that the action of $\cP$ on measures has rather rich structure. This material will not be used elsewhere, and we omit the details of the calculations. We assume $\operatorname{char}(k)=0$.

Given $Q \in \cP$ and a measure $\nu$, Proposition~\ref{prop:comp-meas} shows that the matrix entries of $\Pi^*_Q(\nu)$ are affine linear expressions in the matrix entries of $\nu$. In other words, there is an affine linear transformation $\sigma(Q)$ of $\bA^{3 \times 3}$ that describes the action on measures. Letting $\Aff(3 \times 3)$ denote the monoid of affine linear transformations of $\bA^{3 \times 3}$, we thus have a monoid anti-homomorphism\footnote{The action is by pull-back, and thus contravariant.}\textsuperscript{,}\footnote{There is a subtlety in the definition of $\sigma(Q)$. The matrices corresponding to measures do not span $\bA^{3 \times 3}$, and so the action of $\Pi^*_Q$ on measures does not determine a unique affine linear transformation. To resolve this, one can consider weak measures $\nu$ satisfying $\nu(e)=1$ and $\nu(j_{p,q})=\nu(j_{\ol{p},\ol{q}})$. The space of such weak measures is $\bA^{3 \times 3}$, and the arguments from \S \ref{ss:meas-monoid} show that $\cP$ acts on this space. The transformation $\sigma(Q)$ is uniquely determined by the action of $\Pi_Q^*$ on these weak measures.}
\begin{displaymath}
\sigma \colon \cP \to \Aff(3 \times 3).
\end{displaymath}
Thus $\cP$ admits a natural affine representation.

Let $r,s \in \{0,1,2\}$ and $\delta, \epsilon \in \{0,1\}$ be the invariants of $Q$ appearing in Proposition~\ref{prop:comp-meas}. It turns out that only the following~30 possibilities for $(r,s,\delta,\epsilon)$ actually occur:
\begin{itemize}
\item $(r,s,1,1)$ with any $r$ and $s$.
\item $(r,s,0,1)$ with $r>0$.
\item $(r,s,1,0)$ with $s>0$.
\item $(r,s,0,0)$ with and $r$ and $s$.
\end{itemize}
Let $\cP(r,s,\delta,\epsilon)$ be the subset of $\cP$ consisting of those elements with the given invariants. These sets partition $\cP$ into 30 pieces. It turns out that closure of $\sigma(\cP)$ is the disjoint union of the closures of the $\sigma(\cP(r,s,\delta,\epsilon))$. Moreover, if at least one of $\delta$ or $\epsilon$ vanishes then $\sigma(Q)$ is determined from $(r,s,\delta,\epsilon)$, and does not vary continuously; hence $\sigma(\cP(r,s,\delta,\epsilon))$ (and its closure) is a single point. The Zariski closure of $\sigma(\cP(r,s,1,1))$ is isomorphic to an affine space $\bA^d$; the dimension is as follows:
\begin{itemize}
\item If $r>0$ and $s>0$ then $d=3$.
\item If exactly one of $r$ or $s$ equal to~0 then $d=4$
\item If $r=s=0$ then $d=5$.
\end{itemize}
Thus the closure of $\sigma(\cP)$ is a disjoint union of 21 points, four copies of $\bA^3$, four copies of $\bA^4$, and a single $\bA^5$.

\section{Support} \label{s:support}

In \S \ref{s:support}, we determine the supports of the measures on $\fP$. We also show that nilpotent endomorphisms in the associated $\uPerm$ categories have trace zero.

\subsection{Fra\"iss\'e subclasses} \label{ss:subclass}

We now recall the classification of the Fra\"iss\'e subclasses of $\fP$ due to Schmerl. Let $0 \le n \le \infty$. Write $[n]$ for the set $\{1, \ldots, n\}$, with the convention that $[n]$ is empty if $n=0$ and the set of all positive integers if $n=\infty$. Let $\fS_n$ be the group of all permutations of $[n]$, and let $H=\Aut(\bQ, <)$ be the group of all order-preserving self-bijections of the rational numbers. We define three families of Fra\"iss\'e subclasses of $\fP$.

\textit{The $\fA$ family: bounded antichains.} Let $\fA_n$ consist of all antichains with at most $n$ elements. This is clearly a Fra\"iss\'e class. The Fra\"iss\'e limit is the antichain $A_n=[n]$, and its automorphism group is the symmetric group $\fS_n$.

\textit{The $\fB$ family: bounded disjoint unions of chains.} Let $\fB_n$ consist of those posets $X$ for which there is a decomposition $X = X_1 \sqcup \cdots \sqcup X_m$, with $m \le n$, such that each $X_i$ is totally ordered and $a \parallel b$ whenever $a \in X_i$, $b \in X_j$, and $i \ne j$. Let $B_n = [n] \times \bQ$, equipped with the order $(a,p)<(b,q)$ if $a=b$ and $p<q$. It is not difficult to see that $B_n$ is a homogeneous partial order with age $\fB_n$. This implies that $\fB_n$ is a Fra\"iss\'e class and $B_n$ is its limit. The automorphism group of $B_n$ is the wreath product $H \wr \fS_n$.

\textit{The $\fC$ family: ordinal sums of bounded antichains.} Let $\fC_n$ consist of those posets $X$ for which there is a decomposition $X = X_1 \sqcup \cdots \sqcup X_m$, for some $m$, such that each $X_i$ is an antichain with at most $n$ elements, and $a<b$ for all $a \in X_i$ and all $b \in X_j$ when $i<j$. Let $C_n = [n] \times \bQ$, equipped with the order $(a,p)<(b,q)$ if $p<q$. Again, it is not difficult to see that $C_n$ is homogeneous with age $\fC_n$, which implies that $\fC_n$ is a Fra\"iss\'e class with limit $C_n$. The automorphism group of $C_n$ is the wreath product $\fS_n \wr H$.

We note that $\fA_0=\fB_0=\fC_0$ and $\fB_1=\fC_1$. We are now ready to state the Schmerl's theorem\footnote{We note that Schmerl omits $\fA_0$ from his list.} \cite{Schmerl}:

\begin{theorem} \label{thm:schmerl}
The Fra\"iss\'e subclasses of $\fP$ are exactly:
\begin{itemize}
\item $\fP$ itself,
\item $\fA_n$ for $0 \le n \le \infty$,
\item $\fB_n$ for $1 \le n \le \infty$,
\item $\fC_n$ for $2 \le n \le \infty$.
\end{itemize}
The classes listed above are pairwise distinct.
\end{theorem}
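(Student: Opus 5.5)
The plan is to treat Schmerl's theorem in two halves: first show that every listed class is a Fra\"iss\'e subclass and that they are pairwise distinct, then show that every Fra\"iss\'e subclass $\fF \subseteq \fP$ appears on the list.

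\emph{The listed classes.} The paper already sketches why $\fA_n$, $\fB_n$, $\fC_n$ are Fra\"iss\'e classes. One checks that $A_n$, $B_n$, $C_n$ are homogeneous with the stated ages, using that $H$ acts homogeneously on $\bQ$ and that the relevant wreath products act homogeneously on finite substructures. Distinctness follows by comparing small members. The class $\fA_n$ contains no $2$-chain, for $n \ge 0$. For $n \ge 2$, $\fB_n$ contains the poset ``$2$-chain plus isolated point'' but not the $3$-element poset $\{a<b,\ a<c\}$. For $n \ge 2$, $\fC_n$ contains $\{a<b,\ a<c\}$ but not ``$2$-chain plus isolated point.'' The index $n$ is then recovered from the maximal antichain size.

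\emph{Classification, step 1.} Let $\fF \ne \fP$ be a Fra\"iss\'e subclass. Since $\fF$ is hereditary, it is determined by its minimal forbidden posets. First I dispose of the case where $\fF$ contains no $2$-chain: then $\fF$ consists of antichains, and being closed under substructures and amalgamation forces $\fF = \fA_n$ with $n$ the supremum of antichain sizes. Amalgamating two antichains over a common subset produces all antichains of the combined size, so the sizes form an initial segment.

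\emph{Classification, step 2.} Assume $\fF$ contains the $2$-chain. The key dichotomy concerns the $3$-element posets $V = \{a<b,\ a<c\}$, its transpose $\Lambda$, and $D = $ ``$2$-chain plus isolated point.'' I will show, using amalgamation, that:
\begin{enumerate}
\item If $D \in \fF$ and one of $V$ or $\Lambda$ is in $\fF$, then every finite poset lies in $\fF$.
\item $V \in \fF$ if and only if $\Lambda \in \fF$.
\end{enumerate}
For (i) the method is to build posets by one-point amalgamations, using Proposition~\ref{prop:poset-amalg} and Proposition~\ref{prop:poset-amalg-2}. These propositions let me control which of $P_-$, $P_0$, $P_+$ are forced. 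The main obstacle is that amalgamation only guarantees that \emph{some} amalgam lies in $\fF$, so I must find configurations where the amalgam is unique or where every option is already useful. I would first try to produce the $4$-element ``N'' poset and $2+2$, then argue by induction on $\vert P\vert$. At each stage, amalgamate $P^x$ and $P^y$ over $P^{x,y}$, choosing $x,y$ separated (Proposition~\ref{prop:poset-sep}) so that $P$ is the only amalgamation. For a poset with $\vert P \vert \ge 4$ that is not of the forms $\fB$ or $\fC$, such a separated pair should exist. This is the delicate combinatorial core, and it is precisely Schmerl's argument.

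\emph{Classification, step 3.} If $D \in \fF$ but $V, \Lambda \notin \fF$, then every connected component of a member is a chain, so $\fF \subseteq \fB_\infty$. Homogeneity then forces $\fF = \fB_n$, with $n$ the maximal number of components; amalgamation shows all chain lengths occur. If $D \notin \fF$, then incomparability is an equivalence relation on each member (this is the forbidden-$D$ characterization of weak orders), so $\fF \subseteq \fC_\infty$. Amalgamation again gives $\fF = \fC_n$, with $n$ the maximal antichain size.
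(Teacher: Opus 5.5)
The paper gives no proof of this statement; it cites Schmerl \cite{Schmerl}. Your outline is therefore necessarily a different, self-contained route. Its peripheral parts are sound in outline: distinctness, the case with no $2$-chain, and the reductions in step~3. For instance, amalgamating $\{c<a\}$ and $\{a<b\}$ over $\{a\}$ forces a $3$-chain, and if $D\notin\fF$ then amalgamating $\{a<x\}$ with the antichain $\{a,y\}$ over $\{a\}$ forces $\Lambda$.

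The genuine gap is claim~(i), which is the whole content of the theorem. Your mechanism for it rests on the assertion that every poset with $\vert P\vert\ge 4$ outside $\fB_\infty\cup\fC_\infty$ has a separated pair, and this is false. Take $P=\{x<a,\ x<b\}\sqcup\{c\}$, which contains both $V$ and $D$. Check each pair:
\begin{itemize}
\item $a$ and $b$ are twins.
\item $x\lessdot a$ and $x\lessdot b$ are covers.
\item The pre-amalgamation $(P,c,x)$ also admits the claw $\{x<a,\ x<b,\ x<c\}$.
\item The pre-amalgamation $(P,c,a)$ also admits the N poset $\{x<a,\ x<b,\ c<a\}$.
\end{itemize}
More generally, in a poset of height at most~$2$, comparable pairs are covers, and an incomparable pair consisting of a minimal and a maximal element is never separated. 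So a separated pair exists exactly when two minimal elements have incomparable up-sets or two maximal elements have incomparable down-sets. This fails for $V\sqcup A_k$ for every $k$, and for N itself, which you also give no way to produce.

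For such $P$, every choice of $x,y$ leaves a competing amalgam of the same cardinality as $P$. An induction on $\vert P\vert$ says nothing about whether $P$ or that competitor lies in $\fF$, so the step ``$P$ is the only amalgamation'' is unavailable.

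There are two further omissions:
\begin{itemize}
\item Your induction explicitly sets aside the members of $\fB_\infty$ and $\fC_\infty$, but these must still be shown to lie in $\fF$. Antichains have no separated pairs at all, and proving that $\fF$ has unbounded width needs its own argument.
\item Claim~(ii) is only asserted, not proved.
\end{itemize}

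What is missing is a different engine. One option is to prove that $\fF$ is closed under arbitrary one-point extensions: adjoin auxiliary witness points that separate $x$ from $y$, take the resulting unique amalgam, and discard the witnesses by heredity. The induction must then be organized, not simply on $\vert P\vert$, so that the enlarged posets being amalgamated are already known to lie in $\fF$. Saying this step ``is precisely Schmerl's argument'' amounts to citing Schmerl, as the paper does, rather than supplying a proof.
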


\subsection{Regular measures} \label{ss:regular}

Let $\nu$ be a measure on $\fP$ valued in a field $k$. Recall that $\nu$ is \defn{regular} if $\nu(P)=\nu(\emptyset \to P)$ is non-zero for every poset $P$. Equivalently, this means that $\nu$ has support $\fP$.

\begin{proposition} \label{prop:noreg}
No measure for $\fP$ is regular.
\end{proposition}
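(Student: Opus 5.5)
The plan is to argue by contradiction. Suppose $\nu$ is a regular measure valued in a field $k$. Then $\nu(\alpha)\ne 0$ for every embedding $\alpha$, and in particular $\nu(P,x)\neq 0$ for every marked poset $(P,x)$. By Theorem~\ref{thm:quad}, the matrix $N=(\nu_{p,q})_{0\le p,q\le 2}$ is one of the matrices in Table~\ref{t:matrix}, possibly transposed. Moreover, every $\nu_{p,q}=\nu(J_{p,q},\ast)$ must be nonzero, since each is the value of $\nu$ on a one-point extension. So a regular measure needs all nine entries of $N$ to be nonzero.

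Going through Table~\ref{t:matrix}, almost every family has a zero entry regardless of the parameters.
\begin{itemize}
\item $\AA(s,t)$, $\BB_2$, $\BB_3$, $\BB_4$, $\BB_5$ and every $\CC_i$ each contain an entry that is identically $0$; the same holds for their transposes.
\item The only survivor is $\BB_1(t)$ with $t\ne 0$, which is the measure $-\tau+t\lambda_{\ast,\ast}$ of Theorem~\ref{thm:some-meas}(a).
\end{itemize}

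It remains to exhibit, for each $t$, a marked poset on which $-\tau+t\lambda_{\ast,\ast}$ vanishes. The natural first try is a $2$-element chain or antichain. For the discrete $2$-point poset $Q$ with one point marked, Proposition~\ref{prop:EFG} gives $[Q,\ast]=j_{0,0}-j_{1,0}-j_{0,1}-1$, so $\nu(Q,\ast)=t-t-t-1=-t-1$. This vanishes only when $t=-1$, so it does not suffice by itself.

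I would therefore look for a second marked poset $(P,x)$ with $\lambda_{\ast,\ast}(P,x)=0$ and $\tau(P,x)\neq0$. Then $\nu(P,x)=-\tau(P,x)\neq 0$, which is not what we want. What we actually want is $\lambda_{\ast,\ast}(P,x)=0$ and $\tau(P,x)=0$, which gives $\nu(P,x)=0$ for every $t$ and finishes the proof.

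A concrete candidate for such a poset is $(E,y)$ from Lemma~\ref{lem:quad-2}. There $\lambda(E,y)=0$ identically, so in particular $\lambda_{\ast,\ast}(E,y)=0$, and $y$ has no twin. Hence $[E,y]=0$ in $\Theta(\fP)$, so $\nu(E,y)=0$ for \emph{every} measure $\nu$, and that single fact already rules out regularity.

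This suggests a cleaner proof that does not need Theorem~\ref{thm:quad} at all. Lemma~\ref{lem:quad-2} gives $[E_{0,0},y]=0$ in $\Theta(\fP)$. For any measure, $\nu(E)=\nu(E^y)\,\nu(E,y)=0$ by multiplicativity, so $E_{0,0}$ lies outside the support of $\nu$. I would present this short argument as the proof, keeping the table check above only as a consistency remark. The only step that needs care is confirming that Lemma~\ref{lem:quad-2} is valid for $m=n=0$; the statement allows this, and the cut computation there gives $\lambda(E,y)=\sum_{\delta}(-1)^{\delta}uv^2=0$.
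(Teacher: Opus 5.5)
Your final argument is exactly the paper's proof: Lemma~\ref{lem:quad-2} with $m=n=0$ gives $[E,y]=0$ in $\Theta(\fP)$, so $\nu(E)=\nu(E^y)\,\nu(E,y)=0$ for every measure, and $E$ lies outside the support. The preliminary case check against Table~\ref{t:matrix} is correct but unnecessary, as you yourself note.
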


\begin{proof}
Let $E$ be the poset depicted by the following Hasse diagram:
\begin{center}
\begin{tikzpicture}[
    x=1.5cm,
    y=1cm,
    dot/.style={
        circle,
        fill,
        inner sep=1.6pt
    },
]

\node[dot,label=left:$x$] (x) at (-1,0.5) {};
\node[dot,label=right:$a$] (a) at (1,0.5) {};
\node[dot,label=left:$y$] (y) at (-1,1) {};
\node[dot,label=below:$b$] (b) at (0,1) {};
\node[dot,label=above:$c$] (c) at (-1,1.5) {};
\node[dot,label=above:$d$] (d) at (0,1.5) {};

\draw
    (x) -- (y)
    (x) -- (b)
    (y) -- (c)
    (y) -- (d)
    (a) -- (d)
    (b) -- (d);
\end{tikzpicture}
\end{center}
This is the poset $E$ used in \S \ref{ss:jstab} with $n=m=0$. By Lemma~\ref{lem:quad-2}, $[E,y]=0$ in $\Theta(\fP)$, and so $[E]=[E,y] \cdot [E^y]$ vanishes as well. Thus any measure $\nu$ satisfies $\nu(E)=0$, and is therefore not regular.
\end{proof}

Proposition~\ref{prop:noreg} has a useful consequence. Although a measure on $\fP$ is never regular, its restriction to its support is regular, and the proposition combined with Schmerl's theorem implies that the support is one of the much simpler classes $\fA_n$, $\fB_n$, or $\fC_n$. We use this observation to control traces in the associated $\uPerm$ tensor categories.

Before getting into the details, we recall a bit of background. Measures were introduced in \cite{repst} for the purpose of constructing tensor categories: given an oligomorphic group $G$ and a measure $\nu$ on $G$, we defined a tensor category $\uPerm(G, \nu)$. In \cite[\S 4]{arboreal}, we translated this construction to the language of Fra\"iss\'e classes: given a Fra\"iss\'e class $\fF$ and a measure $\nu$ on $\fF$, we defined a tensor category $\uPerm(\fF, \nu)$. These $\uPerm$ tensor categories are essentially never abelian, and an important problem is to determine when they embed into a pre-Tannakian category. A necessary condition is that every nilpotent endomorphism in the $\uPerm$ category have trace~0; as far as we currently know, this condition could be sufficient. Using the above proposition, we can verify this important condition in characteristic~0.

\begin{corollary} \label{cor:nilp}
Let $\nu$ be a measure on $\fP$ valued in a field $k$ of characteristic~0. Then any nilpotent endomorphism in the category $\uPerm(\fP, \nu)$ has trace~0.
\end{corollary}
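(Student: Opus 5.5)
The plan is to pass to the support of $\nu$ and reduce to regular measures on the very restricted classes of Schmerl's theorem, where the trace condition can be checked directly. Let $\fF_\nu$ be the support and $\ol{\nu}$ the restricted measure, which is regular. By Proposition~\ref{prop:noreg}, $\fF_\nu \ne \fP$, so Theorem~\ref{thm:schmerl} shows that $\fF_\nu$ is one of $\fA_n$, $\fB_n$, $\fC_n$ with $0 \le n \le \infty$. The argument then has two steps.

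\emph{Step 1: reduction to the support.} I would use the semisimplification formalism from \cite{arboreal}. In $\uPerm(\fF,\nu)$, the negligible morphisms (those $f$ with $\operatorname{tr}(fg)=0$ for all $g$) form a tensor ideal, and the quotient is the semisimplification. The intended claim is that the quotient of $\uPerm(\fP,\nu)$ by negligibles agrees, up to Karoubi envelope, with that of $\uPerm(\fF_\nu,\ol{\nu})$. The reason is that a basis morphism indexed by a structure $P$ with $\nu(P)=0$ should be negligible, since traces of composites are computed by summing $\nu$ over amalgams, all of which contain $P$. The key fact is that for a nilpotent $f$, $\operatorname{tr}(f)$ depends only on the image of $f$ in this quotient, and that image is again nilpotent. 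So it suffices to prove the statement for $\uPerm(\fF_\nu,\ol{\nu})$, or even just to show that this category's semisimplification has no nonzero nilpotents with nonzero trace.

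\emph{Step 2: the regular case.} Each class $\fA_n$, $\fB_n$, $\fC_n$ corresponds to an oligomorphic group: $\fS_n$, $H\wr\fS_n$, or $\fS_n\wr H$, with $H=\Aut(\bQ,<)$. For these groups the regular measures and their $\uPerm$ categories are known.
\begin{itemize}
\item For finite $n$ in the $\fA$ family, $\fS_n$ is a finite group. The category is then essentially classical permutation modules, which embed in $\operatorname{Rep}(\fS_n)$ in characteristic~0.
\item For $\fA_\infty$ one gets Deligne's interpolation categories $\operatorname{Rep}(\fS_t)$.
\item For $\fB_1=\fC_1$ (the group $H$) one gets the Delannoy-type categories of \cite{repst}, which are known to be nice.
\item For the wreath products, I would appeal to the general results of \cite{repst} on products and wreath products. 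These say that a regular measure on a wreath product $G\wr\fS_n$ or $\fS_n \wr G$ yields a category embedding in a pre-Tannakian category, provided the factors do, and this gives the trace-zero property.
\end{itemize}
In each case, embedding into a pre-Tannakian (or semisimple) category forces nilpotents to have trace~0, because trace is additive on short exact sequences and the associated graded of a nilpotent endomorphism is zero.

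The main obstacle is Step~2 in the wreath cases with $n=\infty$, and more basically verifying that every regular measure on each $\fB_n$ and $\fC_n$ really comes from the known constructions. I would first try to classify the regular measures on these classes directly; they have few one-point extension types, so this should be a small computation. I would then invoke the relevant wreath product results for each measure found. If a general embedding theorem is unavailable, the fallback is to check directly that the trace form on $\uPerm(\fF_\nu,\ol{\nu})$ makes its semisimplification a product of known semisimple categories. Step~1 also needs care, namely that negligible morphisms are precisely those killed by passing to the support, but this should follow from \cite[Proposition~2.18]{arboreal} and the matrix description of morphisms in $\uPerm$.
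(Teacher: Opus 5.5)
Your architecture matches the paper's: pass to the support, use Proposition~\ref{prop:noreg} and Theorem~\ref{thm:schmerl} to reduce to $\fA_n$, $\fB_n$, $\fC_n$, and then check those classes.

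Step~1 works but does more than needed. The paper simply uses the tensor functor $\Phi\colon \uPerm(\fP,\nu)\to\uPerm(\fP_\nu,\ol{\nu})$ of \cite[\S 4.6]{arboreal}, which preserves traces and sends nilpotents to nilpotents. So you need neither an identification of semisimplifications nor the extra argument that the trace condition on $\uPerm(\fP_\nu,\ol{\nu})$ passes to its semisimplification. Also, ``negligibles are precisely what is killed by passing to the support'' is too strong. Only the inclusion of that kernel in the negligible ideal holds, and only the inclusion is used.

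Your handling of $\fA_n$ for $n<\infty$, of $\fA_\infty$, and of $\fB_1$ is fine.

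The genuine gap is Step~2 for $\fB_n$ and $\fC_n$ with $2\le n\le\infty$. You appeal to ``general results of \cite{repst} on products and wreath products'', saying that regular measures on $G\wr\fS_n$ or $\fS_n\wr G$ give categories embedding in pre-Tannakian categories whenever the factors do. No such inheritance theorem is available to quote. The paper cites \cite{repst} only for $\fA_\infty$ and $\fB_1$, and says the remaining classes are handled ``by the same methods'', that is, by redoing the direct verification for these groups. Even granting such a theorem, you would still have to show that each relevant regular measure on $\fB_n$ or $\fC_n$ is actually assembled from measures on the factors, and you have not done this. So, as written, these cases are not proved.

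You also do not need to classify all regular measures on $\fB_n$ and $\fC_n$. Only restrictions $\ol{\nu}$ of measures on $\fP$ occur. By Theorem~\ref{thm:support} these come from:
\begin{itemize}
\item $\BB_4(t)$ and $\BB_5(t)$, with support $\fB_{-t}$;
\item $\BB_1(t)$, with support $\fC_{-t}$.
\end{itemize}
What remains is to carry out the \cite{repst}-style argument for these specific one-parameter families.

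The paper's own proof is terse at exactly this point. However, it does not rest on a wreath-product embedding theorem; it asserts that the direct methods extend.
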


\begin{proof}
Recall (\S \ref{ss:support}) that $\fP_{\nu}$ is the support of $\nu$, and $\ol{\nu}$ is the regular measure induced on $\fP_{\nu}$. By \cite[\S 4.6]{arboreal}, there is a natural tensor functor
\begin{displaymath}
\Phi \colon \uPerm(\fP, \nu) \to \uPerm(\fP_{\nu}, \ol{\nu}).
\end{displaymath}
Since $\Phi$ preserves trace and maps nilpotent endomorphisms to nilpotent endomorphisms, it is enough to show that nilpotent endomorphisms in $\uPerm(\fP_{\nu}, \ol{\nu})$ have trace~0. By Theorem~\ref{thm:schmerl} and Proposition~\ref{prop:noreg}, $\fP_{\nu}$ is one of the classes $\fA_n$, $\fB_n$, or $\fC_n$. In \cite{repst}, we proved that nilpotent endomorphisms have trace~0 for all measures on $\fA_{\infty}$ and $\fB_1$ in characteristic~0. The other cases can be handled by the same methods.
\end{proof}

\subsection{Supports} \label{ss:supp-P}

\begin{table}
\caption{Supports of measures. See \S \ref{ss:subclass} and \S \ref{ss:supp-P} for notation.} \label{t:support}
\begin{tabular}{l|cccccc}
\toprule
Measure &
$\AA(s,t)$ &
$\BB_1(t)$ &
$\BB_2(t)$ &
$\BB_3(t)$ &
$\BB_4(t)$ &
$\BB_5(t)$
\\
Support &
$\fA_t$ &
$\fC_{-t}$ &
$\fA_0$ &
$\fA_t$ &
$\fB_{-t}$ &
$\fB_{-t}$ \\
\bottomrule
\end{tabular}
\vskip 8pt
\begin{tabular}{l|cccccccc}
\toprule
Measure &
$\CC_1$ &
$\CC_2$ &
$\CC_3$ &
$\CC_4$ &
$\CC_5$ &
$\CC_6$ &
$\CC_7$ &
$\CC_8$
\\
Support &
$\fA_0$ &
$\fB_1$ &
$\fA_0$ &
$\fA_0$ &
$\fA_0$ &
$\fA_1$ &
$\fA_0$ &
$\fA_1$ \\
\bottomrule
\end{tabular}
\end{table}

We now determine the supports of the measures on $\fP$. If $\nu$ is a measure, then the support of $\nu^{\top}$ is the transpose of the support of $\nu$. In fact, since every Fra\"iss\'e subclass of $\fP$ is stable under transpose (by Theorem~\ref{thm:schmerl}), it follows that $\nu$ and $\nu^{\top}$ have the same support. It thus suffices to determine the support of one member of each orbits under transpose.

We introduce a convenient notational device. Let $t$ be an element of our coefficient field $k$. If $k$ has characteristic~0, we define $\fA_t$ to be $\fA_{\infty}$ if $t$ does not belong to $\bN$, and $\fA_n$ if $t=n$ does belong to $\bN$. If $k$ has positive characteristic, we define $\fA_t$ to be $\fA_{\infty}$ if $t$ does not belong to the prime subfield, and $\fA_n$ if it does and $n \in \bN$ is a minimal representative of $t$. We similarly define $\fB_t$ and $\fC_t$.

\begin{theorem} \label{thm:support}
The supports of measures on $\fP$ are as listed in Table~\ref{t:support}.
\end{theorem}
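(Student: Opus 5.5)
We determine the support of each measure by testing the minimal excluded structures that separate the classes in Schmerl's list (Theorem~\ref{thm:schmerl}). By Proposition~\ref{prop:noreg} the support is never $\fP$, so it is one of $\fA_n$, $\fB_n$, $\fC_n$. Since supports are hereditary and $\nu(P)$ is computed by factoring $\emptyset \to P$ into one-point extensions, it suffices to evaluate $\nu$ on a few small posets:
\begin{itemize}
\item the antichains $A_n$;
\item the two-element chain $J_{0,1}$;
\item the poset $J_{0,0}\sqcup J_{0,1}$ (a point together with an incomparable chain);
\item the ``V'' and ``$\Lambda$'' shapes $J_{0,2}$ and $J_{2,0}$;
\item disjoint unions of chains, and ordinal sums of antichains.
\end{itemize}

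The key tests go as follows.
\begin{enumerate}
\item The value $\nu(A_n)=\prod_{i<n}\nu(A_{i+1},\ast)$ controls the size of antichains. By Corollary~\ref{cor:weak1}, $\nu(A_{i+1},\ast)=-i+\sum\nu_{p,q}\lambda_{p,q}(A_{i+1},\ast)$. Here the point is isolated and every $(L,\emptyset)$ or $(\emptyset,R)$ with $L$ or $R$ an antichain is a cut. Moreover $(L,R)$ is a cut only if $L$ or $R$ is empty, because antichain elements are incomparable. This gives a closed formula of the form $\nu_{0,0}-i + (\text{terms in } \nu_{p,0},\nu_{0,q})$, presumably $\nu_{0,0}-i\,(\nu_{1,0}+\nu_{0,1}+1)+\dots$. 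For each matrix this yields $t-i$ or a constant, which identifies $n$ (e.g.\ the answer is $\fA_t$, $\fA_0$, or $\fA_1$) exactly as in the $\fA_t$ convention.
\item A measure has support in the $\fA$ family if and only if $\nu(J_{0,1})=\nu_{0,0}\nu_{0,1}$ vanishes (or $\nu_{0,0}=0$). This immediately sorts $\AA$, $\BB_2$, $\BB_3$, and most $\CC_i$ into the $\fA$ family.
\item For the remaining matrices ($\BB_1,\BB_4,\BB_5,\CC_2$), $\fB$ is distinguished from $\fC$ as follows. $\fB_n$ excludes the V poset $J_{0,2}$ while containing a point plus a chain, and $\fC_n$ does the reverse. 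So we compute $\nu(J_{0,2})=\nu_{0,2}\cdot\nu(A_2)$ and $\nu(A_1\sqcup J_{0,1})$ via cuts.
\item To pin down the index $n$ in $\fB_n$ or $\fC_n$, we compute $\nu$ of $n$ disjoint chains, or of an ordinal sum of size-$n$ antichains. For $\BB_1(t)$ the global formula of Proposition~\ref{prop:meas1} gives $\nu(\emptyset\to P)=\theta_\emptyset(P)$, which is a chromatic-type polynomial vanishing exactly when $P$ has a level of width more than $-t$. For $\BB_4$ and $\BB_5$ we compute $\nu$ on $n$ disjoint points attached to chains, expecting products of $(-t-i)$.
\end{enumerate}

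Transposes have the same support, since every subclass in Theorem~\ref{thm:schmerl} is transpose-stable.

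The main obstacle is showing nonvanishing on the whole candidate class, not just vanishing outside it. For $\fA$-type supports this is immediate from the product formula for $\nu(A_n)$. For the $\fB$ and $\fC$ cases we must show $\nu(P)\neq 0$ for every disjoint union of at most $n$ chains (respectively every ordinal sum of antichains of width at most $n$). The plan is to build $P$ one point at a time, adding points at the top of a chain or the top level, and to use Proposition~\ref{prop:sep} to discard separated points. The needed one-point values then reduce to the explicit $\lambda_{p,q}$ descriptions in Proposition~\ref{prop:lambda-calc}, and each factor should come out as $1$ or $-t-i$ with $i<n$. Characteristic $p$ is handled through the minimal-representative convention.
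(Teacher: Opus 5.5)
Your proposal follows the paper's proof. You reduce to $\fA_n$, $\fB_n$, $\fC_n$ via Theorem~\ref{thm:schmerl} and Proposition~\ref{prop:noreg}, detect the $\fA$ family by $\nu(J_{0,1})=N_{0,0}N_{0,1}$, and separate $\fB$ from $\fC$ by $\nu(J_{0,2})=N_{0,2}\,\nu(A_2)$. The index is then read off from the antichain values. For $r\ge 1$ these are
\[
\nu(A_{r+1},\ast)=N_{0,0}-r-r(N_{1,0}+N_{0,1})+(r-1)(N_{2,0}+N_{0,2}).
\]
This gives $t-r$ for $\AA$ and $\BB_3$, $t+r$ for $\BB_4$ and $\BB_5$, and $-t-r$ for $\BB_1$, not ``$t-i$ or a constant.''

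The ``main obstacle'' you identify is not one. The support is a Fra\"iss\'e class, so by Theorem~\ref{thm:schmerl} it is exactly one of Schmerl's classes. Within each family, $\fB_n$ and $\fC_n$ are determined by the maximal antichain size $n$. So nonvanishing on the whole class is automatic, and your step~4 can be dropped: the chain and ordinal-sum computations, and the ``chromatic-type'' description for $\BB_1$. That description is also inaccurate as stated, since $\nu(P)$ vanishes identically in $t$ for $P\notin\fC_\infty$.
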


\begin{proof}
Let $\nu$ be a measure on $\fP$ valued in a field $k$, and let $N$ be the associated matrix, i.e., $N_{p,q}=\nu(j_{p,q})$ for $0 \le p,q \le 2$. By Theorem~\ref{thm:schmerl} and Proposition~\ref{prop:noreg}, the support of $\nu$ is one of the classes $\fA_n$, $\fB_n$, or $\fC_n$. We use a few small test posets to determine the family.

\textit{Observation 1.} We have $\nu(J_{0,0})=N_{0,0}$, where $J_{0,0}$ is the one-element poset. If $N_{0,0}=0$ then $\nu$ vanishes on all posets and its support is $\fA_0$.

\textit{Observation 2.} Let $A_2$ be a two-element antichain, and let $x \in A_2$. Then $[A_2,x] = -1+j_{0,0}-j_{1,0}-j_{0,1}$ in $\Theta(\fP)$, and so
\begin{displaymath}
\nu(A_2) = N_{0,0} \cdot (-1+N_{0,0}-N_{1,0}-N_{0,1}).
\end{displaymath}
If $\nu(A_2)=0$ then the support of $\nu$ is $\fA_0$, $\fA_1$, or $\fB_1$.

\textit{Observation 3.} We have
\begin{displaymath}
\nu(J_{0,1}) = N_{0,0} \cdot N_{0,1} = N_{0,0} \cdot N_{1,0}.
\end{displaymath}
The two expressions come from factoring $\emptyset \to J_{0,1}$ in two ways; this is just the $h_1=0$ equation from Table~\ref{t:eqs}. If $P$ is a poset that is not an antichain then there is an embedding $J_{0,1} \to P$, and so $\nu(J_{0,1})=0$ implies $\nu(P)=0$. We thus see that $\nu(J_{0,1})=0$ if and only if the support of $\nu$ is $\fA_n$ for some $0 \le n \le \infty$.

\textit{Observation 4.} We have
\begin{displaymath}
\nu(J_{0,2}) = N_{0,2} \cdot \nu(A_2),
\end{displaymath}
where $A_2$ is a two-element antichain. The poset $J_{0,2}$ belongs to each $\fC_n$ (with $n \ge 2)$ and does not belong to any $\fA_n$ or $\fB_n$. Thus $\nu(J_{0,2})=0$ if and only if the support of $\nu$ is some $\fA_n$ or $\fB_n$.

We can deduce quite a bit from the above observations:
\begin{itemize}
\item If $N$ has type $\BB_2$ or some $\CC_i$ then the support is as stated in Table~\ref{t:support}.
\item If $N$ has type $\AA$ or $\BB_3$ then the support is some $\fA_n$.
\item If $N$ is $\BB_4(t)$ or $\BB_5(t)$ then the support is some $\fA_n$ or $\fB_n$. Moreover, if $t \ne 0$ then $\nu(J_{0,1}) \ne 0$, and so the support is some $\fB_n$.
\item If $N=\BB_1(t)$ and $t \notin \{0,-1\}$ then $\nu(J_{0,2}) \ne 0$, and so the support of $\nu$ is some $\fC_n$ (since we know the support is not $\fP$ by Proposition~\ref{prop:noreg}).
\end{itemize}
To complete the proof, we must compute a little bit more to determine the precise index in some cases.

Let $A_r$ be the $r$-element antichain, and let $\ast$ be an arbitrary point. A simple computation with twins and cuts shows
\begin{displaymath}
[A_{r+1}, \ast] = -r + j_{0,0} + \sum_{p=1}^r (-1)^p \binom{r}{p} j_{p,0} + \sum_{q=1}^r (-1)^q \binom{r}{q} j_{0,q}.
\end{displaymath}
Thus if $\nu$ is a measure with matrix $N$ then
\begin{displaymath}
\nu(A_{r+1}, \ast) = N_{0,0} - r - r(N_{1,0}+N_{0,1}) + (r-1)(N_{2,0}+N_{0,2}).
\end{displaymath}
This gives
\begin{displaymath}
\nu(A_{r+1}, \ast) = \begin{cases}
t-r & \text{if $N$ is $\AA(s,t)$ or $\BB_3(t)$} \\
t+r & \text{if $N$ is $\BB_4(t)$ or $\BB_5(t)$} \\
-t-t & \text{if $N$ is $\BB_1(t)$.} \end{cases}
\end{displaymath}
The result now follows easily. We spell out the details in one case. If $N=\AA(s,t)$ then $\nu(A_r) = t(t-1) \cdots (t-r+1)$. We therefore see that $\nu(A_r)=0$ if and only if $t$ belongs to the subset $\{0,\ldots,r-1\}$ of $k$. Since we know the support of $\nu$ is some $\fA_n$, this implies that the support is $\fA_t$.
\end{proof}

Comparing Table~\ref{t:support} with Schmerl's classification, we find:

\begin{corollary} \label{cor:support}
Every proper Fra\"iss\'e subclass of $\fP$ occurs as the support of a measure valued in a field of characteristic~0.
\end{corollary}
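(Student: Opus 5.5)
The plan is to read off Table~\ref{t:support} for measures valued in a field $k$ of characteristic~$0$, and to check that every class on Schmerl's list in Theorem~\ref{thm:schmerl} other than $\fP$ appears there for a suitable choice of parameter. By Theorem~\ref{thm:remaining}, each matrix in Table~\ref{t:matrix} comes from a measure for every value of its parameter. Theorem~\ref{thm:support} then tells us its support. So the only work is choosing the parameter correctly; I will take $k=\bQ$, with $t$ ranging over $\bZ$ or over $\bQ\setminus\bZ$.

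The steps, in order:

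\begin{enumerate}
\item The $\fA$ family. Use $\AA(s,t)$ with $s$ arbitrary. Taking $t=n\in\bN$ gives support $\fA_n$ for each $0\le n<\infty$. Taking $t\notin\bN$, for instance $t=1/2$, gives $\fA_\infty$.
\item The $\fB$ family. Use $\BB_4(t)$, whose support is $\fB_{-t}$. Taking $t=-n$ with $n\ge1$ gives $\fB_n$, and $t=1/2$ gives $\fB_\infty$. Alternatively, $\CC_2$ realizes $\fB_1$ directly.
\item The $\fC$ family. Use $\BB_1(t)$, whose support is $\fC_{-t}$. Taking $t=-n$ with $n\ge2$ gives $\fC_n$, and $t=1/2$ gives $\fC_\infty$.
\end{enumerate}

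This exhausts the proper subclasses in Theorem~\ref{thm:schmerl}.

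The one point needing care is whether the table entries genuinely hold at these parameter values. This matters most for $\BB_1(t)$: the proof of Theorem~\ref{thm:support} deduced that the support lies in the $\fC$ family only when $t\notin\{0,-1\}$, and fixed the index through the value of $\nu(A_{r+1},\ast)$. I will recheck that computation. From
\[
\nu(A_{r+1},\ast)=N_{0,0}-r-r(N_{1,0}+N_{0,1})+(r-1)(N_{2,0}+N_{0,2}),
\]
substituting $\BB_1(t)$ gives $-t-r$ (the printed case ``$-t-t$'' in that proof looks like a typo for this). Hence $\nu(A_r)$ is, up to sign, a falling factorial in $-t$. It therefore vanishes exactly when $r>n$ for $t=-n$, and never vanishes for $t=1/2$.

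Combined with Observation~4 ($\nu(J_{0,2})\neq0$ for $t\notin\{0,-1\}$) and Proposition~\ref{prop:noreg}, this pins the support down to $\fC_n$. The same check applied to $\BB_4(t)$, where the relevant value is $t+r$, pins down $\fB_n$ for the chosen parameters. I expect this verification to be the only nontrivial step; everything else is bookkeeping against Table~\ref{t:support}.
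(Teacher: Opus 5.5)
Your proposal is correct and is essentially the paper's argument, which simply reads off Table~\ref{t:support} against Schmerl's list (Theorem~\ref{thm:schmerl}). Your parameter choices are exactly what that comparison requires: $\AA(s,n)$ and $\AA(s,1/2)$ for the $\fA$ family, $\BB_4(-n)$ and $\BB_4(1/2)$ for $\fB_n$ and $\fB_\infty$, and $\BB_1(-n)$ and $\BB_1(1/2)$ for $\fC_n$ and $\fC_\infty$. You are also right that ``$-t-t$'' is a typo for $-t-r$; this formula is valid for $r\ge 1$, while $\nu(A_1)=t$, which is why your ``up to sign'' qualifier is needed.
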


\section{The oligomorphic group} \label{s:oligo}

In \S \ref{s:oligo}, we recall how measures on Fra\"iss\'e classes relate to measures for oligomorphic groups in general. We then examine this correspondence in the case of $\fP$. The main result is that measures on $\fP$ naturally correspond to measures on the associated oligomorphic group in characteristic~0.

\subsection{Background}

Let $\fF$ be a Fra\"iss\'e class, let $\Omega$ be its Fra\"iss\'e limit, and let $G$ be the automorphism group of $\Omega$. Thus $(G, \Omega)$ is an oligomorphic group.

For a finite subset $A$ of $\Omega$, let $G(A)$ be the subgroup of $G$ consisting of elements that fix each element of $A$. These groups form a neighborhood basis of the identity for a topology on $G$ \cite[\S 2.2]{repst}. We say that an action of $G$ is \defn{smooth} if every stabilizer is open, and \defn{finitary} if it has finitely many orbits; we use the term \defn{$G$-set} to mean ``set equipped with a finitary smooth action.'' We let $\bS(G)$ denote the category of $G$-sets. Note that the transitive $G$-sets are exactly the quotients of orbits on powers of $\Omega$. See \cite[\S 2.3]{repst} for details.

A \defn{stabilizer class} is a collection $\sE$ of open subgroups of $G$ satisfying certain conditions; see \cite[\S 2.6]{repst}. Given a stabilizer class $\sE$, we say that a $G$-set is \defn{$\sE$-smooth} if the stabilizer of any point belongs to $\sE$. We let $\bS(G, \sE)$ be the category of such sets. The collection $\sE(\Omega)$ of open subgroups of the form $G(A)$ is a stabilizer class, and the only example we will require. The transitive $\sE(\Omega)$-smooth $G$-sets are exactly the orbits on powers of $\Omega$.

A \defn{measure} for $G$ valued in a ring $k$ is a rule $\nu$ that assigns to each map $f \colon Y \to X$ of transitive objects in $\bS(G)$ a value $\nu(f)$ in $k$ such that certain axioms hold; see \cite[Definition~3.12]{repst}. There is a universal measure valued in a ring $\Theta(G)$. There is a variant: a measure for $G$ \defn{relative} to a stabilizer class $\sE$ is defined in the same way, but restricting to maps of objects in $\bS(G, \sE)$. There is again a universal ring $\Theta(G, \sE)$.

We now have three notions of measures: those for $\fF$, those for $G$, and those for $G$ relative to $\sE(\Omega)$. There is one straightforward comparison: measures for $\fF$ correspond bijectively to measures for $G$ relative to $\sE(\Omega)$, i.e., there is a natural ring isomorphism $\Theta(\fF) = \Theta(G, \sE(\Omega))$ \cite[Theorem~6.9]{repst}. In fact, this correspondence motivated the relative notion. In general, comparing measures for $G$ to their relative counterparts can be difficult. However, there is one useful result. We say that a stabilizer class $\sE$ is \defn{large} if every open subgroup of $G$ contains some member of $\sE$ with finite index. In this case, measures for $G$ and measures for $G$ relative to $\sE$ correspond in characteristic~0, that is, we have a natural ring isomorphism $\Theta(G) \otimes \bQ = \Theta(G, \sE) \otimes \bQ$ \cite[Proposition~2.10]{distal}.

The following well-known result gives a useful criterion for $\sE(\Omega)$ to be large.

\begin{proposition} \label{prop:large}
Suppose $G$ has no fixed points on $\Omega$. Then the following are equivalent:
\begin{enumerate}
\item For all finite subsets $A,B \subset \Omega$, the subgroup of $G$ generated by $G(A)$ and $G(B)$ is equal to $G(A \cap B)$.
\item For every open subgroup $U$ of $G$, there is a unique finite subset $A$ of $\Omega$ such that $G(A) \subset U \subset G\{A\}$. Here $G\{A\}$ is the subgroup of $G$ consisting of elements $g$ such that $gA=A$.
\item The stabilizer class $\sE(\Omega)$ is large, and every finite subset $A$ of $\Omega$ is algebraically closed, i.e., every $G(A)$-orbit on $\Omega \setminus A$ is infinite.
\end{enumerate}
\end{proposition}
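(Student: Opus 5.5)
The plan is to prove the cycle of implications (a)$\Rightarrow$(b)$\Rightarrow$(c)$\Rightarrow$(a), using throughout that open subgroups of $G$ contain some $G(A)$ (the $G(A)$ form a neighborhood basis of the identity) and that $G(A) \subset G(B)$ holds if and only if $B \subset \operatorname{acl}(A)$. Here $\operatorname{acl}(A)$ denotes the set of points with finite $G(A)$-orbit, which is a finite set because $G$ is oligomorphic.

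\emph{(a)$\Rightarrow$(b).} Let $U$ be open. The set of finite $A$ with $G(A) \subset U$ is non-empty, and by (a) it is closed under intersection: if $G(A),G(B) \subset U$ then $G(A \cap B)=\langle G(A),G(B)\rangle \subset U$. Hence there is a unique minimal such set $A_U$. For $g \in U$ we have $G(gA_U)=gG(A_U)g^{-1} \subset U$, so $gA_U$ is also admissible and has the same size as $A_U$. By minimality and intersection-closure, $gA_U=A_U$, so $U \subset G\{A_U\}$. For uniqueness, suppose $G(A) \subset U \subset G\{A\}$. Then $A_U \subset A$. If the inclusion were strict, then $G(A) \subset G(A_U) \subset U \subset G\{A\}$. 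But (a) applied with $B=A\setminus\{a\}$ gives $G(A)\ne G(A\setminus\{a\})$, since $G$ has no fixed points: otherwise $\langle G(A), G(\{a\}')\rangle$-type arguments would force $G(A\setminus\{a\})$ to fix $a$. From this I would deduce that $G(A_U)$ moves some point of $A\setminus A_U$ outside $A$; this is where the absence of fixed points is needed. That contradicts $U \subset G\{A\}$.

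\emph{(b)$\Rightarrow$(c).} Given $U$, choose $A$ as in (b). Then $[G\{A\}:G(A)] \le |A|!$ is finite, so $G(A)$ has finite index in $U$, and $\sE(\Omega)$ is large. For algebraic closure, suppose $b \notin A$ has a finite $G(A)$-orbit. Then $V=G(A)\cap G(b)$ has finite index in $G(A)$, so its normal core $U$ is an open subgroup of finite index in $G(A)$. Now $U \supset G(A\cup\{b\})$-type containments give $G(A\cup Ob) \subset U \subset G\{A \cup Ob\}$, where $Ob$ is the finite orbit (enlarged if necessary). We also have $G(C)\subset U\subset G(A)\subset G\{A\}$ for a suitable $C$. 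The uniqueness part of (b) then forces these two sets to agree, which is a contradiction. I expect the bookkeeping of which finite sets to use to be fiddly but routine.

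\emph{(c)$\Rightarrow$(a).} We have $\langle G(A),G(B)\rangle \subset G(A\cap B)$, and this subgroup is open, so largeness gives a finite $C$ with $G(C)$ of finite index in it. The key step is to show the group equals $G(A\cap B)$. Algebraic closure means $G(C)\subset H$ with finite index implies $H \subset G\{C'\}$ for $C'=\operatorname{acl}$-type set equal to $C$. Since $G(A)\subset H$, every $H$-orbit on $C$ is a $G(A)$-orbit that is finite, so $C\subset A$ by algebraic closedness; similarly $C\subset B$, so $C \subset A\cap B$. Then $H$ contains $G(A\cap B)\cap$ (finite-index piece), and $G(A\cap B)$ acts on the finite set $H/G(C)$-related data. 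The main obstacle is upgrading ``finite index in $G(A\cap B)$'' to equality. I would argue as follows: $H\supset G(A)$ and $H$ has finite index in $G(A\cap B)$, so the $G(A\cap B)$-set $G(A\cap B)/H$ is finite. A finite orbit of $G(A\cap B)$ on which the stabilizer contains $G(A)$ must be trivial by algebraic closedness of $A\cap B$, after realizing the coset space inside $\Omega^n$ via elements of $A\setminus B$. I expect this realization step to be the hardest, and I would handle it via the standard fact that algebraically closed sets have no proper finite-index open overgroups of $G(A\cap B)$ of this form.
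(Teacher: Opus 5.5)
The paper gives no argument here; it simply cites \cite[Lemma~1.6]{BJJ}. A self-contained cycle (a)$\Rightarrow$(b)$\Rightarrow$(c)$\Rightarrow$(a) is therefore a legitimate alternative, and several of your pieces are right: the construction of the least admissible set $A_U$, the proof that $U\subset G\{A_U\}$, and the largeness half of (b)$\Rightarrow$(c). Your ``throughout'' fact is misstated, though: $G(A)\subset G(B)$ holds iff $G(A)$ fixes every point of $B$, whereas finite orbits only give that $G(A\cup B)$ has finite index in $G(A)$.

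The genuine gap is uniqueness in (a)$\Rightarrow$(b). From $G(A)\ne G(A\setminus\{a\})$ you only learn that $G(A\setminus\{a\})$ moves $a$, not that anything moves $a$ outside $A$. So ``from this I would deduce that $G(A_U)$ moves some point of $A\setminus A_U$ outside $A$'' is unsupported, and this is exactly where the no-fixed-point hypothesis must enter. The missing idea is to apply (a) with one of the two sets equal to the finite orbit itself. If $a\in A\setminus A_U$ and $G(A_U)\subset U\subset G\{A\}$, then $O=G(A_U)a$ is a finite subset of $A$ disjoint from $A_U$. Both $G(A_U)$ and $G(O)$ lie in $G\{O\}$, so (a) gives $G=G(A_U\cap O)=\langle G(A_U),G(O)\rangle\subset G\{O\}$. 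Hence for $o\in O$ we have $G(O\setminus\{o\})\subset G(o)$. Now (a) applied to $O\setminus\{o\}$ and $\{o\}$ gives $G=\langle G(O\setminus\{o\}),G(o)\rangle\subset G(o)$, so $o$ is a fixed point of $G$, a contradiction. The same argument shows directly that (a) forces every finite set to be algebraically closed.

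The other two implications are easier than your sketches suggest, but as written neither is completed.

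In (b)$\Rightarrow$(c), the normal core $U=G(A\cup O)$ is the wrong subgroup. Your chain $G(C)\subset U\subset G(A)\subset G\{A\}$ mixes two sets and exhibits no second $C$ with $G(C)\subset U\subset G\{C\}$. For $C=A$ you would need $G(A)\subset U$, which fails once $|O|\ge 2$. Take $U=G(A)$ instead: $G(A)$ fixes $A$ pointwise and permutes $O$, so both $A$ and $A\cup O$ satisfy the sandwich condition, contradicting uniqueness.

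In (c)$\Rightarrow$(a), the ``main obstacle'' is illusory, and the coset-realization argument you propose for it is neither needed nor justified. Once $C\subset A\cap B$, you have $G(A\cap B)\subset G(C)\subset H$, so $H=G(A\cap B)$ immediately. You also do not need $H\subset G\{C\}$, which you assert without proof, to get $C\subset A$. Since $G(A)\subset H$ and $[H:G(C)]<\infty$, the injection $G(A)/G(A\cup C)\to H/G(C)$ shows $G(A\cup C)$ has finite index in $G(A)$. So every point of $C$ has a finite $G(A)$-orbit and therefore lies in $A$ by algebraic closedness; likewise $C\subset B$.
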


\begin{proof}
See \cite[Lemma~1.6]{BJJ}.
\end{proof}

\subsection{Partial orders} \label{ss:oligo-poset}

We now return to partial orders. Let $\Omega$ be the Fra\"iss\'e limit of the class $\fP$; this is the universal homogeneous partial order. Let $G$ be its automorphism group. We note that $G$ acts transitively on $\Omega$ by homogeneity: all one element posets are isomorphic. Thus $G$ has no fixed points on $\Omega$.

\begin{proposition} \label{prop:poset-large}
The three conditions of Proposition~\ref{prop:large} hold.
\end{proposition}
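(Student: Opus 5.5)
Since the three conditions of Proposition~\ref{prop:large} are equivalent (and $G$ has no fixed points on $\Omega$), it suffices to verify condition (a): for all finite subsets $A,B \subset \Omega$, the subgroup $\langle G(A), G(B) \rangle$ equals $G(A \cap B)$. The inclusion $\langle G(A), G(B)\rangle \subset G(A\cap B)$ is trivial. For the reverse inclusion I intend to use the input mentioned in the introduction, Jech's lemma \cite[Lemma~7.7]{Jech} on the action of $G$ on $\Omega$. I expect it to give, in some form, that $G(C)$ is generated by $G(C \cup \{a\})$ and $G(C \cup \{b\})$ whenever $C$ is finite and $a,b \notin C$. If the lemma is not literally in this shape, I will prove the needed statement directly, as sketched below.

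The reduction to this one-point case goes by induction on $|A \setminus B| + |B \setminus A|$. Put $C = A \cap B$. If $A \subset B$ or $B \subset A$ the claim is trivial. Otherwise I pick $a \in A \setminus B$ and set $A' = A \setminus \{a\}$. The inductive hypothesis gives $\langle G(A'), G(B)\rangle = G(C)$, since $A' \cap B = C$. So it is enough to show $G(A') \subset \langle G(A), G(B)\rangle$. For this I use the induction again, with the pair $(A, A' \cup \{b\})$ for some $b \in B\setminus A$; its intersection is $A'$, and the measure of asymmetry either drops or I arrange the induction on $|A\cup B|$ together with the symmetric difference. Some care is needed to order this induction correctly. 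The cleanest route is to prove the two-point statement $\langle G(C\cup\{a\}), G(C\cup\{b\})\rangle = G(C)$ for arbitrary finite $C$, and then iterate it, which is standard.

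For the two-point statement I argue directly. Let $H = \langle G(C\cup\{a\}), G(C \cup \{b\})\rangle \le G(C)$. Since $G(C\cup\{a\})$ is the stabilizer of $a$ in $G(C)$, it suffices to show that $H$ acts transitively on the $G(C)$-orbit $O$ of $a$. Then for $g \in G(C)$ there is $h \in H$ with $hga = a$, so $hg \in G(C\cup\{a\}) \subset H$.

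By homogeneity, $G(C)$-orbits correspond to the isomorphism type of $C\cup\{a\}$ over $C$. Given $a' \in O$, I want a chain $a = a_0, a_1, \ldots, a_k = a'$ in $O$ with $b$ fixed, alternating with steps that fix $a$-type points. Concretely, I use $G(C\cup\{b\})$ to move $a$ to any $a''$ with the same type over $C\cup\{b\}$. The key step, and the main obstacle, is a path-connectivity claim. Consider the graph on $O$ in which $a_1 \sim a_2$ when some element of $H$ maps one to the other. It is generated by conjugates of the two stabilizers, and I need to show it is connected. The plan is to use the extension property of the universal poset to find an intermediate point $b'$ with $b'$ in the $G(C)$-orbit of $b$, $b'$ in the same relation to both $a$ and $a'$, and $C \cup \{a, a', b'\}$ realizable in $\Omega$. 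The existence of such $b'$ follows from a one-point amalgamation, via Proposition~\ref{prop:poset-amalg-2} and the cut construction of Proposition~\ref{prop:cut-order}; in fact, choosing $b'$ incomparable to everything outside $C$, with the relations to $C$ that $b$ has, should work, provided it stays consistent with transitivity. Then the stabilizer of $b'$ (a conjugate of $G(C\cup\{b\})$ lying in $H$) moves $a$ to $a'$.

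Finally, I read off (c): $\sE(\Omega)$ is large, and finite sets are algebraically closed, the latter also following directly from the extension property.
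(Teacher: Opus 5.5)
Your main route is the paper's: its entire proof is that condition (a) is \cite[Lemma~7.7]{Jech}, with (b) and (c) then following from Proposition~\ref{prop:large}. Your reduction of (a) to the statement $\langle G(C\cup\{a\}), G(C\cup\{b\})\rangle = G(C)$ is also fine: first add the points of $B\setminus A$ one at a time when $|A\setminus B|=1$, then induct on $|A\setminus B|$.

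The self-contained fallback you sketch for that statement, however, has a genuine gap at the step you call key. You claim that, for $b'$ in the $G(C)$-orbit of $b$, the group $G(C\cup\{b'\})$ is ``a conjugate of $G(C\cup\{b\})$ lying in $H$''. But $G(C\cup\{b'\})=gG(C\cup\{b\})g^{-1}$ with $g\in G(C)$ arbitrary. This is only known to lie in $H$ if $g$ can be chosen in $H$, i.e.\ if $b'\in Hb$. Since $H$ is transitive on $G(C)b$ if and only if $H=G(C)$, assuming this is circular. Your concrete choice of $b'$ incomparable to $a$ and $a'$ shows the problem. When $b$ is comparable to $a$, such a $b'$ does not have the type of $b$ over $C\cup\{a\}$, and nothing puts its stabilizer in $H$.

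The fix is to require more of $b'$: it should have the same type as $b$ over $C\cup\{a\}$, not just over $C$, and the same relation to $a'$ as to $a$. Let $\pi\colon C\cup\{a,a'\}\to C\cup\{a\}$ be the identity on $C\cup\{a\}$ with $\pi(a')=a$. Declare $z<b'$ iff $\pi(z)<b$, and $b'<z$ iff $b<\pi(z)$.

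This defines a partial order. Since $a$ and $a'$ have the same type over $C$, $\pi$ restricts to isomorphisms on $C\cup\{a\}$ and on $C\cup\{a'\}$, and $\pi(v)\le\pi(w)$ whenever $v<w$. This handles chains $b'<v<w$ and $u<v<b'$. A chain $u<b'<w$ gives $\pi(u)<\pi(w)$, hence $u<w$, unless $\{u,w\}=\{a,a'\}$, which would force $a<b<a$. By the extension property of $\Omega$, such a $b'$ exists.

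Homogeneity then gives $h\in G(C\cup\{a\})\subset H$ with $hb=b'$, so $G(C\cup\{b'\})=hG(C\cup\{b\})h^{-1}\subset H$. Since $a$ and $a'$ have the same type over $C\cup\{b'\}$, this group contains an element carrying $a$ to $a'$. So a single intermediate point suffices, and with this correction your argument gives a self-contained proof of (a) in place of the citation to Jech.
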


\begin{proof}
Condition~(a) is proved in \cite[Lemma~7.7]{Jech}.
\end{proof}

\begin{remark}
In the nominal sets literature, the conditions of Proposition~\ref{prop:large} are closely related to the notion of least supports. In \cite[Theorem~6]{BKL}, it is stated that $(G, \Omega)$ admits least supports.
\end{remark}

As a consequence, we obtain a description of measures for $G$ in characteristic~0:

\begin{proposition} \label{prop:oligo-meas}
Measures for $G$ valued in a field $k$ of characteristic~0 are naturally in bijection with measures for $\fP$ valued in $k$.
\end{proposition}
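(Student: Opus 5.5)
The plan is to chain three ring isomorphisms and then pass to $k$-points. The first isomorphism is \cite[Theorem~6.9]{repst}. It gives a natural identification $\Theta(\fP) = \Theta(G, \sE(\Omega))$, so measures for $\fP$ valued in $k$ are the same as measures for $G$ relative to $\sE(\Omega)$.

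The second isomorphism comes from Proposition~\ref{prop:poset-large}, which says that the three equivalent conditions of Proposition~\ref{prop:large} hold for $(G,\Omega)$. In particular, condition (c) states that the stabilizer class $\sE(\Omega)$ is large. Since $\sE(\Omega)$ is large, \cite[Proposition~2.10]{distal} supplies a natural ring isomorphism
\begin{displaymath}
\Theta(G) \otimes \bQ = \Theta(G, \sE(\Omega)) \otimes \bQ.
\end{displaymath}

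Now take a field $k$ of characteristic~0. Giving a $k$-valued measure for $G$ means giving a ring homomorphism $\Theta(G) \to k$. Because $k$ contains $\bQ$, such homomorphisms are the same as ring homomorphisms $\Theta(G) \otimes \bQ \to k$. The same holds for $\Theta(G,\sE(\Omega))$ and for $\Theta(\fP)$. Combining this with the two isomorphisms above, $k$-valued measures for $G$ correspond bijectively to $k$-valued measures for $\fP$.

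It remains to check that the bijection is the natural one. Concretely, a measure for $G$ should go to its restriction to maps between $\sE(\Omega)$-smooth transitive $G$-sets, that is, orbits on powers of $\Omega$. These correspond to embeddings of finite posets via the Fra\"iss\'e limit. I expect this to be immediate from the way the two cited isomorphisms are constructed, since both are induced by restriction along $\bS(G,\sE(\Omega)) \subset \bS(G)$ and by the identification of orbits with embeddings.

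There is no serious obstacle: the substantive input is Jech's lemma, which is already packaged in Proposition~\ref{prop:poset-large}. The only point needing care is confirming that \cite[Proposition~2.10]{distal} applies with exactly the hypotheses available here, namely that $\sE(\Omega)$ is large, with no additional assumption such as a fixed-point condition beyond those already verified. If an extra hypothesis were needed, the plan would be to supply it from the other two conditions of Proposition~\ref{prop:large}, namely (a), that subgroups generated by pointwise stabilizers are stabilizers of intersections, and (b), the existence of least supports.
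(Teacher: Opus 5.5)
Your proposal is correct and matches the paper's proof, which likewise combines the isomorphism $\Theta(\fP)=\Theta(G,\sE(\Omega))$, the largeness of $\sE(\Omega)$ from Proposition~\ref{prop:poset-large}, and \cite[Proposition~2.10]{distal}. Your extra step, that ring homomorphisms into a characteristic-$0$ field factor through $-\otimes\bQ$, is exactly the implicit passage to $k$-points.
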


\begin{proof}
This follows from Proposition~\ref{prop:poset-large}, the aforementioned \cite[Proposition~2.10]{distal}, and the isomorphism $\Theta(\fP) = \Theta(G, \sE(\Omega))$.
\end{proof}

We finish \S \ref{s:oligo} by translating Corollary~\ref{cor:nilp} to the group-theoretic setting. Recall that for a measure $\nu$ on $G$ we have a tensor category $\uPerm(G, \nu)$ by \cite[\S 8]{repst}. The basic objects of this category correspond to $G$-sets; we write $[X]$ for the object corresponding to $X \in \bS(G)$.

\begin{proposition}
Let $\nu$ be a measure for $G$ valued in a field of characteristic~0. Then nilpotent endomorphisms in $\uPerm(G, \nu)$ have trace~0.
\end{proposition}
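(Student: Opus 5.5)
The plan is to transfer Corollary~\ref{cor:nilp} along the comparison between measures for $G$ and measures for $\fP$. By Proposition~\ref{prop:oligo-meas}, a $k$-valued measure $\nu$ for $G$ corresponds to a measure $\nu'$ for $\fP$. The first step is to make this correspondence concrete. Restricting $\nu$ to maps in $\bS(G,\sE(\Omega))$ gives a relative measure, and hence, via $\Theta(\fP)=\Theta(G,\sE(\Omega))$, a measure $\nu'$ on $\fP$. The paper's \cite[\S 4]{arboreal} identifies $\uPerm(\fP,\nu')$ with the full subcategory of $\uPerm(G,\nu)$ spanned by the objects $[X]$ with $X$ an $\sE(\Omega)$-smooth $G$-set, that is, finite unions of orbits on powers of $\Omega$. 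Traces of endomorphisms of such objects agree in the two categories, since both are computed by integrating against the same measure on the same $G$-sets.

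The second step reduces an arbitrary object of $\uPerm(G,\nu)$ to a smooth one. Let $X$ be any $G$-set. By Proposition~\ref{prop:poset-large} and Proposition~\ref{prop:large}(c), every open subgroup $U$ contains some $G(A)$ with finite index. Hence each transitive $G$-set $G/U$ receives a finite surjection $\pi\colon G/G(A)\to G/U$ whose fibres have cardinality $n=[U:G(A)]$. Since $\operatorname{char}k=0$, $n$ is invertible. The maps $\pi_*\colon[G/G(A)]\to[G/U]$ and $\pi^*$ then satisfy $\pi_*\pi^*=\nu(\pi)\cdot\mathrm{id}$, where $\nu(\pi)=n$ because the fibres are finite sets on which $\nu$ is counting measure. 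So $[G/U]$ is a direct summand of $[G/G(A)]$ via the idempotent $e=n^{-1}\pi^*\pi_*$. Applying this orbit by orbit, every object $[X]$ is a retract of some $[Y]$ with $Y$ $\sE(\Omega)$-smooth.

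The third step treats a nilpotent endomorphism $f$ of $[X]$. Let $i\colon[X]\to[Y]$ and $r\colon[Y]\to[X]$ satisfy $ri=\mathrm{id}$, and put $g=ifr$. Then $g$ is nilpotent, because $g^m=if^mr$. By cyclicity of trace, $\operatorname{tr}(g)=\operatorname{tr}(fri)=\operatorname{tr}(f)$. Now $g$ lies in the subcategory identified with $\uPerm(\fP,\nu')$, so $\operatorname{tr}(g)=0$ by Corollary~\ref{cor:nilp}.

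The main obstacle is the first step: checking that the inclusion of $\uPerm(G,\sE(\Omega);\nu)$ into $\uPerm(G,\nu)$ is fully faithful and trace-compatible, and that the former matches $\uPerm(\fP,\nu')$. I would argue this from the definitions in \cite{repst}. Hom spaces are spaces of $G$-invariant functions on $Y\times X$, and $Y\times X$ is again $\sE(\Omega)$-smooth when $X$ and $Y$ are. Composition and trace involve only integrals over such products and their diagonals.
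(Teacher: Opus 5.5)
Your proof is correct and follows the paper's route. You identify the $\sE(\Omega)$-smooth full subcategory of $\uPerm(G,\nu)$ with $\uPerm(\fP,\nu')$ via \cite[\S 4]{arboreal}, show every $[X]$ is a retract of an $\sE(\Omega)$-smooth object, and transfer Corollary~\ref{cor:nilp} along that retraction. The variations are minor: you build the retraction from the pull--push idempotent $n^{-1}\pi^*\pi_*$ using only largeness (Proposition~\ref{prop:large}(c)), whereas the paper uses Proposition~\ref{prop:large}(b) and the fact that $G\{A\}$ normalizes $G(A)$ to write $[X]\cong[Y]^{\Gamma}$ (this is the same idempotent when $G(A)$ is normal in $U$), and you make explicit, via cyclicity of trace, the Karoubi-envelope step that the paper leaves implicit.
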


\begin{proof}
Let $\nu'$ be the measure for $\fP$ corresponding to $\nu$. By \cite[\S 4.4]{arboreal}, we have an equivalence of tensor categories
\begin{displaymath}
\uPerm(\fP, \nu') = \uPerm(G, \sE(\Omega), \nu).
\end{displaymath}
Here $\uPerm(G, \sE(\Omega), \nu)$ is the full subcategory of $\uPerm(G, \nu)$ spanned by objects $[Y]$ where $Y$ is $\sE(\Omega)$-smooth. It follows from Corollary~\ref{cor:nilp} that nilpotent endomorphisms in these categories have trace~0.

Suppose $X$ is a transitive $G$-set. Write $X=G/U$ for an open subgroup $U$. By Proposition~\ref{prop:large}(b), there is a finite subset $A$ of $\Omega$ such that $G(A) \subset U \subset G\{A\}$. Put $Y=G/G(A)$ and $\Gamma=U/G(A)$. Since $G\{A\}$ is the normalizer of $G(A)$ \cite[Corollary~2.3]{distal}, we have $\Gamma \subset \Aut_G(Y)$ and $X=Y/\Gamma$. By the definition of the category $\uPerm(G, \nu)$, we find that $[X]$ is isomorphic to the invariant subobject $[Y]^{\Gamma}$ of $[Y]$. From this, we see that $\uPerm(G, \nu)$ and $\uPerm(G, \sE(\Omega), \nu)$ have the same Karoubi envelope, and so the result follows.
\end{proof}

\section{Knop-like measures} \label{s:knop}

Knop \cite{Knop} introduced the notion of \defn{degree function} on a (finitely powered) regular category $\cE$, and showed how they can be used to produce tensor categories. We \cite{regcat} showed that such a regular category $\cE$ has an associated pro-oligomorphic group $G$, and degree functions on $\cE$ give measures for $G$; this distinguishes a special subclass of \defn{Knop-like} measures on $G$. The category $\fP^+$ of finite posets and monotone maps is coregular. We determine the degree functions on it (or, technically, its opposite category), and determine which measures on $\fP$ are Knop-like.

\subsection{Background} \label{ss:knop}

Let $\cE$ be a category. A morphism $f \colon Y \to X$ in $\cE$ is a \defn{regular epimorphism} if it is the co-equalizer of a diagram $Z \rightrightarrows Y$; if $\cE$ has finite limits and $f$ is regular then it is the co-equalizer of its kernel pair, i.e., the diagram $Y \times_X Y \rightrightarrows Y$. We say that $\cE$ is \defn{regular} if it has finite limits, the kernel pair of any morphism admits a co-equalizer, and any base change of a regular epimorphism is regular. Any morphism $f$ in a regular category factors uniquely (up to isomorphism) as $i \circ s$ where $i$ is a monomorphism and $s$ is a regular epimorphism. We say that $\cE$ is \defn{finitely powered} if every object has only finitely many subobjects. See \cite[\S 2.1]{regcat} for additional details.

Suppose now that $\cE$ is finitely powered and regular. As usual, $k$ denotes our coefficient field (or even ring). The following definition is due to Knop \cite{Knop}:

\begin{definition} \label{defn:knop}
A \defn{degree function} on $\cE$ valued in $k$ is a rule $\kappa$ that assigns to each regular epimorphism $f \colon Y \to X$ a value $\kappa(f)$ in $k$ such that the following conditions hold:
\begin{enumerate}
\item $\kappa(f)=1$ if $f$ is an isomorphism.
\item $\kappa(g \circ f) = \kappa(g) \cdot \kappa(f)$, when defined.
\item Let $f \colon Y \to X$ be a regular epimorphism, let $X' \to X$ be an arbitrary morphism, and let $f' \colon Y' \to X'$ be the base change of $f$. Then $\kappa(f')=\kappa(f)$.
\end{enumerate}
\end{definition}

\begin{remark}
The rule $\kappa^{\rm triv}$ defined by $\kappa^{\rm triv}(\alpha)=1$ for any regular epimorphism $\alpha$ is a degree function. It is called the \defn{trivial degree function}.
\end{remark}

Suppose now, for simplicity, that the final object of $\cE$ has no proper subobjects. Let $\cE^s$ denote the category with the same objects of $\cE$ and where the morphisms are regular epimorphisms. In \cite[\S 4]{regcat}, we construct a pro-oligomorphic group $G$ and a stabilizer class $\sE$ such that $\cE^s$ is equivalent to the category of transitive $\sE$-smooth $G$-sets. Write $\fB(X)$ for the $G$-set corresponding to the object $X$ of $\cE$. If $\kappa$ is a degree function for $\cE$, we show that there is a measure $\nu$ for $G$ relative to $\sE$ given as follows: if $f \colon Y \to X$ is a regular epimorphism in $\cE$ then
\begin{displaymath}
\nu(f \colon \fB(Y) \to \fB(X)) = \sum_{Z \subset Y, f(Z)=X} \mu(Z,Y) \cdot \kappa(f \vert_Z).
\end{displaymath}
Here the sum is taken over subobjects $Z$ of $Y$ whose image under $f$ is $X$, and $\mu$ is the M\"obius function for the poset of subobjects of $Y$. It is not difficult to see that $\kappa \mapsto \nu$ is injective. Thus a certain subclass of measures for $G$ relative to $\sE$ are distinguished as coming from degree functions. We call these measures \defn{Knop-like}.

\begin{remark}
It is possible for the same group $G$ to come from multiple regular categories. In this case, the notion of Knop-like measure can depend on the choice of regular category. In fact, this happens in the case of posets, as we explain in Remark~\ref{rmk:multi-knop}.
\end{remark}

\subsection{The category of posets and monotone maps} \label{ss:poset-cat}

We say that a function $\alpha \colon P \to Q$ of posets is \defn{monotone} if $x \le y$ implies $\alpha(x) \le \alpha(y)$. Let $\fP^+$ be the category of finite posets with monotone maps. We note that monomorphisms in $\fP^+$ are exactly injective monotone maps, and epimorphisms are exactly surjective monotone maps. We require the following result.

\begin{proposition} \label{prop:coreg}
The category $\fP^+$ is coregular, i.e., the opposite category is regular. The regular monomorphisms in $\fP^+$ are exactly the embeddings of posets.
\end{proposition}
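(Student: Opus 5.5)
We must show two things: (i) $(\fP^+)^{\mathrm{op}}$ is regular, i.e. $\fP^+$ has finite colimits, cokernel pairs of maps admit equalizers, and regular monomorphisms are stable under pushout; and (ii) regular monomorphisms in $\fP^+$ are exactly the poset embeddings. I would first establish finite colimits. Finite coproducts are disjoint unions. Coequalizers come from quotienting the underlying set by the generated equivalence relation, taking the preorder generated by the image of $\le$, and then collapsing its symmetric part to get a poset. Concretely, the colimit of a finite diagram is the poset reflection of the preorder on the set-colimit generated by the images of the orders. This also gives the pushout $S^{\sharp}$ mentioned in the remark after Lemma~\ref{lem:sharp}. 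Finite limits exist too: products and equalizers are computed on underlying sets with induced order. In particular every pair of maps has an equalizer, so the middle axiom is automatic.

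Next I would characterize regular monomorphisms. Suppose $\alpha\colon P\to Q$ is an embedding; identify $P$ with its image. Form the cokernel pair $Q\sqcup_P Q$. Because $P$ carries the induced order, I would compute this pushout directly: take two copies $Q_1,Q_2$ glued along $P$, with $a_i\le b_j$ for $i\ne j$ iff there is $c\in P$ with $a\le c\le b$. This relation is already a partial order: transitivity is checked through the glued copy of $P$, and antisymmetry holds because a two-way chain forces $a=b\in P$, which is identified. The equalizer of the two inclusions $Q\rightrightarrows Q\sqcup_PQ$ is therefore the set of points of $Q$ lying in $P$, with the induced order, which is $\alpha$. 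Hence embeddings are regular monos. Conversely, any equalizer in $\fP^+$ carries the induced order, by the description of limits above. So a regular mono, being an equalizer of its cokernel pair, is an embedding.

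Finally, stability of embeddings under pushout. Let $\alpha\colon P\to Q$ be an embedding and $f\colon P\to P'$ monotone; I must show $P'\to Q\sqcup_P P'$ is an embedding. I would write the pushout explicitly on the set $P'\sqcup(Q\setminus P)$, modeled on Lemma~\ref{lem:sharp}. Two points of $P'$ keep their order. For $a\in P'$ and $b\in Q\setminus P$, set $a<b$ iff $a\le f(c)$ for some $c\in P$ with $c<b$, and dually. For two points of $Q\setminus P$, set $b\le b'$ iff either $b\le b'$ in $Q$, or there are $c,c'\in P$ with $b<c$, $c'<b'$ and $f(c)\le f(c')$.

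I expect this verification to be the main obstacle: showing that this relation is a partial order, satisfies the universal property, and restricts on $P'$ to the original order. The restriction claim needs care. A chain from $a$ to $a'$ in $P'$ passing through $Q\setminus P$ must enter and exit via elements $c\le b\le c'$ with $c,c'\in P$. Then $c\le c'$ in $Q$, hence in $P$ because $\alpha$ is an embedding. By monotonicity $f(c)\le f(c')$, so the detour can be shortened inside $P'$. An induction on chain length would then show that antisymmetry can fail only by collapsing points of $Q\setminus P$, never points of $P'$. Consequently, even if some collapsing occurs in the poset reflection, the map $P'\to Q\sqcup_PP'$ remains injective and order-reflecting, which is all that is required.
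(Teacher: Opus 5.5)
Your proposal is correct and follows essentially the same route as the paper: finite colimits as the poset reflection of the generated preorder on the set-colimit, and equalizers with the induced order. It likewise computes the cokernel pair $Q \amalg_P Q$ explicitly to show embeddings are regular monos, and uses a chain-shortening argument to show that the generated preorder restricts to the original order on $P'$. The only differences are cosmetic: the paper first reduces pushout-stability to one-point extensions, whereas you shorten detours through $Q\setminus P$ for a general embedding directly, and your explicit formulas for the order on $P'\sqcup(Q\setminus P)$ are not needed (and in general define only a preorder, as you yourself note).
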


\begin{proof}
We break the proof into a number of pieces.

\textit{(a) Colimits.} We show that $\fP^+$ has all finite colimits. It is clear that $\emptyset$ is the initial object. It thus suffices to show that $\fP^+$ has push-outs. Consider monotone maps $\alpha \colon P \to Q$ and $\beta \colon P \to P'$. Let $R$ be the push-out in the category of finite sets, and let $\tilde{\beta} \colon Q \to R$ and $\tilde{\alpha} \colon P' \to R$ be the given functions. Let $\preceq$ be the transitive relation on $R$ generated by $\tilde{\beta}(b) \preceq \tilde{\beta}(b')$ whenever $b \le b'$ in $Q$, and $\tilde{\alpha}(c) \preceq \tilde{\alpha}(c')$ whenever $c \le c'$ in $P'$. The relation $\preceq$ is a quasi-order on $R$, that is, it is reflexive and transitive. There is an equivalence relation $\sim$ on $R$ given by $d \sim d'$ if $d \preceq d'$ and $d' \preceq d$. Define $Q'$ to be the quotient of $R$ by $\sim$; this is a partially ordered set under $\preceq$. Let $\beta' \colon Q \to Q'$ and $\alpha' \colon P' \to Q'$ be the natural maps. One easily sees that $\alpha'$ and $\beta'$ are monotone, and $\beta' \circ \alpha = \alpha' \circ \beta$.

We now show that $(Q', \alpha', \beta')$ is universal. Let $X$ be a partially ordered set and let $\sigma \colon Q \to X$ and $\tau \colon P' \to X$ be monotone maps such that $\sigma \circ \alpha = \tau \circ \beta$. Since $R$ is the push-out in the category of sets, there exists a unique map $\tilde{\phi} \colon R \to X$ such that $\tilde{\phi} \circ \tilde{\beta} = \sigma$ and $\tilde{\phi} \circ \tilde{\alpha} = \tau$. It is clear that $\tilde{\phi}$ is monotone (applying this term in the obvious sense for maps of quasi-orders). Thus equivalent elements of $R$ have the same image in $X$ (since $X$ is a partial order), and so $\tilde{\phi}$ factors uniquely through a map $\phi \colon Q' \to X$, which is necessarily monotone. This shows that $Q'$ is the push-out in $\fP^+$.

\textit{(b) Push-outs of embeddings.} Consider a push-out diagram in $\fP^+$
\begin{displaymath}
\xymatrix{
Q \ar[r]^{\beta'} & Q' \\
P \ar[r]^{\beta} \ar[u]^{\alpha} & P' \ar[u]_{\alpha'} }
\end{displaymath}
where $\alpha$ is an embedding. We show that $\alpha'$ is an embedding. Since any embedding can be factored into a sequence of one-point extensions, it suffices to treat the case where $\alpha$ is such a map. We thus assume $Q=P \sqcup \{x\}$. With notation as in (a), we have $R = P' \sqcup \{\tilde{y}\}$. The quasi-order $\preceq$ on $R$ is generated by the order $\le$ on $P'$ together with the relations $b \preceq \tilde{y}$ whenever $b=\beta(a)$ and $a \in P_{<x}$, and the similar relations $\tilde{y} \preceq b$ whenever $b=\beta(a)$ and $a \in P_{>x}$.

We claim that $\preceq$ restricts to the given order $\le$ on $P'$. Thus suppose $b \preceq b'$. We then have a chain $b=x_1 \preceq \cdots \preceq x_n=b'$, where $x_i \preceq x_i'$ is one of the generating relations. If any $x_i$ with $1<i<n$ belongs to $P'$ then we have $x_1 \le x_i$ and $x_i \le x_n$ by induction on the length of the chain, and so $b \le b'$ as required. It thus suffices to consider the case where $n=3$ and $x_2=\tilde{y}$; that is, we have $b=\beta(a)$ and $b'=\beta(a')$ and $a \le x \le a'$. But then $a \le a'$ and so $b \le b'$ since $\beta$ is monotone. This establishes the claim, from which it easily follows that $\alpha' \colon P' \to Q'$ is an embedding.

\textit{(c) Equalizers.} One easily sees that the equalizer of a diagram $P \rightrightarrows Q$ in $\fP^+$ is simply the equalizer in sets endowed with the induced order from $P$. In particular, all equalizers exist.

\textit{(d) Regular monomorphisms.} We claim that a monotone map $\alpha \colon P \to Q$ of posets is a regular monomorphism if and only if $\alpha$ is an embedding. Combined with (b), this shows that an arbitrary push-out of a regular monomorphism remains a regular monomorphism, which will complete the proof.

First suppose that $\alpha$ is a regular monomorphism; it is thus the equalizer of the diagram $Q \rightrightarrows Q \amalg_P Q$. By (c), it follows that $P$ is simply the usual equalizer with the induced order. Thus $\alpha$ is an embedding.

Now suppose $\alpha$ is an embedding. Let $P'$ be a copy of $Q$, let $R$ be the push-out of $P \to Q$ and $P \to P'$ in the category of sets, and let $\preceq$ be the quasi-order on $R$, as in (a). We saw in (b) that $\preceq$ restricts to $\le$ on $Q$ and $P'$. Suppose now $b \in Q$ and $c \in P'$ satisfy $b \preceq c$. We claim there exists $a \in P$ with $b \le a$ and $a \le c$. By definition, there is then a chain $b=x_1 \preceq \cdots \preceq x_n=c$, where each $x_i \preceq x_{i+1}$ is one of the generating relations. The claim is clear if $n=2$, for then $b \preceq c$ is one of the defining relations, and so either $b$ or $c$ belongs to $P$. Suppose $n>2$. If $x_2$ belongs to $Q$ then, by induction on the length of the chain, there is $a \in P$ such that $x_2 \le a$ and $a \le c$, and so $b \le a$ as well. If $x_2$ belongs to $P'$ the situation is similar. This establishes the claim.

Suppose now $b \sim c$, meaning $b \preceq c$ and $c \preceq b$. There are then elements $a,a' \in P$ such that $b \le a \le c \le a' \le b$. But this implies $a \le a'$ and $a' \le a$, and so $a=a'$, which in turn implies $b=c=a$. This shows that the equivalence relation on $R$ is discrete, and so $Q'=R$ is the push-out in $\fP^+$. It thus follows that the equalizer of $Q \rightrightarrows Q'$ is $P$, which shows that $\alpha$ is a regular monomorphism.
\end{proof}

We now analyze push-outs of one-point extensions in more detail.

\begin{proposition} \label{prop:push-one}
Consider a push-out square in $\fP^+$
\begin{displaymath}
\xymatrix{
Q \ar[r]^{\beta'} & Q' \\
P \ar[r]^{\beta} \ar[u]^{\alpha} & P' \ar[u]_{\alpha'} }
\end{displaymath}
where $\alpha$ is a one-point extension; note that $\alpha'$ is an embedding by Proposition~\ref{prop:coreg}. Identify $P$ and $P'$ with subsets of $Q$ and $Q'$, put $Q = P \sqcup \{x\}$, and let $y=\beta'(x)$. Let $E$ be the set $\beta(P_{<x}) \cap \beta(P_{>x})$.
\begin{enumerate}
\item Suppose $E$ is empty. Then $y \notin P'$, and so $\alpha'$ is a one-point extension. For $a \in P'$ we have $a<y$ if and only if there is some element $b \in P_{<x}$ such that $a \le \beta(b)$. There is an analogous condition for $y<a$.
\item Suppose $E$ is non-empty. Then $E=\{y\}$, and $\alpha'$ is an isomorphism of posets.
\end{enumerate}
\end{proposition}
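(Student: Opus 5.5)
We will work directly with the explicit construction of push-outs from part (a) of the proof of Proposition~\ref{prop:coreg}. Since $\alpha$ is a one-point extension, the set-theoretic push-out is $R = P' \sqcup \{\tilde{y}\}$. The quasi-order $\preceq$ on $R$ is generated by three kinds of relations: the order on $P'$; $\beta(b) \preceq \tilde{y}$ for $b \in P_{<x}$; and $\tilde{y} \preceq \beta(c)$ for $c \in P_{>x}$. Part (b) of that proof already shows that $\preceq$ restricts to $\le$ on $P'$. Hence $Q'$ is obtained from $R$ in one of two ways, and the only question is whether $\tilde{y}$ becomes equivalent to an element of $P'$:
\begin{itemize}
\item if $\tilde{y} \sim a$ for some $a \in P'$, then we identify $\tilde{y}$ with $a$;
\item otherwise, $Q' = R$.
\end{itemize}

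The first step is a chain analysis describing when $a \preceq \tilde{y}$ or $\tilde{y} \preceq a$ holds for $a \in P'$. Take a generating chain from $a$ to $\tilde{y}$. We may assume $\tilde{y}$ occurs only at the end: otherwise we truncate at its first occurrence. The last step into $\tilde{y}$ has the form $\beta(b) \preceq \tilde{y}$ with $b \in P_{<x}$. The earlier part of the chain lies in $P'$, so by (b) it gives $a \le \beta(b)$. Thus $a \preceq \tilde{y}$ if and only if $a \le \beta(b)$ for some $b \in P_{<x}$. Dually, $\tilde{y} \preceq a$ if and only if $\beta(c) \le a$ for some $c \in P_{>x}$.

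\emph{Case (a).} Suppose $\tilde{y} \sim a$ for some $a \in P'$. Then there exist $b \in P_{<x}$ and $c \in P_{>x}$ with $\beta(c) \le a \le \beta(b)$. Since $b < x < c$ in $Q$, we have $b < c$, and monotonicity of $\beta$ gives $\beta(b) \le \beta(c)$. Therefore $a = \beta(b) = \beta(c) \in E$. Contrapositively, if $E$ is empty then $\tilde{y}$ is not equivalent to anything, so $Q' = R$. In that case $y = \tilde{y} \notin P'$ and $\alpha'$ is a one-point extension. The stated criterion for $a < y$ is exactly the chain characterization above, and strictness is automatic because $a \ne y$.

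\emph{Case (b).} Suppose $e \in E$, say $e = \beta(b) = \beta(c)$ with $b \in P_{<x}$ and $c \in P_{>x}$. Then $e \preceq \tilde{y} \preceq e$, so $\tilde{y} \sim e$, and $y = e \in P'$. Because $\preceq$ is a partial order on $P'$, the class of $\tilde{y}$ contains at most one element of $P'$. Any $e' \in E$ is equivalent to $\tilde{y}$ by the same argument, hence equivalent to $e$, hence equal to $e$. So $E = \{y\}$. Finally, $Q'$ is the quotient $R/\!\sim$. Its underlying set is $P'$ and, by (b), its order restricted to $P'$ is the original one. So $\alpha'$ is a bijective embedding, i.e.\ an isomorphism.

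The only delicate step is the chain reduction: justifying that a chain can be shortened so that $\tilde{y}$ appears only at an endpoint. This is the same induction on chain length used in (b). Everything else is routine bookkeeping.
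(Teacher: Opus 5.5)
Your proposal is correct and follows essentially the same route as the paper's proof. Both work with the explicit push-out $R = P' \sqcup \{\tilde{y}\}$, show by the chain argument that $a \preceq \tilde{y}$ exactly when $a \le \beta(b)$ for some $b \in P_{<x}$, and identify $E$ with the set of elements of $P'$ equivalent to $\tilde{y}$, which has at most one element because $\preceq$ restricts to the partial order on $P'$.
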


\begin{proof}
Let $R = P' \sqcup \{\tilde{y}\}$. Let $\preceq$ be the reflexive and transitive relation on $R$ generated by the relation $\le$ on $P'$ together with the relations $\beta(a) \preceq \tilde{y}$ and $\tilde{y} \preceq \beta(b)$ for all $a \in P_{<x}$ and $b \in P_{>x}$. Let $\sim$ be the equivalence relation corresponding to $\preceq$. It follows from the general construction of push-outs in the proof of Proposition~\ref{prop:coreg} that $Q'$ is $R/\sim$ as a poset. Under this identification, $\alpha'$ and $\beta'$ are the natural maps, and $y$ is the class of $\tilde{y}$. We note that $\preceq$ restricts to the given partial order on $P'$ by the proof of Proposition~\ref{prop:coreg}. In particular, distinct elements of $P'$ are inequivalent.

Let $a \in P'$. We claim that $a \preceq \tilde{y}$ if and only if $a \le \beta(b)$ for some $b \in P_{<x}$. One direction is clear: if $a \le \beta(b)$ for such $b$ then $a \preceq \tilde{y}$, by definition of $\preceq$. Conversely, suppose $a \preceq \tilde{y}$. Then there exists a chain $a=z_1 \preceq \cdots \preceq z_r = \tilde{y}$, where each $z_i \preceq z_{i+1}$ is a generating relation. If $r=2$ the result is clear. Suppose $r>2$. We can assume that no $z_i$ with $1<i<r$ is equal to $\tilde{y}$. Thus $z_2$ belongs to $P'$. By induction on the length of the chain, we have $z_2 \le \beta(b)$ for some $b \in P_{<x}$. Since $a=z_1$ and $z_2$ belong to $P'$ and the order $\preceq$ restricts to the given order on $P'$, we have $a \le z_2 \le \beta(b)$, as required. This establishes the claim. Of course, there is an analogous statement for $\tilde{y} \preceq a$.

Let $a \in P'$ be equivalent to $\tilde{y}$. Then $a \preceq \tilde{y}$, and so $a \le \beta(b)$ for some $b \in P_{<x}$; similarly, $\tilde{y} \preceq a$, and so $\beta(c) \le a$ for some $c \in P_{>x}$. Since $b<x<c$ and $\beta$ is monotone, we have $\beta(b) \le \beta(c)$. But we also have $\beta(c) \le a \le \beta(b)$. Thus $a=\beta(b)=\beta(c)$ belongs to $E$. The reasoning is reversible, and so we see that $E$ is exactly the set of elements of $P'$ that are equivalent to $\tilde{y}$. Since distinct elements of $P'$ are equivalent, it follows that $E$ is either empty or a singleton. The result now follows.
\end{proof}

\begin{remark} \label{rmk:push-one}
Suppose we are in the setting of Proposition~\ref{prop:push-one}. If $x$ is maximal in $Q$ then $P_{>x}$ is empty. Thus we are in case (a), and $y$ is maximal in $Q'$. Similarly if $x$ is minimal or isolated.
\end{remark}

\subsection{Degree functions on posets}

We have seen (Proposition~\ref{prop:coreg}) that $\fP^+$ is a coregular category, and that $\fP$ is the wide subcategory\footnote{A wide subcategory is one that contains all objects.} whose morphisms are regular monomorphisms. The opposite category to $\fP^+$ is finitely powered: indeed, any epimorphism $P \to Q$ in $\fP^+$ satisfies $\vert Q \vert \le \vert P \vert$, and so a subobject $Q$ of $P$ in  $(\fP^+)^{\rm op}$ satisfies $\vert Q \vert \le \vert P \vert$. We can therefore speak of degree functions on $(\fP^+)^{\rm op}$. For simplicity, we will simply refer to these as degree functions on $\fP^+$.

To be clear, a degree function on $\fP^+$ is a rule $\kappa$ that assigns to each embedding $\alpha$ of finite posets a quantity $\kappa(\alpha)$ in $k$, subject to the conditions (a), (b), and the dual of (c) from Definition~\ref{defn:knop}. The final axiom states that whenever we have a push-out square
\begin{displaymath}
\xymatrix{
Q \ar[r]^{\beta'} & Q' \\
P \ar[u]^{\alpha} \ar[r]^{\beta} & P' \ar[u]_{\alpha'} }
\end{displaymath}
where $\alpha$ is an embedding, we must have $\kappa(\alpha) = \kappa(\alpha')$. We note that in the presence of axiom (b), it suffices to prove axiom (c) when $\alpha$ is a one-point extension.

In the remainder of \S \ref{s:knop}, we study degree functions on $\fP^+$.

\subsection{The max and min degree functions}

Let $\alpha \colon P \to Q$ be an embedding of finite posets, and identify $P$ with its image in $Q$. Put
\begin{displaymath}
\kappa^{\min}(\alpha) = \begin{cases}
1 & \text{if $\Min(Q) \subset P$} \\
0 & \text{otherwise.}
\end{cases}
\end{displaymath}
Note that $\Min(Q) \subset P$ is equivalent to $\Min(P) = \Min(Q)$. We define $\kappa^{\max}$ to be the transposed rule: precisely, $\kappa^{\max}(\alpha)$ is~1 if $\Max(Q) \subset P$, and 0 otherwise.

\begin{proposition}
$\kappa^{\min}$ and $\kappa^{\max}$ are degree functions.
\end{proposition}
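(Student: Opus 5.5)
By transposition it suffices to treat $\kappa^{\min}$; the statement for $\kappa^{\max}$ then follows by applying the transpose involution to every poset and every push-out square. Axiom (a) is immediate, since an isomorphism $\alpha \colon P \to Q$ has $\Min(Q)=\Min(P) \subset P$.

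For axiom (b), take embeddings $P \subset Q \subset T$. The identity $\kappa^{\min}(P \subset T)=\kappa^{\min}(P\subset Q)\,\kappa^{\min}(Q \subset T)$ amounts to the equivalence
\begin{displaymath}
\Min(T) \subset P \iff \Min(Q) \subset P \text{ and } \Min(T) \subset Q .
\end{displaymath}
Suppose $\Min(T) \subset P$. Then $\Min(T) \subset Q$ holds trivially. Now let $q \in \Min(Q)$. Some element $m \in \Min(T)$ satisfies $m \le q$, and $m \in P \subset Q$; minimality of $q$ in $Q$ forces $m=q$, so $q \in P$. For the converse, if $\Min(T)\subset Q$ then every element of $\Min(T)$ is minimal in $Q$, so $\Min(T) \subset \Min(Q) \subset P$.

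For axiom (c), as noted above it suffices to treat a push-out square in which $\alpha$ is a one-point extension $Q=P\sqcup\{x\}$. I would use Proposition~\ref{prop:push-one} together with Remark~\ref{rmk:push-one}. We have $\kappa^{\min}(\alpha)=1$ exactly when $x$ is not minimal in $Q$, i.e.\ when $P_{<x}\neq\emptyset$.

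\emph{Case (b) of Proposition~\ref{prop:push-one}.} Here $\alpha'$ is an isomorphism, so $\kappa^{\min}(\alpha')=1$. Also $E\ne\emptyset$ forces $P_{<x}\ne\emptyset$, so $\kappa^{\min}(\alpha)=1$ as well.

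\emph{Case (a) of Proposition~\ref{prop:push-one}.} Here $Q'=P'\sqcup\{y\}$, and $\kappa^{\min}(\alpha')=1$ exactly when $y$ is not minimal in $Q'$. We must show that $y$ is minimal in $Q'$ if and only if $x$ is minimal in $Q$.
\begin{itemize}
\item If $x$ is minimal, then $P_{<x}=\emptyset$, and Remark~\ref{rmk:push-one} gives that $y$ is minimal in $Q'$.
\item If $x$ is not minimal, pick $b\in P_{<x}$. Proposition~\ref{prop:push-one}(a), applied with $a=\beta(b)$, shows $\beta(b)<y$. Hence $y$ is not minimal.
\end{itemize}
In both cases $\kappa^{\min}(\alpha)=\kappa^{\min}(\alpha')$.

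The only step with any content is this last case analysis of the push-out, and it is handled entirely by Proposition~\ref{prop:push-one}.
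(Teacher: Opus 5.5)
Your proof is correct and follows essentially the same route as the paper: you reduce to $\kappa^{\min}$, check multiplicativity with the same minimal-element argument (phrased as an equivalence), and verify the push-out axiom for one-point extensions via Proposition~\ref{prop:push-one} and Remark~\ref{rmk:push-one}. The only cosmetic difference is in axiom (c): you split along the two cases of Proposition~\ref{prop:push-one}, whereas the paper splits on whether $x$ is minimal. In the non-minimal case the paper simply uses $\beta(a) \le y$ to conclude that either $y \in P'$ or $y$ is not minimal.
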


\begin{proof}
It suffices to handle $\kappa^{\min}$. We verify the axioms of Definition~\ref{defn:knop}. Axiom (a) is clear.

(b) Now let $\alpha \colon P \to Q$ and $\beta \colon Q \to R$ be embeddings of posets. Suppose $\kappa^{\min}(\alpha) = \kappa^{\min}(\beta) = 1$. Then $\Min(P) = \Min(Q) = \Min(R)$, and so $\kappa^{\min}(\beta \circ \alpha) = 1$. Now suppose $\kappa^{\min}(\alpha) = 0$. Then there is an element $y \in \Min(Q)$ that does not belong to $P$. If $z$ is a minimal element of $R$ below $y$ then $z$ does not belong to $P$, and so $\kappa^{\min}(\beta \circ \alpha)=0$. If $\kappa^{\min}(\beta) = 0$ then there is an element of $\Min(R)$ that does not belong to $Q$, and so it certainly does not belong to $P$ either; thus $\kappa^{\min}(\beta \circ \alpha)=0$. This verifies the axiom.

(c) Consider a push-out square
\begin{displaymath}
\xymatrix{
Q \ar[r]^{\beta'} & Q' \\
P \ar[u]^{\alpha} \ar[r]^{\beta} & P' \ar[u]_{\alpha'} }
\end{displaymath}
where $\alpha$ is a one-point extension. Identify $P$ and $P'$ with subsets of $Q$ and $Q'$. Write $Q = P \sqcup \{x\}$ and $Q' = P' \cup \{y\}$, where $y=\beta'(x)$. We must show that $\kappa^{\min}(\alpha) = \kappa^{\min}(\alpha')$.

Suppose $\kappa^{\min}(\alpha)=1$. Then $x$ is not minimal in $Q$, and so there is an element $a \in P$ with $a<x$. We have $\beta(a) \le y$, and so either $y$ is not minimal or it belongs to $P'$. In either case, we find $\Min(Q') \subset P'$, and so $\kappa^{\min}(\alpha')=1$.

Now suppose that $\kappa^{\min}(\alpha)=0$. Then $x$ is a minimal element of $Q$, and so $y$ is a minimal element of $Q'$ that does not belong to $P'$ (Remark~\ref{rmk:push-one}). Thus $\kappa^{\min}(\alpha')=0$, which completes the proof.
\end{proof}

\subsection{A family of degree functions}

Let $P$ be a finite poset. Write $\Iso(P)$ for the set of isolated points in $P$. Define the \defn{proper boundary} of $P$ by
\begin{displaymath}
\partial_*(P) = (\Max(P) \cup \Min(P)) \setminus \Iso(P).
\end{displaymath}
Fix $t \in k$. Let $\alpha \colon P \to Q$ be an embedding of finite posets, and identify $P$ with its image in $Q$. Put
\begin{displaymath}
\kappa_t(\alpha) = \begin{cases}
t^{\vert \Iso(Q) \setminus P \vert} & \text{if $\partial_*(Q) \subset P$} \\
0 & \text{otherwise.} \end{cases}
\end{displaymath}
Here we use the convention that $t^0=1$; thus $\kappa_0(\alpha)=1$ if $\partial_*(Q) \subset P$ and $\Iso(Q) \subset P$. Before getting to the main result, we require an observation about the proper boundary.

\begin{lemma} \label{lem:prop-bd}
Let $P \to Q$ be an embedding of posets. Then $\partial_*(Q) \subset P$ if and only if $\partial_*(Q) = \partial_*(P)$.
\end{lemma}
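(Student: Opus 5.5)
The ``if'' direction is immediate, since $\partial_*(P) \subset P$. The real content is the converse: assuming $\partial_*(Q) \subset P$, I will show $\partial_*(Q) = \partial_*(P)$ by proving both inclusions. The tool throughout is that $P$ is an induced subposet, so comparisons between elements of $P$ are the same in $P$ and in $Q$.

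For $\partial_*(Q) \subset \partial_*(P)$, take $z \in \partial_*(Q)$. By symmetry assume $z \in \Min(Q)$. Then $z$ is minimal in $P$ as well, since anything below it in $P$ would be below it in $Q$. Since $z$ is not isolated in $Q$, some $w \in Q$ satisfies $w > z$. I want a witness inside $P$. Choose a maximal element $m$ of $Q$ with $m \ge w > z$. Then $m$ is not isolated, because $m > z$, so $m \in \partial_*(Q) \subset P$. Hence $z$ is comparable to an element of $P$, so $z$ is not isolated in $P$, and therefore $z \in \partial_*(P)$. The case $z \in \Max(Q)$ is the transpose.

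For $\partial_*(P) \subset \partial_*(Q)$, take $z \in \partial_*(P)$. Again by symmetry assume $z \in \Min(P)$. It is not isolated in $P$, so it is not isolated in $Q$; the issue is to show $z \in \Min(Q)$. Suppose instead that some $w \in Q$ satisfies $w < z$. Choose a minimal element $m$ of $Q$ with $m \le w < z$. Then $m$ is not isolated, because $m < z$, so $m \in \partial_*(Q) \subset P$. But then $m < z$ with $m \in P$, which contradicts the minimality of $z$ in $P$. So $z \in \Min(Q) \setminus \Iso(Q) \subset \partial_*(Q)$.

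I expect no serious obstacle. The only point needing care is the trick of passing to an extremal element of $Q$ lying beyond a given comparison. Such an element is automatically non-isolated, so the hypothesis $\partial_*(Q) \subset P$ puts it inside $P$.
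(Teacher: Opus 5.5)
Your proof is correct and takes essentially the same route as the paper. In both inclusions you pass to an extremal element of $Q$ lying beyond a given comparison; that element is non-isolated and hence lies in $\partial_*(Q) \subset P$. The paper works with maximal rather than minimal elements, which is just the transpose, and your claim that $z$ is minimal in $P$ silently uses $z \in P$, which is exactly the hypothesis $\partial_*(Q) \subset P$.
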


\begin{proof}
If $\partial_*(Q) = \partial_*(P)$ then clearly $\partial_*(Q) \subset P$.

Now suppose $\partial_*(Q) \subset P$. Let $x \in \partial_*(Q)$, and without loss of generality suppose $x \in \Max(Q)$. Since $x \in P$ by assumption, it also belongs to $\Max(P)$. Since $x \notin \Iso(Q)$, there is an element $y \in \Min(Q)$ such that $y<x$. This $y$ also belongs to $\partial_*(Q)$, and thus to $P$, and so $x \notin \Iso(P)$. Thus $x \in \partial_*(P)$. We have thus shown $\partial_*(Q) \subset \partial_*(P)$.

Now suppose $x \in \partial_*(P)$, and without loss of generality, suppose $x \in \Max(P)$. Since $x$ is not isolated in $P$, it is also not isolated in $Q$. If $x$ were not maximal in $Q$ then we would have $x<y$ for some $y \in \Max(Q)$; but then this $y$ would belong to $\partial_*(Q) \subset P$, and so $x$ would not be maximal in $P$. Thus $x$ is maximal in $Q$, and therefore belongs to $\partial_*(Q)$. Hence $\partial_*(P) \subset \partial_*(Q)$.
\end{proof}

We are now ready to prove the main result.

\begin{proposition}
$\kappa_t$ is a degree function.
\end{proposition}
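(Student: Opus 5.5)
We check the three axioms of Definition~\ref{defn:knop} in the dual form stated above, following the model of the proof for $\kappa^{\min}$. Axiom (a) is immediate: if $\alpha$ is an isomorphism, then $\partial_*(Q)\subset P$ and $\Iso(Q)\setminus P=\emptyset$, so $\kappa_t(\alpha)=t^0=1$.

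For axiom (b), take embeddings $P\subset Q\subset R$. We want $\kappa_t(P\subset R)=\kappa_t(P\subset Q)\,\kappa_t(Q\subset R)$, and Lemma~\ref{lem:prop-bd} is the main tool.
\begin{itemize}
\item If both factors on the right are nonzero, then $\partial_*(R)=\partial_*(Q)=\partial_*(P)$, so $\partial_*(R)\subset P$.
\item For the exponents we need $\Iso(R)\setminus P=(\Iso(R)\setminus Q)\sqcup(\Iso(Q)\setminus P)$. The key point is that $\Iso(R)\cap Q=\Iso(Q)$ whenever $\partial_*(R)\subset Q$. One inclusion is automatic. For the other, suppose $z\in\Iso(Q)$ is not isolated in $R$. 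Then $z$ lies below some element of $\Max(R)$ or above some element of $\Min(R)$, and that element is not isolated. It therefore lies in $\partial_*(R)\subset Q$, contradicting that $z$ is isolated in $Q$.
\item If $\partial_*(Q)\not\subset P$, pick $w\in\partial_*(Q)\setminus P$, say $w\in\Max(Q)$ with some $v<w$ in $Q$. Choose a maximal element $m$ of $R$ above $w$. Either $m\in\partial_*(R)$ lies outside $P$, or $m\in P$. In the second case, if $m=w$ we contradict $w\notin P$. If $m\neq w$, then $m>w$ with $m\in Q$, contradicting the maximality of $w$ in $Q$. Hence $\partial_*(R)\not\subset P$, and the left side is also $0$.
\item If $\partial_*(R)\not\subset Q$, then certainly $\partial_*(R)\not\subset P$.
\end{itemize}
So both sides vanish in the last two cases, and axiom (b) holds.

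For axiom (c), it suffices to treat a push-out square in which $\alpha$ is a one-point extension $Q=P\sqcup\{x\}$; we use Proposition~\ref{prop:push-one} and Remark~\ref{rmk:push-one}. We split according to the type of $x$.

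\textbf{Case 1: $x$ isolated in $Q$.} Then $y$ is isolated in $Q'$ and $y\notin P'$. The proper boundary is unchanged on both sides, so both values equal $t$ times the same indicator. More precisely, $\kappa_t(\alpha)=t$ iff $\partial_*(P)=\partial_*(Q)$, which always holds here, and similarly for $\alpha'$. Thus both values are $t$.

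\textbf{Case 2: $x$ maximal (or, dually, minimal) but not isolated.} Then $\kappa_t(\alpha)=0$, since $x\in\partial_*(Q)\setminus P$. By the remark, $y\notin P'$ and $y$ is maximal in $Q'$. Since $x$ has some $b<x$ in $P$, we get $\beta(b)<y$, so $y$ is not isolated. Hence $y\in\partial_*(Q')\setminus P'$, and $\kappa_t(\alpha')=0$.

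\textbf{Case 3: $x$ neither maximal nor minimal.} Then $\kappa_t(\alpha)=1$: $x\notin\Max(Q)\cup\Min(Q)$, and $\Iso(Q)=\Iso(P)$ because $x$ is comparable to elements that are thus non-isolated. If case (b) of Proposition~\ref{prop:push-one} occurs, $\alpha'$ is an isomorphism and its value is $1$. Otherwise, $y$ lies strictly above some $\beta(b)$ and strictly below some $\beta(c)$, with $y\notin P'$. So $y\notin\partial_*(Q')\cup\Iso(Q')$, and the same reasoning gives $\kappa_t(\alpha')=1$.

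The main obstacle is the bookkeeping of isolated points in the exponent in axiom (b), namely the identity $\Iso(R)\cap Q=\Iso(Q)$ under the boundary hypothesis. In Case 3 of axiom (c), one must also confirm that no element of $P'$ changes its maximal, minimal, or isolated status on adding $y$. This holds because any element below $y$ is below $\beta(c)\in P'$, so it already had something above it in $P'$.
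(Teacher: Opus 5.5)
Your proof is correct and follows essentially the same route as the paper. For axiom (b) you use Lemma~\ref{lem:prop-bd} together with the decomposition $\Iso(R)\setminus P=(\Iso(R)\setminus Q)\sqcup(\Iso(Q)\setminus P)$, and for axiom (c) you use the same three-way split of $x$ (isolated, in $\partial_*(Q)$, or neither maximal nor minimal) via Proposition~\ref{prop:push-one}.

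The only local difference is that you prove $\partial_*(Q)\not\subset P\Rightarrow\partial_*(R)\not\subset P$ directly, by taking a maximal element of $R$ above $w$; the paper instead invokes the lemma to get $\partial_*(R)=\partial_*(Q)$.

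One wording fix: your first case in (b) should read ``both boundary conditions hold'' rather than ``both factors are nonzero.'' When $t=0$ a factor can vanish even though the boundary conditions hold, and that case is handled by your exponent identity, which is valid with $t^0=1$.
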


\begin{proof}
We verify the axioms of Definition~\ref{defn:knop}. Axiom (a) is clear.

(b) Now let $\alpha \colon P \to Q$ and $\beta \colon Q \to R$ be embeddings of posets. First suppose $\partial_*(R) \subset Q$ and $\partial_*(Q) \subset P$. Thus, by Lemma~\ref{lem:prop-bd}, we have $\partial_*(P) = \partial_*(Q) = \partial_*(R)$. Hence $\kappa_t(\beta \circ \alpha) = t^{\vert \Iso(R) \setminus P \vert}$. We must therefore show
\begin{displaymath}
\vert \Iso(R) \setminus P \vert = \vert \Iso(R) \setminus Q \vert + \vert \Iso(Q) \setminus P \vert.
\end{displaymath}
In fact, we claim that
\begin{displaymath}
\Iso(R) \setminus P = (\Iso(R) \setminus Q) \sqcup (\Iso(Q) \setminus P).
\end{displaymath}
The two sets on the right side are clearly disjoint. We also clearly have $\Iso(R) \setminus Q \subset \Iso(R) \setminus P$. Now suppose $x \in \Iso(Q) \setminus P$. If $x$ were not isolated in $R$, then $x$ would be comparable to some $y \in \partial_*(R)$; but since $\partial_*(R)=\partial_*(Q)$, this would show that $x$ is not isolated in $Q$, a contradiction. Thus $x \in \Iso(R) \setminus P$. We have thus shown that the right side above is contained in the left. Finally, consider $x \in \Iso(R) \setminus P$. If $x \notin Q$ then $x \in \Iso(R) \setminus Q$; if $x \in Q$ then it is clearly isolated in $Q$, and so $x \in \Iso(Q) \setminus P$. This verifies the claim.

Now suppose $\partial_*(R) \not\subset Q$ or $\partial_*(Q) \not\subset P$. Thus $\kappa_t(\beta)=0$ or $\kappa_t(\alpha)=0$. We must show that $\kappa_t(\beta \circ \alpha) = 0$. To do this, we show $\partial_*(R) \not\subset P$. If $\partial_*(R) \not\subset Q$ then this is clear. If $\partial_*(R) \subset Q$ but $\partial_*(Q) \not\subset P$ then $\partial_*(R) = \partial_*(Q)$ by Lemma~\ref{lem:prop-bd}, and so $\partial_*(R) \not\subset P$, as required.

(c) Consider a push-out square
\begin{displaymath}
\xymatrix{
Q \ar[r]^{\beta'} & Q' \\
P \ar[u]^{\alpha} \ar[r]^{\beta} & P' \ar[u]_{\alpha'} }
\end{displaymath}
where $\alpha$ is a one-point extension. Identify $P$ and $P'$ with subsets of $Q$ and $Q'$. Write $Q = P \sqcup \{x\}$ and $Q' = P' \cup \{y\}$, where $y=\beta'(x)$. We must show $\kappa_t(\alpha) = \kappa_t(\alpha')$. We consider two cases separately.

\textit{Case 1.} Suppose that $\partial_*(Q) \subset P$. This means $x \notin \partial_*(Q)$, so either $x \in \Iso(Q)$ or $x \notin \Min(Q) \cup \Max(Q)$. We claim that $\partial_*(Q') \subset P'$. If $x \in \Iso(Q)$ then $y \in \Iso(Q')$ (Remark~\ref{rmk:push-one}), and so $y \notin \partial_*(Q')$, and so $\partial_*(Q') \subset P'$. If $x \notin \Min(Q) \cup \Max(Q)$ then there are $a,b \in P$ such that $a<x<b$, and so $\beta(a) \le y \le \beta(b)$. Thus either $y \in P'$ or $y \notin \Min(Q') \cup \Max(Q')$, and so $\partial_*(Q') \subset P'$. This verifies the claim.

Suppose that $x$ is isolated. Then $y$ is also isolated and does not belong to $P'$ (Remark~\ref{rmk:push-one}). We thus find $\kappa_t(\alpha) = \kappa_t(\alpha') = t$, as required.

Suppose that $x$ is not isolated. Then there is some $a \in P$ such that $a<x$ or $x<a$. Thus $\beta(a) \le y$ or $y \le \beta(a)$, and so either $y \in P'$ or $y$ is not isolated. We thus find $\kappa_t(\alpha)=\kappa_t(\alpha')=1$, as required.

\textit{Case 2.} Suppose that $\partial_*(Q) \not\subset P$, meaning $x \in \partial_*(Q)$. Without loss of generality, suppose $x$ is maximal in $Q$. Then $y$ is maximal in $Q'$ and does not belong to $P'$ (Remark~\ref{rmk:push-one}). Since $x$ is not isolated in $Q$, there is $a \in P$ such that $a<x$, and so $\beta(a) < y$; this inequality is strict since $\beta(a) \in P'$ and $y \notin P'$. Thus $y$ is not isolated, and therefore belongs to $\partial_*(Q')$; hence $\partial_*(Q') \not\subset P'$. We therefore find $\kappa_t(\alpha) = \kappa_t(\alpha') = 0$, as required.
\end{proof}

\subsection{Classification of degree functions}

We now arrive at the main result of \S \ref{s:knop}. In what follows, we consider degree functions valued in the field $k$.

\begin{theorem}
The degree functions on $\fP^+$ are exactly $\kappa^{\rm triv}$, $\kappa^{\min}$, $\kappa^{\max}$, and $\kappa_t$, for $t \in k$.
\end{theorem}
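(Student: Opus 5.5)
Since the four listed rules have already been shown to be degree functions, only the converse remains. The approach is to exploit the fact that a degree function is also multiplicative and pushout-invariant, and then use the measure classification (Theorem~\ref{mainthm}) together with the explicit formula for Knop-like measures. Axiom (b) reduces everything to one-point extensions $P^x \to P$. So I would set $\kappa(P,x)=\kappa(P^x\to P)$ and study how pushouts (Proposition~\ref{prop:push-one}) move marked points around.

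\emph{Step 1: pushout invariance collapses $\kappa(P,x)$ to a few invariants.}
\begin{itemize}
\item If $x$ is neither minimal nor maximal, choose $a<x<b$. Push out along the monotone map $\beta\colon P^x\to J_{0,0}$ collapsing everything to a point. Then $E=\beta(P_{<x})\cap\beta(P_{>x})$ is non-empty, so $\alpha'$ is an isomorphism and $\kappa(P,x)=1$.
\item If $x$ is isolated, collapsing $P^x$ to a point (or to the empty poset when $P^x=\emptyset$) gives the extension of a point by an isolated point, or the one-point poset. Pushing out along the monotone map from a point into $P^x$ relates these, and I expect all isolated extensions to share a common value $t$.
\item If $x$ is maximal but not minimal, collapse $P^x$ to a point. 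Proposition~\ref{prop:push-one}(a) gives the chain $J_{1,0}$ with $x$ on top. So $\kappa(P,x)=\kappa(J_{1,0},\ast)=:m$. Dually, minimal non-isolated points give $\kappa(J_{0,1},\ast)=:m'$.
\end{itemize}

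\emph{Step 2: multiplicativity forces relations among $t,m,m'$.} Take a two-element chain $a<b$ and compute $\kappa(\emptyset\to\{a<b\})$ in two orders, adding $a$ first or $b$ first. This gives $t\,m=t\,m'$ with isolated value $t$. More useful equations come from three-element posets. Consider $\{a<b, c\}$ and add the points in different orders, so that a point is at times isolated and at times maximal. Similarly use $J_{0,2}$ and $J_{2,0}$. This should yield $m^2=m$, $m'^2=m'$, and constraints like $t(m-1)=0$ or $m\,m'$ relations.

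\emph{Step 3: match the solutions to the four listed rules.}
\begin{itemize}
\item $m=m'=1$ gives $\kappa^{\rm triv}$ if the isolated value is $1$.
\item $m=1,m'=0$ and $m=0,m'=1$ give $\kappa^{\max}$ and $\kappa^{\min}$ (check signs). Here $t$ should be forced to $1$, respectively $0$ relations.
\item $m=m'=0$ with free $t$ gives $\kappa_t$.
\end{itemize}
One subtlety must be handled: the one-point poset added to $\emptyset$ has $x$ isolated. Pushout invariance from $\emptyset$ may force its value to equal the general isolated value, which is consistent with the formulas: $\kappa^{\min}$ gives $0$ and $\kappa_t$ gives $t$.

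\textbf{Main obstacle.} I expect Step 1 to be the hardest part, namely showing that $\kappa$ on isolated points and on boundary points is genuinely independent of the ambient poset. Pushouts only collapse along monotone maps from $P^x$. The question is whether, for instance, an isolated point in a non-empty poset and the first point added to $\emptyset$ can be linked. The difficulty is that the pushout from $\emptyset$ only maps to a disjoint union. I would first try the chain of maps $\emptyset\to$ point, exploiting that a pushout along $\emptyset\to P'$ just forms the disjoint union $P'\sqcup\{x\}$. This indeed gives invariance: every isolated extension is the pushout of $\emptyset\to\{x\}$.
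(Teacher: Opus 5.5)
Your Step~1 is correct and is the paper's own reduction. Collapsing $P^x$ to a point, and for isolated $x$ viewing $P^x\to P$ as the pushout of $\emptyset\to\{x\}$ along $\emptyset\to P^x$, shows that $\kappa(P^x\to P)$ equals $1$, $m$, $m'$ or $t$ according to which of $P_{<x}$, $P_{>x}$ are empty. This is the routine part, not the main obstacle.

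The genuine gap is Step~2. You never derive the constraints, and the posets you propose cannot yield the essential ones. Every factorization identity coming from the two-element chain, from $\{a<b,c\}$, and from $J_{0,2}$ and $J_{2,0}$ is satisfied by $(t,m,m')=(0,5,0)$, so none of them gives $m^2=m$. Idempotency requires a point that starts out maximal to become interior. For the chain $1<2<3$, the embedding $\{1\}\to\{1<2<3\}$ has value $m\cdot m$ when factored through $\{1,2\}$, but $m\cdot 1$ when factored through $\{1,3\}$, because $2$ is then inserted between $1$ and $3$. Dually, $m'^2=m'$.

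What $J_{0,2}$ does give, via $\{b_1\}\to J_{0,2}$ (adding $x$ then $b_2$, versus $b_2$ then $x$), is $mm'=tm'$; dually $J_{2,0}$ gives $mm'=tm$. Together with $m,m'\in\{0,1\}$, these relations cut out exactly the four families. Your guessed constraint $t(m-1)=0$ is false: it fails for $\kappa_t$ with $t\neq 0$.

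Step~3 also needs correcting. In both mixed cases $t$ is forced to $0$, not $1$. Also, $(m,m')=(1,0)$ is $\kappa^{\min}$, not $\kappa^{\max}$, since $\kappa^{\min}(\{1\}\to\{1<2\})=1$.

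Incidentally, the route you announce in your first sentence and then abandon would also close the gap, and non-circularly, since Proposition~\ref{prop:degree-meas} and the $N=K-A$ computation do not use the classification of degree functions. By Step~1, $K=(\kappa_{p,q})$ has rows $(t,m',m')$, $(m,1,1)$, $(m,1,1)$. So the associated Knop-like measure has matrix $K-A$, whose entries with $p,q\in\{1,2\}$ form the block $\begin{pmatrix}-1&0\\0&1\end{pmatrix}$. Theorem~\ref{mainthm} then forces this matrix to be $\BB_5(s)$, $\CC_7$, $\CC_7^{\top}$ or $\CC_8$, i.e.\ $(t,m,m')=(s,0,0)$, $(0,1,0)$, $(0,0,1)$ or $(1,1,1)$. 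That route uses much heavier machinery than the paper, which needs only the chain $1<2<3$, the poset $J_{0,2}$, and their transposes.
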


\begin{proof}
Let $\kappa$ be a degree function. Put
\begin{displaymath}
a = \kappa(\{1\} \to \{1<2\}), \qquad
b = \kappa(\{2\} \to \{1<2\}), \qquad
c = \kappa(\emptyset \to \{1\}).
\end{displaymath}
Let $(P,x)$ be a marked poset. We claim
\begin{displaymath}
\kappa(P^x \to P) = \begin{cases}
1 & \text{if $P_{<x} \ne \emptyset$ and $P_{>x} \ne \emptyset$} \\
a & \text{if $P_{<x} \ne \emptyset$ and $P_{>x} = \emptyset$} \\
b & \text{if $P_{<x} = \emptyset$ and $P_{>x} \ne \emptyset$} \\
c & \text{if $P_{<x} = \emptyset$ and $P_{>x} = \emptyset$.}
\end{cases}
\end{displaymath}
Since any degree function is determined by its values on one-point extensions, this will show that $\kappa$ is completely determined by the triple $(a,b,c)$.

We now prove the claim. Consider the push-out
\begin{displaymath}
\xymatrix{
P \ar[r] & Q \\
P^x \ar[u] \ar[r] & \{y\} \ar[u] }
\end{displaymath}
where $y \notin P$. Note that $\kappa(P^x \to P)$ is equal to $\kappa(\{y\} \to Q)$. We compute $Q$ using Proposition~\ref{prop:push-one}. Start with $\tilde{Q} = \{x,y\}$. Let $\preceq$ be the quasi-order on $\tilde{Q}$ generated by $x \le y$ if $P_{>x}$ is non-empty and $y \le x$ if $P_{<x}$ is non-empty. Then $Q$ is the quotient of $\tilde{Q}$ by the associated equivalence relation. If $P_{>x}$ and $P_{<x}$ are both non-empty then $x$ and $y$ are equivalent, and the map $\{y\} \to Q$ is an isomorphism; this gives $\kappa(P^x \to P)=1$. If $P_{<x}$ is non-empty and $P_{>x}$ is empty, then $Q=\{y<x\}$, and the map $\{y\} \to Q$ is isomorphic to $\{1\} \to \{1<2\}$, and so $\kappa(P^x \to P)=a$. The transposed case is similar. Finally, if $P_{<x}$ and $P_{>x}$ are both empty then $Q=\{x,y\}$ is a discrete poset. The map $\{y\} \to Q$ is isomorphic to the push-out of $\emptyset \to \{x\}$ along $\emptyset \to \{y\}$, and so $\kappa(P^x \to P)=c$. This establishes the claim.

We now obtain some relations on the values $(a,b,c)$. Consider the inclusion $\alpha \colon \{1\} \to \{1,2,3\}$, where $1<2<3$. Factoring $\alpha$ through $\{1,2\}$, we find $\kappa(\alpha)=a^2$. Factoring $\alpha$ through $\{1,3\}$, we obtain $\kappa(\alpha)=a$. Thus $a=a^2$. Similarly, $b=b^2$. Thus $a,b \in \{0,1\}$.

Now consider the poset $P=\{x,y,z\}$ with $x<y$ and $x<z$, and consider the embedding $\alpha \colon \{y\} \to P$. Factoring through $\{x,y\}$, we obtain $\kappa(\alpha)=ab$, while factoring through $\{y,z\}$, we obtain $\kappa(\alpha)=bc$. Thus $ac=bc$. Taking the transpose, we find $ab=ac$ as well.

We thus have $a,b \in \{0,1\}$ and $ab=ac$ and $ab=bc$. These equations imply that $(a,b,c)$ is one of the following triples:
\begin{displaymath}
(1,1,1), \quad (1,0,0), \quad (0,1,0), \quad (0,0,t)
\end{displaymath}
where $t$ is arbitrary. These triples are realized by the degree functions $\kappa^{\rm triv}$, $\kappa^{\min}$, $\kappa^{\max}$, and $\kappa_t$ respectively. Thus $\kappa$ must be one of these degree functions, as required.
\end{proof}

\subsection{The associated measures} \label{ss:knoplike}

As we saw in \S \ref{ss:knop}, degree functions on a regular category give rise to measures on an associated oligomorphic group. In the case of $\fP^+$, we can phrase the construction in terms of measures on $\fP$. The following is the precise statement. By a \defn{quotient} of a poset $Q$ we mean a surjective monotone function $\beta \colon Q \to R$, though we often refer to $R$ itself as the quotient. The collection $\Quot(Q)$ of all quotients of $Q$ (taken up to isomorphism) forms a poset, with $Q$ as the maximal element.

\begin{proposition} \label{prop:degree-meas}
Let $\kappa$ be a degree function on $\fP^+$. Given an embedding of posets $\alpha \colon P \to Q$, define
\begin{displaymath}
\nu(\alpha) = \sum_{\substack{\beta \colon Q \twoheadrightarrow R, \\ \beta \circ \alpha \colon P \hookrightarrow R}} \mu(R, Q) \cdot \kappa(\beta \circ \alpha).
\end{displaymath}
Here we sum over elements $R \in \Quot(Q)$ such that $P \to Q \to R$ is an embedding of posets, and $\mu$ denotes the M\"obius function for $\Quot(Q)$. Then $\nu$ is a measure for $\fP$.
\end{proposition}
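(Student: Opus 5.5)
The plan is to reduce Proposition~\ref{prop:degree-meas} to the general machinery of \cite{regcat} recalled in \S\ref{ss:knop}, applied to the finitely powered regular category $\cE=(\fP^+)^{\rm op}$. By Proposition~\ref{prop:coreg}, the regular epimorphisms of $\cE$ are the opposites of poset embeddings, and the final object $\emptyset$ of $\cE$ has no proper subobjects, since any epimorphism $\emptyset \to Q$ in $\fP^+$ forces $Q=\emptyset$. Thus \cite[\S 4]{regcat} provides a pro-oligomorphic group $G$ and stabilizer class $\sE$ with $\cE^s$ equivalent to transitive $\sE$-smooth $G$-sets. It also provides a measure for $G$ relative to $\sE$ given by the M\"obius formula. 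The first step is to translate that formula. A subobject $Z$ of $Y=Q$ in $\cE$ is a quotient $Q \twoheadrightarrow R$ in $\fP^+$. The subobject poset of $Q$ in $\cE$ is therefore $\Quot(Q)$, with $Q$ as top element, which matches the given ordering. The condition ``$f(Z)=X$'' means that the composite $P \to Q \to R$ is a regular monomorphism of $\fP^+$, that is, an embedding. Under these identifications $\kappa(f\vert_Z)$ becomes $\kappa(\beta\circ\alpha)$, so the displayed formula is exactly the one in \cite{regcat}.

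The second step is to identify $(G,\sE)$ with $\fP$ in the sense of \cite[Theorem~6.9]{repst}. One must check that the category $\cE^s$, namely finite posets with embeddings (reversed), is equivalent to the category of transitive $\sE$-smooth $G$-sets in a way compatible with amalgamation. Concretely, the amalgamations of $Q$ and $P'$ over $P$ in $\fP$ should correspond to the orbits on the fiber product $\fB(Q)\times_{\fB(P)}\fB(P')$. In $\cE$ this fiber product is the push-out $Q\amalg_P P'$ in $\fP^+$, and its transitive pieces are the subobjects of the push-out that are ``jointly covered''. These are the poset quotients of $Q\amalg_P P'$ in which both $Q$ and $P'$ still embed, i.e., exactly the amalgamations. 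Given this, a measure for $G$ relative to $\sE$ is the same as a measure for $\fP$, because axioms (a)--(c) match under the dictionary.

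I expect this second step to be the main obstacle, namely matching the orbit decomposition of the fiber product in \cite{regcat} with poset amalgamations. As a fallback, I would verify the measure axioms directly. Axiom (a) is clear: for an isomorphism only $R=Q$ contributes. For (b) and (c), the standard route is M\"obius inversion. I would define $\hat{\nu}(\alpha)=\sum_{R}\kappa(\beta\circ\alpha)$ over quotients, with the sum inverted by $\mu$, and show the amalgamation identity by partitioning the quotients of the amalgams according to their images. The key inputs are Proposition~\ref{prop:push-one}, which describes push-outs of one-point extensions, and the base-change invariance of $\kappa$. This mirrors the proof pattern in \S\ref{ss:meas1}. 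For (c) it suffices to treat a pre-amalgamation $(P,x,y)$ via Proposition~\ref{prop:meas-marked}.
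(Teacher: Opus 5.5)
Your proposal is correct and follows essentially the same route as the paper. Both apply \cite[Proposition~6.9]{regcat} to the regular category $(\fP^+)^{\rm op}$, identify subobjects of $Q$ with $\Quot(Q)$ and the condition $f(Z)=X$ with $\beta\circ\alpha$ being an embedding, and then pass from measures for $G$ relative to $\sE$ to measures for $\fP$.

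The only difference is in that last identification. The paper notes that $G=\Aut(\Omega)$ and $\sE=\sE(\Omega)$ and invokes $\Theta(\fP)=\Theta(G,\sE(\Omega))$. You instead check the dictionary directly, matching orbits on fiber products (quotients of $Q\amalg_P P'$ in which $Q$ and $P'$ still embed) with amalgamations, which makes the same content explicit.
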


\begin{proof}
One can prove this directly by translating the arguments in \cite[Proposition~6.9]{regcat} from the language of oligomorphic groups to Fra\"iss\'e classes. Alternatively, one can directly apply \cite[Proposition~6.9]{regcat}, in the following manner. As we saw in \S \ref{ss:knop}, there is an oligomorphic group $G$ and stabilizer class $\sE$ such that the category of regular epimorphisms in $(\fP^+)^{\rm op}$ is equivalent to the category of transitive $\sE$-smooth $G$-sets. This $G$ is nothing other than the automorphism group of the universal homogeneous poset $\Omega$ (see \S \ref{ss:oligo-poset}), and $\sE=\sE(\Omega)$. Thus \cite[Proposition~6.9]{regcat} shows that a degree function on $\fP^+$ gives a measure on $G$ relative to $\sE$, which, in turn, is the same thing as a measure on $\fP$. The formula for the measure given in \cite[Construction~6.7]{regcat} is exactly our formula for $\nu$, after performing the necessary translations (passing from $(\fP^+)^{\rm op}$ to $\fP^+$, and converting from the group language to the Fra\"iss\'e class language).
\end{proof}

Fix a degree function $\kappa$ on $\fP^+$ and let $\nu$ be the measure associated to it by Proposition~\ref{prop:degree-meas}. We now determine the matrix $N$ of $\nu$. Put
\begin{displaymath}
\kappa_{p,q} = \kappa(J_{p,q} \setminus \{\ast\} \to J_{p,q})
\end{displaymath}
and let $K$ be the $3 \times 3$ matrix $(\kappa_{p,q})_{0 \le p,q \le 2}$.

\begin{proposition}
With the above notation, we have
\begin{displaymath}
N = K - A, \qquad
A = \begin{pmatrix}
0 & 1 & 0 \\
1 & 2 & 1 \\
0 & 1 & 0 \end{pmatrix}.
\end{displaymath}
\end{proposition}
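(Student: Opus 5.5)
We have to compute $\nu_{p,q}=\nu(J_{p,q}\setminus\{\ast\}\to J_{p,q})$ from the formula of Proposition~\ref{prop:degree-meas}. The plan is to enumerate the quotients $\beta\colon J_{p,q}\twoheadrightarrow R$ for which the composite from $P=J_{p,q}\setminus\{\ast\}$ is still an embedding. Then compute the M\"obius values $\mu(R,J_{p,q})$ in $\Quot(J_{p,q})$, and show the only surviving terms are the trivial quotient $R=J_{p,q}$ (contributing $\kappa_{p,q}$) plus correction terms whose total is $-A_{p,q}$, independent of $\kappa$.

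\textbf{Which quotients occur.} Since $\beta\circ\alpha$ must be injective, $\beta$ is injective on $P=\{a_1,\dots,a_p,b_1,\dots,b_q\}$. So $\beta$ either is a bijection, or identifies $\ast$ with exactly one point of $P$.
\begin{itemize}
\item If $\beta$ is a bijection, $R$ is $J_{p,q}$ with possibly extra relations. These can only involve $\ast$, since the order on $P$ must be preserved and every $a_i<b_j$ already holds. But $\ast$ is comparable to everything, so no new relations are possible and $R=J_{p,q}$.
\item If $\ast$ is identified with some $a_i$, monotonicity forces $a_i\le b_j$ (already true). It also forces $a_k\le a_i$ for all $k$, which contradicts the embedding condition unless $p=1$.
\item Symmetrically, identification with some $b_j$ requires $q=1$.
\end{itemize}
So the surviving quotients are $J_{p,q}$ itself, the collapse $\ast\mapsto a_1$ when $p=1$, and the collapse $\ast\mapsto b_1$ when $q=1$. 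In each collapse, $\beta\circ\alpha$ is an isomorphism, so $\kappa(\beta\circ\alpha)=1$.

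\textbf{M\"obius values.} For each collapse $R$ we need $\mu(R,J_{p,q})$. The interval $[R,J_{p,q}]$ in $\Quot(J_{p,q})$ consists of the quotients through which $J_{p,q}\to R$ factors. I expect this interval to be just $\{R,J_{p,q}\}$, since collapsing a covering pair to one point is minimal. That gives $\mu=-1$.

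For $p=q=1$, $J_{1,1}$ is the chain $a<\ast<b$, and there are two such collapses, so the correction is $-2$. Hence we obtain
\begin{displaymath}
\nu_{p,q}=\kappa_{p,q}-[p=1]-[q=1],
\end{displaymath}
which is exactly $N=K-A$ with $A_{p,q}=[p=1]+[q=1]$; indeed $A$ has entries $1,2,1$ in the middle row and middle column and $0$ in the corners. Note also that $J_{0,0}\setminus\{\ast\}=\emptyset$, so the $(0,0)$ entry has no correction, consistent with the formula.

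\textbf{Main obstacle.} The delicate step is justifying $\mu(R,J_{p,q})=-1$, i.e.\ showing nothing lies strictly between. Here I must be careful about how $\Quot$ is ordered and that elements are taken up to isomorphism. An intermediate element would be a quotient $R'$ of $J_{p,q}$ that further surjects onto $R$ compatibly, with $J_{p,q}\to R'$ not bijective. Since $|R|=|J_{p,q}|-1$, such an $R'$ must be a bijective quotient, i.e.\ $J_{p,q}$ with extra relations. We showed above that no extra relations can be added, so $R'=J_{p,q}$. This establishes that the interval has exactly two elements. I will also double-check the isomorphism-class subtlety for $p=q=1$: the two collapses of the chain may be isomorphic as abstract quotients, which would affect the count, and they should be treated as distinct quotients under $J_{p,q}$.
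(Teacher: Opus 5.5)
Your proposal is correct and is essentially the paper's argument. The paper takes an arbitrary one-point extension $P\to Q$ with $Q_{\parallel x}=\emptyset$ and observes three things: no proper bijective quotient can keep $P$ embedded; every other contributing $R$ therefore has $P\to R$ an isomorphism, so $\kappa=1$ and $\mu(R,Q)=-1$; and so $\nu(\alpha)=\kappa(\alpha)-n$, where $n$ is the number of retractions $Q\to P$. Those retractions are exactly your collapses $\ast\mapsto a_1$ (when $p=1$) and $\ast\mapsto b_1$ (when $q=1$).

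In your M\"obius step, ``with $J_{p,q}\to R'$ not bijective'' should read $R'\neq R$. The case of an $R'$ of the same size as $R$ is excluded because $P$ then embeds bijectively into $R'$, which forces $R'=R$ in $\Quot(J_{p,q})$.
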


\begin{proof}
We begin with a general observation. Let $\alpha \colon P \to Q$ be a one-point extension of posets, write $Q = P \sqcup \{x\}$, and assume that $Q_{\parallel x} = \emptyset$. Consider a proper quotient $\beta \colon Q \to R$ such that the composition $P \to Q \to R$ is an embedding. In general, a proper quotient in $\fP^+$ can be bijective; this happens if more pairs are comparable in the target than in the source. This cannot happen here, since $x$ is already comparable to all other elements of $Q$ and $\beta \vert_P$ is an embedding. Thus $\vert R \vert < \vert Q \vert$. It follows that the natural map $P \to R$ is an isomorphism, and so $\kappa(\beta \circ \alpha) = 1$. Moreover, in $\Quot(Q)$, the only element above $R$ is $Q$, and so $\mu(R,Q)=-1$. Applying the formula in Proposition~\ref{prop:degree-meas}, we find
\begin{displaymath}
\nu(\alpha) = \kappa(\alpha) - n,
\end{displaymath}
where $n$ is the number of retractions of $Q$ onto $P$, where a \defn{retraction} is a monotone map $Q \to P$ that is the identity on $P$. The result now follows upon observing that $A_{p,q}$ is the number of retractions of $J_{p,q}$ onto $J_{p,q} \setminus \{\ast\}$.
\end{proof}

Some simple calculations now give
\begin{displaymath}
\kappa^{\rm triv} \rightsquigarrow \CC_8, \qquad
\kappa^{\min} \rightsquigarrow \CC_7, \qquad
\kappa^{\max} \rightsquigarrow \CC_7^{\top}, \qquad
\kappa_t \rightsquigarrow \BB_5(t),
\end{displaymath}
where the right side indicates the matrix for the measure associated to the degree function on the left side. Thus the Knop-like measures are exactly those corresponding to these matrices.

\begin{remark}
The material in \S \ref{s:knop} gives an independent proof of Theorem~\ref{thm:some-meas}(b), i.e., that there is a measure with matrix $\CC_8$.
\end{remark}

\begin{remark}
Suppose $\kappa$ is a degree function with matrix $K$. As we have seen above, $K-A$ is the matrix of the associated measure. Curiously, each matrix $K$ is itself the matrix of a measure: $\kappa^{\rm triv}$ gives $\BB_1(1)$, $\kappa^{\min}$ gives $\BB_2(1)^{\top}$, and $\kappa_t$ gives $\AA(1,t)$. We do not have a conceptual explanation for this.
\end{remark}

\begin{remark} \label{rmk:multi-knop}
Let $\fQ^+$ be the following category: objects are finite posets that contain a least element, and morphisms are monotone maps that preserve the least element. One can show that $\fQ^+$ is coregular, and that regular monomorphisms are poset embeddings preserving the least element. Moreover, the wide subcategory $\fQ$ consisting of regular monomorphisms is equivalent to $\fP$, via the functor $\fP \to \fQ$ sending $P$ to $P \sqcup \{\bzero\}$, where $\bzero$ is declared to be least. Thus a degree function on $\fQ^+$ induces a measure on $\fP$, just as in Proposition~\ref{prop:degree-meas}. It turns out that $\fQ^+$ admits only two degree functions ($\kappa^{\rm triv}$ and $\kappa^{\max}$), and the Knop-like measures are just $\CC_7$ and $\CC_8$.
\end{remark}


\begin{thebibliography}{Sno2}

\bibitem[BJJ]{BJJ} R\'emi Barritault, Colin Jahel, Matthieu Joseph. On dissociated infinite permutation groups. \arxiv{2504.14057v1}

\bibitem[BKL]{BKL} Miko\l{}aj Boja\'nczyk, Bartek Klin, S\l{}awomir Lasota. Automata with group actions. In \textit{Proceedings of the 26th Annual IEEE Symposium on Logic in Computer Science (LICS)}, pp.~355--364. IEEE Computer Society, 2011. \DOI{10.1109/LICS.2011.48}

\bibitem[Cam]{Cameron} Peter J.\ Cameron. Oligomorphic permutation groups. London Mathematical Society Lecture Note Series, vol. 152, Cambridge University Press, Cambridge, 1990. \DOI{10.1017/CBO9780511549809}

\bibitem[HNS]{arboreal} Nate Harman, Ilia Nekrasov, Andrew Snowden. Arboreal tensor categories. \textit{Selecta Math. (N.S.)} \textbf{31} (2025), article~58. \DOI{10.1007/s00029-025-01058-1} \arxiv{2308.06660}

\bibitem[HSS]{line} Nate Harman, Andrew Snowden, Noah Snyder. The Delannoy category. \textit{Duke Math.\ J.} \textbf{173} (2024), no.~16, pp.~3219--3291. \DOI{10.1215/00127094-2024-0012} \arxiv{2211.15392}

\bibitem[HS]{repst} Nate Harman, Andrew Snowden. Oligomorphic groups and tensor categories. \arxiv{2204.04526}

\bibitem[Jec]{Jech} Thomas J.~Jech. The axiom of choice. North--Holland Publishing Co., Amsterdam--London; Amercan Elsevier Publishing Co., Inc., New York, 1973. Studies in Logic and the Foundations of Mathematics, Vol.~75.

\bibitem[Kno]{Knop} Friedrich Knop. Tensor envelopes of regular categories. \textit{Adv.\ Math.} \textbf{214} (2007), pp.~571--617. \DOI{10.1016/j.aim.2007.03.001} \arxiv{math/0610552}

\bibitem[Mac]{Macpherson} Dugald Macpherson. A survey of homogeneous structures. \textit{Discrete Math.} \textbf{311} (2011), no.~15, pp.~1599--1634. \DOI{10.1016/j.disc.2011.01.024}

\bibitem[Nek]{Nekrasov} Ilia Nekrasov. Tensorial measures on $\omega$-categorical structures. Ph.~D.\ dissertation, University of Michigan, 2024. \DOI{10.7302/23874}

\bibitem[NS]{distal} Ilia Nekrasov, Andrew Snowden. Upper bounds for measures on distal classes. \arxiv{2407.19131}

\bibitem[Sch]{Schmerl} James H.~Schmerl. Countable homogeneous partially ordered sets. \textit{Algebra Universalis} \textbf{9} (1979), pp.~317--321. \DOI{10.1007/BF02488043}

\bibitem[Sno1]{regcat} Andrew Snowden. Regular categories, oligomorphic monoids, and tensor categories. \arxiv{2403.16267}

\bibitem[Sno2]{homoperm} Andrew Snowden. The thirty-seven measures on permutations. \textit{Exp.\ Math.}, to appear. \arxiv{2404.08775}

\bibitem[Sno3]{webpage} Andrew Snowden. Poset calculator. \\ {\tiny\url{https://public.websites.umich.edu/~asnowden/notes/marked_poset_calculator.html}}

\end{thebibliography}
\end{document}